\newtheorem{theorem}{Theorem}[section]
\newtheorem{lemma}[theorem]{Lemma} 
\newtheorem{proposition}[theorem]{Proposition} 
\theoremstyle{definition}
\newtheorem{definition}[theorem]{Definition} 
\newtheorem{remark}[theorem]{Remark} 
\newtheorem{assumption}{Assumption}
\newcommand{\indep}{\perp \!\!\! \perp}
\def\dCov{\mathrm{dCov}}
\def\JdCov{\mathrm{JdCov}}
\def\RJdCov{\mathrm{RJdCov}}
\def\RdCov{\mathrm{RdCov}}
\def\i{\iota}
\def\d{\mathrm{d}}
\def\SO{\mathcal{SO}}
\def\R{\hat{R}}
\def\o{\mathrm{oracle}}
\title[Distribution-Free Joint Independence Testing and ICA]{Distribution-Free Joint Independence Testing and Robust Independent Component Analysis Using Optimal Transport}
\author[Niu]{Ziang Niu}
\address{Applied Mathematics and Computational Science, University of Pennsylvania, Philadelphia, USA}
\email{ziangniu@sas.upenn.edu}
\author[Bhattacharya]{Bhaswar B. Bhattacharya}
\address{Department of Statistics and Data Science, University of Pennsylvania, Philadelphia, USA}
\email{bhaswar@wharton.upenn.edu}
\keywords{Asymptotic efficiency, combinatorial central limit theorem, distance correlation, joint dependence measures, multivariate ranks, optimal transport, Stein's method.}
\begin{document}
	
	\maketitle
	
	\begin{abstract} In this paper, we study the problem of measuring and testing joint independence for a collection of multivariate random variables. Using the emerging theory of optimal transport (OT)-based multivariate ranks, we propose a distribution-free test for multivariate joint independence. Towards this we introduce the notion of {\it rank joint distance covariance} $(\RJdCov)$, the higher-order rank analogue of the celebrated distance covariance measure, which can capture the dependencies among all the subsets of the variables. The $\RJdCov$ can be easily estimated from the data without any moment assumptions and the associated test for joint independence is universally consistent. We derive the asymptotic null distribution of the $\RJdCov$ estimate, using which we can readily calibrate the test without any knowledge of the (unknown) marginal distributions (due to the distribution-free property). We also provide an efficient data-agnostic resampling-based implementation of the test which controls Type-I error in finite samples and is consistent with only a fixed number of resamples. In addition to being distribution-free and universally consistent, the proposed test is also statistically efficient, that is, it has non-trivial asymptotic (Pitman) efficiency (power against $1/\sqrt n$ alternatives). We demonstrate this by computing the limiting local power of the test for both mixture alternatives and joint Konijn alternatives. We then use the $\RJdCov$ measure to develop a new method for independent component analysis (ICA) that is easy to implement and robust to outliers and contamination. Extensive simulations are performed to illustrate the efficacy of the proposed test in comparison to other existing methods. Finally, we apply the proposed method to learn the higher-order dependence structure among different US industries based on  stock prices. As a byproduct of our theoretical analysis, we develop a version of Hoeffding's classical combinatorial central theorem for mutiple independent permutations and a multivariate H\'ajek representation result for joint rank statistics, which might be of independent interest. 
	\end{abstract}

	\section{Introduction}
	
	Given $r$ probability distributions on $\mathbb R^{d_1}, \mathbb R^{d_2}, \ldots, \mathbb R^{d_r}$ with marginal laws $\mu_1, \mu_2, \ldots, \mu_r$, respectively, and joint law $\mu$ on $\mathbb  R^{d}$, where $d=\sum_{s=1}^r d_s$, the {\it mutual independence testing} problem is given by
	\begin{align}\label{eq:independence}
	H_0:\mu=\mu_1\otimes\mu_2\otimes\cdots\otimes\mu_r\qquad\text{versus}\qquad H_1:\mu\neq\mu_1\otimes\mu_2\otimes\cdots\otimes\mu_r , 
	\end{align}
	where $\mu_1\otimes\mu_2\otimes\cdots\otimes\mu_r$ is the product of the marginal distributions $\mu_1, \mu_2, \ldots,\mu_r$. This has been extensively studied in the case $r=2$, which is the classical {\it pairwise independence testing} problem for a collection of random variables. For univariate distributions, that is, $d_1=d_2=1$, nonparametric tests for pairwise independence begin with the celebrated results of~\citet{hoeffding} and~\citet{blum1961distribution}. These tests are {\it distribution-free}, that is, the distributions of the test statistics under $H_0$ do not depend on the (unknown) marginal distributions, and consistent against a general class of alternatives. 	 Since then a slew of methods for measuring and testing univariate pairwise independence have been proposed. These include, among several others, results of~\citet{yanagimoto1970measures},~\citet{feuerverger1993consistent},~\citet{bergsma2014consistent},~\citet{signcovariance},~\citet{heller2016consistent} and the recent breakthrough of Chatterjee~\cite{chatterjeecorrelation,chatterjee2022survey}.

	Pairwise independence testing is more challenging for dimensions greater than 1, that is, either $d_1 > 1$ or $d_2 > 1$, because of the lack of a natural ordering in multivariate data. One of the most popular methods for pairwise multivariate independence testing is the {\it celebrated distance covariance} ($\dCov$) of   \citet{szekely2007measuring} and \citet{szekely2009brownian}, which measures dependence between 2 random vectors based on the difference of their joint and marginal characteristic functions and can be estimated using the correlation among the pairwise distances. The distance covariance is zero if and only if the null hypothesis of independence holds, provided $\mu_1$ and $\mu_2$ have finite first moments. Another approach is based on the Hilbert-Schmidt independence criterion (HSIC), which also provides a consistent kernel-based test for pairwise independence (see \citet{gretton2005kernel,gretton2005measuring,gretton2007kernel}). \citet{Sejdinovic2013} showed that  distance covariance and the kernel-based independence criterion are, in fact, equivalent if the kernel is chosen based on the relevant distance function. Other popular methods include mutual information-based tests \cite{ai2022testing,berrett2019nonparametric,kinney2014equitability,wu2009smoothed}, graph-based methods \cite{deb2020measuring,heller2012class,friedman1983graph}, the maximal information coefficient \cite{novelassociation}, ranking of interpoint distances \cite{heller2013consistent,moon2022interpoint}, ball covariance \cite{pan2019ball}, and binning approaches based on partitions of the sample space \cite{heller2013consistent,scanning2019fisher,zhang2019bet} (see \cite{JH16,independence2022evaluating} for reviews of the various methods). However, none of the aforementioned multivariate tests simultaneously inherit the distribution-free and universal consistency properties of the rank-based univariate tests. A breakthrough in this direction was made recently by \citet{deb2021multivariate}  and \citet{shi2020distribution} using optimal transport (OT) based multivariate ranks \cite{chernozhukov2017monge,delbarrio2019,Hallin2017}. The OT-based  tests for pairwise independence are distribution-free in finite samples, computationally feasible, universally consistent (that is, the power of the tests converge to 1 as the sample size increases), and enjoy attractive efficiency properties \cite{shi2020consistent} (see also \cite{deb2021efficiency,scanning2019fisher}). 

In this paper we consider the mutual independence testing problem \eqref{eq:independence} for more than 2 variables. Measuring and testing higher-order dependence\footnote{We say a collection of random variables $X_1,\ldots,X_r$ has higher-order dependency if they are pairwise independent but not jointly independent.} and understanding the dependence structure between the variables are much more involved tasks than detecting pairwise dependence. Examples of such higher-order dependencies abound in the literature. This includes, among others, applications in diagnostic checking for directed acyclic graphs (DAGs), where the noise variables are assumed to be jointly independent, hence inferring pairwise independence is not enough, and independent component analysis (ICA), which entails finding a suitable transformation of multivariate data with mutually independent components (see Section \ref{sec:ICA} for details). In fact, \citet{bottcher2020dependence} collects over 350 datasets featuring statistically significant higher-order dependencies. While there are a few results for mutual independence testing in the univariate case, where $d_1=d_2=\cdots=d_r = 1$ (see \cite{distributionnonparametric,independenceempiricalcharacteristic,hoeffding2010multivariate,kankainen1995consistent,poczos2012copula}  and the references therein), in higher dimensions the problem is much more challenging. Towards this, \citet{pfister2018kernel} generalized the HSIC to multiple variables and obtained kernel-based tests joint independence (see also \citet{sejdinovic2013kernel} for a kernel-based 3-variable interaction test). Recently, \citet{bottcher2019distance} and \citet{chakraborty2019distance} proposed higher-order generalizations of the distance covariance, which provide consistent tests for multivariate joint independence. This is referred to as the {\it total distance multivariance} by \citet{bottcher2019distance} or the {\it joint distance covariance}  ($\JdCov$) by \citet{chakraborty2019distance}. One of the attractive properties of the distance multivariance/$\JdCov$ is that it has a hierarchical structure that can capture the dependence among any subset of the variables. Recently, \citet{consistentindependence} proposed tests for joint independence based on pairwise distances and linear projections (see also \citet{banerjeetest} for generalizations to functional data). However, none of the aforementioned methods possesses the distribution-free property  of the univariate methods. In fact, these tests usually have intractable (non-Gaussian) asymptotic distributions under $H_0$ that depend on the (unknown) marginal distributions and are generally calibrated using permutation methods.

	In this paper we propose a distribution-free test for joint independence using the framework of OT-based multivariate ranks. Towards this, we introduce the  notion of {\it rank joint distance covariance} $(\RJdCov)$, the rank analogue of the  $\JdCov$, which is obtained by aggregating the (higher-order) rank distance covariances ($\RdCov$) over all the subsets of variables (Section \ref{subsec:rank-based jdcov}). The higher-order $\RdCov$ characterizes the mutual independence of a subset of variables given their lower-order independence and the $\RJdCov$ characterizes the total mutual independence of all the $r$ variables (Proposition \ref{ppn:RJdCovindependence}). The $\RdCov$ and, hence, the $\RJdCov$ can be consistently estimated from data without any moment assumptions (Theorem \ref{thm:consistency}).  Consequently, since OT-based multivariate ranks are themselves distribution-free (see Section~\ref{subsec:multivariate_rank}), we can construct a distribution-free test for multivariate joint independence based on the estimated $\RJdCov$. The proposed test has the following properties:

\begin{itemize}	 
	 

\item {\it Distribution-free, universally consistent tests}: Since $\RJdCov$ is well-defined whenever the marginal distributions $\mu_1,  \mu_2, \ldots, \mu_r$ are  absolutely continuous and is zero if and only if the null hypothesis of joint independence holds, our proposed test is consistent whenever the joint distribution has absolutely continuous marginals (Proposition \ref{ppn:phiconsistency}). Moreover, we can use the $\RdCov$ estimates to obtain distribution-free, universally consistent tests for higher-order independence of a subset of variables given their lower-independence.  For example, if a collection variables are pairwise independent, the $\RJdCov$ measure can be used to construct distribution-free, consistent tests for 3-way or higher-order dependence (see Remark \ref{remark:RdCov} for a discussion and Section \ref{sec:realdata} for an application in real data).

\item {\it Asymptotic null distribution}: In Section \ref{sec:theory} we derive the limiting distribution of the proposed test under the null hypothesis of joint independence. In fact, we derive the limiting joint distribution of the $\RdCov$ estimates for all the subsets of $r$ variables, which can be expressed as a squared integral of a certain Gaussian process or, equivalently, as an infinite weighted sum of independent chi-squared distributions (Theorem \ref{thm:H0}). One remarkable property of the limiting distribution is that the distributions of the $\RdCov$ estimates across the various subsets are asymptotically independent, thereby facilitating the hierarchical testing for higher-order dependence mentioned above.  Since the OT-based multivariate ranks are distributed as uniform random permutations, the $\RdCov$ estimates can be expressed as a combinatorial sum indexed by multiple (more than 1) independent permutations. For this we use Stein's method based on exchangeable pairs to develop  a version of Hoeffiding's classical combinatorial central limit theorem for multi-dimensional arrays (tensors) indexed by independent permutations (see Theorem \ref{thm:clt} in Appendix \ref{sec:clt}), a result which might be of independent interest. The distribution-free property implies that the asymptotic null distribution of the test statistic does not depend on the distribution of the data generating mechanism, hence, can be readily used to calibrate the test statistic.

	\item {\it Finite sample properties}: The distribution-free property also allows us to approximate the quantiles of the null distribution in finite samples  using a data-agnostic resampling technique. Consequently, we can calibrate the test statistic to have finite sample level $\alpha$ (Section \ref{sec:finitesample}). Interestingly, this finite sample implementation is consistent with only a finite number of resamples (Theorem \ref{thm:consistency_permutation}). Hence, we can implement the proposed test both asymptotically and in finite samples efficiently, without having to estimate any nuisance parameters of the marginal distributions or use computationally intensive data-dependent permutation techniques to approximate the rejection thresholds.

	\item {\it Asymptotic efficiency}: 	In Section \ref{sec:local_power} we establish the asymptotic (Pitman) efficiency of the proposed test by computing its limiting power for local contiguous alternatives (alternatives shrinking towards $H_0$ at rate $O(1/\sqrt n)$) \cite{vdv2000asymptotic}. Specifically, we derive the asymptotic distribution of the proposed test for two kinds of local alternatives: mixture alternatives (Theorem \ref{thm:mixture}) and joint Konijn alternatives (Theorem \ref{thm:powerK}). To the best of our knowledge, these are the first results on the efficiency properties of  nonparametric joint independence tests. This implies that the proposed test, in addition to being distribution-free, universally consistent, and computationally feasible, is also statistically efficient, making it particularly attractive for modern data applications.  The proof of the local power analysis is based on a Haj\'{e}k representation result for the $\RdCov$ estimates (Proposition \ref{ppn:ZnS}), which allows us to replace the empirical rank maps with their population counterparts without altering the limiting distribution of the test statistic under $H_0$.

\end{itemize} 

Going beyond hypothesis testing, the $\RJdCov$ can be used more generally to quantify deviations from joint independence. Specifically, in Section \ref{sec:ICA} using the $\RJdCov$ measure as an objective function we develop a new method for the independent component analysis (ICA) problem that is robust to outliers and contamination. ICA is a method for extracting independent components from multivariate data that emerged from research in artificial neural networks and has found applications in blind source separation, feature extraction, computational biology, and time series analysis (see \cite{hyvarinen2000independent} for a review). Our estimator, which is obtained by minimizing the $\RJdCov$ measure, can be computed efficiently and is consistent for the independent components (Theorem \ref{thm:independentcomponent}). We illustrate the effectiveness of the proposed ICA estimator with the approach of \citet{matteson2017independent} based on the $\dCov$ in various simulation settings (Section \ref{sec:ICAsimulation}).  

In Section \ref{sec:sim}  we compare the performance of the proposed test with other popular tests for joint independence, such the dHSIC \cite{pfister2018kernel}, $\JdCov$ \cite{chakraborty2019distance}, and the $\dCov$ based measure in \cite{matteson2017independent}. Our test performs well across a variety of data distributions and is especially powerful compared to the existing tests in contamination models and heavy-tailed distributions. 

Finally, in Section \ref{sec:realdata} we apply our method to a dataset consisting of stock prices of different companies in USA. Our approach sheds more insights into the structure of the higher-order dependencies in this data, producing results that are more interpretable than existing methods. All the implementation of tests can be found in \href{https://github.com/ZiangNiu6/Distribution-free-mutual-independence-test}{GitHub}.

	\section{Rank-Based Joint Independence Measures}

	In this section, we define a rank-based measure of joint independence by combining higher-order distance covariances with optimal transportation of measures. We review the relevant concepts regarding higher-order distance covariance and joint dependence measures in Section \ref{subsec:RjdCov} and OT-based multivariate ranks in Section \ref{subsec:multivariate_rank}. The rank-based joint dependence measures are introduced in Section \ref{subsec:rank-based jdcov} and their estimation is discussed in Section \ref{subsec:rank-based jdcov est}. We begin by introducing some notations.

	\subsection{Notations} 
	
	Fix $r \geq 2$ and suppose $\bm X= (X_1,\ldots,X_r)$ is a random vector, where each $X_i$ is a random variable taking values in $\mathbb{R}^{d_i}$, for $1 \leq i \leq r$ and $d_0 = \sum_{i=1}^r d_i$. The characteristic function of $X_i$ will be denoted by 
	$$\phi_{X_i}(t)=\E\left[e^{\i\langle t,X_i\rangle}\right] , $$ 
	for $t\in\mathbb{R}^{d_i}$. 
	
	\begin{definition}\label{defn:r} The random vector $\bm X=(X_1, \ldots, X_r)$ is said to be $q$-independent (for some $2 \leq q \leq r$) if for any sub-family $\{i_1, i_2, \ldots, i_q\} \subset \{1, 2, \ldots, r\}$ the random variables $X_{i_1}, X_{i_2}, \ldots, X_{i_q}$ are mutually independent. 
	\end{definition}
	
	Throughout we will denote 
	$$w_{d_i}(t) := \frac{1}{c_{d_i}\|t\|^{1+d_i}} \quad \text{where } c_{d_i} := \frac{\pi^{\frac{1+d_i}{2}}}{\Gamma\left( \frac{1+d_i}{2} \right) },$$ 
	for $1 \leq i \leq r$. Moreover, for $t_i \in \mathbb R^{d_i}$ define 
	\begin{align}\label{eq:w}
		\d w_i := w_{d_i}(t_i) \d t_i = \frac{\d t_i}{c_{d_i}\|t_i\|^{1+d_i}} \quad \text {and } \quad \d w= \prod_{i=1}^{r}\d w_i . 
	\end{align} 
	Finally, for $p \geq 1$, let $\mathcal{P}(\mathbb{R}^{p})$ and $\mathcal{P}_{ac}(\mathbb{R}^{p})$ denote the collection of all probability distributions and Lebesgue absolutely continuous probability distributions on $\mathbb{R}^{p}$, respectively.
	
	\subsection{Higher-Order Distance Covariance and Joint Dependence Measures}\label{subsec:RjdCov}
	
	The celebrated distance covariance $(\dCov)$ of Sz\'ekely and Rizzo \cite{szekely2007measuring} is a powerful measure for independence between two random vectors. It has been widely used for testing of independence \cite{szekely2007measuring,sejdinovic2013equivalence}, feature screening \cite{li2012feature} and general association analysis, including canonical component analysis \cite{zhu2020nonlinear} and independent component analysis \cite{matteson2017independent}. Recently, there has been efforts to generalize the notion of $\dCov$ beyond pairwise independence to joint independence of a collection of $r > 2$ random vectors. Towards this, independently and concurrently, 
	B\"ottcher et al. \cite{bottcher2019distance} and Chakraborty and Zhang \cite{chakraborty2019distance} proposed the following higher-order generalization of $\dCov$. This is referred to as the {\it distance multivariance} (by B\"ottcher et al. \cite{bottcher2019distance}) or the {\it $r$-th order $\dCov$} (by Chakraborty and Zhang \cite{chakraborty2019distance}).

	\begin{definition}[{\cite[Definition 2.1]{bottcher2019distance} and \cite[Definition 1]{chakraborty2019distance}}] 
		The $r$-th order $\dCov$ of $(X_1, X_2, \ldots, X_r)$ is defined as the positive square root of 
		\begin{align}\label{eq:dCov}
			\dCov^{2}(X_1,\ldots,X_r)=\int_{\mathbb{R}^{d_0}}\left|\E\left[\prod_{i=1}^{r}(\phi_{X_i}(t_i)-e^{\i\langle t_i,X_i \rangle}) \right] \right|^{2}\d w.
		\end{align}
		where $\phi_{X_i}(t_i)=\E\left[e^{\i\langle t_i,X_i\rangle}\right]$ for $t_i\in\mathbb{R}^{d_i}$, and $\d w$ is as defined in \eqref{eq:w}. 
	\end{definition}

	Clearly, $\dCov(X_1, X_2, \ldots, X_r) = 0$ whenever the collection $(X_1, X_2, \ldots, X_r)$ are mutually independent. However, the converse is only true for $r=2$, in which case \eqref{eq:dCov} reduces to the classical $\dCov$ between two random vectors $X_1$ and $X_2$ \cite{szekely2007measuring}. In other words, when  $r \geq 3$ the joint independence of $(X_1, X_2, \ldots, X_r)$ is not a necessary
	condition for $\dCov(X_1, X_2, \ldots, X_r)$ to be zero. (For example, $\dCov(X_1, X_2, X_3) = 0$ whenever $X_1 \indep (X_2, X_3)$.) One can conclude $(X_1, X_2, \ldots, X_r)$ are jointly independent when $\dCov(X_1, X_2, \ldots, X_r) = 0$ under the additional assumption that $(X_1, X_2, \ldots, X_r)$ are $(r-1)$-independent (see Definition \ref{defn:r} and \cite[Theorem 3.4]{bottcher2019distance}), but not in general.  To characterize joint independence one has to consider  all possible interactions between the $r$ variables $X_1, X_2, \ldots, X_r$. This leads to the notion of the {\it total distance multivariance} \cite{bottcher2019distance} or the {\it joint distance correlation} ($\JdCov$) \cite{chakraborty2019distance}. Towards this, we need the following notation: For $S \subseteq \{1, 2, \ldots, r\}$ with $|S| \geq 2$ denote by $\bm X_{S} = (X_{i})_{i \in S}$ and
	\begin{align}\label{eq:dCovS}
		\dCov^{2}(\bm X_{S})=\int_{\mathbb{R}^{d_S}}\left|\E\left[\prod_{i \in S} (\phi_{X_i}(t_i)-e^{\i\langle t_i,X_i \rangle}) \right] \right|^{2} \prod_{i \in S} \d w_i , 
	\end{align}
	where $d_S : = \sum_{i \in S} d_i$. In other words, $\dCov^{2}(\bm X_{S})$ is the $|S|$-th order $\dCov$ for the variables in $\bm X_{S}$.

	\begin{definition} [{\cite[Definition 2]{chakraborty2019distance}}]
		Given a vector non-negative weights $\bm C = (C_2, C_3, \ldots, C_r)$, the joint distance covariance $(\JdCov)$ of the random vector $\bm X= (X_1, X_2, \ldots, X_r)$ is defined as: 
		\begin{align}\label{eq:expressionJdCov}
			\JdCov^{2}(\bm X; \bm C) = \sum_{s=2}^r C_s \sum_{\substack{S \subseteq \{1, 2, \ldots, r\} \\ |S| = s }} \dCov^2(\bm X_S) . 
		\end{align}
	\end{definition}

	$\JdCov$ completely characterizes the joint independence of 
	$(X_1, X_2, \ldots, X_r)$, that is, 
	\begin{align}\label{eq:JdCovindependent}
		\JdCov^{2}(\bm X; \bm C) = 0 \text{ if and only if } (X_1, X_2, \ldots, X_r) \text{ are mutually independent } 
	\end{align}
	(see \cite[Proposition 3]{chakraborty2019distance}). Moreover, by choosing all the weights to be equal, that is, $C_s=1$, for $2 \leq s \leq r$, one gets the {\it total distance multivariance} as in \cite[Definition 2.1]{bottcher2019distance}.  Chakraborty and Zhang \cite{chakraborty2019distance} suggests choosing $C_s= c^{r-s}$, for some constant $c>0$, which allows the following simplier expression that does not require evaluating all the $(2^r-r -1)$ dCov terms in \eqref{eq:expressionJdCov}. 
	
	\begin{lemma}[Proposition 4 in \cite{chakraborty2019distance}]\label{lm:U}
		For any $c\geq0$, 
		\begin{align*}
			\JdCov^{2}(\bm X; c) := \JdCov^2(\bm X; (c^{r-s})_{2 \leq s \leq r}) = \E\left[ \prod_{i=1}^{r}(U_i(X_i,X_i')+c)\right] - c^{r} , 
		\end{align*}
		where $(X_1',\ldots,X_r')$ be an independent copy of $\bm X = (X_1, X_2, \ldots, X_r)$ and, for $1\leq i\leq r$, 
		\begin{align*}
			U_i(x,x') & =\E\|x-X_i'\|+\E\|X_i-x'\|-\|x-x'\|-\E\|X_i-X_i'\| \nonumber \\ 
			& = \int_{\mathbb{R}^{d_i}}\bigg\{(\phi_{X_i}(t)-e^{\i\langle t,x\rangle})(\phi_{X_i}(-t)-e^{-\i\langle t,x'\rangle})\bigg\}w_{d_i}(t)\d t .
		\end{align*}
	\end{lemma}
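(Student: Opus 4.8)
The plan is to reduce the identity to the definition \eqref{eq:expressionJdCov} by first expressing each subset distance covariance $\dCov^{2}(\bm X_S)$ as a double expectation in the $U_i$'s, and then matching the resulting sum over subsets with the subset expansion of $\prod_{i=1}^{r}(U_i(X_i,X_i')+c)$. The observation that $\E[U_i(X_i,X_i')]=0$ is what makes the lower-order terms disappear, so that only the $|S|\ge 2$ terms survive.

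\emph{Step 1: a double-expectation formula for $\dCov^{2}(\bm X_S)$.} For $S\subseteq\{1,\ldots,r\}$ with $|S|\ge 2$ I would write $|z|^{2}=z\bar z$ in \eqref{eq:dCovS} and introduce an independent copy $\bm X'=(X_1',\ldots,X_r')$ of $\bm X$, so that the integrand becomes
$$\E\Big[\prod_{i\in S}\big(\phi_{X_i}(t_i)-e^{\i\langle t_i,X_i\rangle}\big)\big(\phi_{X_i}(-t_i)-e^{-\i\langle t_i,X_i'\rangle}\big)\Big].$$
Assuming each $X_i$ has a finite first moment (the standard condition under which $\dCov$ is defined), Fubini's theorem allows interchanging this expectation with $\int_{\mathbb R^{d_S}}(\cdot)\prod_{i\in S}\d w_i$; the integrand, for fixed realizations, factorizes across the coordinates $t_i$, and by the Fourier representation of $U_i$ stated in the lemma the $i$-th factor integrates to $U_i(X_i,X_i')$. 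Hence $\dCov^{2}(\bm X_S)=\E\big[\prod_{i\in S}U_i(X_i,X_i')\big]$. The equivalence of the two displayed expressions for $U_i$ is the classical energy-distance identity $\int_{\mathbb R^{d_i}}(1-\cos\langle t,x\rangle)\,w_{d_i}(t)\,\d t=\|x\|$.

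\emph{Step 2: the algebraic expansion.} From the first (energy-distance) form of $U_i$, substituting $x=X_i$, $x'=X_i'$ and taking expectations shows each of the four terms equals $\E\|X_i-X_i'\|$ up to sign, so $\E[U_i(X_i,X_i')]=0$. Expanding over subsets,
$$\prod_{i=1}^{r}\big(U_i(X_i,X_i')+c\big)=\sum_{S\subseteq\{1,\ldots,r\}}c^{\,r-|S|}\prod_{i\in S}U_i(X_i,X_i') .$$
Separating the empty set (contributing $c^{r}$), the singletons (contributing $c^{r-1}\sum_{i=1}^{r}U_i(X_i,X_i')$), and the sets of size $\ge 2$, and taking expectations, the singleton contribution vanishes by the previous sentence, while by Step 1 the remaining terms sum to $\sum_{s=2}^{r}c^{\,r-s}\sum_{|S|=s}\dCov^{2}(\bm X_S)$. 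Comparing with \eqref{eq:expressionJdCov} for the weights $C_s=c^{r-s}$ gives $\E\big[\prod_{i=1}^{r}(U_i(X_i,X_i')+c)\big]-c^{r}=\JdCov^{2}\big(\bm X;(c^{r-s})_{2\le s\le r}\big)$, which is the claim.

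The one genuinely technical point is justifying the Fubini interchange in Step 1: one has to check absolute integrability of the complex integrand against $\big(\prod_{i\in S}\d w_i\big)\times\mathbb P$, using the bound $|\phi_{X_i}(t_i)-e^{\i\langle t_i,x\rangle}|=O(\|t_i\|)$ near the origin (this is where the finite first moment is used and where the singularity of $w_{d_i}$ is absorbed), boundedness away from the origin, the identity $\E|\phi_{X_i}(t)-e^{\i\langle t,X_i\rangle}|^{2}=1-|\phi_{X_i}(t)|^{2}$, and independence across $i$. Everything after that is the one-line subset expansion together with $\E[U_i(X_i,X_i')]=0$.
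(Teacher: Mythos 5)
The paper itself does not prove this lemma: it is imported verbatim as Proposition 4 of Chakraborty and Zhang \cite{chakraborty2019distance}, and in the present paper it is only ever applied after the rank transformation. Your reconstruction is correct and follows essentially the same (standard) route as the cited source: first the representation $\dCov^{2}(\bm X_S)=\mathbb{E}\bigl[\prod_{i\in S}U_i(X_i,X_i')\bigr]$, obtained by writing $|z|^2=z\bar z$, introducing the independent copy, and integrating coordinatewise against $w_{d_i}$; then the subset expansion of $\prod_{i=1}^{r}(U_i+c)$ together with $\mathbb{E}[U_i(X_i,X_i')]=0$, which removes the empty and singleton terms and matches the weights $C_s=c^{r-s}$ in \eqref{eq:expressionJdCov}. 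Step 2 is complete as written.

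Two refinements to the technical part of Step 1. First, a single factor of size $O(\|t_i\|)$ does not by itself absorb the singularity of $w_{d_i}$: $\|t_i\|\cdot\|t_i\|^{-1-d_i}=\|t_i\|^{-d_i}$ has a logarithmically divergent radial integral at the origin. Convergence of the $i$-th integral comes from the fact that the integrand contains \emph{two} such factors (one involving $X_i$, one involving $X_i'$), giving $O(\|t_i\|^{2})$ near $0$; the bound should be applied to both. Second, after integrating out $\bm t$, the natural dominating function is of the form $\prod_{i\in S}C\,(1+\|X_i\|)(1+\|X_i'\|)$, and since the coordinates $X_1,\ldots,X_r$ are dependent in general, its expectation is controlled by product moments such as $\mathbb{E}\bigl[\prod_{i\in S}\|X_i\|\bigr]$ (or a H\"older-type sufficient condition), not by marginal first moments alone; this is the moment hypothesis under which the cited proposition is stated, and your "finite first moment" caveat is slightly too weak for $|S|\geq 3$. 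Neither point affects the use made of the lemma here, since the rank-transformed variables are supported in $[0,1]^{d_i}$ and all integrability is automatic.
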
 
	
	Depending on whether $c > 1$ or $c < 1$, $\JdCov^{2}(\bm X; c)$ puts more or less weights on the lower-order dependence terms, respectively. For example, if the joint distribution of $\{X_1,\ldots,X_r\}$ is known to be Gaussian,  where mutual independence is equivalent to pairwise independence, larger values of $c$ should be considered. Otherwise, a smaller $c<1$ makes more sense.

	\subsection{Multivariate Ranks Based on Optimal Transport}\label{subsec:multivariate_rank}
	
	To motivate the notion of multivariate ranks based on optimal transport, let us recall some fundamental facts about univariate ranks. Suppose that $X \sim \nu \in \cP_{ac}(\mathbb R)$  is a random variable with distribution function $F$. Here, the distribution function itself serves as the one-dimensional population rank function, which has the property that 
	$F(X) \sim \mathrm{Unif}([0, 1])$, that is, {\it $F$ transports $\mu$ to $\mathrm{Unif}([0, 1])$}, the uniform distribution on $[0, 1]$. In fact, when $\mu$ has finite second moment, it can be shown that $F$ is the almost everywhere unique map that transports $\mu$ to $\mathrm{Unif}([0, 1])$ and minimizes the expected squared-error cost, that is, $$F=\mathrm{arg}\min_{T: T(X) \sim \mathrm{Unif}([0,1])} \E_{X \sim \mu}[(X-T(X))^2],$$ where the minimization is over all functions $T$ that transport the distribution of $X$ to the uniform distribution on $[0, 1]$. This shows that in dimension 1, ranks can be thought of as the univariate analogue of the celebrated {\it Monge transportation problem}  \cite{monge1781memoire,Villani2003}.   	Hallin et al. \cite{delbarrio2019,Hallin2017} used this interpretation of ranks and proposed the following multivariate generalization. 
	
	\begin{definition}[\cite{deb2021multivariate,delbarrio2019,Hallin2017}] \label{defn:multivariate_rank}
		Given a random variable $X \sim \mu$ and a pre-specified reference distribution $\nu$, with $\mu, \nu \in \cP_{ac}(\mathbb R^d)$, the {\it multivariate population rank map} $R_{\mu} : \mathbb R^d \rightarrow \mathbb R^d$ is defined as:
		\begin{align}\label{eq:population_rank}
			R_{\mu} := \mathrm{arg}\min_{T: T(X) \sim \nu} \E_{X \sim \mu}[||X-T(X)||^2], 
		\end{align}
		where the minimum is over all functions $T : \mathbb R^d \rightarrow \mathbb R^d$ that transports the distribution of $X$ to $\mathbb R^d$ and $|| \cdot ||$ denotes the usual Euclidean norm in $\mathbb R^d$.\footnote{Note that, even though the  definition in \eqref{eq:population_rank} is not meaningful when $X \sim \mu$ does not have finite second moment, OT-based multivariate ranks can still be defined using the Brenier-McCann theorem \cite{Mccann1995,brenier1991polar}, which says that there exists an almost everywhere unique map $R_{\mu}: \mathbb R^d \rightarrow \mathbb R^d$, which is the gradient of a convex function and satisfies $R_{\mu, \nu}(X) \sim \nu$. This notion coincides with that in Definition \ref{defn:multivariate_rank}, whenever $\mu$ and $\nu$ have finite second moment.}
	\end{definition} 
	
	To define the empirical analogue of the population multivariate rank we begin with a `natural' discretization $\cH_N^d:=\{ h_1^d,\ldots , h_N^d\}$ of the pre-specified reference distribution $\nu \in \cP_{ac}(\mathbb R^d)$, that is, the empirical measure  $\nu_N:= \frac{1}{N} \sum_{i=1}^N \delta_{h_i^d}$ converges converges weakly to $\nu$. The natural choice for $d=1$ is $\cH_N^1=\{1/N, 2/N, \ldots, 1\}$, whose empirical distribution converges to $\mathrm{Unif}([0, 1])$. For higher dimensions one can choose $\cH_N^d$ as $N$ i.i.d. points from $\nu$ or, more commonly, a deterministic quasi-Monte  Carlo sequence such as the Halton sequence \cite{deb2021multivariate,Hofer2009}. Then, given i.i.d. samples $X_1, X_2, \ldots, X_N$ from a distribution $\mu \in \cP_{ac}(\mathbb R^d)$, the  {\it multivariate empirical rank map} $\hat{R}(X_i)$ is the optimal transport map from the empirical distribution of the data $\hat \mu_N:= \frac{1}{N} \sum_{i=1}^N \delta_{X_i}$ to $\hat \mu_N:= \frac{1}{N} \sum_{i=1}^N \delta_{h_i^d}$. In other words, 
	\begin{align}\label{eq:RN}
		\hat{R}(X_i)=h_{\hat{\sigma}_N(i)}^d,
	\end{align} 
	where 
	\begin{align*} 
		\mathbf{\hat{\sigma}}_N:= \mathrm{arg}\min_{\sigma \in S_N} \sum_{i=1}^N ||  X_i-h_{{\sigma(i)}}^d||^2 , 
	\end{align*} 
	and $S_N$ denotes the set of all $N!$ permutations of $\{1, 2, \ldots, N \}$. This is a {\it minimum bipartite matching problem} (also known as the {\it assignment problem}), which is a fundamental problem in combinatorial optimization that can be solved exactly $O(N^3)$ time \cite{jonker1987shortest} and approximately in near-linear time (see \cite{bipartite} and the references therein).

	A remarkable feature of the optimal transport based multivariate empirical ranks defined in \eqref{eq:RN} is that 	they mimic the distribution-free property of univariate ranks  \cite{deb2021multivariate,delbarrio2019}.

	\begin{proposition}[{\cite[Proposition 1.6.1]{delbarrio2019} and  \cite[Proposition 2.2]{deb2021multivariate}}]\label{ppn:H0distributionfree_OT}
		Suppose that $X_1,\ldots,X_N$ are i.i.d. samples from a distribution $\mu \in \cP_{ac}(\mathbb R^d)$. Then the vector of empirical ranks $$( \hat R(X_1), \hat R(X_2), \ldots, \hat R(X_N))$$ 
		is uniformly distributed over the $N!$ permutations of the fixed grid $\mathcal{H}_N^{d}$. 
	\end{proposition}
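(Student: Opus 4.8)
The plan is to reduce the statement to showing that the random permutation $\hat{\sigma}_N$ in \eqref{eq:RN} is uniformly distributed over $S_N$: indeed, $(\hat R(X_1), \ldots, \hat R(X_N)) = (h^d_{\hat{\sigma}_N(1)}, \ldots, h^d_{\hat{\sigma}_N(N)})$ is a deterministic bijective relabeling of $\hat{\sigma}_N$, so the two assertions are equivalent. The proof then rests on three facts: almost sure uniqueness of the optimal assignment, an equivariance property of the assignment problem under relabeling of the sample, and the exchangeability of i.i.d.\ data.

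First I would show that $\hat{\sigma}_N$ is almost surely unique. Expanding the squared norm,
$$\sum_{i=1}^N \| X_i - h^d_{\sigma(i)} \|^2 = \sum_{i=1}^N \|X_i\|^2 + \sum_{j=1}^N \|h^d_j\|^2 - 2\sum_{i=1}^N \langle X_i, h^d_{\sigma(i)} \rangle ,$$
so minimizing the left-hand side over $\sigma \in S_N$ is the same as maximizing $\sigma \mapsto \sum_{i=1}^N \langle X_i, h^d_{\sigma(i)} \rangle$. Two distinct permutations $\sigma \neq \sigma'$ can both be optimal only if $\sum_{i=1}^N \langle X_i, h^d_{\sigma(i)} - h^d_{\sigma'(i)} \rangle = 0$; since the grid points $h_1^d, \ldots, h_N^d$ are distinct, there is at least one index $i$ with $h^d_{\sigma(i)} \neq h^d_{\sigma'(i)}$, so this is a nontrivial affine constraint on $(X_1, \ldots, X_N) \in (\mathbb R^d)^N$ and hence defines a Lebesgue-null set. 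Taking the union over the finitely many pairs $(\sigma, \sigma')$ and using $\mu \in \cP_{ac}(\mathbb R^d)$ (so that the law of $(X_1, \ldots, X_N)$ is absolutely continuous with respect to Lebesgue measure on $(\mathbb R^d)^N$), we conclude that the minimizer $\hat{\sigma}_N$ is unique with probability one.

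Next, fix $\pi \in S_N$ and apply the assignment problem to the relabeled sample $(X_{\pi(1)}, \ldots, X_{\pi(N)})$. Since $\sum_{i=1}^N \| X_{\pi(i)} - h^d_{\sigma(i)} \|^2 = \sum_{j=1}^N \| X_j - h^d_{\sigma(\pi^{-1}(j))} \|^2$, the change of variable $\sigma \mapsto \sigma \circ \pi$ shows that, on the full-measure event where both optimizers are unique, the optimal assignment of the relabeled sample equals exactly $\hat{\sigma}_N \circ \pi$. Because $X_1, \ldots, X_N$ are i.i.d., the relabeled sample has the same law as the original one, whence $\hat{\sigma}_N \circ \pi \stackrel{d}{=} \hat{\sigma}_N$ for every $\pi \in S_N$. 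Thus the law of $\hat{\sigma}_N$ on the group $S_N$ is invariant under right translation, and since right translation acts transitively on $S_N$ this forces every singleton to carry equal mass, i.e., $\hat{\sigma}_N \sim \mathrm{Unif}(S_N)$, which is the claim. The only genuinely delicate point is the almost sure uniqueness step, which is precisely where the absolute continuity of $\mu$ is essential; everything else is bookkeeping with permutation composition together with exchangeability.
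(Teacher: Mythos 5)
Your proof is correct: the reduction to uniformity of $\hat{\sigma}_N$, the almost-sure uniqueness of the optimal assignment via the linear tie constraint and absolute continuity of $\mu$, and the equivariance-plus-exchangeability argument are exactly the standard route, which is also how the cited references (del Barrio et al.\ and Deb--Sen) establish this result; the paper itself states the proposition without proof, citing them. No gaps worth flagging beyond the implicit (and valid) use of distinctness of the Halton grid points and a measurable tie-breaking rule on the null event of non-uniqueness.
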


	The result above suggests a general strategy for constructing non-parametric distribution-free tests, by replacing the data points with their empirical ranks (appropriately defined depending on the testing problem). This strategy was recently adopted in \cite{deb2021multivariate,shi2020distribution} to construct distribution-free, computationally efficient, and universally consistent multivariate two-sample and  independence tests. For other interesting properties and applications of optimal transport based ranks see  \cite{chernozhukov2017monge,ghosal2022multivariate,hallin2020fully,hallin2020center} and the references therein.

\begin{remark} Common choices of the reference distribution $\nu$ are $\mathrm{Unif}([0, 1]^d)$, the uniform distribution on the $d$-dimensional cube $[0, 1]^d$ \cite{deb2021multivariate}, the spherical uniform distribution $U_d$, which is the product of the uniform distribution on $[0, 1)$ and the uniform distribution on the unit sphere $S^{d-1}$ \cite{Hallin2017,shi2020distribution}, and $N(\bm 0, \bm I_d)$, the $d$-dimensional standard Gaussian \cite{deb2021efficiency}. Hereafter, for concreteness we will choose $\nu = \mathrm{Unif}([0, 1]^d)$ and  $\cH_N^d:=\{ h_1^d,\ldots , h_N^d\}$ will denote the Halton sequence corresponding to  this distribution. However, our results continue to hold for any reference distribution $\nu$ with finite moments, which, in particular, includes the spherical uniform and the standard multivariate Gaussian. 
\end{remark}

\subsection{Rank-Based Joint Dependence Measures}\label{subsec:rank-based jdcov}

We are now ready to define the rank-based counterparts of the $d$-th order $\dCov$ and  $\JdCov$. Towards this, suppose, as before, $(X_1,\ldots,X_r)$ is a $r$-dimensional random vector, where each $X_i$ is a random variable taking in $\mathbb{R}^{d_i}$ with distribution $\mu_i$, for $1 \leq i \leq r$ and $d_0 = \sum_{i=1}^r d_i$. Throughout we will assume $\mu_i \in\mathcal{P}_{ac}(\mathbb{R}^{d_i})$ and denote the population rank map of $\mu_i$ by $R_{\mu_i}$ (recall Definition \ref{defn:multivariate_rank}), for $1 \leq i \leq r$.

\begin{definition}\label{def:RdCov}
	The $r$-th order rank $\dCov$ $(\RdCov)$ of $(X_1, X_2, \ldots, X_r)$ is defined as the positive square root of 
	\begin{align}\label{eq:RdCov}
		\RdCov^{2}(X_1,\ldots,X_r)=\int_{\mathbb{R}^{d_0}} \left|\E\left[\prod_{i=1}^{r} \left\{ \E\left(e^{\i\langle t_i,R_{\mu_i}(X_i)\rangle}\right) - e^{\i\langle t_i, R_{\mu_i}(X_i)\rangle}\right\} \right] \right |\d w
	\end{align}
	where $t_i\in\mathbb{R}^{d_i}$, for $1 \leq i \leq r$, and $\d w$ is as defined in \eqref{eq:w}. 
\end{definition} 

Note that the $r$-th order $\RdCov$ is obtained from the $r$-th order $\dCov$ by replacing $(X_1, X_2, \ldots, X_r)$ with their population rank maps $(R_{\mu_1}(X_1), R_{\mu_2}(X_2), \ldots, R_{\mu_r}(X_r) )$. For $r=2$, \eqref{def:RdCov} reduces to the rank distance covariance of $(X_1, X_2)$ defined in 
\cite{deb2021multivariate,shi2020distribution}, which, like the classical $\dCov$, is zero if and only if  $X_1 \indep X_2$. For the $r \geq 3$, as in $\JdCov$, we need to consider all the interactions between the $r$ variables $(X_1, X_2, \ldots, X_r)$ to characterize the joint indepedence. This leads to our the notion of {\it rank joint distance covariance} ($\RJdCov$):

\begin{definition} Given a vector non-negative weights $\bm C = (C_2, C_2, \ldots, C_r)$, the rank joint distance covariance $(\RJdCov)$ of the random vector $\bm X= (X_1, X_2, \ldots, X_r)$ is defined as: 
	\begin{align}\label{eq:RJdCov}
		\RJdCov^{2}(\bm X; \bm C) = \sum_{s=2}^r C_s \sum_{\substack{S \subseteq \{1, 2, \ldots, r\} \\ |S| = s }} \RdCov^2(\bm X_S) , 
	\end{align} 
	where 
	\begin{align}\label{eq:RdCovdS}
	\RdCov^{2}(\bm X_S)=\int_{\mathbb{R}^{d_S}} \left|\E\left[\prod_{ i \in S } \left\{ \E\left(e^{\i\langle t_i,R_{\mu_i}(X_i)\rangle}\right) - e^{\i\langle t_i, R_{\mu_i}(X_i)\rangle}\right\} \right] \right | \prod_{i \in S} \d w_i . 
	\end{align}
	Moreover, for any $c\geq0$, 
	\begin{align}\label{eq:RJDCovS}
		\RJdCov^{2}(\bm X; c) := \RJdCov^2(\bm X; (c^{r-s})_{2 \leq s \leq r}) = \sum_{s=2}^r c^{r-s} \sum_{\substack{S \subseteq \{1, 2, \ldots, r\} \\ |S| = s }} \RdCov^2(\bm X_S) . 
	\end{align} 
	$($Note that $ \RdCov^{2}(X_1,\ldots,X_r) = \RJdCov^{2}(\bm X; 0)$.$)$
\end{definition}

As in Lemma \ref{lm:U}, $\RJdCov^{2}(\bm X; c)$ has the following compact representation: 
\begin{align}\label{eq:RJdCovW}
	\RJdCov^{2}(\bm X; c) := \E\left[ \prod_{i=1}^{r}(W_i(R_{\mu_i}(X_i), R_{\mu_i}(X_i') )+c)\right] - c^{r} , 
\end{align}
where $(X_1',\ldots,X_r')$ be an independent copy of $\bm X = (X_1, X_2, \ldots, X_r)$ and, for $1\leq i\leq r$, 
\begin{align}\label{eq:W}
	W_i(x,x') & =\E\|x-R_{\mu_i}(X_i')\|+\E\|R_{\mu_i}(X_i)-x'\|-\|x-x'\| - \E\|R_{\mu_i}(X_i)-R_{\mu_i}(X_i')\| . 
\end{align}
Note that $R_{\mu_1}(X_1), R_{\mu_2}(X_2), \ldots, R_{\mu_r}(X_r)$ are i.i.d. $\mathrm{Unif}([0, 1]^d)$ when the variables $X_1, X_2, \ldots, X_r$ are mutually independent (by the definition of the optimal transport maps). Therefore, the distribution of $\RJdCov^{2}(\bm X; \bm C)$ does not depend on the marginal distributions of $X_1, X_2, \ldots, X_r$ under mutual independence. Moreover,  $\RJdCov^{2}(\bm X; \bm C)$ characterizes the joint independence of $X_1, X_2, \ldots, X_r$. 

\begin{proposition}\label{ppn:RJdCovindependence} 
Let $\RdCov$ and $\RJdCov$ be as defined in \eqref{eq:RdCovdS} and \eqref{eq:RJdCov}, respectively. Then the following hold: 
\begin{itemize} 
\item[$(1)$] For any $S \subseteq \{1, 2, \ldots, r\}$ with $|S| \geq 2$, $\RdCov(\bm X_S) = 0$ if the variables $(X_i)_{i \in S}$ are independent. Moreover, the converse holds whenever the variables $(X_i)_{i \in S}$ are $|S| -1$ independent. 

\item[$(2)$] For any vector positive weights $\bm C$, $\RJdCov^{2}(\bm X; \bm C) = 0$ if and only if  $(X_1, X_2, \ldots, X_r)$ are mutually independent. 
\end{itemize} 
\end{proposition}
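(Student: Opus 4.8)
The plan is to reduce both statements to the already-known characterizations of the higher-order $\dCov$ \cite[Theorem 3.4]{bottcher2019distance} and of the $\JdCov$ \cite[Proposition 3]{chakraborty2019distance} by passing to the rank-transformed variables $Y_i := R_{\mu_i}(X_i)$, $1 \le i \le r$. The one genuinely new ingredient I need is that the population rank maps transfer independence in both directions. Since $\mu_i \in \mathcal{P}_{ac}(\mathbb{R}^{d_i})$ and the reference $\nu = \mathrm{Unif}([0,1]^{d_i})$ is also absolutely continuous, the Brenier--McCann theorem \cite{brenier1991polar,Mccann1995} gives that $R_{\mu_i}$ is, on a Borel set of full $\mu_i$-measure, injective with Borel inverse $Q_{\mu_i}$ (the optimal map from $\nu$ to $\mu_i$), so that $Q_{\mu_i}(R_{\mu_i}(X_i)) = X_i$ almost surely. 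Hence $\sigma(R_{\mu_i}(X_i)) = \sigma(X_i)$ up to $\mathbb{P}$-null sets, and since (joint or $q$-wise) independence of a family of random variables depends only on the generated $\sigma$-algebras, for every $S \subseteq \{1,\ldots,r\}$ and every $2 \le q \le |S|$ the family $(X_i)_{i\in S}$ is $q$-independent if and only if $(Y_i)_{i\in S}$ is $q$-independent. I would also record that each $Y_i$ is supported on $[0,1]^{d_i}$, hence has finite moments of all orders, and that comparing \eqref{eq:RdCovdS}--\eqref{eq:RJdCov} with \eqref{eq:dCovS}--\eqref{eq:expressionJdCov} yields $\RdCov^2(\bm X_S) = \dCov^2(\bm Y_S)$ and $\RJdCov^2(\bm X; \bm C) = \JdCov^2(\bm Y; \bm C)$, so all subsequent references to the classical theory are available here without any moment assumptions on the $\mu_i$.

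With these identifications, part (1) follows quickly. For the forward implication, if $(X_i)_{i\in S}$ are mutually independent then so are $(Y_i)_{i\in S}$, the expectation defining $\dCov^2(\bm Y_S)$ factorizes over $i \in S$, and each factor $\E[\phi_{Y_i}(t_i) - e^{\i\langle t_i, Y_i\rangle}]$ vanishes, so $\RdCov^2(\bm X_S) = 0$. For the converse under $(|S|-1)$-independence, I would transfer $(|S|-1)$-independence to $(Y_i)_{i\in S}$, use $\RdCov(\bm X_S) = 0 \Leftrightarrow \dCov(\bm Y_S) = 0$, invoke \cite[Theorem 3.4]{bottcher2019distance} (applicable since the $Y_i$ have finite first moments) to deduce mutual independence of $(Y_i)_{i\in S}$, and finally push back through the $Q_{\mu_i}$ to conclude mutual independence of $(X_i)_{i\in S}$.

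For part (2), the ``if'' direction is immediate from part (1): mutual independence of $(X_1,\ldots,X_r)$ makes every subfamily independent, so each $\RdCov^2(\bm X_S) = 0$ and hence $\RJdCov^2(\bm X; \bm C) = 0$ for any weights. For ``only if'', positivity of the weights $\bm C$ together with nonnegativity of every summand in \eqref{eq:RJdCov} forces $\RdCov^2(\bm X_S) = 0$, equivalently $\dCov^2(\bm Y_S) = 0$, for all $S$ with $|S| \ge 2$; then \cite[Proposition 3]{chakraborty2019distance} applied to $\bm Y = (Y_1,\ldots,Y_r)$ gives mutual independence of the $Y_i$, and applying $Q_{\mu_1},\ldots,Q_{\mu_r}$ coordinatewise yields the claim. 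If a self-contained argument is preferred over citing Proposition 3, the same conclusion follows by induction on $q$: vanishing of the pairwise $\dCov$'s gives $2$-independence of $\bm Y$ by the classical characterization \cite{szekely2007measuring}, and part (1) upgrades $q$-independence to $(q+1)$-independence on each subset of size $q+1$, iterating up to $q = r$. I expect the main obstacle to be the first step---stating the a.e.\ invertibility and Borel measurability of $R_{\mu_i}$ precisely enough that the transfer of joint and $q$-wise independence between $(X_i)$ and $(R_{\mu_i}(X_i))$ is rigorous; once that is settled, everything else is a direct reduction to existing results, made possible by the boundedness of the rank maps.
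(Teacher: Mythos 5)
Your proposal is correct and follows essentially the same route as the paper: reduce to the classical characterizations of $\dCov$/$\JdCov$ applied to the rank-transformed variables $R_{\mu_i}(X_i)$ (which factorize under independence), invoke \cite[Theorem 3.4]{bottcher2019distance} for the converse, and pull back to the $X_i$ via the a.e.\ inverse $Q_{\mu_i}$ from McCann's theorem. Your explicit remarks on boundedness of the ranks (so no moment conditions are needed) and the induction for part (2) merely spell out what the paper leaves implicit.
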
 

\begin{proof} Note that $\{R_{\mu_i}(X_i)\}_{i \in S}$ are independent when $(X_i)_{i \in S}$ are mutually independent, which implies $\RdCov^2(\bm X_S) = 0$, by definition. For the converse, note that $\RJdCov^{2}(\bm X_S) = 0$ and $\{R_{\mu_i}(X_i)\}_{i \in S}$ are $|S|-1$ independent implies $\{R_{\mu_i}(X_i)\}_{i \in S}$ are mutually independent by \cite[Theorem 3.4]{bottcher2019distance}. Then, since $\mu_i \in \cP_{ac}(\mathbb R^{d_i})$, by  McCann's theorem \cite{Mccann1995} (see also \cite[Proposition 2.1]{deb2021multivariate}) there exists a measurable function $Q_{\mu_i}$ such that $Q_{\mu_i}(R_{\mu_i}(X_i))=X_i$ almost everywhere $\mu_i$, for $1 \leq i \leq r$. This implies, $X_1, X_2, \ldots, X_r$ are mutually independent. This completes the proof of (1).

For (2) note that $\RJdCov^{2}(\bm X; \bm C) = 0$ if and only if  $\RdCov^{2}(\bm X_S) = 0$ for all $S \subseteq \{1, 2, \ldots, r\}$ with $|S| \geq 2$. The result then follows from (1). 
\end{proof} 

\subsection{Consistent Estimation of $\RJdCov$}\label{subsec:rank-based jdcov est}

In this section we discuss how $\RdCov$ and $\RJdCov$ can be consistently estimated given $n$ samples i.i.d. $\{\bm X_a\}_{a=1}^{n}$, with $\bm X_a=(X_{1}^{(a)},\ldots,X_{r}^{(a)})$, from a distribution $\mu \in \cP_{ac}(\mathbb R^{d_0})$.  The natural plug-in estimator of $\RdCov^2(\bm X_S)$, for $S \subseteq \{1, 2, \ldots, r\}$ with $|S| \geq 2$, is: 
\begin{align}\label{eq:RdCovSn}
	\RdCov^{2}_{n}(\bm X_S):=\int_{\mathbb{R}^{d_S}}\left|\frac{1}{n}\sum_{b=1}^{n}\left\{\prod_{i \in S} \left (\frac{1}{n}\sum_{a=1}^{n}e^{\i\langle t_i,\R_i(X_{i}^{(a)})\rangle}-e^{\i\langle t_i,\R_i(X_{i}^{(b)})\rangle} \right) \right\} \right|^{2} \prod_{i \in S} \d w_i, 
\end{align} 
where $\R_{i}$ is the empirical rank map for the $i$-th marginal distribution, that is, the optimal transport map from $\frac{1}{n}\sum_{a=1}^n \delta_{X_i^{(a)}}$ to $\frac{1}{n}\sum_{a=1}^n\delta_{h_a^{d_i}}$, with $\cH_n^{d_i} = \{ h_1^{d_i},  h_2^{d_i}, \ldots,  h_n^{d_i}\} $ the Halton sequence in $[0, 1]^{d_i}$, for $1 \leq i \leq r$. Using the representation in \eqref{eq:RJdCovW} and \eqref{eq:W} the estimate in \eqref{eq:RdCovSn} can be written as: 
\begin{align}\label{eq:RdCovWXS}
	\RdCov^2_n(\bm X_S) &=\frac{1}{n^2}\sum_{1 \leq a, b \leq n} \prod_{i \in S} \hat{\cE}_i(a, b) . 
\end{align} 
where 
\begin{align}\label{eq:estimateW}
	\hat{\cE}_i(a, b) & := \frac{1}{n}\sum_{v=1}^n\|\hat{R}_{i}(X_{i}^{(a)})-\hat{R}_{i}(X_{i}^{(v)})\|+\frac{1}{n}\sum_{u=1}^n\|\hat{R}_{i}(X_{i}^{(u)})-\hat{R}_{i}(X_{i}^{(b)})\| \nonumber \\ 
	& \hspace{1.25in} -\|\hat{R}_{i}(X_{i}^{(a)})-\hat{R}_{i}(X_{i}^{(b)})\| -\frac{1}{n^2}\sum_{1 \leq u,v \leq n} \|\hat{R}_{i}(X_{i}^{(u)})-\hat{R}_{i}(X_{i}^{(v)})\| , 
\end{align} 
for $1 \leq a, b \leq n$. Consequently, the natural plug-in estimate of \eqref{eq:RJdCov} is 
\begin{align}\label{eq:RJdCovn}
	\RJdCov^{2}_n(\bm X; \bm C) = \sum_{s=2}^r C_s \sum_{\substack{S \subseteq \{1, 2, \ldots, r\} \\ |S| = s }} \RdCov^2_n(\bm X_S) , 
\end{align} 
and that of \eqref{eq:RJDCovS} (using the representation in \eqref{eq:RJdCovW} is 
\begin{align*}
	\RJdCov^{2}_n(\bm X; c)  
	& :=  \frac{1}{n^2}\sum_{1 \leq a, b \leq n} \prod_{i = 1}^r \left( \hat{\cE}_i(a, b)  + c \right) - c^r . 
\end{align*}

	\begin{remark}\label{remark:functionS} 
		For $S \subseteq \{1,2, \ldots, r\}$ with $|S| \geq 2$, consider the function $\theta_S: (\mathbb R^{d_S})^n \rightarrow \mathbb R_{\geq 0}$
		\begin{align}\label{eq:thetaS}
			\theta_{S}((x_i^{(1)})_{i \in S}, \ldots, (x_i^{(n)})_{i \in S}):=\int_{\mathbb{R}^{d_S}}\left|\frac{1}{n}\sum_{b=1}^{n}\left\{\prod_{i \in S} \left (\frac{1}{n}\sum_{a=1}^{n}e^{\i\langle t_i, x_i^{(a)})\rangle}-e^{\i\langle t_i, x_i^{(b)}\rangle} \right) \right\} \right|^{2} \prod_{i \in S} \d w_i, 
		\end{align} 
		where $x_i^{(a)} \in \mathbb R^{d_i}$, for $1 \leq i \leq r$ and, hence, $(x_i^{(a)})_{i \in S} \in \mathbb R^{d_S}$, for $1 \leq a \leq n$. Evaluating $\theta_{S}$ at the data points gives us the natural plug-in estimate of $\dCov(\bm X_S)$ (recall \eqref{eq:dCovS}) which is denoted by: 
		$$\dCov_n(\bm X_S) :=  \theta_{S}(\bm X_S^{(1)}, \ldots, \bm X_S^{(n)}) , $$
		where $\bm X_S^{(a)} := (X_i^{(a)})_{i \in S} \in \mathbb R^{d_S}$, for $1 \leq a \leq n$. On the other hand, evaluating $\theta_{S}$ on the the rank transformed data gives us the estimate of $\RdCov$ in \eqref{eq:RdCovSn}. Specifically, recalling \eqref{eq:RdCovSn} note that 
		\begin{align}\label{eq:RdCovHalton}
			\RdCov_n(\bm X_S)=  \theta_{S}(\hat {\bm R}_S^{(1)}, \ldots, \hat {\bm R}_S^{(n)} ), 
		\end{align}
		where $\hat {\bm R}_S^{(a)} := (\R_i(X_i^{(a)}))_{i \in S} \in \mathbb R^{d_S}$, for $1 \leq a \leq n$. The function $\bm \theta_S$ will play an important role in the calibration of the independence tests on $\RJdCov_n$ discussed in Section \ref{sec:independencetesting}. 
	\end{remark}

	To establish the consistency of $\RJdCov_n$ we will assume the following:  
	
	\begin{assumption}\label{assumption:U}
		For every $1 \leq i \leq p$, the empirical distribution of $\cH_n^{d_i} = \{ h_1^{d_i},  h_2^{d_i}, \ldots,  h_n^{d_i}\} $ converges weakly to $\mathrm{Unif}([0, 1]^{d_i})$. 
	\end{assumption}
	
	As mentioned before, Assumption \ref{assumption:U} holds whenever 
	$\cH_n^{d_i}$ is the Halton sequence in $[0, 1]^{d_i}$ (which is our default choice), for a random sample $n$ i.i.d. $\mathrm{Unif}([0, 1]^{d_i})$ points, or other quasi-Monte Carlo sequences as well (see \cite[Appendix D]{deb2021multivariate} for a discussion on the various of choices $\cH_n^{d_i}$). Under this assumption we have the following  result: 
	
	\begin{theorem}\label{thm:consistency} Suppose Assumption \ref{assumption:U} holds. Then for any $S \subseteq \{1, 2, \ldots, d\}$ with $|S| \geq 2$, 
		$$\RdCov^{2}_n(\bm X_S)\stackrel{a.s.}{\rightarrow}\RdCov^{2}(\bm X_S).$$
		Consequently, $\RJdCov^{2}_n(\bm X; \bm C)\stackrel{a.s.}{\rightarrow}\RJdCov^{2}(\bm X; \bm C)$. 
	\end{theorem}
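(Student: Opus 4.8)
The plan is to pass to the limit inside the frequency-domain representation \eqref{eq:RdCovSn} by dominated convergence. Fix $S \subseteq \{1,\dots,r\}$ with $|S| \geq 2$ and, for $\bm t_S = (t_i)_{i \in S} \in \mathbb{R}^{d_S}$, abbreviate
\begin{align*}
f_i^{n,b}(t_i) := \frac{1}{n}\sum_{a=1}^{n} e^{\i \langle t_i, \R_i(X_i^{(a)}) \rangle} - e^{\i \langle t_i, \R_i(X_i^{(b)}) \rangle}, \qquad g_S^{n}(\bm t_S) := \Bigl| \frac{1}{n} \sum_{b=1}^{n} \prod_{i \in S} f_i^{n,b}(t_i) \Bigr|^{2},
\end{align*}
so that $\RdCov_n^2(\bm X_S) = \int_{\mathbb{R}^{d_S}} g_S^{n}(\bm t_S) \prod_{i\in S}\d w_i$, while $\RdCov^2(\bm X_S)$ is the same integral with $g_S^n$ replaced by $g_S(\bm t_S) := \bigl| \E \prod_{i \in S}(\phi_i(t_i) - e^{\i\langle t_i, R_{\mu_i}(X_i)\rangle}) \bigr|^{2}$, where $\phi_i(t_i) := \E[e^{\i\langle t_i, R_{\mu_i}(X_i)\rangle}]$. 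I would establish (i) $g_S^n(\bm t_S) \rightarrow g_S(\bm t_S)$ almost surely for each fixed $\bm t_S$, and (ii) a $\prod_{i\in S}\d w_i$-integrable upper bound for $g_S^n$ that does not depend on $n$. Granted these, Fubini's theorem (the weight measure being $\sigma$-finite) upgrades (i) to the statement that, almost surely, $g_S^n(\bm t_S) \rightarrow g_S(\bm t_S)$ for Lebesgue-a.e.\ $\bm t_S$, and then dominated convergence yields $\RdCov_n^2(\bm X_S) \stackrel{a.s.}{\rightarrow} \RdCov^2(\bm X_S)$.

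For step (i) I would treat the two averages in $f_i^{n,b}$ separately. Since $\R_i$ is by construction a bijection between $\{X_i^{(1)}, \dots, X_i^{(n)}\}$ and the grid $\cH_n^{d_i}$, one has $\frac{1}{n}\sum_{a=1}^{n} e^{\i\langle t_i, \R_i(X_i^{(a)})\rangle} = \frac{1}{n}\sum_{a=1}^{n} e^{\i\langle t_i, h_a^{d_i}\rangle} \rightarrow \int_{[0,1]^{d_i}} e^{\i\langle t_i, u\rangle}\,\d u = \phi_i(t_i)$ by Assumption \ref{assumption:U}, the last equality because $R_{\mu_i}(X_i) \sim \mathrm{Unif}([0,1]^{d_i})$; note this step uses only the grid, not the data. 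For the outer $b$-average, the key observation is that the $r$-tuples $\bigl(R_{\mu_1}(X_1^{(b)}), \dots, R_{\mu_r}(X_r^{(b)})\bigr)$, $b = 1, \dots, n$, are i.i.d.\ with all coordinates in a bounded cube, so the strong law of large numbers gives $\frac{1}{n}\sum_{b=1}^{n} \prod_{i \in S}\bigl(\phi_i(t_i) - e^{\i\langle t_i, R_{\mu_i}(X_i^{(b)})\rangle}\bigr) \stackrel{a.s.}{\rightarrow} \E\prod_{i\in S}\bigl(\phi_i(t_i) - e^{\i\langle t_i, R_{\mu_i}(X_i)\rangle}\bigr)$. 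What remains is to show that replacing $\R_i(X_i^{(b)})$ by $R_{\mu_i}(X_i^{(b)})$ and the inner average by $\phi_i(t_i)$ inside the product changes $g_S^n$ negligibly; using $|e^{\i\langle t_i,u\rangle} - e^{\i\langle t_i,v\rangle}| \le \|t_i\|\,\|u-v\|$ together with the elementary bound $|\prod_{j} a_j - \prod_{j} b_j| \le 2^{|S|-1}\sum_{j}|a_j - b_j|$ for complex numbers of modulus at most $2$, this reduces to controlling $\frac{1}{n}\sum_{b=1}^{n}\|\R_i(X_i^{(b)}) - R_{\mu_i}(X_i^{(b)})\|$, which by Cauchy--Schwarz is at most $\bigl(\frac{1}{n}\sum_{b=1}^{n}\|\R_i(X_i^{(b)}) - R_{\mu_i}(X_i^{(b)})\|^{2}\bigr)^{1/2}$.

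The main obstacle is therefore the $L^2$-consistency of the empirical rank map, namely $\frac{1}{n}\sum_{a=1}^{n}\|\R_i(X_i^{(a)}) - R_{\mu_i}(X_i^{(a)})\|^2 \stackrel{a.s.}{\rightarrow} 0$ under Assumption \ref{assumption:U}. This is the one genuinely nontrivial analytic ingredient; it is precisely the almost-sure convergence of the empirical optimal transport map to the population rank map, obtained from cyclical monotonicity together with the weak convergence of the grid, as in \cite{delbarrio2019,deb2021multivariate}. Combining it with the two convergences above, (i) follows for each fixed $\bm t_S$. For step (ii), since $\R_i(X_i^{(a)})$ and $R_{\mu_i}(X_i)$ all lie in $[0,1]^{d_i}$, each factor obeys $|f_i^{n,b}(t_i)| \le \min\bigl(2, \sqrt{d_i}\,\|t_i\|\bigr)$, whence $g_S^n(\bm t_S) \le \prod_{i \in S}\min\bigl(4, d_i\|t_i\|^{2}\bigr)$; and $\int_{\mathbb{R}^{d_i}}\min(4, d_i\|t\|^2)\, w_{d_i}(t)\,\d t < \infty$ because the integrand behaves like $\|t\|^{1 - d_i}$ near the origin and like $\|t\|^{-1-d_i}$ at infinity --- exactly the reason the weight in \eqref{eq:w} carries the exponent $1+d_i$. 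This gives $\RdCov_n^2(\bm X_S) \stackrel{a.s.}{\rightarrow} \RdCov^2(\bm X_S)$, and since $\RJdCov_n^2(\bm X; \bm C)$ in \eqref{eq:RJdCovn} is a fixed nonnegative linear combination of the $\RdCov_n^2(\bm X_S)$ over the $2^r - r - 1$ subsets $S$ with $|S|\ge 2$, intersecting the corresponding almost-sure events yields $\RJdCov_n^2(\bm X; \bm C) \stackrel{a.s.}{\rightarrow} \RJdCov^2(\bm X; \bm C)$.
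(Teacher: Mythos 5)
Your proof is correct, and it takes a noticeably different route from the paper's. The paper never touches the frequency-domain integral: it works with the double-centered distance representation \eqref{eq:RdCovWXS}--\eqref{eq:estimateW}, replaces each empirical rank by the population rank inside the four distance terms via the reverse triangle inequality, controls the error by the averaged rank-map consistency of \cite[Theorem 2.1]{deb2021multivariate} together with a telescoping bound on the product over $i\in S$, and then disposes of the ``oracle'' statistic in one stroke by citing \cite[Proposition 8]{chakraborty2019distance} applied to the i.i.d.\ (and bounded) rank-transformed data. You instead stay with the characteristic-function representation \eqref{eq:RdCovSn}: pointwise a.s.\ convergence for fixed $\bm t_S$ (the permutation identity $\frac1n\sum_a e^{\i\langle t_i,\R_i(X_i^{(a)})\rangle}=\frac1n\sum_a e^{\i\langle t_i,h_a^{d_i}\rangle}$, the SLLN for the i.i.d.\ bounded products in the population ranks, and the same rank-consistency ingredient with the Lipschitz bound $|e^{\i\langle t,u\rangle}-e^{\i\langle t,v\rangle}|\le\|t\|\,\|u-v\|$ replacing the reverse triangle inequality), followed by the Fubini null-set argument and dominated convergence with the dominating function $\prod_{i\in S}\min(4,d_i\|t_i\|^2)$, whose $w$-integrability is exactly the usual dCov weight computation. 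The shared nontrivial ingredient is the a.s.\ $L^1/L^2$ convergence of the empirical transport map to $R_{\mu_i}$, which both arguments import from \cite{deb2021multivariate,delbarrio2019}. What the paper's route buys is brevity and no integrability bookkeeping, at the price of leaning on the external strong-consistency result for the plug-in $\dCov$ estimator and on the algebraic equivalence of the two representations; what your route buys is a self-contained proof of the oracle convergence (SLLN plus DCT) in which the role of boundedness of the ranks is explicit --- it is precisely what supplies the moment-free dominating function and hence the ``no moment assumptions'' feature highlighted after the theorem.
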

	
	The proof of Theorem \ref{thm:consistency} is given in Appendix \ref{sec:consistencypf}. One of the  highlights of the above result is that it holds without any moment assumptions on the data generating distribution. This is in contrast to consistency results for  $\JdCov$ which require the distribution to satisfy certain moment conditions. Consequently, the rank-based joint dependence measures are more robust to heavy-tail distributions, as will been seen from the simulations in Section \ref{sec:sim}.

	\section{Distribution-Free Joint Independence Testing}
	\label{sec:independencetesting}
	
	In section we discuss new distribution-free tests for the multivariate joint independence testing. Given i.i.d.samples $\{\bm X_a\}_{a=1}^{n}$, with $\bm X_a=(X_{1}^{(a)},\ldots,X_{r}^{(a)})$, from a distribution $\mu \in \cP_{ac}(\mathbb R^{d_0})$ with marginal distributions $\mu_1, \mu_2, \ldots, \mu_r$ such that $\mu_i \in \cP_{ac}(\mathbb R^{d_i})$, for $1 \leq i \leq r$, consider the joint independence testing problem:  
	\begin{align}\label{eq:H0H1}
		H_0:\mu=\mu_1\otimes\mu_2\otimes\cdots\otimes\mu_r \quad \text{ versus } \quad H_1:\mu \neq \mu_1\otimes\mu_2\otimes\cdots\otimes\mu_r.
	\end{align}

	Our test for \eqref{eq:H0H1} will be based on rejecting $H_0$ for `large' values of $\RJdCov^{2}(\bm X; \bm C)$, for a given choice non-negative weights $\bm C = (C_2, C_3, \ldots, C_r)$. Note that the vector of ranks $\{\R_i(X_i^{(a)})\}_{1 \leq a \leq n}$ are distributed uniformly over the $n!$ permutations of $\cH_n^{d_i}$, for $1 \leq i \leq r$ (by Proposition \ref{ppn:H0distributionfree_OT}) and under $H_0$ independent over $1 \leq i \leq r$. Hence, the distribution of $\RJdCov^{2}(\bm X; \bm C)$ under $H_0$ does not depend on the marginal distributions $\mu_1, \mu_2, \ldots,\mu_r$.

	\begin{proposition}[Distribution-freeness under $H_0$]\label{ppn:H0distributionfree}
		Under $H_0$, the distribution of $\RJdCov^{2}_n(\bm X; \bm C)$ is universal, that is, it is free of $\mu_1,\ldots,\mu_r$. 
	\end{proposition}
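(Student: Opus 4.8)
The plan is to exhibit $\RJdCov^{2}_n(\bm X; \bm C)$ as a deterministic function of the $r$ empirical rank permutations $\hat\sigma_n^{(1)}, \ldots, \hat\sigma_n^{(r)}$, and then to show that under $H_0$ these permutations are jointly distributed as $r$ independent uniform permutations of $S_n$, a law that does not involve $\mu_1, \ldots, \mu_r$.

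First I would note that, by \eqref{eq:RdCovWXS}, \eqref{eq:estimateW}, and \eqref{eq:RJdCovn}, the statistic $\RJdCov^{2}_n(\bm X; \bm C)$ depends on the data only through the rank-transformed values $\{\hat{R}_i(X_i^{(a)})\}_{1 \le i \le r,\, 1 \le a \le n}$. Since $\hat{R}_i(X_i^{(a)}) = h_{\hat\sigma_n^{(i)}(a)}^{d_i}$, where $\hat\sigma_n^{(i)}$ is the optimal assignment permutation for the $i$-th marginal and the Halton grids $\cH_n^{d_i}$ are fixed (non-random and known), there is a deterministic function $g$ — depending only on $n$, $r$, $\bm C$, and the fixed grids — with $\RJdCov^{2}_n(\bm X; \bm C) = g(\hat\sigma_n^{(1)}, \ldots, \hat\sigma_n^{(r)})$. (The same remark applies verbatim to the $c$-parametrized form $\RJdCov^{2}_n(\bm X; c)$.) It therefore suffices to show that the joint law of $(\hat\sigma_n^{(1)}, \ldots, \hat\sigma_n^{(r)})$ under $H_0$ is free of $\mu_1, \ldots, \mu_r$.

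Next I would analyze that joint law. Under $H_0$ we have $\mu = \mu_1 \otimes \cdots \otimes \mu_r$, so the $r$ coordinate-blocks $(X_i^{(1)}, \ldots, X_i^{(n)})$, $1 \le i \le r$, are mutually independent. Because $\hat\sigma_n^{(i)}$ is a measurable function of the $i$-th block alone (it solves the assignment problem between $\frac1n\sum_a\delta_{X_i^{(a)}}$ and $\frac1n\sum_a\delta_{h_a^{d_i}}$), the permutations $\hat\sigma_n^{(1)}, \ldots, \hat\sigma_n^{(r)}$ are mutually independent. For each fixed $i$, since $\mu_i \in \cP_{ac}(\mathbb R^{d_i})$ the assignment to the distinct grid points is almost surely unique, and Proposition \ref{ppn:H0distributionfree_OT} gives that $(\hat{R}_i(X_i^{(1)}), \ldots, \hat{R}_i(X_i^{(n)}))$ is uniform over the $n!$ permutations of $\cH_n^{d_i}$; as the grid points are distinct, this is equivalent to $\hat\sigma_n^{(i)} \sim \mathrm{Unif}(S_n)$. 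Hence $(\hat\sigma_n^{(1)}, \ldots, \hat\sigma_n^{(r)})$ is a vector of $r$ i.i.d.\ uniform random permutations of $S_n$, a distribution that manifestly does not depend on $\mu_1, \ldots, \mu_r$; applying $g$ gives the claim.

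The only genuinely delicate point I anticipate is the reduction in the first step — making precise that the statistic factors through the permutations, and invoking absolute continuity (via Proposition \ref{ppn:H0distributionfree_OT}) to dispose of ties in the assignment problem; the independence/uniformity bookkeeping is then immediate from $H_0$. It is worth adding that the same argument shows each $\RdCov^{2}_n(\bm X_S)$ is individually distribution-free under $H_0$, and in fact under the weaker sub-null that the variables in $S$ are mutually independent, which is precisely what underlies the hierarchical higher-order testing discussed in the introduction.
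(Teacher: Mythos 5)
Your argument is correct and is essentially the paper's own reasoning: the paper proves this proposition in the sentence immediately preceding it, noting that the statistic is a function of the rank vectors $\{\R_i(X_i^{(a)})\}$, which by Proposition \ref{ppn:H0distributionfree_OT} are uniform over the $n!$ permutations of the fixed grids $\cH_n^{d_i}$ and, under $H_0$, independent across $1 \leq i \leq r$. Your write-up merely makes explicit the factorization through the assignment permutations and the measurability/independence bookkeeping, which is a faithful elaboration of the same route.
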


	The above result automatically gives a finite sample distribution-free independence test that uniformly controls the Type-I error for \eqref{eq:H0H1}. To this end, fix a level $\alpha\in (0,1)$ and let $c_{\alpha, n}$ denote the upper $\alpha$ quantile of the universal distribution in Proposition \ref{ppn:H0distributionfree}. Consider the test function:
	\begin{equation}\label{eq:testrank}
		\phi_n(\bm C):=\bm 1\left\{\RJdCov^{2}_n(\bm X; \bm C) \geq c_{\alpha, n}\right\}.
	\end{equation}
	This test is exactly distribution-free for all $n\geq 1$ and uniformly of level $\alpha$ under $H_0$,\footnote{Strictly speaking, to guarantee exact level $\alpha$, we have to randomize $\phi_n(\bm C)$, as the exact distribution of $\RJdCov^{2}_n(\bm X; \bm C)$ is discrete. However, this makes no practical difference unless $n$ is very small.} that is, 
	\begin{equation}\label{eq:uniflevelt}
		\sup_{\mu=\mu_1\otimes\mu_2\otimes\cdots\otimes\mu_r} \E_\mu\left[\phi_n(\bm C) \right]=\alpha.
	\end{equation}
	Moreover, since $\RJdCov^{2}_n(\bm X; \bm C)$ is a consistent estimate of $\RJdCov^{2}(\bm X; \bm C)$, which characterizes joint independence, the test $\phi_n(\bm C)$ is consistent against all fixed alternatives. This is summarized in the following proposition (see Appendix \ref{sec:consistencypf} for the proof). 
	
	\begin{proposition}[Universal consistency]\label{ppn:phiconsistency}
		Suppose Assumption \ref{assumption:U} holds. Then the test $\phi_n(\bm C)$ is  universally consistent, that is, for any $\mu \ne \mu_1\otimes\mu_2\otimes\cdots\otimes\mu_r$, 
		\begin{equation}\label{eq:power}
			\lim_{n \rightarrow \infty} \E_{\mu}\left[\phi_n(\bm C) \right] = 1 . 
		\end{equation} 
	\end{proposition}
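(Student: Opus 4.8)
The plan is to combine the almost-sure consistency of $\RJdCov^{2}_n$ from Theorem \ref{thm:consistency} with the fact that the critical value $c_{\alpha,n}$ tends to $0$, the latter being a consequence of the distribution-freeness established in Proposition \ref{ppn:H0distributionfree}. Throughout I take $\bm C$ to be a vector of strictly positive weights, as required for the characterization in Proposition \ref{ppn:RJdCovindependence}$(2)$.

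First I would treat the behavior under the alternative. Since $\mu \ne \mu_1 \otimes \cdots \otimes \mu_r$, Proposition \ref{ppn:RJdCovindependence}$(2)$ gives $\delta := \RJdCov^{2}(\bm X; \bm C) > 0$, and by Theorem \ref{thm:consistency} we have $\RJdCov^{2}_n(\bm X; \bm C) \stackrel{a.s.}{\rightarrow} \delta$ under $\mu$; in particular the $\E_\mu$-probability of the event $\{\RJdCov^{2}_n(\bm X; \bm C) \geq \delta/2\}$ converges to $1$.

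The crux is to show $c_{\alpha,n} \to 0$. For this I would evaluate the null law of $\RJdCov^{2}_n(\bm X; \bm C)$ at a conveniently chosen absolutely continuous product measure, say $\mu^{\star} = \mathrm{Unif}([0,1]^{d_1}) \otimes \cdots \otimes \mathrm{Unif}([0,1]^{d_r})$. Under $\mu^{\star}$ the coordinates are mutually independent, so $\RJdCov^{2}(\bm X; \bm C) = 0$ by Proposition \ref{ppn:RJdCovindependence}$(2)$, and Theorem \ref{thm:consistency} then yields $\RJdCov^{2}_n(\bm X; \bm C) \to 0$ almost surely, hence in probability, under $\mu^{\star}$. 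By Proposition \ref{ppn:H0distributionfree} the law of $\RJdCov^{2}_n(\bm X; \bm C)$ is identical under every product null, so for every $\varepsilon > 0$ the probability $\mathbb{P}_{H_0}(\RJdCov^{2}_n(\bm X; \bm C) \geq \varepsilon)$ tends to $0$; in particular it falls below $\alpha$ for all large $n$, which forces the upper $\alpha$-quantile to satisfy $c_{\alpha,n} \leq \varepsilon$ eventually. Since $\varepsilon$ is arbitrary, $c_{\alpha,n} \to 0$.

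Combining the two steps, for all $n$ large enough $c_{\alpha,n} < \delta/2$, hence $\E_\mu[\phi_n(\bm C)] = \mathbb{P}_\mu(\RJdCov^{2}_n(\bm X; \bm C) \geq c_{\alpha,n}) \geq \mathbb{P}_\mu(\RJdCov^{2}_n(\bm X; \bm C) \geq \delta/2) \to 1$, which is \eqref{eq:power}. I expect the step $c_{\alpha,n} \to 0$ to be the main point: it is precisely here that distribution-freeness is indispensable, since it lets us transfer the almost-sure convergence of $\RJdCov^{2}_n$ under one convenient null — where Theorem \ref{thm:consistency} applies verbatim — to a statement about the quantiles of the universal null distribution. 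The remaining steps are routine consequences of Theorem \ref{thm:consistency} and the definition of $\phi_n(\bm C)$.
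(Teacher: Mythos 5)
Your proposal is correct, and its overall skeleton matches the paper's: under the alternative you use Theorem \ref{thm:consistency} together with Proposition \ref{ppn:RJdCovindependence}$(2)$ to get $\RJdCov^{2}_n(\bm X;\bm C)\stackrel{a.s.}{\to}\delta>0$, and you then only need the critical value to fall below $\delta/2$. Where you genuinely diverge is in how $c_{\alpha,n}\to 0$ is established. The paper simply cites Theorem \ref{thm:H0}: since $n\,\RJdCov^{2}_n(\bm X;\bm C)$ has a nondegenerate limit under $H_0$, the universal quantile satisfies $c_{\alpha,n}=O(1/n)$. You instead avoid the asymptotic null distribution entirely: you apply Theorem \ref{thm:consistency} at one convenient product null (independent uniforms), where the population value is zero, to get $\RJdCov^{2}_n\to 0$ in probability under that null, and then use the distribution-freeness of Proposition \ref{ppn:H0distributionfree} to transfer this to the universal null law, forcing its upper $\alpha$-quantile to vanish. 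Your route is more elementary and self-contained — it needs only the strong-law-type consistency result and distribution-freeness, not the combinatorial CLT machinery behind Theorem \ref{thm:H0} — and it cleanly isolates where distribution-freeness enters. What the paper's route buys in exchange is a rate, $c_{\alpha,n}=O(1/n)$, which is irrelevant for plain consistency but aligns with the later local power analysis against $1/\sqrt n$ alternatives. Your explicit restriction to strictly positive weights $\bm C$ is also the right caveat, since it is exactly what Proposition \ref{ppn:RJdCovindependence}$(2)$ requires to guarantee $\delta>0$; the paper uses this implicitly.
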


	\begin{remark}\label{remark:RdCov} 
		Note that, while $\RJdCov_n(\bm X; \bm C)$ provides a distribution-free, universally consistent test for joint independence, the individual $\RdCov(\bm X_S)$ can be used for testing the joint independence of the subset of variables $\bm X_{S}$ given that they are $|S|-1$ independent. This is because $\RdCov_n(\bm X_S) = 0$ if and if only if the variables $\bm X_{S}$ are mutually independent, provided they are $|S|-1$ independent. 
		For example, if we are interested in testing pairwise independence: 
		$$H_0: X_1, X_2, \ldots, X_r \text{ are pairwise independent } \quad \text{ versus } \quad H_1: \text{ not } H_0,$$
		then the test which rejects $H_0$ for `large' values of 
		$$\sum_{1 \leq i < j \leq r} \RdCov^2_n(X_i, X_j),$$
		will be distribution-free and consistent. This is, in fact, the rank-analogue of the test based on pairwise distance covariances studied in \cite{yao2018testing}. Similarly, if we know that 3 variables $(X_1, X_2, X_3)$ are pairwise independent, then $\RdCov^2_n(X_1,X_2,X_3)$ can be used to obtain a distribution-free, consistent test for the mutual independence of $(X_1, X_2, X_3)$. Another attractive property of $\RdCov$ is that the collection $\RdCov_n(\bm X_S)$ is asymptotically independent over $S \subseteq \{ 1, 2, \ldots, d \}$ under the null hypothesis of joint independence (see Theorem \ref{thm:H0}), which can be leveraged to learn the higher-order dependencies among the variables $X_1, X_2, \ldots, X_r$. 
	\end{remark}
	
	\begin{remark} As mentioned in the Introduction, another related measure for joint independence based on the second-order $\dCov$ was proposed by \citet{matteson2017independent}, and its rank version was discussed in \cite{deb2021multivariate}. Although these measures characteristic joint independence, unlike $\JdCov/\RJdCov$ it is unable to capture the dependence structure among the variables. In Section \ref{sec:sim} we compare our method with test proposed in \cite{matteson2017independent} in simulations. 
		\end{remark}

	\subsection{Asymptotic Null Distribution}
	\label{sec:theory}

	In this section we will derive the asymptotic null distribution of the collection of $2^{r} - r -1$ random variables: 
	\begin{align}\label{eq:RS}
		\{ \RdCov_n(\bm X_S): S \in \cT \} , 
	\end{align} 
	where $\cT:=\{S \subseteq \{ 1, 2, \ldots, r \} \text{ such that } |S| \geq  2\}$, 
	and consequently that of $\RJdCov_n(\bm X; \bm C)$. To describe the limit we need the following definition: 
	
	\begin{definition}\label{defn:ZSprocess}
		Denote by $\{Z_S\}_{S \in \cT}$ a collection of mutually independent complex-valued Gaussian processes such that, for each $S \in \cT$,
		\begin{align}\label{eq:ZS}
			\{Z_S(\bm t): \bm t \in \mathbb R^{d_S} \}
		\end{align}
		a complex-valued Gaussian process indexed by $\mathbb R^{d_S}$ (recall $d_S = \sum_{i \in S} d_i$) with zero mean and covariance function: 
		\begin{align}\label{eq:CS}
			C_S(\bm t, \bm t'): = \Cov[Z_S(\bm t), Z_S(\bm t')] := \prod_{i \in S} \left( \E\left[e^{\i\langle t_{i}-t_{i}',U_{i}\rangle}\right]-\E\left[e^{\i\langle t_{i},U_{i}\rangle} \right] \E\left[e^{\i\langle -t_{i}',U_{i}\rangle} \right] \right) , 
		\end{align}
		where 
		\begin{itemize} 
			
			\item $\bm t= (t_i)_{i \in S} \in \mathbb R^{d_S}$ and $\bm t'= (t_i')_{i \in S} \in \mathbb R^{d_S}$, with $t_i, t_i' \in \mathbb R^{d_i}$, for $i \in S$, and 
			
			\item $U_1, U_2, \ldots, U_r$ are independent with $U_i \sim \mathrm{Unif}([0, 1]^{d_i})$, for $1 \leq i \leq r$. 
			
		\end{itemize} 
	\end{definition}
	
	\begin{remark}
		The covariance function \eqref{eq:CS} can be expressed in closed form  using the fact, 
		$$\E\left[e^{\i\langle t_{i},U_{i}\rangle} \right]  = \prod_{j = 1}^{d_i} \frac{e^{\i t_{ij}} - 1}{\i t_{ij}},$$ 
		for  $U_i \sim \mathrm{Unif}([0, 1]^{d_i})$ and $t_i = (t_{ij})_{1 \leq j \leq d_i} \in \mathbb R^{d_i}$. Hence,  
		$$ C_S(\bm t, \bm t') = \prod_{i \in S}  \left( \prod_{j = 1}^{d_i} \frac{e^{\i ( t_{ij} - t_{ij}' ) } - 1}{\i (t_{ij} - t_{ij}') }   - \prod_{j = 1}^{d_i} \frac{e^{\i t_{ij}} - 1}{ t_{ij}}  \prod_{j = 1}^{d_i} \frac{e^{-\i t_{ij}'} - 1}{t_{ij}'} \right),$$
		where $t_i = (t_{ij})_{1 \leq j \leq d_i} \in \mathbb R^{d_i}$ and $t_i' = (t_{ij}')_{1 \leq j \leq d_i} \in \mathbb R^{d_i}$.
	\end{remark}

	With the above definition we can now describe the asymptotic distribution of the collection \eqref{eq:RS} under the null hypothesis of joint independence. 

	\begin{theorem}\label{thm:H0}
		Suppose Assumption \ref{assumption:U} holds. Then under $H_0$ as in \eqref{eq:H0H1}, as $n \rightarrow \infty$, 
		\begin{align} \label{eq:H0RdCovS}
			\{ n \RdCov_n(\bm X_S): S \in \cT \} \dto \left\{ \int_{\mathbb R^{d_S}} |Z_S(\bm t)|^2 \prod_{i\in S} \d w_i \right\}_{S \in \cT} , 
		\end{align} 
		for $Z_S(\bm t)$ as defined in \eqref{eq:ZS}. Consequently,  for any vector of non-negative weights $\bm C= (C_1, C_2, \ldots, C_r)$, under $H_0$ as in \eqref{eq:H0H1}, 
		\begin{align}\label{eq:H0RJdCov}
			n\RJdCov^{2}_n(\bm X; \bm C) \dto \sum_{s=2}^r C_s \sum_{\substack{S \subseteq \{1, 2, \ldots, r\} \\ |S| = s }} \int_{\mathbb R^{d_S}} |Z_S(\bm t)|^2 \prod_{i\in S} \d w_i . 
		\end{align} 
	\end{theorem}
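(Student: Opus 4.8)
The plan is to reduce $n\,\RdCov^2_n(\bm X_S)$ to a functional of $r$ independent uniformly random permutations, prove a functional central limit theorem for the underlying empirical process in a suitable weighted $L^2$-space, and then apply the continuous mapping theorem. Under $H_0$, Proposition~\ref{ppn:H0distributionfree_OT} together with the independence of the coordinates implies that, for each $1\le i\le r$, the rank vector $(\R_i(X_i^{(1)}),\ldots,\R_i(X_i^{(n)}))$ is a uniformly random permutation of the Halton grid $\cH_n^{d_i}$, and that these $r$ permutations are mutually independent. Writing $\R_i(X_i^{(a)})=h^{d_i}_{\pi_i(a)}$ with $\pi_1,\ldots,\pi_r$ i.i.d.\ uniform on $S_n$, and $\bar g_{n,i}(t):=\frac{1}{n}\sum_{k=1}^n e^{\i\langle t,h_k^{d_i}\rangle}$, one has for every $S\in\cT$
\[
n\,\RdCov^2_n(\bm X_S)=\big\|W_{n,S}\big\|_{\mathcal H_S}^2,\qquad \mathcal H_S:=L^2\Big(\mathbb R^{d_S},\ \prod_{i\in S}\d w_i\Big),
\]
where $W_{n,S}(\bm t):=\frac{1}{\sqrt n}\sum_{b=1}^n\prod_{i\in S}\big(\bar g_{n,i}(t_i)-e^{\i\langle t_i,h^{d_i}_{\pi_i(b)}\rangle}\big)$ for $\bm t=(t_i)_{i\in S}$. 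It then suffices to show that $\{W_{n,S}\}_{S\in\cT}$ converges in distribution, jointly in $\prod_{S\in\cT}\mathcal H_S$, to a family $\{Z_S\}_{S\in\cT}$ of \emph{mutually independent} processes with $Z_S$ as in Definition~\ref{defn:ZSprocess}: then \eqref{eq:H0RdCovS} follows from the continuous mapping theorem applied to $(g_S)\mapsto(\|g_S\|^2)$, and \eqref{eq:H0RJdCov} follows from the further continuous linear map $\sum_{s=2}^rC_s\sum_{|S|=s}(\cdot)$.

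For the finite-dimensional convergence I would fix finitely many pairs $(S_1,\bm t^{(1)}),\ldots,(S_m,\bm t^{(m)})$ and observe that an arbitrary real linear combination of the real and imaginary parts of the $W_{n,S_j}(\bm t^{(j)})$ is a centered statistic of the form $\frac{1}{\sqrt n}\sum_{b=1}^n A_b(\pi_1(b),\ldots,\pi_r(b))$ for a deterministic, uniformly bounded array; Theorem~\ref{thm:clt} (the combinatorial central limit theorem for a fixed number of independent uniform permutations) then gives a centered Gaussian limit. The limiting second moments are found by direct computation: $\E W_{n,S}(\bm t)=0$ exactly, since each $\pi_i(b)$ is marginally uniform and the $\pi_i$ are independent; $\E[W_{n,S}(\bm t)\overline{W_{n,S'}(\bm t')}]=0$ \emph{exactly} whenever $S\ne S'$, because the factor indexed by any coordinate of $S\triangle S'$ has mean zero; and when $S=S'$ the ``diagonal'' $b=b'$ terms equal $\prod_{i\in S}\big(\frac1n\sum_k e^{\i\langle t_i-t_i',h_k^{d_i}\rangle}-\bar g_{n,i}(t_i)\overline{\bar g_{n,i}(t_i')}\big)$, which converges to $C_S(\bm t,\bm t')$ by Assumption~\ref{assumption:U}, while the off-diagonal $b\ne b'$ terms sum to $O(n^{1-|S|})=o(1)$. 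The pseudo-covariances are computed the same way, and the Hermitian symmetry $W_{n,S}(-\bm t)=\overline{W_{n,S}(\bm t)}$ is exact; together these identify the Gaussian limit as $\{Z_S\}$, and joint Gaussianity with vanishing cross-covariances forces the $Z_S$ to be mutually independent.

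Upgrading to process convergence in $\mathcal H_S$ requires tightness. Since $|e^{\i\langle\cdot\rangle}|=1$, the diagonal computation also gives $\E|W_{n,S}(\bm t)|^2\le C\prod_{i\in S}(1-|\bar g_{n,i}(t_i)|^2)$ with $C$ not depending on $n$; combined with the Sz\'ekely--Rizzo identity $\int_{\mathbb R^{d}}(1-\cos\langle t,v\rangle)w_{d}(t)\,\d t=\|v\|$ this yields
\[
\int_{\mathbb R^{d_i}}\big(1-|\bar g_{n,i}(t_i)|^2\big)\,\d w_i=\frac{1}{n^2}\sum_{k,l=1}^n\|h_k^{d_i}-h_l^{d_i}\|\le\sqrt{d_i},
\]
so $\sup_n\E\|W_{n,S}\|_{\mathcal H_S}^2<\infty$ and in particular $W_{n,S}\in\mathcal H_S$ a.s. The same estimate, localized, shows that the integral over $\bm t$ for which some $\|t_i\|\le\epsilon$ is $O(\epsilon^2)$ (there $1-|\bar g_{n,i}(t_i)|^2=O(\|t_i\|^2)$ uniformly in $n$, which is integrable against the $\|t_i\|^{-1-d_i}$ singularity of $\d w_i$) and the integral over $\bm t$ for which some $\|t_i\|> M$ is $O(1/M)$ (there $\int_{\|t_i\|>M}\d w_i\to0$), uniformly in $n$; the same tail bounds hold for $Z_S$. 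On the complementary compact set $\{\forall i\in S:\ \|t_i\|\in[1/M,M]\}$, where $\prod_{i\in S}\d w_i$ is a finite measure, the fields $W_{n,S}$ are entire in $\bm t$ with uniformly bounded $L^2$-increments, hence tight; together with the finite-dimensional convergence this gives $W_{n,S}\dto Z_S$ in $\mathcal H_S$, jointly over $S\in\cT$, the patching of the compact and tail pieces being the standard $\varepsilon/3$ truncation argument used for $\dCov_n$ in \cite{szekely2007measuring,szekely2009brownian} and its rank version in \cite{deb2021multivariate,shi2020distribution}.

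Finally, the continuous mapping theorem for $g\mapsto\|g\|^2$ gives $n\,\RdCov^2_n(\bm X_S)\dto\int_{\mathbb R^{d_S}}|Z_S(\bm t)|^2\prod_{i\in S}\d w_i$ jointly over $S\in\cT$, with mutually independent limits since they are measurable functions of the independent $Z_S$; this is \eqref{eq:H0RdCovS}, and applying the continuous linear map $\sum_{s=2}^rC_s\sum_{|S|=s}(\cdot)$ yields \eqref{eq:H0RJdCov}. I expect the main obstacle to be the finite-dimensional step, which rests on the combinatorial central limit theorem for several independent uniform permutations (Theorem~\ref{thm:clt}), a stand-alone result that must be proved separately via Stein's method with exchangeable pairs. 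Granting it, the remaining care is in the tightness step: moving the weak limit inside the singular weighted $L^2$-integral while respecting the product-over-$S$ structure, and keeping track of the complex-Gaussian data (covariance, pseudo-covariance, and the symmetry relating $Z_S(\bm t)$ and $Z_S(-\bm t)$).
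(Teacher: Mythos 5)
Your proposal follows essentially the same route as the paper's proof: under $H_0$ the ranks reduce to independent uniform permutations of the Halton grids, the finite-dimensional limits come from the Cram\'er--Wold device together with the multi-permutation combinatorial CLT (Theorem \ref{thm:clt}), the covariance computations (mean zero, exact vanishing of cross-covariances across distinct $S$, diagonal terms converging to $C_S$ and off-diagonal terms of order $n^{1-|S|}$) match Lemma \ref{lm:Zcovariance}, and the passage to the singular weighted $L^2$ functional is handled by truncating the domain and controlling the small- and large-$\|t_i\|$ tails via the Sz\'ekely--Rizzo identity and Assumption \ref{assumption:U}, exactly as in the paper's verification of the conditions of Lemma \ref{lem:convergence_cond}. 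The only point where you are lighter than the paper is tightness on the compact annulus, which the paper proves through a stochastic equicontinuity/covering-number argument (Lemma \ref{lem:equicontinuity}) rather than your ``entire with uniformly bounded $L^2$-increments'' assertion, but the overall architecture coincides.
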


	The proof of Theorem \ref{thm:H0} is given in Appendix \ref{sec:H0pf}. One of the challenges in dealing with $\RdCov_n$ and $\RJdCov_n$ is that they are functions of dependent multivariate ranks. 
	To solve the problem, we develop a version of the classical Hoeffiding's combinatorial central limit theorem for $r$-tensors indexed by multiple independent permutations (see Theorem \ref{thm:clt} in Appendix \ref{sec:clt}), a result which might be more broadly useful in the analysis of other nonparametric tests. Our proof of the combinatorial CLT uses Stein's method based on exchangeable pairs, combined with techniques from empirical process theory gives us the result in Theorem \ref{thm:H0}.

	\begin{remark}(Equivalent representation of the limiting distribution) By considering the process  $Z_S(\bm t)$ as a random element in $L^2(\mathbb R^{d_S})$ and an eigen-expansion of the integral operator on associated with the covariance kernel \eqref{eq:CS} the limit in \eqref{eq:H0RdCovS} can be expressed as: 
		$$\int_{\mathbb R^{d_S}} |Z_S(\bm t)|^2 \prod_{i\in S} \d w_i \stackrel{D} = \sum_{j=1}^\infty \lambda_j Z_j^2, $$
		where $\{\lambda_j\}_{j \geq 1}$ are positive constants and $Z_1, Z_2, \ldots, $ are i.i.d. $N(0, 1)$ (see \cite[Chapter 1, Section 2]{kuo1975gaussian}). Hence, by the independence of the processes $\{Z_S\}_{S \in \cT}$, the limit in \eqref{eq:H0RJdCov} can be represented as an infinite sum weighted chi-squares: 
		$$n\RJdCov^{2}_n(\bm X; \bm C) \dto \sum_{s=2}^r C_s \sum_{\substack{S \subseteq \{1, 2, \ldots, r\} \\ |S| = s }} \int_{\mathbb R^{d_S}} |Z_S(\bm t)|^2 \prod_{i\in S} \d w_i \stackrel{D}{=} \sum_{j=1}^\infty \lambda_j' Z_j^2, $$
		for a sequence of positive constants $\{\lambda_j'\}_{j \geq 1}$ and $Z_1, Z_2, \ldots, $ are i.i.d. $N(0, 1)$. 
	\end{remark}

	\begin{remark} Another interesting consequence of Theorem \ref{thm:H0} is that collection $\{ \RdCov_n(\bm X_S): S \in \cT \}$ is asymptotically independent. This can be attributed to the representation of $\RdCov_n(\bm X_S)$ as a product of zero mean (under $H_0$) random variables (recall \eqref{eq:RdCovSn}). This property is shared by the $\JdCov$ (equivalently, the distance multivariance) which has a similar product structure (see \cite[Theorem 4.10]{bottcher2019distance}). 
	\end{remark}

	\subsection{Approximating the Cut-off and Finite Sample Properties} 
	\label{sec:finitesample}
	
	Note that the limit in \eqref{eq:H0RdCovS}, as expected, does not depend on the distribution of the data or the specific choices of $\cH_n^{d_1}, \cH_n^{d_2}, \ldots, \cH_n^{d_r}$. This is a manifestation of the distribution-free property of the multivariate ranks. Hence, if $c_\alpha$ denotes the $(1-\alpha)$-th quantile of the distribution in the RHS of \eqref{eq:H0RJdCov}, the test which rejects $H_0$ when 
	\begin{align}\label{eq:testasymptotic}
		\phi^{\mathrm{asymp}}(\bm C) = \bm 1\{ n\RJdCov^{2}_n(\bm X; \bm C) > c_\alpha\} , 
	\end{align}
	will have asymptotic level $\alpha$  and universally consistent for the hypothesis \eqref{eq:H0H1}. Although this test is asymptotically valid, there is, in general, no tractable form of the quantile $c_\alpha$ for the distribution \eqref{eq:H0RJdCov}. Nevertheless, using the distribution-free property of the multivariate ranks we can still approximate the quantiles of \eqref{eq:H0RdCovS} (and consequently \eqref{eq:H0RJdCov}) in a data-agnostic manner. This is because $\RdCov_n(\bm X_S)$ is a function of the multivariate ranks (recall \eqref{eq:RdCovSn} and \eqref{eq:RdCovHalton}), and the ranks are independent and the uniformly distributed over the $n!$ permutations of $\cH_n^{d_i}$, for $1 \leq i \leq r$ (by  Proposition \ref{ppn:H0distributionfree_OT}). Therefore, we can compute an approximate quantiles of $\RdCov_n(\bm X; \bm C)$ as follows:  For $1 \leq b \leq B$ repeat the following two steps: 
	
	\begin{itemize}
		
		\item[$(1)$] Generate $r$ i.i.d. uniform random permutations $\bm \pi:= \{ \pi_1, \pi_2, \ldots, \pi_r \}$ from $S_n$. 
		
		\item[$(2)$] Compute the value of the statistic $\RdCov_n(\bm X; \bm C)$ on the permuted Halton sequences 
		$\pi_1\cH_{n}^{d_1}, \pi_2 \cH_{n}^{d_2}, \ldots, \pi_r \cH_{n}^{d_r}$, where 
		$$\pi_i \cH_{n}^{d_i} := \{h_{\pi_i(1)}^{d_i}, h_{\pi_i(2)}^{d_i}, \ldots, h_{\pi_i(n)}^{d_i}\},$$
		for $1 \leq i \leq r$. More precisely, we evaluate 
		\begin{align}\label{eq:pi}
			\RdCov_n^{(b)}(\bm X; \bm C)= \sum_{s=2}^r C_s \sum_{\substack{S \subseteq \{1, 2, \ldots, r\} \\ |S| = s }}  \theta_{S}(\bm h_{S, \bm \pi }^{(1)}, \ldots, \bm h_{S, \bm \pi}^{(n)} ), 
		\end{align}
		where $\theta_S$ is the function in \eqref{eq:thetaS} and $\bm h_{S, \bm \pi}^{(a)} := (h_{\pi_i(a)}^{d_i})_{i \in S} \in \mathbb R^{d_S}$, for $1 \leq a \leq n$. 
		
	\end{itemize} 
	Then the permutation $(1-\alpha)$-th quantile of $\RdCov_n(\bm X; \bm C)$ is obtained as: 
	\begin{align}\label{eq:cn}
	c_{n, M, \alpha} : = \min \left\{\RdCov_n^{(s)}(\bm X; \bm C): \frac{1}{B} \sum_{b=1}^B \bm 1 \left\{\RdCov_n^{(b)}(\bm X; \bm C) > \RdCov_n^{(s)}(\bm X; \bm C) \right \} \leq \alpha \right \} .  
	\end{align} 
	To compute the permutation $p$-value, let $R$ be the rank of $\RdCov_n(\bm X; \bm C)$ (the original value computed from the data) in the sequence 
	\begin{align*}
		(\RdCov_n^{(1)}(\bm X; \bm C), \RdCov_n^{(2)}(\bm X; \bm C), \ldots, \RdCov_n^{(B)}(\bm X; \bm C)) ,
	\end{align*} 
	by breaking ties at random and where $R=1$ denotes the rank of the largest element. Then the permutation $p$-value as $p_B:=R/(B+1)$.  
	Note that if $H_0$ is true, $\P(p_B\leq\alpha)=\lfloor \alpha(B+1)\rfloor/(B+1) \leq \alpha$. In other words, 
	\begin{align}\label{eq:phitest}
		\tilde \phi_n(\bm C) : = \bm 1 \{ p_B \leq \alpha \}. 
	\end{align}
	is a finite sample level $\alpha$ test.

	\begin{remark}
		Note that the resampling method described above is different from the (data-dependent) bootstrap/permutation method used to calibrate $\JdCov(\bm X, \bm C)$, or other independence tests which are not distribution-free), where the asymptotic distribution depends on the (unknown) marginal distributions of $X_1, X_2, \ldots, X_r$ (see \cite[Proposition 9]{chakraborty2019distance}). On the other hand, our method can be used to compute the cut-offs by only permuting the Halton sequences, without any knowledge of the data, 
		and hence is much more efficient when one has to do multiple tests.  
	\end{remark}

	Although \eqref{eq:phitest} produces a test which has level $\alpha$ in finite samples, its power properties with a finite number of permutations is  a-priori unclear.  
	This is due to fact that for any collection of $r$ random permutations $\bm \pi= \{\pi_1, \pi_2, \ldots, \pi_r\}$, the RHS of \eqref{eq:pi} tends to zero with high probability (see Proposition \ref{ppn:consistencypermutation} in Appendix \ref{sec:consistencypermutationpf}). This combined with Theorem \ref{thm:consistency} leads to  the following result:
	
	\begin{theorem}[Consistency with finite number of resamples]\label{thm:consistency_permutation}
		Suppose Assumption \ref{assumption:U} holds. Fix $\alpha\in(0,1)$ and suppose $B\geq1/\alpha-1$. Then, for any fixed 
		$\mu \ne \mu_1\otimes\mu_2\otimes\cdots\otimes\mu_r$, 
		$$\lim_{n\rightarrow\infty}\E_{\mu}[\tilde \phi(\bm C)]=1 . $$
	\end{theorem}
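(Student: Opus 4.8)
The plan is to establish consistency of the finite-resample test $\tilde\phi_n(\bm C)$ by showing that, under any fixed alternative $\mu\neq\mu_1\otimes\cdots\otimes\mu_r$, the observed statistic $\RJdCov^2_n(\bm X;\bm C)$ stays bounded away from zero while all $B$ resampled values $\RdCov_n^{(b)}(\bm X;\bm C)$ converge to zero in probability, so that the rank $R$ of the observed value equals $B+1$ with probability tending to one, making $p_B = (B+1)/(B+1) = 1 > \alpha$ false — that is, $p_B \le \alpha$ with probability tending to one precisely when $B+1 \le 1/\alpha$ is allowed, i.e. $B \ge 1/\alpha - 1$. Let me restate that correctly: we want $R$ to be large, so $p_B = R/(B+1)$; since $R=1$ denotes the rank of the largest element, if the observed value is the largest then $R=1$ and $p_B = 1/(B+1)$, and $p_B \le \alpha \iff 1/(B+1)\le\alpha \iff B \ge 1/\alpha - 1$, which is exactly the hypothesis. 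So the whole proof reduces to showing $R=1$ (observed value strictly largest) with probability tending to one.

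First I would invoke Theorem \ref{thm:consistency}: under Assumption \ref{assumption:U}, $\RJdCov^2_n(\bm X;\bm C)\stackrel{a.s.}{\to}\RJdCov^2(\bm X;\bm C)$, and by Proposition \ref{ppn:RJdCovindependence}(2) the limit is a strictly positive constant $\delta := \RJdCov^2(\bm X;\bm C) > 0$ since $\bm C$ has positive weights and $\mu$ is not the product measure. Hence for any $\varepsilon\in(0,\delta)$, with probability tending to one we have $\RJdCov^2_n(\bm X;\bm C) > \delta - \varepsilon > 0$. Second, I would control the resampled statistics. The key input here is the cited Proposition \ref{ppn:consistencypermutation} in Appendix \ref{sec:consistencypermutationpf}, which asserts that for any collection $\bm\pi$ of $r$ i.i.d. uniform random permutations, $\theta_S(\bm h_{S,\bm\pi}^{(1)},\ldots,\bm h_{S,\bm\pi}^{(n)}) \stackrel{P}{\to} 0$ (intuitively, permuting the deterministic Halton grids independently makes the configuration look jointly independent, so every $\RdCov$-block vanishes). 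Summing the finitely many blocks over $s$ and over $S$ with $|S|=s$, and using a union bound over the fixed finite number $B$ of resamples, gives $\max_{1\le b\le B}\RdCov_n^{(b)}(\bm X;\bm C)\stackrel{P}{\to}0$.

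Third, I would combine the two bounds: on the intersection of the event $\{\RJdCov^2_n(\bm X;\bm C) > \delta - \varepsilon\}$ and the event $\{\max_{1\le b\le B}\RdCov_n^{(b)}(\bm X;\bm C) < \delta - \varepsilon\}$ — each of which has probability tending to one — the observed statistic strictly exceeds all $B$ resampled values, so its rank is $R=1$ and $p_B = 1/(B+1)\le\alpha$ by the assumption $B\ge 1/\alpha-1$. Therefore $\E_\mu[\tilde\phi_n(\bm C)] = \P(p_B\le\alpha)\to 1$, which is the claim. The main obstacle, and the only genuinely nontrivial ingredient, is Proposition \ref{ppn:consistencypermutation} — establishing that the $\RdCov$ statistic evaluated on independently permuted Halton sequences converges to zero in probability; this requires showing that the empirical joint distribution of the permuted grid points converges weakly to the product of uniform marginals (a consequence of the marginal weak convergence in Assumption \ref{assumption:U} together with the asymptotic independence produced by independent uniform permutations), and then passing this through the integral functional $\theta_S$ via a continuous-mapping / dominated-convergence argument analogous to the proof of Theorem \ref{thm:consistency}. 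Everything else is a routine union bound and the already-established fixed-alternative consistency.
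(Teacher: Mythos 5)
Your argument is correct and follows essentially the same route as the paper's proof: both reduce the claim to showing that, with probability tending to one, the observed statistic exceeds every one of the $B$ resampled values (so $R=1$ and $p_B=1/(B+1)\leq\alpha$ under $B\geq 1/\alpha-1$), using Theorem \ref{thm:consistency} together with Proposition \ref{ppn:RJdCovindependence} to keep $\RJdCov^2_n(\bm X;\bm C)$ above a positive threshold, Proposition \ref{ppn:consistencypermutation} to send each resampled statistic to zero in probability, and a union bound over the fixed finite number of resamples. The only cosmetic difference is your threshold $\delta-\varepsilon$ in place of the paper's $T/2$, which changes nothing.
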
 
	
	\begin{proof}
		Define $T:=\RdCov^{2}(\bm X, \bm C)$, $T_n:=\RdCov_n(\bm X; \bm C)$, and 
		$T_n^{(b)}:=\RdCov_n^{(b)}(\bm X; \bm C)$, for $1 \leq b \leq B$ (as defined in \eqref{eq:pi}). Then, since $B\geq1/\alpha-1$, 
		\begin{align*}
			\E_\mu[\tilde \phi_n(\bm C)] & = \P(p_B \leq \alpha)\\
			&
			\geq\P\left(T_n > T/2 \text{ and } T_n^{(b)} < T/2\text{ for } 1 \leq b \leq B \right)\\
			&
			\geq 1-\left\{ \P(T_n <T/2)+B \cdot \P(T_n^{(1)}>T/2) \right\} \tag*{(by the union bound)} \\
			&
			\rightarrow 1 , 
		\end{align*}
		where the last convergence is due to the consistency result in Theorem \ref{thm:consistency} and Proposition \ref{ppn:consistencypermutation} in Appendix \ref{sec:consistencypermutationpf}.
	\end{proof}

	\section{Local Power Analysis}\label{sec:local_power}

	In this section, we derive the asymptotic local power of the statistic 
	$\RJdCov^2_n(\bm X; \bm C)$ under the contiguous alternatives. In the context of testing pairwise independence two types of local alternatives have been considered in the literature: (1) mixture alternatives \cite{deb2021efficiency,shi2020consistent}  and (2) Konijn alternatives which was originally proposed in \cite{konijn1956power} and has been later investigated in \cite{gieser1997nonparametric} and used more recently in \cite{deb2021efficiency,shi2020consistent}. To the best of our knowledge, local power analysis has not been carried out for joint independence testing of more than 2 variables. The first step towards this is to define contiguous versions of the mixture and Konijn alternatives for multiple random vectors. For this suppose the joint distribution $\mu$ has density $f$ with respect to the Lebesgue measure in $\mathbb R^{d_0}$ and the marginal distributions $\mu_1, \mu_2, \ldots, \mu_r$ have densities $f_{X_1}, f_{X_2}, \ldots, f_{X_r}$ with respect to the Lebesgue measures in $\mathbb R^{d_1}, \mathbb R^{d_2}, \ldots, \mathbb R^{d_r}$, respectively.

	\subsection{Mixture Alternatives}
	
	The mixture alternatives constructed as follows: Given a $\delta > 0$ consider the  following mixture density in $\mathbb R^{d_0}$:  
	\begin{align*} 
		f (\bm x)= (1-\delta) \prod_{i=1}^r f_{X_i} (x_i) + \delta g (\bm x) , 
	\end{align*}
	where $x_i \in \mathbb R^{d_i}$ for $1 \leq i \leq r$, $\bm x = (x_1, x_2, \ldots, x_r) \in \mathbb R^{d_0}$, and $g \ne  \prod_{i=1}^r f_i$ is a probability density function with respect to the Lebsegue measure in $\mathbb R^{d_0}$ such that the following holds: 
	
	\begin{assumption}\label{assumption:mixture} The support of $g$ is contained in that of $\prod_{i=1}^{r}f_i$ and 
		$$0<\E\left[  \left(\frac{g(\bm X)}{ \prod_{i=1}^{r}f_{X_i}(X_i)} - 1 			\right)^2  \right]<\infty , $$ where 
		the expectation is taken under $H_0$, that is, $\bm X = (X_1, X_2, \ldots X_r) \sim \prod_{i=1}^{r}f_{X_i}$.  
			\end{assumption}
	
	Under this assumption contiguous local alternatives are obtained by considering local perturbations of the mixing proportion $\delta$ as follows: 
	\begin{equation}\label{eq:H0mixture}
		H_0:\delta = 0 \qquad \mathrm{versus} \qquad H_1:\delta = h/\sqrt{n},
	\end{equation}  
	for some $h > 0$.  This type of alternative captures all local additive perturbations from $H_0$ by compressing potential lower-order dependence among the variables in the functions $g$. The following theorem derives the distribution of $\RJdCov^2_n(\bm X; \bm C)$ under $H_1$ as in \eqref{eq:H0mixture}. 
	
	
	\begin{theorem}\label{thm:mixture} 
		Suppose Assumptions \ref{assumption:U} and  \ref{assumption:mixture} hold. Then under $H_1$ as in \eqref{eq:H0mixture}, as $n \rightarrow \infty$,  
		\begin{align} \label{eq:H0NRdCovS}
			\{ n \RdCov_n(\bm X_S): S \in \cT \} \dto \left\{ \int_{\mathbb R^{d_S}} |Z_S(\bm t) + \mu_S(\bm t) |^2 \prod_{i\in S} \d w_i \right\}_{S \in \cT} , 
		\end{align} 
		with $Z_S(\bm t)$ as defined in \eqref{eq:ZS} and 
		\begin{align}\label{eq:meanH1}
			\mu_S(\bm t)= h \cdot \E_{\bm X \sim H_0}\left[ \left(\frac{g(\bm X)}{ \prod_{i=1}^{r}f_{X_i}(X_i)} - 1\right) \prod_{i \in S} \left \{\E\left[e^{\i \langle t_{i}, R_{\mu_i}(X_i)\rangle} \right]-e^{\i\langle t_{i},R_{\mu_i}(X_i)\rangle} \right\}\right] , 
		\end{align} 
		where $\bm t = (t_i)_{i \in S}$ and $t_i \in \mathbb R^{d_i}$ for $i \in S$. 
		Consequently,  for any vector of non-negative weights $\bm C= (C_1, C_2, \ldots, C_r)$, under $H_1$ as in \eqref{eq:H0mixture}, 
		\begin{align}\label{eq:H0NRJdCov}
			n\RJdCov^{2}_n(\bm X; \bm C) \dto \sum_{s=2}^r C_s \sum_{\substack{S \subseteq \{1, 2, \ldots, r\} \\ |S| = s }} \int_{\mathbb R^{d_S}} |Z_S(\bm t) + \mu_S(\bm t)|^2 \prod_{i\in S} \d w_i . 
		\end{align} 
	\end{theorem}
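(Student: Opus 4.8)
The plan is to reduce the statement to the null case (Theorem \ref{thm:H0}) via a change of measure and Le Cam's third lemma, exactly as is standard for local power analysis of degenerate $U$-statistic-type functionals. First I would set up the log-likelihood ratio $\Lambda_n := \log \frac{\d \P_{1,n}^{\otimes n}}{\d \P_{0}^{\otimes n}}$, where $\P_{1,n}$ has density $(1-h/\sqrt n)\prod_i f_{X_i} + (h/\sqrt n) g$ and $\P_0 = \prod_i f_{X_i}$. Writing $\xi(\bm x) := \frac{g(\bm x)}{\prod_{i} f_{X_i}(x_i)} - 1$, Assumption \ref{assumption:mixture} guarantees $\E_{H_0}[\xi] = 0$ and $0 < \sigma^2 := \E_{H_0}[\xi^2] < \infty$, so a Taylor expansion of $\log(1 + h \xi(\bm X_a)/\sqrt n)$ gives the LAN expansion $\Lambda_n = \frac{h}{\sqrt n}\sum_{a=1}^n \xi(\bm X_a) - \frac{h^2 \sigma^2}{2} + o_{\P_0}(1) \dto N(-\tfrac{h^2\sigma^2}{2}, h^2\sigma^2)$ under $\P_0$. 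Hence $\P_{1,n}$ is contiguous to $\P_0$.

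The second step is to obtain, under $H_0$, the joint convergence of the vector of (real and imaginary parts of) the Gaussian-process fluctuations together with $\Lambda_n$. Concretely, the proof of Theorem \ref{thm:H0} already shows (through the H\'ajek representation of Proposition \ref{ppn:ZnS} and the combinatorial CLT, Theorem \ref{thm:clt}) that the rank-based empirical characteristic function fluctuation process
\begin{align*}
	G_{n,S}(\bm t) := \frac{1}{\sqrt n}\sum_{a=1}^n \prod_{i\in S}\left\{ \E\left[ e^{\i\langle t_i, R_{\mu_i}(X_i)\rangle} \right] - e^{\i\langle t_i, R_{\mu_i}(X_i^{(a)})\rangle} \right\}
\end{align*}
converges (as processes, in an appropriate $L^2(\prod_{i\in S}\d w_i)$ sense, jointly over $S\in\cT$) to the independent Gaussian processes $Z_S$ of Definition \ref{defn:ZSprocess}, and that $n\RdCov_n(\bm X_S)$ is asymptotically the continuous functional $\int |G_{n,S}|^2 \prod_{i\in S}\d w_i$ plus $o_{\P_0}(1)$. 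I would augment this with the joint CLT of $(G_{n,S})_{S\in\cT}$ and $\frac{h}{\sqrt n}\sum_a \xi(\bm X_a)$; since both are sums of i.i.d. centered terms (after the H\'ajek linearization of the ranks), a multivariate Lindeberg CLT applies, and the limiting covariance between $Z_S(\bm t)$ and the $N(0,h^2\sigma^2)$ limit of $\Lambda_n$ is precisely $h\cdot\E_{H_0}\big[\xi(\bm X)\prod_{i\in S}\{\E[e^{\i\langle t_i, R_{\mu_i}(X_i)\rangle}] - e^{\i\langle t_i, R_{\mu_i}(X_i)\rangle}\}\big] = \mu_S(\bm t)$ — this identifies the mean shift in \eqref{eq:meanH1}.

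The third step is to apply Le Cam's third lemma: under $\P_{1,n}$, the process $(G_{n,S})_{S\in\cT}$ converges to $(Z_S + \mu_S)_{S\in\cT}$, where $Z_S$ are still the same independent Gaussian processes and $\mu_S$ is the deterministic mean function above (the covariance structure is unchanged because the shift is a pure mean shift in the joint Gaussian limit). By the continuous mapping theorem applied to the functional $\{G_S\}\mapsto \{\int_{\mathbb R^{d_S}} |G_S(\bm t)|^2 \prod_{i\in S}\d w_i\}_{S\in\cT}$ — whose continuity on the relevant $L^2$ space was already established in the proof of Theorem \ref{thm:H0} — we get \eqref{eq:H0NRdCovS}, and then \eqref{eq:H0NRJdCov} follows by taking the weighted sum as in \eqref{eq:RJdCovn}.

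The main obstacle is the rigorous justification of the joint weak convergence at the process level under $\P_{1,n}$: one must verify that the H\'ajek linearization of the empirical rank maps (Proposition \ref{ppn:ZnS}) remains valid under the contiguous alternative — which follows from contiguity since it is an $o_{\P_0}(1)$ statement, hence $o_{\P_{1,n}}(1)$ — and that tightness of $G_{n,S}$ in the chosen function space is preserved, again by contiguity. The remaining care is bookkeeping: $G_{n,S}$ is complex-valued, so Le Cam's third lemma must be applied to the real vector formed by the real and imaginary parts of finitely many coordinates $G_{n,S}(\bm t^{(1)}), \ldots, G_{n,S}(\bm t^{(m)})$ and then passed to the limit along with tightness, exactly as in the null proof; the covariance computation that produces $\mu_S(\bm t)$ is then a direct expansion using $\E_{H_0}[\xi]=0$ and the independence of $R_{\mu_1}(X_1),\ldots,R_{\mu_r}(X_r)$ under $H_0$.
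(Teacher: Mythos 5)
Your proposal is correct and follows essentially the same route as the paper: the paper's proof likewise combines the LAN expansion and contiguity of the mixture alternative, the H\'ajek representation (Proposition \ref{ppn:ZnS}) plus a further linearization to the oracle i.i.d.\ sum $\tilde Z_{S,n}$ (your $G_{n,S}/\sqrt n$), a joint multivariate CLT of these with the log-likelihood ratio identifying exactly the covariance $\mu_S(\bm t)$ in \eqref{eq:meanH1}, Le Cam's third lemma to shift the mean under $H_1$, and contiguity to carry over the equicontinuity/tightness before concluding with the continuous mapping theorem. No substantive differences to report.
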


	The proof of this theorem is given in Appendix \ref{sec:local_power_proof}. 
	Using this result we can derive the limiting local power of the test based on $\RdCov^2(\bm X; \bm C)$. Specifically, if $\cH_{\bm C}$ denotes the CDF of the limiting distribution in \eqref{eq:H0NRJdCov} then local asymptotic power of the test $\phi_n^{\mathrm{asymp}}(\bm C)$ (recall \eqref{eq:testasymptotic}) is given by 
	$$\lim_{n \rightarrow \infty} \E_{H_1}[\phi_n^{\mathrm{asymp}}(\bm C)]= 1- \cH_{\bm C}^{(h)}(c_\alpha).$$
	This implies, $\phi_n^{\mathrm{asymp}}(\bm C)$ (and similarly $\phi_n(\bm C)$ and $\tilde \phi_n(\bm C)$ in \eqref{eq:testrank} and \eqref{eq:phitest}, respectively) have non-trivial Pitman efficiency and is rate-optimal, in the sense that, 
	$$\lim_{|h| \rightarrow \infty }\lim_{n \rightarrow \infty} \E_{H_1}[\phi_n^{\mathrm{asymp}}(\bm C)] = 1.$$

	\subsection{Konijn Alternative} 
	
	The Konijn family of alternatives \cite{konijn1956power} for 2 random vectors can be defined as follows: Given two independent random vectors $X_1' \sim \mu_1 \in \cP_{ac}(\mathbb R^{d_1})$, $X_2' \sim \mu_1 \in \cP_{ac}(\mathbb R^{d_2})$, and $\delta > 0$, consider the law of  
	\begin{align}\label{eq:X1X2}
		\begin{pmatrix}
			X_1\\
			X_2
		\end{pmatrix}
		=
		\begin{pmatrix}
			(1- \delta) \bm I_{d_1} & \delta \bm M_1\\
			\delta \bm M_2 & (1-\delta) \bm I_{d_2}
		\end{pmatrix}
		\begin{pmatrix}
			X_1'\\
			X_2'
		\end{pmatrix} 
		\end{align}
		where $\bm M_1$ and $\bm M_2$ are $d_1\times d_2$ and $d_2\times d_1$ dimensional deterministic matrices, respectively. Note that when $\delta=0$, the random vectors $X_1, X_2$ are independent and as $\delta$ increases introduces dependence/correlation through the matrix $\bm M$. Gieser \cite{gieser1993new} showed that if one choses $\delta= h/\sqrt n$, for some $h > 0$, then the distributions of $(X_1, X_2)$ and $(X_1', X_2')$ are contiguous when $X_1'$ and $X_2'$ are elliptically symmetric distributions centered at $\theta_1 \in \mathbb R^{d_1}$ and $\theta_2 \in \mathbb R^{d_2}$ with covariance matrices $\Sigma_1$ and $\Sigma_2$, respectively. Here, we extend the Konijn family to multiple random vectors as follows:

		\begin{definition}\label{defn:XA} Given $r$ independent random vectors $X_1', X_2', \ldots, X_r'$, where $X_i' \sim \mu_i \in \cP_{ac}(\mathbb R^{d_i})$, for $1 \leq i \leq r$, and $\delta > 0$, consider the law of  
			\begin{align}\label{eq:matrixA}
				\bm X = 
				\begin{pmatrix}
					X_1\\
					X_2 \\ 
					\vdots \\ 
					X_r
				\end{pmatrix}
				=
				\begin{pmatrix}
					(1- \delta) \bm I_{d_1} & \delta \bm M_{1, 2} & \cdots & \delta \bm M_{1, r} \\
					\delta \bm M_{2, 1} & (1 - \delta) \bm I_{d_2} & \cdots & \delta \bm M_{2, r} \\ 
					\vdots & \vdots & \ddots & \vdots \\ 
					\delta \bm M_{r, 1}^\top & \delta  \bm M_{r, 2} & \cdots & (1-\delta) \bm I_{d_r} \\ 
				\end{pmatrix}
				\begin{pmatrix}
					X_1'\\
					X_2' \\ 
					\vdots \\ 
					X_r'
				\end{pmatrix} := \bm  A_{\delta} \begin{pmatrix}
					X_1'\\
					X_2' \\ 
					\vdots \\ 
					X_r' 
				\end{pmatrix} , 
			\end{align} 
			where $\bm M_{i, j}$, for $1 \leq i \ne j \leq r$, is a $d_i\times d_j$ a dimensional deterministic matrix.  
		\end{definition}

		Note that the matrix $\bm A_\delta$ in \eqref{eq:matrixA} is a $d_0 \times d_0$ times block matrix with $\bm I_{d_i}$ in the $i$-th diagonal block for $1 \leq i \leq r$ and $\bm M_{i, j}$ in the $(i, j)$-th block for $1 \leq i \ne j \leq r$. When $\delta=0$, the matrix $\bm A_\delta$ is identity and, hence, by continuity there exists a neighborhood $\Theta$ of  zero such that $\bm A_\delta$ is invertible for  $\delta \in \Theta$. Therefore, by a change of variable formula, for $\delta \in \Theta$, the density of $\bm X$ can be written as: 
		\begin{align}\label{eq:densitydelta}
		f_{\bm X}( \bm x | \delta)=\frac{1}{|\mathrm{det}(\bm  A_{\delta})|} f_{\bm X'}(\bm A_\delta \bm x) , 
		\end{align} 
		where $f_{\bm X'}(\cdot) = \prod_{i=1}^r f_{X_i'}(\cdot)$ is the density of $(X_1', X_2', \ldots, X_r')$. Hereafter, we will denote $f'_{\bm X}( \bm x | \delta) = \frac{\mathrm d}{\mathrm d \theta} f_{\bm X}(\bm x | \bm \theta )|_{\theta = \delta}$ and assume the following:

		\begin{assumption}\label{assumption:K} The family of distributions $\{f_{\bm X}( \cdot | \delta)\}_{\delta \in \Theta}$ satisfies the following:  
			
			\begin{itemize}
				
				\item The support of $f_{\bm X}( \cdot | \delta)$ does not depend on $\delta \in \Theta$. In other words, the set $\cS:= \{ \bm x \in \mathbb R^{d_0}: f_{\bm X}( \bm x | \delta) > 0 \}$ does not depend on $\delta$. 
				
				\item The map $\bm x \mapsto\sqrt{ f_{\bm X} (\bm x|0)}$ is continuously differentiable.  
								
				\item The Fisher information 
				$$I(\delta)=\int_{\mathbb R^{d_0}} \left(\frac{ f'_{\bm X}( \bm x | \delta) }{ f_{\bm X}( \bm x | \delta) } \right)^2 f_{\bm X'} (x)  \d x $$ 
				is well-define, finite, strictly positive, and continuous at $\delta=0$. 
				
			\end{itemize}		
			
		\end{assumption}

		Under the above assumption, we consider the following sequence of local hypotheses: 
		\begin{equation}\label{eq:KH0}
			H_0:\delta = 0 \qquad \mathrm{versus} \qquad H_1:\delta = h/\sqrt{n},
		\end{equation}  
		for some $h > 0$. We show in Lemma \ref{lm:KH0} that $H_0$ and $H_1$ as in \eqref{eq:KH0} are contiguous under Assumption \ref{assumption:K}. This assumption also ensures that the family $\{f_{\bm X}( \cdot | \delta)\}_{\delta \in \Theta}$ is quadratic mean-differentiable (QMD) (see \cite[Definition 12.2.1]{lr}) and, hence, Le Cam's theory of contiguity applies (see \cite[Chapter 12]{lr} and \cite[Chapter 6]{vdv2000asymptotic} ) .

		\begin{remark} Assumption \ref{assumption:K} is satisfied, for example, when $X_1, X_2, \ldots, X_r$ are centered elliptical distributions, under certain non-degeneracy conditions on the respective scale matrices (as discussed  \cite[Example 5.1]{shi2020consistent} for the 2 variables case). 
		\end{remark} 
		
		To describe the limiting distribution of $\RJdCov(\bm X; \bm C)$ under $H_1$, denote the likelihood ratio and its derivative as: 
		$$L(\bm x|\delta) := \frac{f_{\bm X}(\bm x| \delta)}{f_{\bm X}(\bm x|0)} = \frac{f_{\bm X}(\bm x| \delta)}{f_{\bm X'}(\bm x)} \quad \text{and} \quad L'(\bm x|\delta) := \frac{\grad f_{\bm X}(\bm x| \delta)}{f_{\bm X'}(\bm x)} , $$
		where $\grad f_{\bm X}(\bm x| \delta) = \frac{\partial}{\partial \delta} f_{\bm X}(\bm x| \delta)$.

		\begin{theorem}[Local power for Konijn alternatives]\label{thm:powerK} 
			Suppose Assumptions \ref{assumption:U} and  \ref{assumption:K} hold. Then under $H_1$ as in \eqref{eq:KH0}, as $n \rightarrow \infty$,  
			\begin{align} \label{eq:H0KRdCovS}
				\{ n \RdCov_n(\bm X_S): S \in \cT \} \dto \left\{ \int_{\mathbb R^{d_S}} |Z_S(\bm t) + \nu_S(\bm t) |^2 \prod_{i\in S} \d w_i \right\}_{S \in \cT} , 
			\end{align} 
			with $Z_S(\bm t)$ as defined in \eqref{eq:ZS} and 
			\begin{align}\label{eq:meanKH1}
				\nu_S(\bm t)= h \cdot \E_{\bm X \sim H_0}\left[ L'(\bm X| 0 ) \prod_{i \in S} \left \{\E\left[e^{\i \langle t_{i}, R_{\mu_i}(X_i)\rangle} \right]-e^{\i\langle t_{i},R_{\mu_i}(X_i)\rangle} \right\}\right] , 
			\end{align} 
			where $\bm t = (t_i)_{i \in S}$ and $t_i \in \mathbb R^{d_i}$ for $i \in S$. 
			Consequently,  for any vector of non-negative weights $\bm C= (C_1, C_2, \ldots, C_r)$, under $H_1$ as in \eqref{eq:H0mixture}, 
			\begin{align}\label{eq:H0KRJdCov}
				n\RJdCov^{2}_n(\bm X; \bm C) \dto \sum_{s=2}^r C_s \sum_{\substack{S \subseteq \{1, 2, \ldots, r\} \\ |S| = s }} \int_{\mathbb R^{d_S}} |Z_S(\bm t) + \nu_S(\bm t)|^2 \prod_{i\in S} \d w_i . 
			\end{align} 
		\end{theorem}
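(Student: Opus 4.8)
The plan is to derive Theorem~\ref{thm:powerK} from the null result Theorem~\ref{thm:H0} by a Le Cam contiguity argument, following the same template as the proof of Theorem~\ref{thm:mixture}, with the mixture score $g/\prod_i f_{X_i}-1$ replaced by the Konijn score $L'(\cdot\mid 0)$. First I would invoke Lemma~\ref{lm:KH0}, which (under Assumption~\ref{assumption:K}) gives that $H_1:\delta=h/\sqrt n$ is contiguous to $H_0:\delta=0$; since the family $\{f_{\bm X}(\cdot\mid\delta)\}_{\delta\in\Theta}$ is quadratic mean differentiable with finite, nonsingular Fisher information $I(0)$, the log-likelihood ratio satisfies, under $H_0$,
\[
\Lambda_n := \sum_{a=1}^n \log L\big(\bm X^{(a)}\,\big|\,h/\sqrt n\big) = \frac{h}{\sqrt n}\sum_{a=1}^n L'\big(\bm X^{(a)}\,\big|\,0\big) - \frac{h^2}{2}I(0) + o_P(1),
\]
with $\E_{H_0}[L'(\bm X\mid 0)]=0$ and $\E_{H_0}[L'(\bm X\mid 0)^2]=I(0)$ (see \cite[Chapter~12]{lr}); in particular $\Lambda_n\dto N(-\tfrac{h^2}{2}I(0),\,h^2 I(0))$ under $H_0$.

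Next I would use the H\'ajek representation for the $\RdCov$ estimates (Proposition~\ref{ppn:ZnS}) to reduce to a tractable statistic. Under $H_0$, for each $S\in\cT$,
\[
n\,\RdCov_n(\bm X_S) = \int_{\mathbb R^{d_S}} |G_n^S(\bm t)|^2 \prod_{i\in S}\d w_i + o_P(1),
\]
where
\[
G_n^S(\bm t) := \frac{1}{\sqrt n}\sum_{a=1}^n \prod_{i\in S}\Big(\E[e^{\i\langle t_i,R_{\mu_i}(X_i)\rangle}] - e^{\i\langle t_i,R_{\mu_i}(X_i^{(a)})\rangle}\Big)
\]
is built from the fixed population rank maps $R_{\mu_i}$, hence is a normalized sum of i.i.d.\ terms. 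Since $H_1\triangleleft H_0$, this $o_P(1)$ remainder is also $o_P(1)$ under $H_1$, so it suffices to identify the limit of $\{\int_{\mathbb R^{d_S}}|G_n^S(\bm t)|^2\prod_{i\in S}\d w_i\}_{S\in\cT}$ under $H_1$.

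Viewing each $G_n^S$ as a random element of $L^2(\mathbb R^{d_S},\prod_{i\in S}\d w_i)$, I would establish the joint weak convergence, under $H_0$,
\[
\big(\{G_n^S\}_{S\in\cT},\,\Lambda_n\big) \dto \big(\{Z_S\}_{S\in\cT},\, N(-\tfrac{h^2}{2}I(0),\,h^2 I(0))\big),
\]
a jointly Gaussian limit, with $\{Z_S\}_{S\in\cT}$ the independent Gaussian processes of Definition~\ref{defn:ZSprocess} and cross-covariance
\[
\lim_{n\to\infty}\Cov_{H_0}\big(G_n^S(\bm t),\,\Lambda_n\big) = h\,\E_{H_0}\Big[L'(\bm X\mid 0)\prod_{i\in S}\big(\E[e^{\i\langle t_i,R_{\mu_i}(X_i)\rangle}] - e^{\i\langle t_i,R_{\mu_i}(X_i)\rangle}\big)\Big] = \nu_S(\bm t),
\]
using that the constant shift in $\Lambda_n$ is irrelevant for covariances and that $\E_{H_0}[L'(\bm X\mid 0)]=0$ removes the correction term. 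The finite-dimensional convergence is a multivariate CLT for the real and imaginary parts of $G_n^S(\bm t_1),\dots,G_n^S(\bm t_k)$ together with $\tfrac{h}{\sqrt n}\sum_a L'(\bm X^{(a)}\mid 0)$, and tightness in the weighted $L^2$ space is obtained exactly as in the proofs of Theorems~\ref{thm:H0} and~\ref{thm:mixture}, using the integrability of $w_i$ near $0$ and at infinity, a truncation argument, and the boundedness of the rank maps (which take values in $[0,1]^{d_i}$). Le Cam's third lemma (see \cite[Chapter~6]{vdv2000asymptotic}) then yields, under $H_1$, $\{G_n^S\}_{S\in\cT}\dto\{Z_S+\nu_S\}_{S\in\cT}$ with $\nu_S$ as in~\eqref{eq:meanKH1}, the mutual independence across $S$ being inherited from that of $\{Z_S\}_{S\in\cT}$ since Le Cam's third lemma leaves the covariance structure unchanged and only shifts the means. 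Finally, applying the continuous mapping theorem to the map $g\mapsto\int_{\mathbb R^{d_S}}|g(\bm t)|^2\prod_{i\in S}\d w_i$ on $L^2(\mathbb R^{d_S},\prod_{i\in S}\d w_i)$ gives~\eqref{eq:H0KRdCovS}, and summing over the subsets (a further continuous map) gives~\eqref{eq:H0KRJdCov}.

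The hard part will be the joint weak-convergence step: proving tightness of the processes $G_n^S$ in the weighted space $L^2(\mathbb R^{d_S},\prod_{i\in S}\d w_i)$ in the presence of the singular weights $w_i(t_i)\propto\|t_i\|^{-(1+d_i)}$, while simultaneously controlling $\Cov_{H_0}(G_n^S(\bm t),\Lambda_n)$ uniformly enough to interchange limit and integral and recover precisely $\nu_S$; this is exactly where the continuous differentiability of $\sqrt{f_{\bm X}(\cdot\mid 0)}$ and the finiteness and positivity of the Fisher information in Assumption~\ref{assumption:K} are used. Once this is in hand, the remainder is a routine transcription of the proof of Theorem~\ref{thm:mixture}.
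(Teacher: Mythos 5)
Your proposal is correct and follows essentially the same route as the paper's proof: contiguity via Lemma \ref{lm:KH0} (QMD and Le Cam's second lemma), reduction to the oracle i.i.d.\ process through Proposition \ref{ppn:ZnS}, a joint CLT for the finite-dimensional distributions together with the log-likelihood ratio, Le Cam's third lemma to obtain the mean shift $\nu_S$, and the same tightness/truncation machinery as in Theorems \ref{thm:H0} and \ref{thm:mixture} before applying the continuous mapping theorem. The cross-covariance computation and the observation that the covariance structure (hence the independence across $S$) is unchanged under the shift both match the paper's argument.
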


		\subsection{H\'ajek Representation and Proof Outline}

		The proofs of Theorems \ref{thm:mixture} and \ref{thm:powerK} are given in Appendix \ref{sec:local_power_proof}. One of the main ingredients of the proof is a {\it H\'ajek representation} for the empirical process associated with $\RdCov_n(\bm X_S)$, which shows that we can replace the empirical rank maps in the definition of $\RdCov_n(\bm X_S)$  with their population counterparts without altering its limiting distribution under $H_0$. Since this result could be of independent interest we summarized this as a proposition below. To describe the result, consider the following empirical process: 
		\begin{align}\label{eq:ZnS}
			Z_{S, n}(\bm t)
			& =  \frac{1}{n}\sum_{b=1}^{n}\left\{\prod_{i \in S} \left (\frac{1}{n}\sum_{a=1}^{n}e^{\i\langle t_i,\R_i(X_{i}^{(a)})\rangle}-e^{\i\langle t_i,\R_i(X_{i}^{(b)})\rangle} \right) \right\} , 
		\end{align} 
		where $S \in \cT$, $\bm t= (t_i)_{i \in S} \in \mathbb R^{d_S}$ and $t_i \in \mathbb R^{d_i}$, for $i \in S$. Recalling \eqref{eq:RdCovSn}, note that 
		\begin{align}\label{eq:RdCovZnS}
			\RdCov^2_n(\bm X_S) =  \int |Z_{S, n}( \bm t )|^2 \prod_{i \in S} \mathrm d w_i. \end{align} 
		One of the challenges of dealing with the process $Z_{S, n}(\bm t)$ is the dependence across the indices $i \in S$ arising from the empirical rank maps $\{\R_i(\cdot)\}_{i \in S}$. The H\'ajek representation result shows that this dependence is asymptotically negligible under $H_0$. Towards this, define the oracle version of the empirical process $Z_{S, n}$ where the empirical rank maps $\{\R_i(\cdot)\}_{i \in S}$ in replaced with their population analogues empirical rank maps $\{R_{\mu_i}(\cdot)\}_{i \in S}$
		\begin{align}\label{eq:ZS_oracle}
			Z_{S, n}^{\textrm{oracle}}(\bm t) 
			&
			= \frac{1}{n}\sum_{b=1}^{n}\left\{\prod_{i \in S} \left (\frac{1}{n} \sum_{a=1}^{n}e^{\i\langle t_i, R_{\mu_i} (X_{i}^{(a)})\rangle}-e^{\i\langle t_i R_{\mu_i}(X_{i}^{(b)})\rangle} \right) \right\} , 
		\end{align} 
		for $\bm t$ as in \eqref{eq:ZS}. We are now ready the state the  H\'ajek representation result which shows that the difference between $Z_{S,n}$ and its oracle version is $o_P(1/\sqrt n)$ under $H_0$.

		\begin{proposition}\label{ppn:ZnS}
			Suppose Assumption \ref{assumption:U} holds. Then under $H_0$ as $n \rightarrow \infty$, 
			$$\sqrt n \left(Z_{S, n}(\bm t) - Z_{S, n}^{\mathrm{oracle}}(\bm t) \right) = o_P(1),$$
			for all $S \in \cT$ and $\bm t \in \mathbb R^{d_S}$. 
		\end{proposition}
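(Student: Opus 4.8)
The plan is to extract the extra factor of $o_P(1)$ from the \emph{degenerate product structure} of $Z_{S,n}$ together with the mutual independence of the coordinates under $H_0$; the crude bound obtained by simply plugging population rank maps into a linear statistic is only $O_P(n^{-1/2})$, so this structure is where the whole gain has to come from. First I would fix $S\in\cT$, $\bm t=(t_i)_{i\in S}$, and write $\hat\psi_i^{(b)}:=\frac1n\sum_{a=1}^{n}e^{\i\langle t_i,\R_i(X_i^{(a)})\rangle}-e^{\i\langle t_i,\R_i(X_i^{(b)})\rangle}$, with $\psi_i^{\o,(b)}$ the same expression with $\R_i$ replaced by $R_{\mu_i}$, so that $Z_{S,n}(\bm t)=\frac1n\sum_{b=1}^{n}\prod_{i\in S}\hat\psi_i^{(b)}$ and $Z_{S,n}^{\o}(\bm t)=\frac1n\sum_{b=1}^{n}\prod_{i\in S}\psi_i^{\o,(b)}$; each factor has modulus at most $2$ and satisfies $\sum_{b}\hat\psi_i^{(b)}=\sum_{b}\psi_i^{\o,(b)}=0$. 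Two ingredients drive the proof: (i) under Assumption~\ref{assumption:U} the empirical transport maps are consistent, i.e.\ $\Delta_{i,n}:=\frac1n\sum_{a=1}^{n}\|\R_i(X_i^{(a)})-R_{\mu_i}(X_i^{(a)})\|^2\to0$ almost surely (the estimate also underlying the proof of Theorem~\ref{thm:consistency}; see \cite{deb2021multivariate,delbarrio2019}), and $\Delta_{i,n}$ is bounded by a deterministic constant since both maps take values in $[0,1]^{d_i}$; (ii) under $H_0$ the coordinates $\{X_i^{(a)}\}_a$ are independent over $i$, $R_{\mu_i}(X_i^{(a)})$ are exactly i.i.d.\ $\mathrm{Unif}([0,1]^{d_i})$, and $(\R_i(X_i^{(1)}),\dots,\R_i(X_i^{(n)}))$ is a uniform random permutation of $\cH_n^{d_i}$ (Proposition~\ref{ppn:H0distributionfree_OT}), hence $\E[\hat\psi_i^{(b)}]=\E[\psi_i^{\o,(b)}]=0$ and the per-coordinate covariances $\E[\psi_i^{\o,(b)}\overline{\psi_i^{\o,(b')}}]$, $\E[\hat\psi_i^{(b)}\overline{\hat\psi_i^{(b')}}]$ are $O(1)$ for $b=b'$ and $O(1/n)$ for $b\ne b'$ (elementary i.i.d.\ and random-permutation computations).

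Next I would telescope. Ordering $S=\{i_1<\dots<i_m\}$ with $m=|S|\ge2$, setting $\epsilon_i^{(b)}:=e^{\i\langle t_i,\R_i(X_i^{(b)})\rangle}-e^{\i\langle t_i,R_{\mu_i}(X_i^{(b)})\rangle}$ and $\bar\epsilon_i:=\frac1n\sum_a\epsilon_i^{(a)}$, and noting $\hat\psi_{i_l}^{(b)}-\psi_{i_l}^{\o,(b)}=\bar\epsilon_{i_l}-\epsilon_{i_l}^{(b)}$, the identity $\prod_l a_l-\prod_l b_l=\sum_l\big(\prod_{j<l}b_j\big)(a_l-b_l)\big(\prod_{j>l}a_j\big)$ yields
\begin{align*}
Z_{S,n}(\bm t)-Z_{S,n}^{\o}(\bm t) &=\sum_{l=1}^{m}\left(\bar\epsilon_{i_l}\cdot\frac1n\sum_{b=1}^{n}\Xi_l^{(b)}-\frac1n\sum_{b=1}^{n}\epsilon_{i_l}^{(b)}\,\Xi_l^{(b)}\right),\\
\Xi_l^{(b)} &:=\Big(\prod_{j<l}\psi_{i_j}^{\o,(b)}\Big)\Big(\prod_{j>l}\hat\psi_{i_j}^{(b)}\Big).
\end{align*}
The crucial structural point is that $\Xi_l^{(b)}$ is a function of $\{X_{i_j}^{(a)}:j\ne l\}$ only, hence independent of the $i_l$-th coordinate under $H_0$; and since $m\ge2$ at least one index $j\ne l$ is present, so by (ii) the product factorizes over the independent coordinates and, each factor being centered, $\E[\Xi_l^{(b)}]=0$, while $\E[\Xi_l^{(b)}\overline{\Xi_l^{(b')}}]$ is $O(1)$ for $b=b'$ and $O(1/n^{m-1})$ for $b\ne b'$.

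Finally I would bound the two pieces after multiplying by $\sqrt n$. For the first: Cauchy--Schwarz gives $|\bar\epsilon_{i_l}|\le\|t_{i_l}\|\sqrt{\Delta_{i_l,n}}\to0$ a.s., while $\E\big|\frac1n\sum_b\Xi_l^{(b)}\big|^2=O(1/n)$ (the diagonal contributes $O(1/n)$ and the off-diagonal $O(1/n^{m-1})\le O(1/n)$), so $\sqrt n\,\frac1n\sum_b\Xi_l^{(b)}=O_P(1)$ and the product with $\bar\epsilon_{i_l}$ is $o_P(1)$. For the second: conditioning on the $i_l$-th coordinate freezes the $\epsilon_{i_l}^{(b)}$, the conditional mean vanishes because $\E[\Xi_l^{(b)}]=0$, and the conditional second moment $\frac1{n^2}\sum_{b,b'}\epsilon_{i_l}^{(b)}\overline{\epsilon_{i_l}^{(b')}}\,\E[\Xi_l^{(b)}\overline{\Xi_l^{(b')}}]$, split into $b=b'$ and $b\ne b'$ and combined with $\sum_b|\epsilon_{i_l}^{(b)}|^2\le\|t_{i_l}\|^2 n\Delta_{i_l,n}$, is at most $C(\bm t,m)\,\Delta_{i_l,n}/n$; hence $\E\big|\sqrt n\,\frac1n\sum_b\epsilon_{i_l}^{(b)}\Xi_l^{(b)}\big|^2\le C(\bm t,m)\,\E[\Delta_{i_l,n}]\to0$ by bounded convergence, so this term is $o_P(1)$ as well, and summing over $l$ completes the proof for each fixed $S$ and $\bm t$. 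The hard part is precisely that the only usable control of the transport maps, $\Delta_{i,n}\to0$, carries no rate, so no $\sqrt n$ can be absorbed through a triangle inequality; the whole argument hinges on the degeneracy of $Z_{S,n}$ (each factor is mean-zero under $H_0$) and coordinatewise independence under $H_0$, which is exactly what upgrades the telescoped error from order $\Delta_{i_l,n}$ to order $\Delta_{i_l,n}/n$, together with the bookkeeping of which factor of $\Xi_l^{(b)}$ contributes an $O(1/n)$ covariance versus an $O(1)$ second moment. The process-level $L^2(\prod_{i\in S}\d w_i)$ version needed in the local power analysis follows from the same estimates, using in addition that $\hat\psi_i^{(b)},\psi_i^{\o,(b)},\epsilon_i^{(b)}$ are $O(\|t_i\|)$ near the origin to secure integrability against $\d w_i$.
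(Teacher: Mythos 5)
Your argument is correct, but it is organized quite differently from the paper's. The paper proves the statement by a pure second-moment cancellation: it writes $\E_{H_0}\big|\sqrt n\,(Z_{S,n}(\bm t)-Z_{S,n}^{\mathrm{oracle}}(\bm t))\big|^2$ as the sum of the two ``variance'' terms and the two cross terms, shows via the exact finite-$n$ covariance computations of Lemma \ref{lm:Zcovariance} (random permutation of the Halton grid for the empirical ranks, i.i.d.\ uniforms for the oracle ranks) that the diagonal terms tend to $C_S(\bm t,\bm t)$, reduces the cross terms to the diagonal product $\prod_{i\in S}\E[Z_{S,n}(\bm t,1,i)\bar Z_{S,n}^{\mathrm{oracle}}(\bm t,1,i)]$ using the exact centering identity $\sum_{b}\bar Z_{S,n}^{\mathrm{oracle}}(\bm t,b,i)=0$, and then uses the a.s.\ consistency of the empirical transport maps plus dominated convergence to show the cross terms converge to the same limit, so the four pieces cancel. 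You never compute the limiting covariance: you telescope the product difference coordinate by coordinate and kill each of the $|S|$ resulting terms separately, the first factor by $|\bar\epsilon_{i_l}|\le\|t_{i_l}\|\sqrt{\Delta_{i_l,n}}\to0$ against an $O_P(1)$ normalized sum, and the second by conditioning on the $i_l$-th coordinate so that the conditional mean of $\Xi_l^{(b)}$ vanishes and the conditional second moment is of order $\Delta_{i_l,n}/n$. The structural inputs are the same in both proofs (centered factors, independence across coordinates under $H_0$, $O(1/n)$ off-diagonal covariances for both the with- and without-replacement factors, rateless consistency of the rank maps plus bounded convergence), but your decomposition buys an explicit bound of the form $\E\big|\sqrt n\, n^{-1}\sum_b\epsilon_{i_l}^{(b)}\Xi_l^{(b)}\big|^2\lesssim\E[\Delta_{i_l,n}]$ and makes visible exactly where the improvement over the naive $O_P(n^{-1/2})$ comes from (the degeneracy of $\Xi_l^{(b)}$), whereas the paper's route is shorter given that Lemma \ref{lm:Zcovariance} is already in hand from the proof of Theorem \ref{thm:H0}, at the cost of resting on an exact asymptotic cancellation of four second-moment terms. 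Both arguments establish the claim pointwise in $\bm t$, which is what the proposition asserts.
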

		
		Once the H\'ajek representation is established the limiting distribution of $\{\RdCov(\bm X_S)\}_{S \in \cT}$ under contiguous local alternatives can be obtained as follows: 
		
		\begin{itemize} 
			
			\item The first step is to show the alternatives considered in \eqref{eq:H0mixture} and \eqref{eq:KH0} are mutually contiguous with $H_0$. While this is well-known for the case of 2 variables and relatively straightforward to extend to multiple variables for the mixture alternative \eqref{eq:H0mixture}, for the Konjin alternative \eqref{eq:KH0} establishing contiguity is much more involved. Invoking Le Cam's second lemma \cite{vdv2000asymptotic}, we show in Lemma \ref{lm:KH0} that Konjin alternatives as in \eqref{eq:matrixA} are contiguous under Assumption \ref{assumption:K}.

			\item Next, we derive the joint distribution of the collection of empirical processes $\{\sqrt n  Z_{S, n}^{\mathrm{oracle}}(\bm t) \}_{S \in \cT}$ and the log-likelihood under $H_0$. Proposition \ref{ppn:ZnS} together with contiguity and Le Cam's third lemma \cite{vdv2000asymptotic} gives the joint distribution of $\{\sqrt n  Z_{S, n}^{\mathrm{oracle}}(\bm t) \}_{S \in \cT}$ under $H_1$ as in \eqref{eq:H0mixture} and \eqref{eq:KH0}. The results in \eqref{eq:H0NRdCovS} and \eqref{eq:H0KRdCovS} then follow by the representation in \eqref{eq:RdCovZnS} and the continuous mapping theorem. 
		\end{itemize}

		\begin{remark} 
			Note that, as expected, one obtains the limiting null distribution of $n \RJdCov_n(\bm X; \bm C)$ by substituting $h =0 $ in Theorems \ref{thm:mixture} and \ref{thm:powerK}. In fact the H\'ajek representation result provides an alternative strategy to proving of the asymptotic null distribution, which does require the combinatorial CLT for multiple permutations. Nevertheless, we present both approaches because the technical by-products emerging from them, namely the H\'ajek representation and the combinatorial CLT, are results of independent interest. In particular, the scope of the combinatorial CLT with multiple permutations goes beyond rank-based methods to other nonparametric independence tests, such as methods based on geometric graphs \cite{friedman1983graph}. 
		\end{remark}

		To validate the asymptotic results discussed above we present a finite sample  simulation comparing the empirical power of the $\RJdCov$ based test with test based on $\JdCov$ studied in \cite{chakraborty2019distance}. Towards this we consider the following 3 distributions: (1) the multivariate normal (MVN), (2) the third-order multivariate normal copula (MVN copula),\footnote{A random variable $X$ is said to follow a $p$-dimensional $r$-th order, for $r > 0$, multivariate normal copula if $X \stackrel{D} = Z^r$, where $Z \sim N_p(\bm \mu, \bm \Sigma)$} as in \cite[Example 1]{chakraborty2019distance},  and (3) the multivariate student distribution (MVT) with 5 degrees of freedom, with dimensions $d_1=d_2=d_3=2$. In all the 3 cases the mean vectors are set to zero and the covariance matrix 
		$\Sigma=((\sigma_{ij}))_{1 \leq i,j \leq 2}$ is $\sigma_{11} = \sigma_{22}=1$ and $\sigma_{12}=\sigma_{21} = 0.5$.  		We set the number of variables $r=3$, the matrices $\bm M_{i, j}= \bm I_2$, for $1 \leq i \ne j \leq 3$ and compute the matrix $\bm  A_{\delta}$ as in \eqref{eq:matrixA} by varying $\delta$ as follows: $ \delta \in \{0,0.4,0.8,1.2,1.6\}$ for the Gaussian case, $\delta \in \{0,0.1,0.2,0.3,0.4\}$ for the  Gaussian copula case, and $\delta \in \{0,0.25,0.5,0.75,1\}$ for the multivariate $t$-distribution. 
		Table \ref{fig:local_power} shows the empirical power (over 200 repetitions) with sample size $n=300$ as a function of the signal strength ($\delta$ varying as above), for tests based on the $\JdCov$ and the $\RJdCov$. The plots show that our method has comparable power with $\JdCov$  for the Gaussian and $t$-distributions, while it is significantly better than the $\JdCov$ in the more heavy-tailed Gaussian copula model, illustrating the attractive efficiency properties of  rank-based distribution-free methods.

		\begin{figure*}
			\centering
			\begin{subfigure}[h]{0.3\textwidth}
				\centering
				\includegraphics[width=\textwidth]{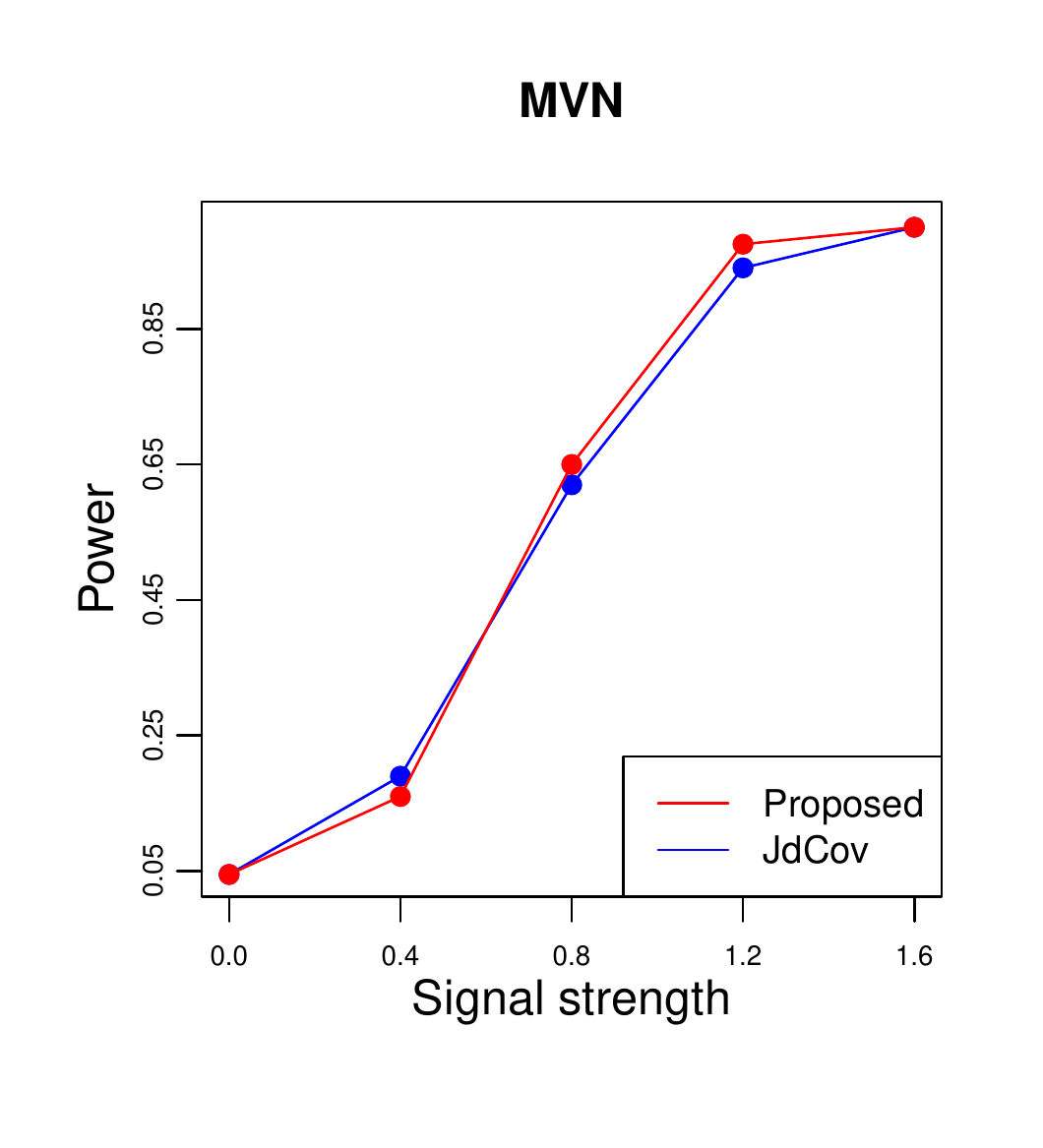} \\
				\small{(a)}
			\end{subfigure}
			\begin{subfigure}[h]{0.3\textwidth}  
				\centering
				\includegraphics[width=\textwidth]{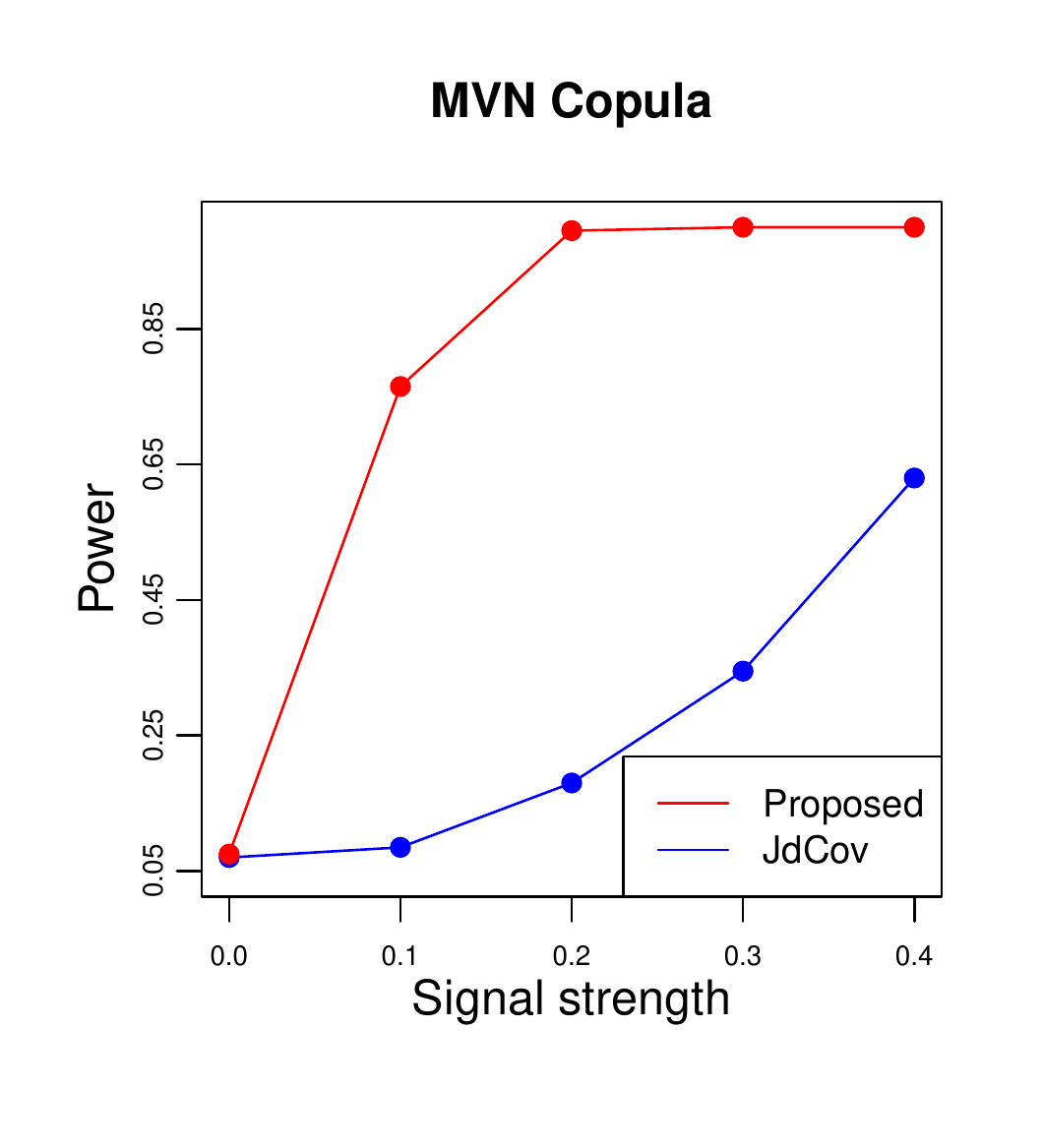} \\ 
				\small{(b)}
			\end{subfigure}
			\begin{subfigure}[h]{0.3\textwidth}  
				\centering
				\includegraphics[width=\textwidth]{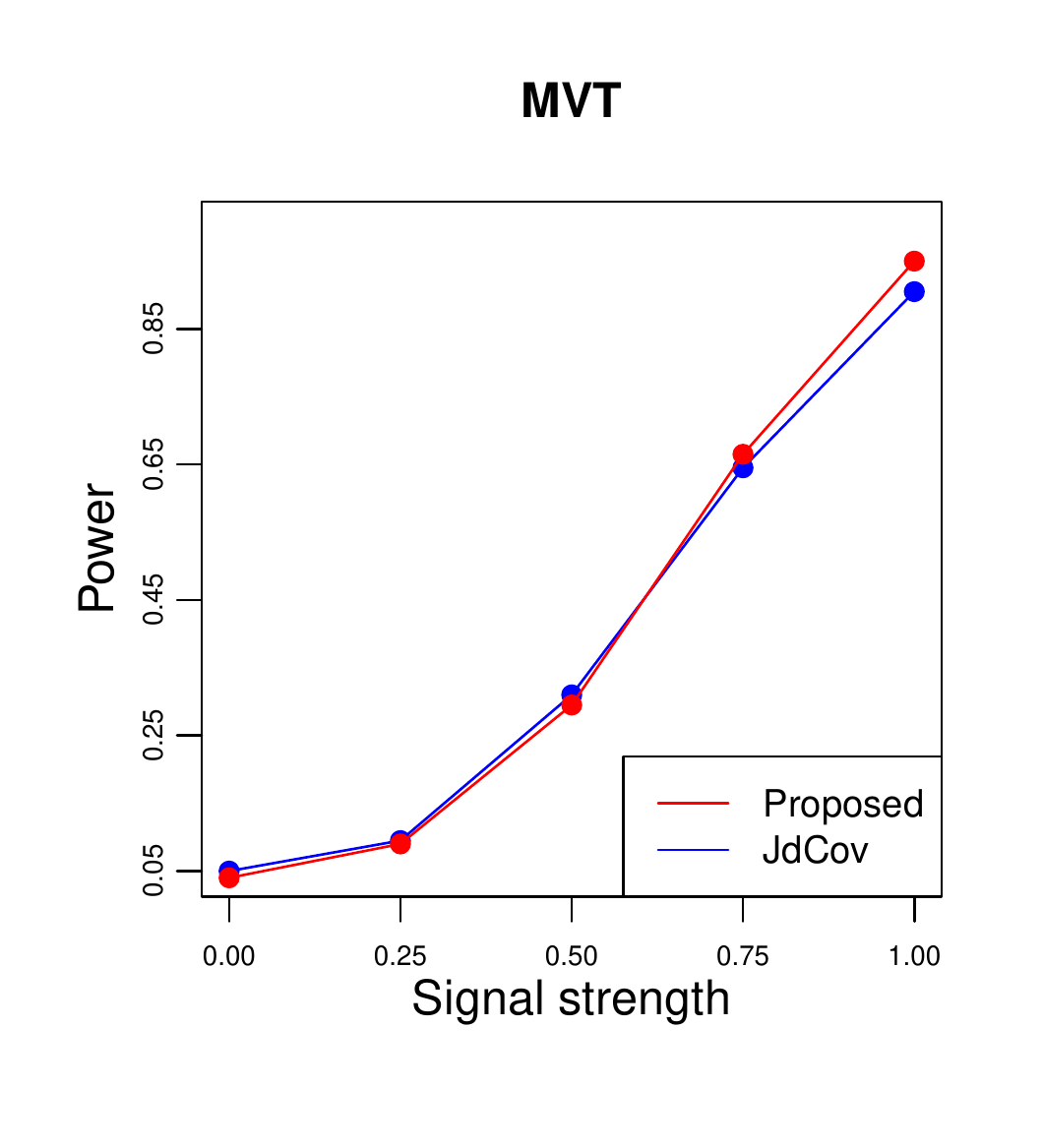} \\ 
				\small{(c)}
			\end{subfigure}
			\caption{\small{Empirical power of against Konijn alternatives with 3 variables: (a) bivariate normal, (b) bivariate normal copula model of order $3$, and (c) bivariate $t$-distribution with $5$ degrees of freedom. } }
			\label{fig:local_power} 
		\end{figure*}

		\section{Robust Independent Component Analysis}\label{sec:ICA}

		In this section, we apply the $\RJdCov$ measure to develop a robust method for independent component analysis (ICA).  
		Towards this, suppose $X \in \mathbb R^r$ be a random vector satisfying the following the moment conditions: 
			\begin{assumption}\label{assumption:Zmoment} The random vector $X$ has a continuous distribution with $\E[ X]=0$ and $\E[\|X\|^{2}]<\infty$. 
		\end{assumption}

The ICA model assume that there is a random vector $S\in\mathbb{R}^{r}$ with jointly independent coordinates such that $X = \bm M S$, where $\bm M$ is a nonsingular mixing matrix. Without loss of generality, we assume $S$ is standardized such that $\E[S] = 0$ and $\Var[S] = \bm I_{r}$. Then by the spectral decomposition:  
$$\Sigma_{X} = \Var[X] = \bm P \Lambda \bm P^\top ,$$ 
where $\bm P$ and $\Lambda$  are the $r \times r$ matrix of eigenvectors and the diagonal matrix of the corresponding eigenvalues of $\Sigma_{\bm X}$, respectively.  
Define $\bm O:=\Lambda^{-\frac{1}{2}} \bm P^{\top}$ and $Z = \bm O X$. Note that $\E[Z]=0$ and $\Var[Z] = \bm I_r$. Therefore, when the ICA model holds we can write 
\begin{align}\label{eq:WZ}
S=\bm M^{-1}X=\bm M^{-1} \bm O^{-1}Z= \bm W Z, 
\end{align} 
		where $\bm W= \bm M^{-1} \bm O^{-1}$. This implies, $\bm I=\Var[S]= \bm W \Var[Z] \bm W^{\top}=\bm W \bm W^{\top}$, that is, $\bm W$ is an orthogonal matrix.

		Following \citet{matteson2017independent} we restrict $\bm W\in\SO(r)$, where $\SO(r)$ denotes the set $r \times r$ orthogonal matrices with determinant 1 (the rotation group) and parameterize $\bm W$ by a vector $\theta$, where $\theta$  is a vectorized triangular array of rotation angles of length $r(r-1)/2$ indexed by $\{i, j: 1\leq i<j\leq r\}$, 
		as follows: 
					\begin{align}\label{eq:rotationrepresentation}
					\bm W = \bm W(\theta)= \bm Q^{(r-1)}\cdots \bm Q^{(1)}, \quad \text{with } \bm Q^{(i)}= \bm Q_{i,r}(\theta_{i,r})\cdots \bm Q_{i,i+1}(\theta_{i,i+1}) , 
		\end{align}
		where $\bm Q_{i,j}(\theta)$ is the identity matrix $\bm I_r$ with the $(i,i)$ and $(j,j)$ elements replaced by $\cos(\theta)$, the $(i,j)$ element replaced by $-\sin(\theta)$ and the $(j,i)$ element replaced by $\sin(\theta)$. To ensure the existence and uniqueness of the representation \eqref{eq:rotationrepresentation} define 
		\begin{align}\label{eq:rotationtheta}
			\Theta=\Bigg\{
			\theta_{i,j}:
			\begin{cases}
				0\leq \theta_{1,j}<2\pi, \\
				0\leq \theta_{i,j}<\pi, & i\neq 1.
			\end{cases}
			\Bigg\}.
		\end{align}
 Then by \cite[Theorem 2.3.2]{matteson2008statistical} there exists a unique inverse map of $\bm W\in \mathcal{SO}(r)$ into $\theta\in \Theta$ such that the mapping is assured to be continuous if either all elements on the main-diagonal of $W$ are positive, or all elements of $W$ are nonzero  (see \cite{matteson2017independent,matteson2008statistical} for more details on such decompositions).

Another identification issue with the ICA model is the sign and order of the independent components. Suppose $\bm P_{\pm}$ is a signed permutation matrix and then note that $X=\bm M S=\bm M \bm P^{\top}_{\pm} \bm P_{\pm}S$ in which case $\bm P_{\pm}S$ are the new independent components and $\bm M \bm P^{\top}_{\pm}$ is the new mixing matrix. This necessitates considering metrics that are invariant to the scale, sign, and order of the independent components, when comparing different estimates (see Section \ref{sec:ICAsimulation} for details).

		In general the ICA model \eqref{eq:WZ} is misspecified, in which case 
		$\theta$ is chosen such that the components of $\bm W(\theta) Z$ are as close to mutually independent as possible. To this end, the $\RJdCov$ provides a robust  measure for quantifying deviations from joint independence. Hence, one choose $\theta$ by minimizing the following objective function:  	
		 \begin{align}\label{eq:minimizerICA}
			\theta_0={\arg\min}_{\theta}\RJdCov^{2}(\bm W(\theta)Z; c).
		\end{align} 
		Recall that the (population) rank map in the 1-dimensional case is the distribution function, hence, $\RJdCov^{2}(\bm W(\theta)Z; c)$ can be computed  by replacing $R_{\mu_i}$ in \eqref{eq:RdCov} with $F_{W_i(\theta)^{\top}Z}$,  the distribution function of $F_{W_i(\theta)^{\top}Z}$, for $1 \leq i \leq r$, where  $W_i(\theta)$ denotes the transpose of $i$-th row of $\bm W(\theta)$.

		\subsection{$\RJdCov$-Based ICA Estimator} 
		
		We now discuss how one can estimate $\theta$ based on samples by minimizing the empirical version of \eqref{eq:minimizerICA}. Towards this, suppose $\bm X_n = \{X_1, X_2, \ldots X_n\}$ be i.i.d. samples distributed as $X$ satisfying Assumption \ref{assumption:Zmoment}. We define approximately uncorrelated observations $\hat{\bm Z}_n = \bm X_n \hat{\bm O}^\top$, where $\hat {\bm O}_n$ is the sample covariance matrix  of $\bm X_n$.  Assumption \ref{assumption:Zmoment} implies that $\Var[\hat{\bm Z}_n] \stackrel{a.s.} \rightarrow \bm I_r$. To simplify notation, we omit the above steps described and assume $\bm Z_n = \{Z_1, Z_2, \ldots Z_n\}$ are i.i.d. samples distributed as $Z$, which has zero mean vector and identity covariance matrix. Also, we arrange  $ \{Z_1, Z_2, \ldots Z_n\}$ such that the $i$-th row of $\bm Z_n$ is $Z_i^\top$, that is, $\bm Z_n$ has dimension $n \times r$.


		 To estimate $\theta$ from $\bm Z_n$ we replace $\RJdCov^{2}$ in \eqref{eq:minimizerICA} with its empirical version
		\begin{align}\label{eq:thetaestimation}
			\hat{\theta}_n={\arg\min}_{\theta}\RJdCov^{2}_n(\bm Z_n \bm W(\theta)^\top; c) 
		\end{align} 
			Note that for any value of $\theta$ the objective function $\RJdCov^{2}_n(\bm Z_n \bm W(\theta)^\top; c)$ can be computed easily, since in dimension 1 the empirical rank map is the empirical distribution function. Specifically, the empirical rank map for the $i$-th component is  $\hat F_{i, n}(t|\theta) = \frac{1}{n} \sum_{a=1}^n \bm 1\{ W_{i}^{\top}(\theta)Z_a \leq t \}$. To establish the consistency of $\hat{\theta}_n$ we need the following definition:

		\begin{definition}
			We say $\hat{\theta}_n$ {\it is consistent for $\theta$ over $\mathcal{SO}(r)$}  if the function $\bm W(\theta)$ is continuous in $\theta$ and $\mathcal{D}(\bm W(\hat{\theta}_n), \bm W(\theta))\stackrel{a.s.}{\rightarrow}0$, as $n\rightarrow\infty$, for any metric $\mathcal{D}$ on $\mathcal{SO}(r)$ satisfying $\mathcal{D}(\bm W, \bm A)=0$ if and only if there exists a signed permutation matrix $\bm P_{\pm}$ such that $\bm W= \bm P_{\pm} \bm A$.
		\end{definition}

		With this definition we can now state the consistency result of $\hat{\theta}_n$.  
		Towards this, recall the definition of $\Theta$ from \eqref{eq:rotationtheta} and 
		let $\bar{\Theta}$ denote a sufficiently large compact subset of the space $\Theta$.

		\begin{theorem}\label{thm:independentcomponent}
			Suppose Assumption \ref{assumption:Zmoment} holds and the density function of $Z$ is uniformly bounded. Moreover, assume there exists a unique minimizer $\theta_0\in\bar{\Theta}$ for \eqref{eq:minimizerICA} and $\bm W(\theta)$ satisfies the conditions for a unique continuous inverse to exist, then $\hat{\theta}_n$ is consistent for $\theta_0$ over $\mathcal{SO}(d)$. 
		\end{theorem}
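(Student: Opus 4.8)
The plan is to treat Theorem \ref{thm:independentcomponent} as a standard $M$-estimation consistency problem. Write $M(\theta) := \RJdCov^2(\bm W(\theta) Z; c)$ for the population criterion in \eqref{eq:minimizerICA} and $M_n(\theta) := \RJdCov^2_n(\bm Z_n \bm W(\theta)^\top; c)$ for its empirical counterpart in \eqref{eq:thetaestimation}, both viewed as functions on the compact set $\bar\Theta$. The argument proceeds in four steps: (i) uniform almost-sure convergence $\sup_{\theta \in \bar\Theta} |M_n(\theta) - M(\theta)| \to 0$; (ii) continuity of $M$ on $\bar\Theta$ together with the separation $\inf_{\theta:\, \|\theta - \theta_0\| \ge \varepsilon} M(\theta) > M(\theta_0)$ for each $\varepsilon > 0$, which follows from compactness of $\bar\Theta$ and the assumed uniqueness of the minimizer $\theta_0$; (iii) the classical argmax argument combining (i) and (ii) to obtain $\|\hat\theta_n - \theta_0\| \to 0$ almost surely; and (iv) transferring this to $\mathcal D(\bm W(\hat\theta_n), \bm W(\theta_0)) \to 0$ using continuity of $\theta \mapsto \bm W(\theta)$ and of $\mathcal D$.

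For step (i) I would decompose $\RJdCov^2_n$ as the weighted sum over subsets $S$ with $|S| \ge 2$ of the terms $\RdCov^2_n((\bm W(\theta) Z)_S)$, so it suffices to control each term uniformly in $\theta$. In this one-dimensional setting the empirical rank map for the $i$-th rotated coordinate equals, up to the grid discretization permitted by Assumption \ref{assumption:U}, the empirical distribution function $\hat F_{i,n}(t \mid \theta) = \tfrac1n \sum_{a=1}^n \bm 1\{ W_i(\theta)^\top Z_a \le t\}$ of the projected sample, while the population rank map is the distribution function $F_{W_i(\theta)^\top Z}$ of the scalar $W_i(\theta)^\top Z$. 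Since the class of half-space indicators $\{ z \mapsto \bm 1\{ w^\top z \le t\} : w \in S^{r-1},\, t \in \mathbb R\}$ is a VC class, the uniform Glivenko--Cantelli theorem gives $\sup_{\theta \in \bar\Theta} \sup_{t \in \mathbb R} | \hat F_{i,n}(t \mid \theta) - F_{W_i(\theta)^\top Z}(t)| \to 0$ almost surely; the assumed uniform boundedness of the density of $Z$ guarantees that each $W_i(\theta)^\top Z$ has a continuous distribution (so the population rank is genuinely $\mathrm{Unif}([0,1])$-distributed and ties occur with probability zero) with a modulus of continuity uniform in $\theta$. Feeding this sup-norm control of the rank maps, together with the law of large numbers for the empirical measure of $\bm Z_n$, into the functional form of $\RdCov^2$ — most conveniently its representation analogous to \eqref{eq:RJdCovW}--\eqref{eq:W} in terms of expected Euclidean distances between the bounded rank vectors in $[0,1]$, which is manifestly bounded and continuous, or alternatively the characteristic-function integral \eqref{eq:RdCovdS} whose $\d w_i$-weighted integrand admits an integrable envelope — yields $\sup_{\theta \in \bar\Theta} | \RdCov^2_n((\bm W(\theta) Z)_S) - \RdCov^2((\bm W(\theta) Z)_S)| \to 0$ almost surely, and hence the uniform convergence of $M_n$. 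For fixed $\theta$ this reduces to Theorem \ref{thm:consistency}; the new ingredient is uniformity over $\theta$, for which the VC bound is the workhorse.

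For step (ii), as $\theta' \to \theta$ in $\bar\Theta$ we have $W_i(\theta')^\top Z \to W_i(\theta)^\top Z$ almost surely and in law, so $F_{W_i(\theta')^\top Z} \to F_{W_i(\theta)^\top Z}$ uniformly (the limit being continuous), and dominated convergence in \eqref{eq:RdCovdS} makes $\theta \mapsto M(\theta)$ continuous on $\bar\Theta$; with the assumed uniqueness of $\theta_0$ and compactness of $\bar\Theta$ this gives the separation. Step (iii) is then textbook: any measurable minimizer $\hat\theta_n$ of $M_n$ satisfies $M_n(\hat\theta_n) \le M_n(\theta_0)$, whence $M(\hat\theta_n) \le M(\theta_0) + 2 \sup_{\theta \in \bar\Theta} |M_n(\theta) - M(\theta)| \to M(\theta_0)$ almost surely, which together with the separation forces $\hat\theta_n \to \theta_0$ almost surely. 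For step (iv), under the stated conditions guaranteeing a unique continuous inverse, $\bm W(\cdot)$ is continuous on $\bar\Theta$, so $\bm W(\hat\theta_n) \to \bm W(\theta_0)$ in operator norm; and any metric $\mathcal D$ on $\mathcal{SO}(r)$ of the prescribed type — for instance $\mathcal D(\bm W, \bm A) = \min_{\bm P_\pm} \| \bm W - \bm P_\pm \bm A\|$ over signed permutation matrices $\bm P_\pm$, which is continuous in its arguments and vanishes exactly on the orbit $\{ \bm P_\pm \bm A\}$ — then satisfies $\mathcal D(\bm W(\hat\theta_n), \bm W(\theta_0)) \to 0$ almost surely. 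This is precisely consistency of $\hat\theta_n$ for $\theta_0$ over $\mathcal{SO}(r)$.

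The main obstacle I anticipate is step (i). Unlike Theorem \ref{thm:consistency}, where the marginal laws are fixed, here the rank maps depend on the optimization variable $\theta$, so one must control the rank-transformed empirical process \emph{uniformly in $\theta$} before propagating it through the nonlinear $\RdCov$ functional, while simultaneously absorbing the discrepancy between $\mathrm{Unif}([0,1])$ and the (possibly deterministic) one-dimensional grid $\{1/n, 2/n, \ldots, 1\}$ used to build the ranks. The uniform Glivenko--Cantelli bound for half-spaces is what makes this tractable, but the remaining estimates — in particular handling the singularity of the weight $\d w_i$ near the origin uniformly in $\theta$, or equivalently verifying the required equicontinuity of the distance-based representation — still need to be carried out with some care.
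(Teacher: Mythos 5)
Your skeleton is the same as the paper's: both treat this as an $M$-estimation problem on the compact parameter set, reduce everything to uniform almost-sure convergence of $\RJdCov^2_n(\bm Z_n\bm W(\theta)^\top;c)$ to $\RJdCov^2(\bm W(\theta)Z;c)$, and then invoke a well-separation/argmax argument (the paper cites Kosorok's Theorem 2.12) before transferring to the signed-permutation-invariant metric $\mathcal D$ via continuity of $\theta\mapsto\bm W(\theta)$. Where you differ is in the key uniform-convergence lemma. The paper proves it by a stochastic Arzel\'a--Ascoli argument: pointwise consistency is imported from Theorem \ref{thm:consistency}, and stochastic equicontinuity in $\theta$ is established by telescoping the product representation \eqref{eq:RdCovWXS}, using the Lipschitz continuity of the population CDF of $W_i(\theta)^\top Z$ (this is exactly where the uniformly bounded density enters, via the convolution bound) together with the SLLN for $\frac1n\sum_k\|Z_k\|$ under Assumption \ref{assumption:Zmoment}. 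You instead propose to control the rank maps uniformly in $\theta$ via a VC/uniform Glivenko--Cantelli bound for half-space indicators and then push this through the bounded distance representation. That route is legitimate and in some respects cleaner, since it makes the uniformity over $\theta$ explicit at the level of the empirical CDFs rather than hiding it in an equicontinuity modulus.

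The genuine remaining hole in your sketch is the second half of step (i). After you replace $\hat R_i(W_i(\theta)^\top Z_a)$ by $F_{W_i(\theta)^\top Z}(W_i(\theta)^\top Z_a)$ uniformly in $\theta$, the resulting ``oracle'' quantity is still a V-statistic-type average $\frac{1}{n^2}\sum_{a,b}\prod_{i\in S}\cE_i^{\mathrm{oracle}}(a,b;\theta)$ of functions of the data indexed by $\theta$; an ordinary law of large numbers gives only pointwise-in-$\theta$ convergence of this to $\RdCov^2((\bm W(\theta)Z)_S)$, not the supremum over $\bar\Theta$ that your argmax step needs. You must either run a Glivenko--Cantelli argument for the class $\{z\mapsto F_{W_i(\theta)^\top Z}(W_i(\theta)^\top z):\theta\in\bar\Theta\}$ (Lipschitz in $\theta$ with envelope proportional to $\|z\|$, integrable by Assumption \ref{assumption:Zmoment}, so bracketing works) or prove the equicontinuity of $\theta\mapsto$ oracle average directly, which is essentially what the paper's Lemma \ref{lem:RWthetaZ} does; your closing sentence acknowledges this but does not carry it out, and it is the substantive part of the lemma, not a routine estimate. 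Two smaller points: (a) the identification between the empirical rank map and $\hat F_{i,n}(\cdot\,|\,\theta)$ must be controlled uniformly over $\theta$, including the exceptional directions where projected ties occur, which your ``up to grid discretization'' phrase glosses over; (b) your separation step works with uniqueness of $\theta_0$ as a point of $\bar\Theta$, whereas the RJdCov criterion is invariant under signed permutations of the rows of $\bm W(\theta)$, which is why the paper sets up the argument on the quotient space $\mathcal{SO}(r)_{\mathcal D}$ rather than in $\theta$-space; under the theorem's literal uniqueness hypothesis your reading goes through and in fact yields the stated $\mathcal D$-consistency, but the quotient formulation is the one consistent with the non-identifiability the theorem is designed to accommodate.
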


		The proof of Theorem \ref{thm:independentcomponent} is given in Appendix \ref{sec:icapf}. This shows that the proposed estimator $\hat \theta_n$ is consistent for $\theta_0$ for metrics which are invariant to the non-identifiability issues of the ICA problem.  The assumption of uniform boundedness is required to guarantee the distribution function is Lipschitz, which ensures the equicontinuity of the objective function.  
		
		\subsection{Practical Implementation}\label{sec:implementation}
		
		One of the challenges in directly implementing gradient based methods to minimize the objective in \eqref{eq:thetaestimation} is that, the empirical distribution functions are not differentiable with respect to $\theta$. \citet{matteson2017independent} suggested a practical way to circumvent this issue by estimating the univariate distribution functions $F_{W_i(\theta)^{\top}Z}$, for $1 \leq i \leq r$, using kernel-based  methods. Specifically, we consider the following problem: 
		\begin{align}\label{eq:kernelICA}
			\tilde{\theta}_n=\arg\min_{\theta}\JdCov^2_n( (\tilde{F}_1(\bm Z_n W_1(\theta)),\ldots,\tilde{F}_r(\bm Z_n W_r(\theta)) ;c), 
		\end{align}
		with  $\tilde{F}_1(\bm Z_nW_1(\theta)) = (\check{F}_1(W_1(\theta)^{\top} Z_1),\ldots,\tilde{F}_1(W_1(\theta)^{\top} Z_n))^\top $ and 
		\begin{align}\label{eq:cdfG}
		     \tilde{F}_i(s)=\frac{1}{n}\sum_{a=1}^nG\bigg(\frac{s-W_i(\theta)^{\top}Z_a}{h_{n}(i)}\bigg) , 
		\end{align}
		where $G$ is the integral of a density kernel and $h_{n}(i)$ is a data dependent bandwidth for the $i$-th component, for $1 \leq i \leq r$ (see Section 2 in \cite{matteson2017independent} for practical choices of $h_{n}(i)$). The optimization problem \eqref{eq:kernelICA} can now be solved using a gradient-based method (see Section \ref{sec:gradICA} for the computation of the gradient).  
		
		The advantages of using kernel-based estimates is that they have nice theoretical properties, which can be used to establish the consistency of the estimate $\tilde{\theta}_n$. In particular, if we assume that $h_{n}(i)$ is a positive measurable function of the $i$-th coordinates of $Z_1, Z_2, \ldots, Z_n$ such that $h_{n}(i)\stackrel{a.s.}{\rightarrow}0$ as $n\rightarrow\infty$, then 		
			\begin{align*}
				\sup_{s\in\mathbb{R}}\big|\tilde{F}_i(s)-F_i(s)\big|\stackrel{a.s.}{\rightarrow}0, 
			\end{align*}
	as $n \rightarrow \infty$, for $1 \leq i \leq r$ (see Corollary 1 in \cite{chacon2010note}). 
%
%
			Then whenever $G$ is Lipschitz and Assumption \ref{assumption:Zmoment} and the conditions in Theorem \ref{thm:independentcomponent} holds, similar to the proof Theorem 2.1 in \cite{matteson2017independent}, by replacing their objective function with $\RJdCov_n^2$, it can be shown that $\tilde{\theta}_n$ is consistent for $\theta_0$ over 
$\mathcal{SO}(r)$.

		\section{Simulation Study}\label{sec:sim}

		In this section, we showcase the efficacy of the proposed test, both in terms of Type-I error control and power, across a range of simulation settings. For our test we implement the finite sample version discussed in Section \ref{sec:finitesample} with cut-off chosen as in \eqref{eq:cn} and all the weights set to 1. 
		We compare our method (hereafter referred to as $\RJdCov$) with the JdCov based test in \cite{chakraborty2019distance}, the dCov based test studied by Matteson and Tsay \cite{matteson2017independent} (hereafter denoted as  MT), and the dHSIC based test \cite{pfister2018kernel}. The section is organized as follows: In Section \ref{subsec:type_I_error} we present the results on Type-I error control. The empirical power for testing joint independence is given in Section \ref{sec:powerindependence} and results for detecting higher-order dependence are in Section \ref{sec:higher}. The performance of the ICA estimator is studied in Section \ref{sec:ICAsimulation}.

		\subsection{Type-I Error Control}\label{subsec:type_I_error}
		
		To illustrate the finite sample Type-I error control of the proposed test  we consider $\bm X = (X_1^\top, X_2^\top, X_3^\top)^\top$ in the following three settings: 
		\begin{itemize} 
		
		\item[$(1)$] {\it Multivariate Gaussian}: $X_1, X_2, X_3$ are i.i.d. $N_3(\bm 0, \bm I_3)$. 
		
				\item[$(2)$] {\it Third-order Gaussian copula}: $X_1, X_2, X_3$ are independent random vectors in $\mathbb R^3$ with i.i.d. coordinates distributed as $N(0, 1)^3$. 
				
				\item[$(3)$] {\it Cauchy}: $X_1, X_2, X_3$ are independent random vectors in $\mathbb R^3$ with i.i.d. coordinates distributed as $\mathcal C(0, 1)$, the Cauchy distribution with location parameter 0 and scale to parameter 1. 
		
		\end{itemize} 
		
		Table \ref{tab:type_I} shows the empirical Type-I error probability (over 500 repetitions) with sample size $n=500$ for the different tests in the above 3 situations. Our test satisfactorily controls Type-I error in finite samples, validating the theoretical results. The other tests also have good Type-I error control.

	\small 	
		\begin{table}[h]
			\centering
			\caption{\small{Empirical probability of Type-I error of the different tests. } }
			\label{tab:type_I}
			\begin{tabular}{cccc}
				\toprule  
				Distribution
				& Multivariate Gaussian & Gaussian Copula  & Cauchy \\
				\midrule  
				Proposed & 0.038  & 0.042  & 0.042\\
				\addlinespace[0.7mm]
				JdCov & 0.036 & 0.030& 0.046 \\
				\addlinespace[0.7mm]
				Matteson-Tsay (MT) & 0.056 & 0.046& 0.056 \\
				\addlinespace[0.7mm]
				dHSIC & 0.032 & 0.074& 0.050 \\
				\bottomrule 
			\end{tabular}
		\end{table}
		
	\normalsize	
		
		\subsection{Power Results} \label{sec:powerindependence}
		
		In this section we compare the empirical power of $\RJdCov$ with the existing methods in the 4 settings described below. In the first 2 cases we have (light-tailed) multivariate Gaussian distributions with covariance structure as in \cite[Example 2]{chakraborty2019distance}, the third setting is a (heavy-tailed) Cauchy regression model, and the fourth is a Cauchy sine dependence model (similar to \cite{zhang2018large}). Throughout we set the sample size $n=500$ and compute the empirical power over 500 repititions.

		\begin{figure*}
			\centering
			\begin{subfigure}[h]{0.49\textwidth}
				\centering
				\includegraphics[width=\textwidth]{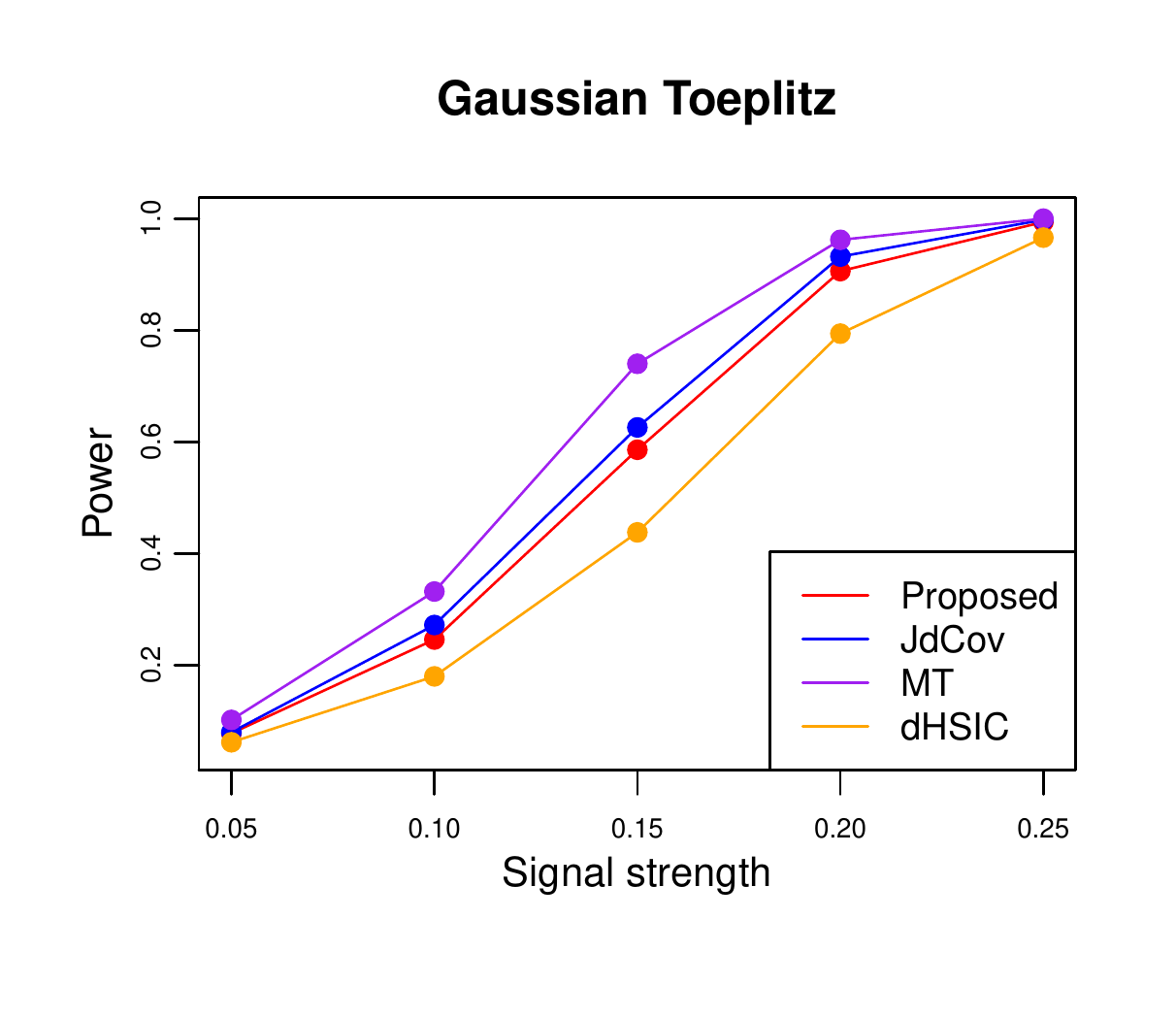} \\
				\small{(a)}
			\end{subfigure}
			\begin{subfigure}[h]{0.49\textwidth}  
				\centering
				\includegraphics[width=\textwidth]{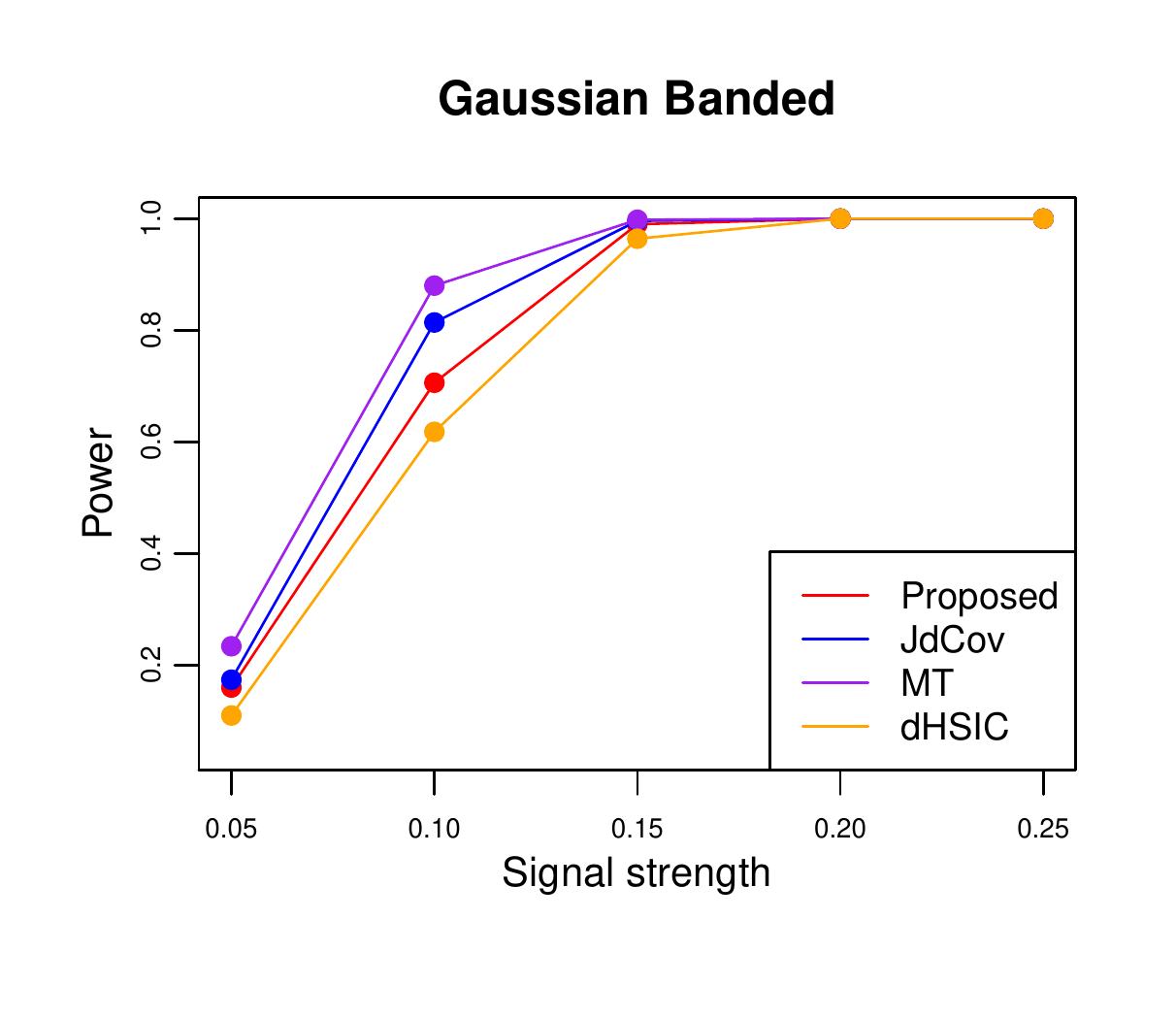} 
				\small{(b)}
			\end{subfigure}
			\caption{\small{Power of the different tests in (a) the multivariate Gaussian model with autoregressive correlation structure and (b) the multivariate Gaussian model with banded covariance matrix. } } 
			\label{fig:powernormal}
		\end{figure*}

		\begin{itemize} 
		
		\item[$(1)$] {\it Multivariate Gaussian with autoregressive covariance structure}: In this case the data is generated as follows: $\bm X \sim N_9(\bm 0, \Sigma)$ where $\Sigma = ((\sigma_{ij}))_{1 \leq i, j \leq 9}$ has an autocorrelation structure, that is, $\sigma_{ij} = \rho^{|i-j|}$. Then the first 3 coordinates of $\bm X$ are set as $X_1$, second 3 coordinates as $X_2$, and the last 3 coordinates as $X_3$. Figure \ref{fig:powernormal} (a) shows the empirical power of the different tests as 
		$\rho$ varies between $0.05$ to $0.25$ over a grid of 5 equally spaced values. 
		
				\item[$(2)$] {\it Multivariate Gaussian with banded covariance matrix}: Here, $\bm X \sim N_9(\bm 0, \Sigma)$ where $\Sigma = ((\sigma_{ij}))_{1 \leq i, j \leq 9}$ has a banded structure, that is, $\sigma_{ii}=1$ for $1 \leq i \leq 9$, $\sigma_{ij}=\rho$ for $1\leq |i-j|\leq 2$, and $\sigma_{ij}=0$ otherwise. As before, the first 3 coordinates of $\bm X$ are set as $X_1$, second 3 coordinates as $X_2$, and the last 3 coordinates as $X_3$. Figure \ref{fig:powernormal} (b) shows the empirical power of the different tests as 
		$\rho$ varies between $0.05$ to $0.25$ over a grid of 5 equally spaced values.

				\item[$(3)$] {\it Cauchy regression}: In this case $\bm X = (X_1^{\top},X_2^{\top},X_3^{\top})^{\top}$ is generated as follows: 
		\begin{align}\label{eq:hvy_tail_noise}
			X_i=Z_i+ a\cdot V , 
		\end{align} 
		for $1 \leq i \leq 3$,  where $Z_i \in \mathbb R^3$ have i.i.d. coordinates distributed according to $\mathcal{C}(0,1)$, and $V=(V_1,V_2,V_3)^\top$ with $V_1=V_2=V_3\sim\mathcal{C}(0,1)$. Note that the variables $X_1, X_2, X_3$ are mutually dependent because of the shared noise vector. Figure \ref{fig:powercauchysine} (a) shows the empirical power of the different tests as $a$ varies between $0.2$ to 1 over a grid of 5 equally spaced values. 
	
	\item[$(4)$] {\it Sine dependence}: In this case $\bm X = (X_1^{\top},X_2^{\top},X_3^{\top})^{\top}$ is generated as in \eqref{eq:hvy_tail_noise}, but with a different noise distribution. Specifically, we fix  $a=1$, generate $Z_i \in \mathbb R^3$ with i.i.d. coordinates distributed according to $\mathcal{C}(0,1)$ as before, but change $V=(V_1,V_2,V_3)^\top$ as  $V_1=V_2=V_3=\sin(b\cdot W)$ where $W\sim\mathcal{C}(0,1)$. Figure \ref{fig:powercauchysine} (b) shows the empirical power of the different tests as $b$ varies between $0.1$ to $0.5$ over a grid of 5 equally spaced values.  	
		\end{itemize}

We observe from the plots in Figure \ref{fig:powernormal} that the $\RJdCov$ along with the other methods have very similar performance in the Gaussian settings. On the other hand, Figure \ref{fig:powercauchysine} shows in the heavy-tailed Cauchy and sine dependence settings the $\RJdCov$ significantly outperforms the other tests. Overall the $\RJdCov$ emerges as the preferred method by exhibiting good power over a range of data distributions.

		\begin{figure*}
			\centering
			\begin{subfigure}[h]{0.49\textwidth}  
				\centering
				\includegraphics[width=\textwidth]{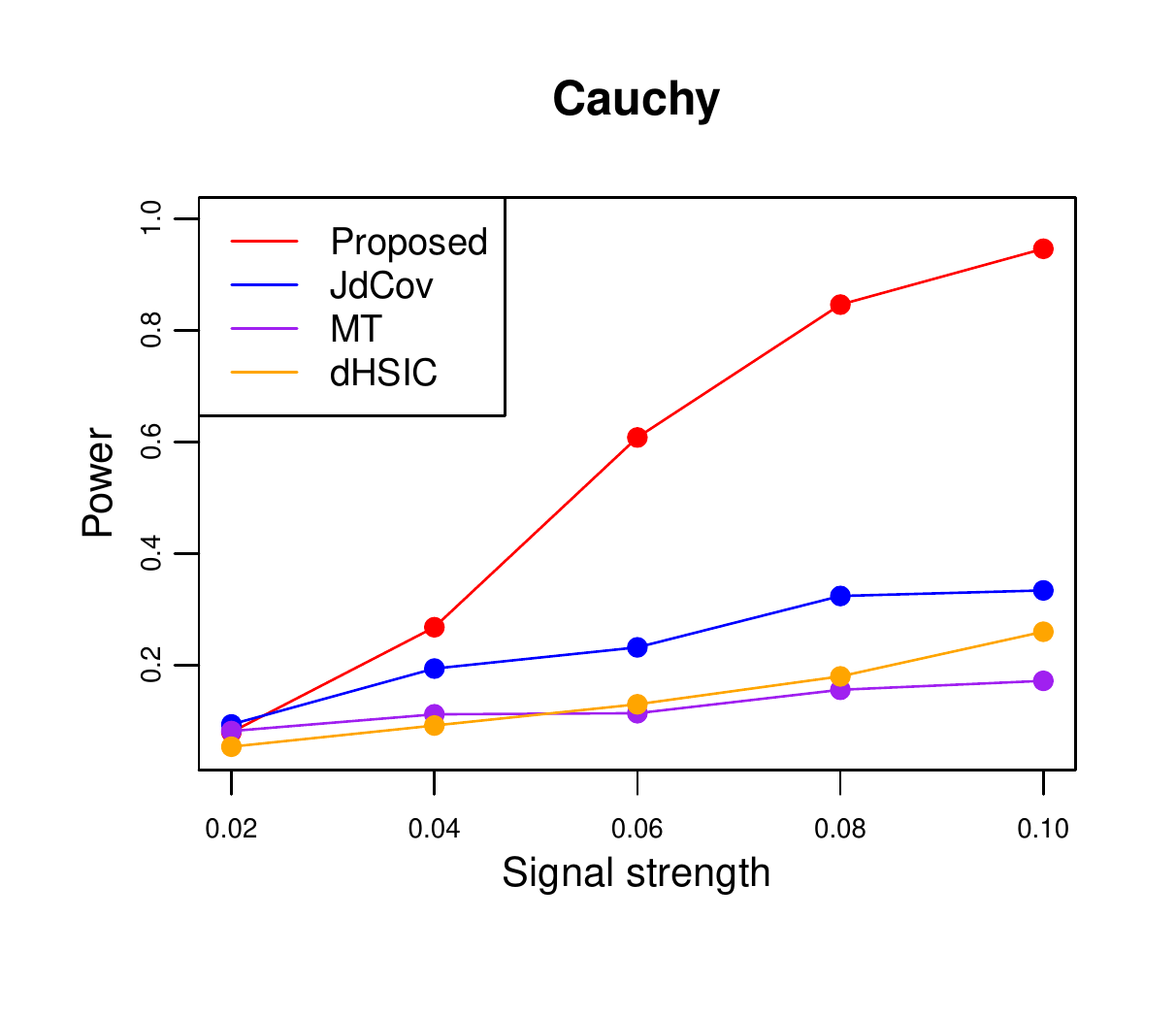} \\ 
				\small{(a)}
			\end{subfigure}
			\begin{subfigure}[h]{0.49\textwidth}  
				\centering
				\includegraphics[width=\textwidth]{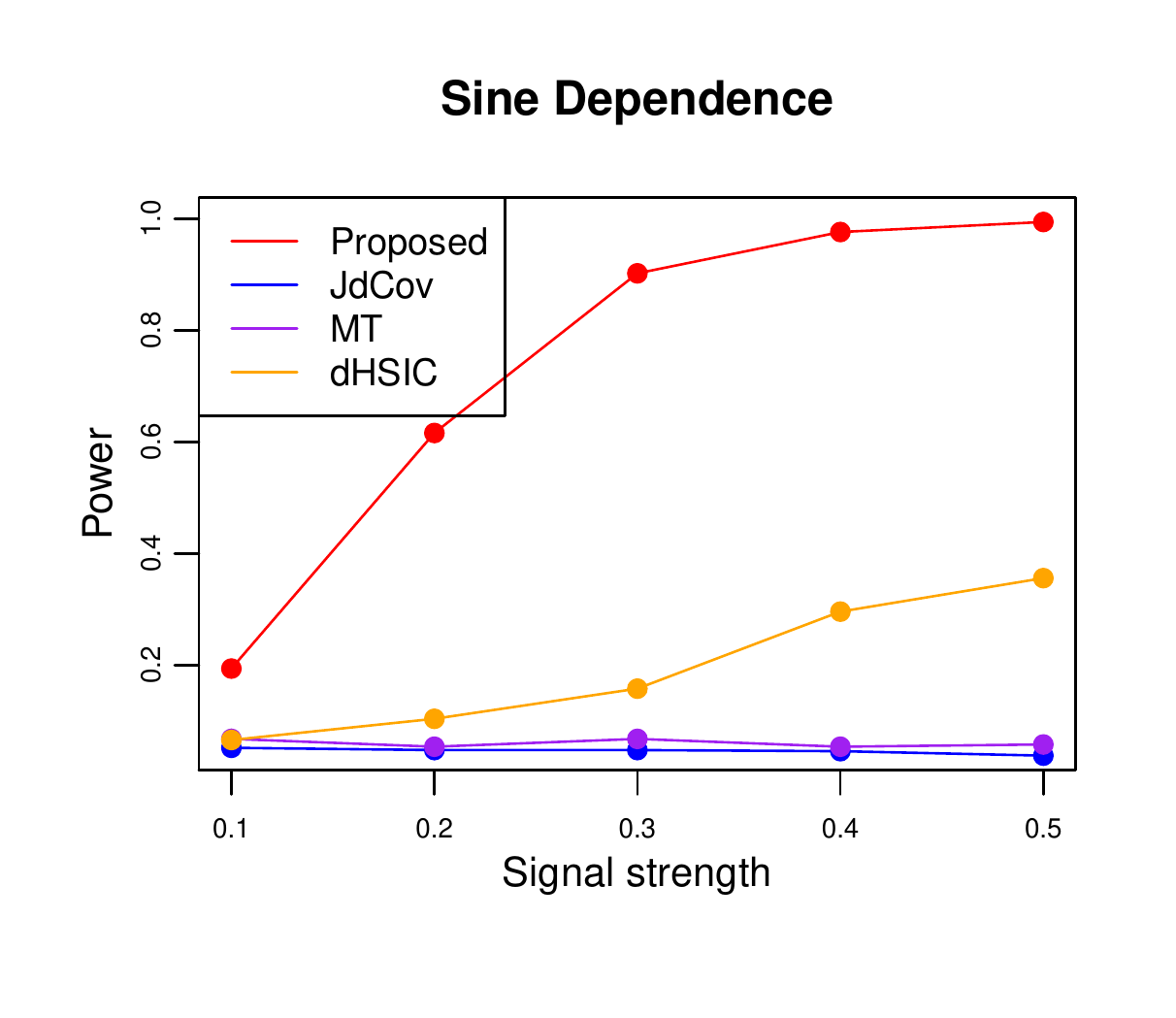} \\ 
				\small{(b)}
			\end{subfigure}
			\caption{\small{Power of the different tests in (a) the Cauchy regression model and (b) the sine dependence model. } } 
			\label{fig:powercauchysine}
		\end{figure*}

		\subsection{Higher-Order Dependence}\label{sec:higher}
		
		In this section we consider situations where the variables are pairwise independent but jointly dependent. Specifically, generate $(X, Y, Z')$ as follows: 		\begin{itemize}
			\item Suppose $X$, $Y$ and $Z$ are independent random vectors with i.i.d. coordinates distributed according to $F$, where $F$ is a distribution in $\mathbb R$ that is symmetric around $0$. 
			\item For $1 \leq s \leq d-1$, set $Z_s'=Z_s$ and set $Z'_d=-Z_d$ if $X_d Y_d Z_d$ is positive, otherwise set $Z'_d=Z_d$. 
					\end{itemize}
		It can easily be checked that $X,Y,Z'$ are pairwise independent. However, they are mutually dependent, since, for example, $Z_d'X_dY_d$ is always non-negative. 
			 In the simulations we choose $F$ to  the following 4 distributions: (1) $N(0, 1)$, (2) the $t$-distribution with 2 degrees of freedom $(t_{(2)})$, (3) the $t$-distribution with 3 degrees of freedom $(t_{(3)})$, and  (4) the Cauchy distribution $\mathcal C(0, 1)$, equivalently, the the $t$-distribution with 1 degree of freedom.  
			 
			 		 In Table \ref{table:higherorder} we show the empirical power (over 500 iterations) with sample size $n=500$ of the following 3 tests (and their corresponding versions based on $\dCov$/$\JdCov$):  
		\begin{itemize}
		\item The test for pairwise independence  that rejects for large values of 
		$$\RdCov^2_n(X, Y) + \RdCov^2_n(Y, Z') + \RdCov^2_n(X, Z').$$ 
		\item The test for higher-order independence that rejects for large values of $\RdCov^2_n(X, Y, Z')$. 
		\item The test for joint independence based on $\RJdCov_n^2$.  
		\end{itemize}  
The results in Table \ref{table:higherorder} show that for all the 4 distributions considered both the $\RJdCov$ and $\JdCov$ based tests successfully detect the null hypothesis of pairwise independence (probability of Type-I error). However, the  $\JdCov$ based test fails to detect higher-order dependence for the Cauchy distribution and joint dependence for the $t_{(2)}$ and the Cauchy distribution. This is not unexpected because the consistency of the $\dCov$ based rely on certain moment assumptions which are not satisfied by heavy-tailed distributions like the $t_{(2)}$ and the Cauchy (recall that $t_{(2)}$ does not have finite variance and $\mathcal C(0, 1)$ does not have finite mean). On the other hand, the $\RJdCov$ based tests easily detects both the higher-order and joint dependences in the all 4 cases, which once again highlights the robustness and broad application of the proposed method.  

\small 
\begin{table}[h]
	\centering
	\caption{\small{ Empirical power for detecting higher-order dependence. } }
	\label{table:higherorder}
	\begin{center}
		\begin{tabular}{ c|c|c|c|c|c|c } 
			\hline
			Dependence& \multicolumn{2}{c|}{Pariwise} &  \multicolumn{2}{c|}{Higher-Order } & \multicolumn{2}{c}{Joint}  \\
			\hline
			Tests& JdCov & Proposed & JdCov & Proposed & JdCov & Proposed\\ 
			\hline
			Gaussian & 0.058&0.054 & 1&1 &1.000&0.942 \\ 
			$t_{(3)}$ & 0.052 & 0.064 &1&1&0.868& 0.904 \\ 
			$t_{(2)}$ &0.056&0.040&1&1&0.098&0.888\\ 
			Cauchy&0.064&0.044&0.04&1&0.050&0.788\\
			\hline
		\end{tabular}
	\end{center}
\end{table} 

\normalsize
		
		\subsection{Performance of the ICA Estimator}\label{sec:ICAsimulation}
		
		To evaluate the performance of the distribution-free ICA estimator proposed in Section \ref{sec:ICA}, we consider the following model:  
		\begin{align*}
			X = \bm M Z , 
		\end{align*}
		where $Z=(Z_{1}, Z_2, \ldots, Z_d)$, $Z_1, Z_2,  \ldots, Z_d$ i.i.d. for some distribution $F$, and $M$ is a $d \times d$ random mixing matrix with condition number between 1 and 2, using the R package \texttt{ProDenICA}.\footnote{\url{https://cran.r-project.org/web/packages/ProDenICA/index.html}} We consider 12 candidate distributions for $F$ as described Table \ref{table:distributionICA}. For each candidate distribution we apply the gradient descent algorithm to solve the optimization program \eqref{eq:kernelICA} (where the gradient is derived in Section \ref{sec:implementation}). We compare our results with the method proposed in \citet{matteson2017independent}. As in \cite{matteson2017independent} the estimation error is measured in the following metric, which is designed to resolve the non-identifiability issues: 
		\begin{align*}
			D(\hat{M},M)=\frac{1}{\sqrt{d-1}}\inf_{C\in\mathcal{C}}\|C\hat{M}^{-1}M- \bm I_d\|_{F}
		\end{align*} 
		where 
		\begin{itemize} 
		
		\item $\hat{M}$ is the estimated mixing matrix, 
		
		\item $\|\cdot\|_{F}$ denotes the Frobenius norm,  
		
		\item $\mathcal{C}=\{C\in\mathcal{M}:C=P_{\pm}B\text{ for some }P_{\pm} \text{ and }B\}$, where $\mathcal{M}$ is the the set of $d\times d$ nonsingular matrices, $P_{\pm}$ is a $d\times d$ signed permutation matrix, and $B$ is a $d\times d$ diagonal matrix with positive diagonal elements. 
		\end{itemize}		
		The computation for metric $D$ is available in the R package \texttt{JADE} \cite{nordhausen2014jade}. We set dimension $d=3$ and the sample size $n=500$ and present the results in \ref{fig:ica}, where the indices in $x$-axis denote different distributions as described in Table \ref{table:distributionICA}.  The results in Table \ref{table:distributionICA} show that our estimator have smaller estimation error and variance except for distribution (f).
		
		\small 
					
						\begin{table}[h]
						\centering
						\caption{\small{Description of the distributions for the ICA simulations and the means and standard deviations of the estimation errors. } } 
						\label{table:distributionICA}
						\begin{tabular}{c|c|c|c}
							& & \multicolumn{2}{c}{Estimation Error (Standard Deviation)} \\
                            \hline
                            Index & Distribution & Proposed & MT \\
							\hline 
							$\mathsf{(a)}$ & $\mathsf{N(0,1)^3}$ & 0.129 (0.077) & 0.192 (0.094)\\
							$\mathsf{(b)}$ & $\mathsf{N(0,1)^5}$ & 0.170 (0.109) & 0.209 (0.120)\\
							$\mathsf{(c)}$ & $\mathsf{Gamma(5,1)}$ & 0.067 (0.028) & 0.073 (0.049)\\
							$\mathsf{(d)}$ & $\mathsf{Gamma(10,1)}$ & 0.127 (0.069) & 0.148 (0.088)\\
							$\mathsf{(e)}$ & $\mathsf{\frac{3}{10}Exp(1) + \frac{7}{19} Exp(5)}$ & 0.034 (0.046) & 0.038 (0.055)\\
							$\mathsf{(f)}$ & $\mathsf{\frac{3}{10}N(-2,1) + \frac{7}{10}N(2,1)}$ & 0.157 (0.121) & 0.181 (0.120)\\
                            $\mathsf{(g)}$ & $\mathsf{Uniform(0,1)}$ & 0.189 (0.110) & 0.310 (0.132)\\
							$\mathsf{(h)}$ & $\mathsf{\frac{7}{10}N(-2,3) + \frac{3}{10}N(2,1)}$ & 0.115 (0.117) & 0.174 (0.134)\\
							$\mathsf{(i)}$ & $\mathsf{\frac{5}{10}N(-2,2) + \frac{5}{10}N(2,2)}$ & 0.390 (0.105) & 0.435 (0.104)\\
							$\mathsf{(j)}$ & $\mathsf{(\frac{5}{10}N(-2,2) + \frac{5}{10}N(2,2))^3}$ & 0.121 (0.085) & 0.205 (0.093)\\
							$\mathsf{(k)}$ & $\mathsf{(\frac{5}{10}N(-2,2) + \frac{5}{10}N(2,2))^5}$ & 0.191 (0.097) & 0.236 (0.107)\\
							$\mathsf{(l)}$ & $\mathsf{(\frac{5}{10}N(-2,2) + \frac{5}{10}N(2,2))^7}$ & 0.131 (0.084) & 0.157 (0.101)\\
						\end{tabular}
					\end{table}

				\normalsize

		\begin{figure*}
			\centering
			\begin{subfigure}[h]{0.49\textwidth}
				\centering
				\includegraphics[width=\textwidth]{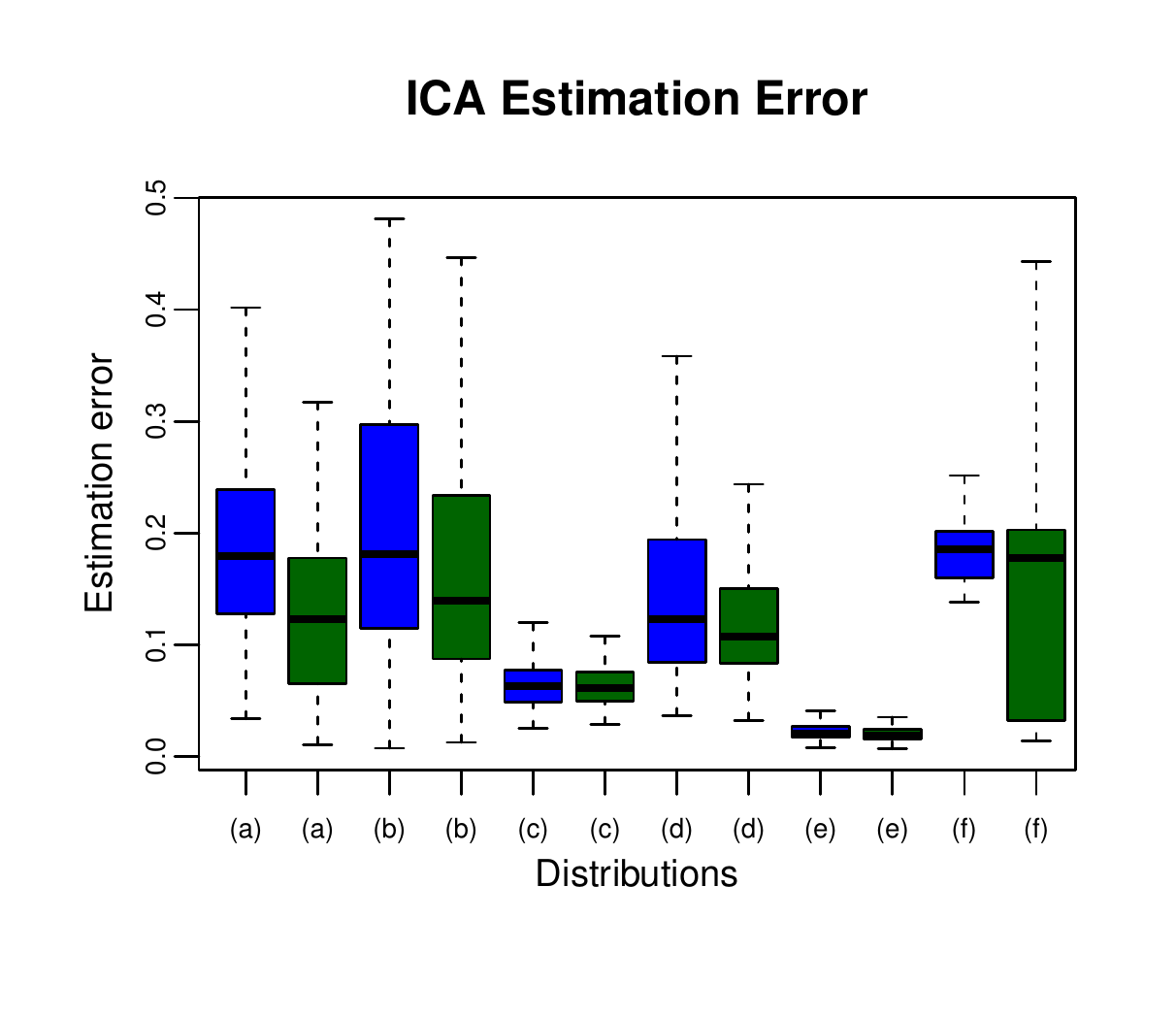}
			\end{subfigure}
			\begin{subfigure}[h]{0.49\textwidth}  
				\centering 
				\includegraphics[width=\textwidth]{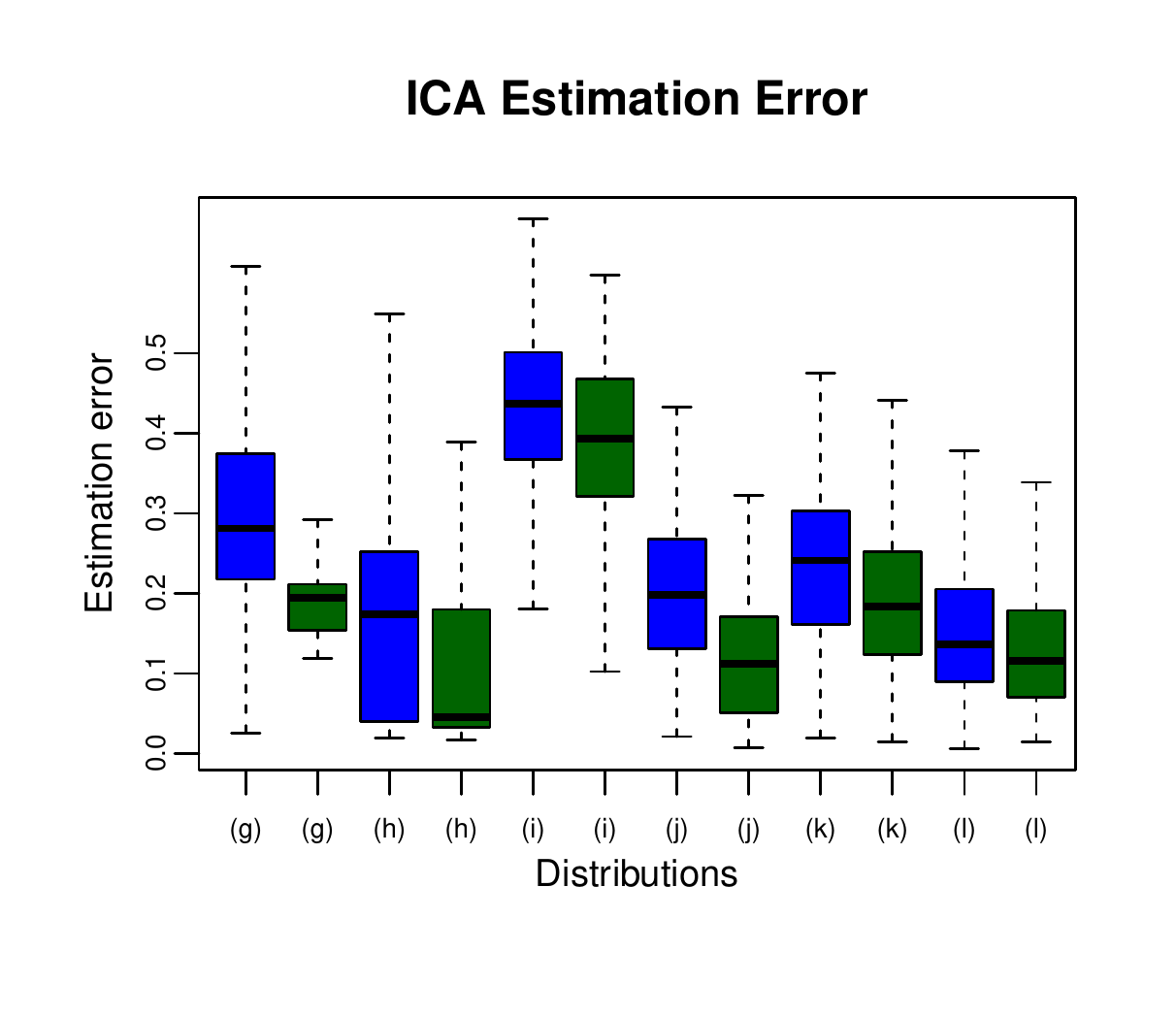}
			\end{subfigure}
			\caption{\small{ ICA estimation error across 12 distributions: The blue boxplots correspond to the Matteson-Tsay estimator and the green boxplots corresponds to the proposed estimator based on $\RJdCov$. } } 
			\label{fig:ica}
		\end{figure*}

		\section{Real data application}\label{sec:realdata}

		In this section, we apply our method to the stock price data available in R package \texttt{gmm}. The data can be accessed using the command  \texttt{data(finance)}. The dataset has prices for 24 stocks corresponding to 20 companies in US starting from January 5th, 1993 to January 30th, 2009. Since the dataset has several missing entries before 2003, we use data from the 2003-2009 period for 9 of these companies (described below) for our analysis. To account for the temporal dependence, we average the stock prices over the week, after which we obtain $n=300$ data points. We categorize the different companies into 6 different industries according to North American Industry Classification System (NAICS) as follows: (1) Real Estate (RE) category which contains the company NHP, 
		(2) the Food and Retail (FR) category which includes the companies WMK and GCO; (3) Manufacture (M) category including ROG, MAT, and VOXX; (4) the Finance (F) category with the company FNM; (5) Communications Systems (JCS); and (6) Zoom Technologies (ZOOM). We created 2 separate categories for the companies JCS and ZOOM because their  stock prices seem to vary independently of the other companies, possibly due to their relatively small scales, even though they both belong to the information industry.

		\subsection{Results and Analysis}

		To understand the dependency structure between the 6 categories (industries) described above, we first test the pairwise dependence of the stock prices between different industries. Then we test for higher-order dependencies between triples of categories for which the null hypothesis of pairwise independence is accepted. We report the results for the tests based on $\RdCov/\RJdCov$ and $\dCov/\JdCov$ for comparison. Throughout we set the significance level to $0.05$.
	

	The $p$-values for the pairwise independence tests are summarized in Table \ref{table:pairwisedependencestockprice}.  Note that this entails performing ${6 \choose 2} = 15$, so the $p$-values are adjusted using the BH-procedure.  To better visualize the results, we plot the dependency graph between 6 categories  using function \texttt{dependence.structure} in R package \texttt{multivariance} \cite{bottcher2020dependence} and the results are shown in Figure \ref{fig:finance}. From these we can see that hypothesis of pairwise independence is accepted by $\RdCov$ for the following 3 triples: (I) Real Estate-JCS-ZOOM, (II) Finance-JCS-ZOOM, and (III) Manufacture-JCS-ZOOM. On the other hand, $\dCov$ only accepts the pairwise independence hypothesis for the triple: Real Estate-JCS-ZOOM when using $\dCov$. This might be due to the small scale of companies JCS and ZOOM, which tends to make them independent of each other and among others. Moreover, observe that the Manufacture and Food and Retail categories are prone to be dependent with all the others, which is also reasonable because their services are essential to the other industries.
		
		\small 

                   \begin{table}[h]
						\centering
						\caption{\small{$p$-values for pairwise independence testing among the 6 categories. } }
						\label{table:pairwisedependencestockprice}
						\begin{tabular}{l|r|r}
							& \multicolumn{2}{c}{$p$-values}\\
                            \hline 
                            Industry-Industry & JdCov & Proposed\\
							\hline 
                            RE-JCS & 0.096 & 0.136 \\
							RE-ZOOM & 0.188 & 0.107\\
							RE-FR & 0.002 & 0.000\\
							RE-M & 0.002 & 0.000\\
							RE-F & 0.002 & 0.000\\
							JCS-ZOOM & 0.152 & 0.136\\
                            JCS-FR & 0.002 & 0.004\\
                            JCS-M & 0.002 & 0.258\\
                            JCS-F & 0.102 & 0.408\\
                            ZOOM-FR & 0.008 & 0.010\\
                            ZOOM-M & 0.012 & 0.058\\
                            ZOOM-F & 0.032 & 0.968\\
                            FR-M & 0.002 & 0.000\\
                            FR-F & 0.002 & 0.000\\
                            M-F & 0.002 & 0.000\\
						\end{tabular}
					\end{table}

 \normalsize

		The $p$-values for testing third-order independence for the 3 triples for which the hypothesis of pairwise independence are accepted are given in Table \ref{table:higherorderstockprice}. We observe from Table \ref{table:higherorderstockprice} that the third-order $\dCov$ based $p$-value, which tests for the null hypothesis of joint independence of RE, JCS, and ZOOM given their pairwise independence, accepts the null hypothesis at level $0.05$. However,  the $\JdCov$ based $p$-value, which tests for the null hypothesis of joint independence of RE, JCS, and ZOOM (without any assumptions on pairwise independence), rejects the null hypothesis of joint independence at $0.05$. Given that the pairwise independence hypothesis has been accepted, for consistent hierarchical interpretability one would have ideally expected the third-order $\dCov$ and the $\JdCov$ to have the same conclusions. 
		On the other hand, the conclusions from the $\RdCov$ and $\RJdCov$ $p$-values are hierarchically consistent. Moreover, Figure \ref{fig:finance}(b) shows that there is higher-order dependence between JCS, ZOOM, and Manufacture industries when using $\RdCov$ and $\RJdCov$. This may be because JCS and ZOOM, and Manufacturing will typically serve as  downstream industries but direct links between them are not obvious due to the scales of the companies.

		\begin{figure*}
			\centering
			\begin{subfigure}[h]{0.49\textwidth}
				\centering
				\includegraphics[width=\textwidth]{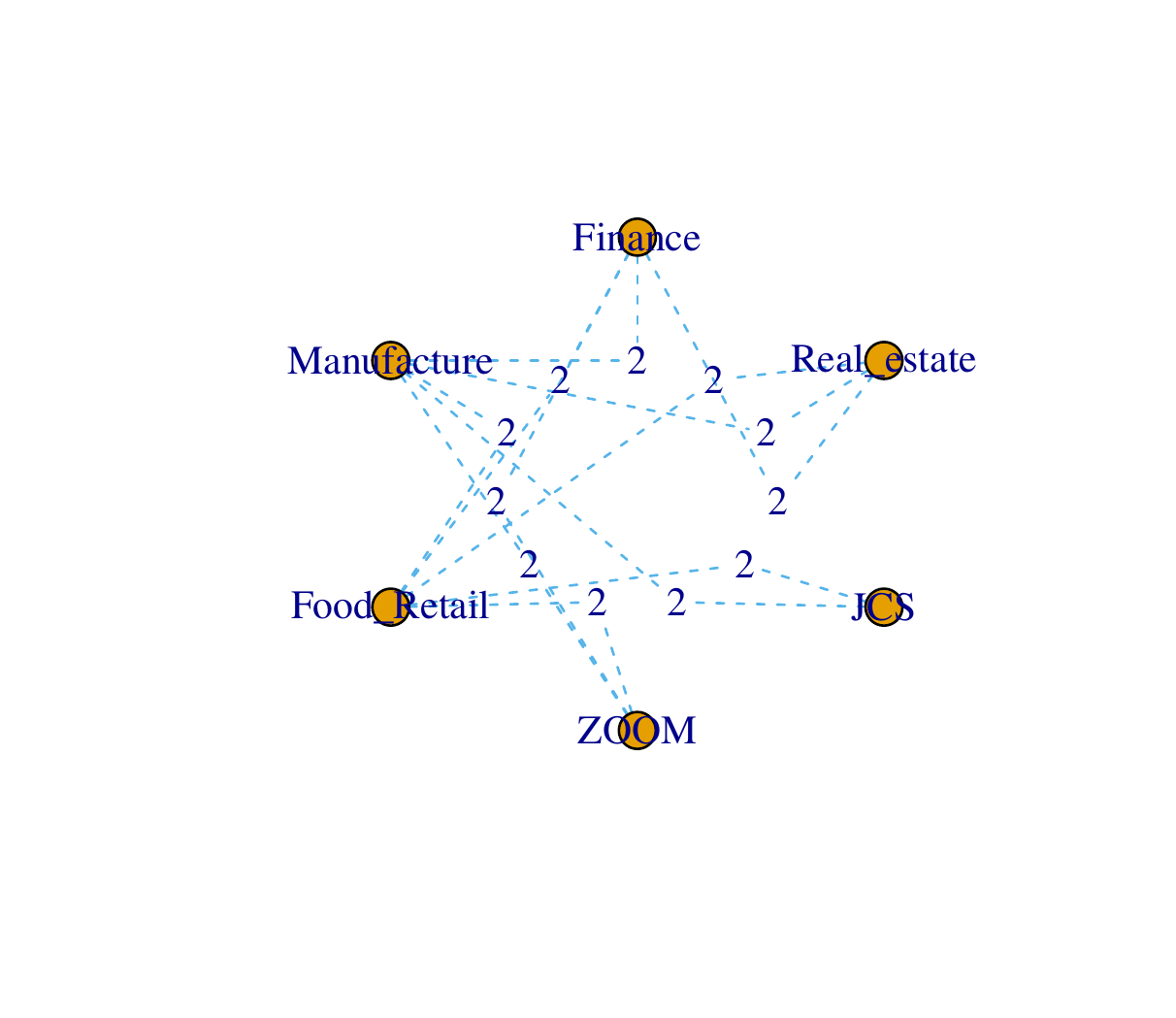} 
				(a) 
			\end{subfigure}
			\begin{subfigure}[h]{0.49\textwidth}  
				\centering 
				\includegraphics[width=\textwidth]{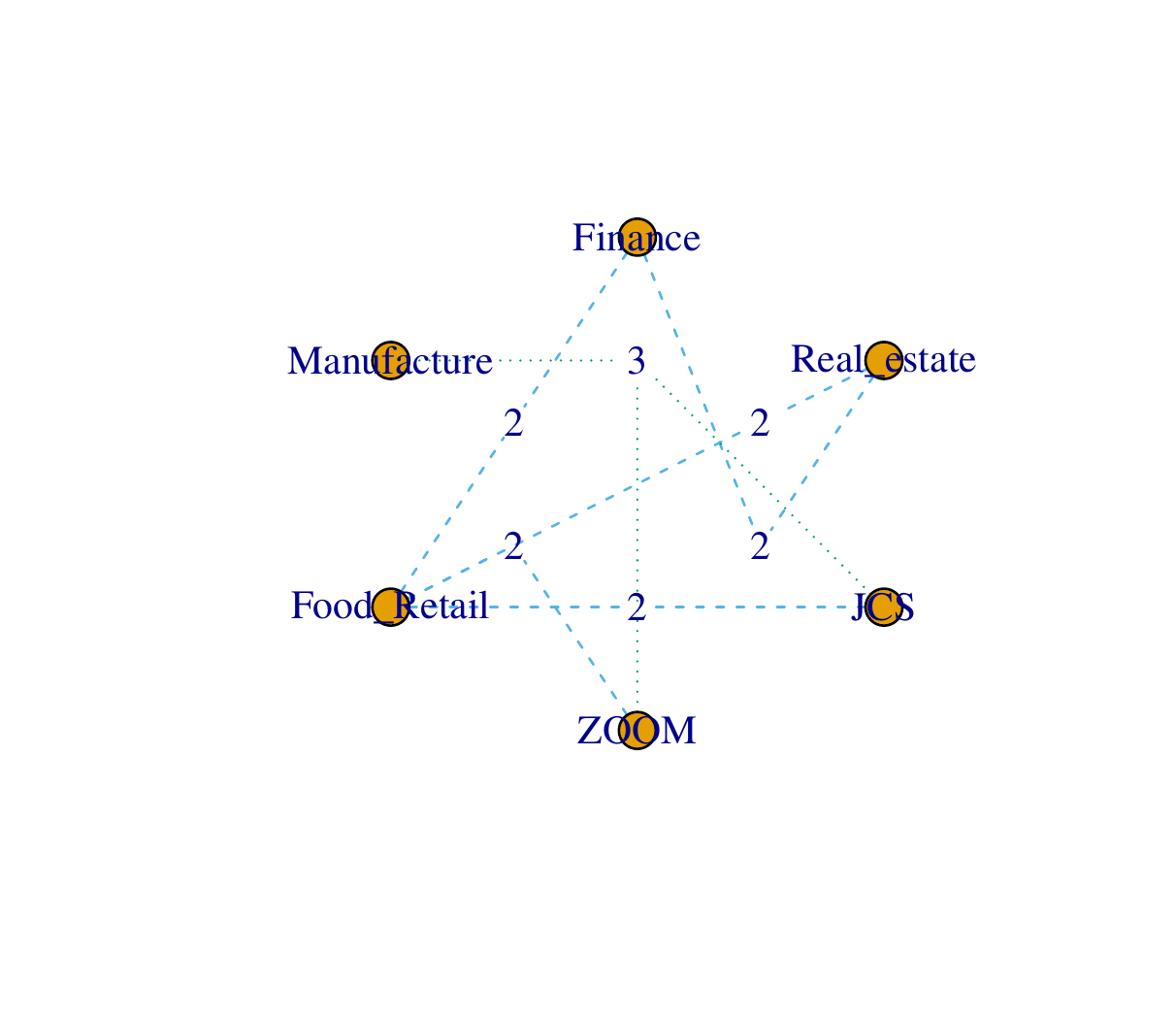} 
				(b) 
			\end{subfigure}
			\caption{\small{ Stock prices data: (a) Dependency graph obtained using higher-order $\dCov$, (b)  Dependency graph obtained using higher-order $\RdCov$. } } 
			\label{fig:finance}. 
		\end{figure*}

		\small

		\begin{table}[h]
			\centering
			\caption{\small{$p$-values for third order independence given pairwise independence. } }
			\label{table:higherorderstockprice}
			\begin{tabular}{l|r|r|r|r}
				& $\dCov$ & $\JdCov$ & $\RdCov$ & $\RJdCov$ \\
				\hline Real estate-JCS- ZOOM & 0.101 & 0.020 & 0.187 & 0.063 \\
				Finance-JCS-ZOOM & &  & 0.274 & 0.425\\
				Manufacture-JCS-ZOOM &  &  & 0.003 & 0.006\\
			\end{tabular}
		\end{table}
		
		\normalsize

\small{\subsection*{Acknowledgment} The authors are grateful to Bj{\"o}rn B{\"o}ttcher and David Matteson for sharing their codes and datasets. BBB was partly supported by NSF CAREER Grant DMS-2046393 and a Sloan research fellowship.

		\small

		\bibliographystyle{abbrvnat}
		\bibliography{ref}
		
		\normalsize

		\appendix

		\section{Combinatorial CLT with multiple permutations}
		\label{sec:clt}
		
		In this section we derive an analogue of Hoeffding's combinatorial CLT \cite{hoeffding} for tensors with multiple random permutations. We begin with the following assumption:

		\begin{assumption}\label{assumption:A} Fix $r \geq 2$ and suppose $\bm A_n =((a_{i_1, i_2, \ldots, i_r}))_{1 \leq i_1, i_2, \ldots, i_r \leq n}$ is a sequence of $r$-tensors satisfying the following conditions: 
			
			\begin{itemize} 
				
				\item[$(1)$] For all $1 \leq s \leq r$ and $1 \leq i_1, i_2, \ldots, i_{s-1}, i_{s+1}, \ldots, i_r \leq n$, 
				\begin{align}\label{eq:sumA}
					\sum_{i_s=1}^n a_{i_1, i_2, \cdots, i_r}=0 .
				\end{align}
				
				\item[$(2)$] There is a universal constants $K_1, K_2> 0$ such that $|a_{i_1, i_2, \ldots, i_r}| \leq K_1/\sqrt n$, for all $1 \leq i_1, i_2, \ldots, i_r \leq n$, and $\sum_{1 \leq i_1, i_2, \ldots, i_r \leq n} a_{i_1, i_2, \ldots, i_r}^2 \geq K_2 n^{r-1}$. 
			\end{itemize}
		\end{assumption}

		Under this assumption we have the following theorem: 
		
		\begin{theorem}\label{thm:clt} Fix $r \geq 2$ and suppose $\bm A_n =((a_{i_1, i_2, \ldots, i_r}))_{1 \leq i_1, i_2, \cdots, i_r \leq n}$ is a sequence of $r$-tensors satisfying Assumption \ref{assumption:A}. Consider $r-1$ independent random permutations $\pi_1, \pi_2, \ldots, \pi_{r-1}$ of the set $\{1,\ldots,n\}$ and define 
			$$C_n:=\sum_{i=1}^{n}a_{i, \pi_1(i), \ldots, \pi_{r-1}(i)}.$$ 
			Then, as $n \rightarrow \infty$, 
			\begin{align*}
				\frac{C_n}{\sqrt{\Var[C_n]}}  \dto N(0,1).
			\end{align*}
		\end{theorem}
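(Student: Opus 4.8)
The plan is to prove the central limit theorem by Stein's method of exchangeable pairs. Throughout write $\sigma_n^2:=\Var[C_n]$ and $W:=C_n/\sigma_n$; note that $\E[C_n]=0$ by Assumption~\ref{assumption:A}(1), so $\E[W]=0$ and $\E[W^2]=1$. A first preliminary step is a second-moment computation showing that $\sigma_n^2$ is bounded away from $0$. Expanding $\E[C_n^2]=\sum_{i,i'}\E[a_{i,\pi_1(i),\ldots}a_{i',\pi_1(i'),\ldots}]$, the diagonal terms $i=i'$ contribute $n^{-(r-1)}\sum_{\mathrm{all}}a^2\geq K_2$ by Assumption~\ref{assumption:A}(2), while the off-diagonal terms, after repeated use of the centering \eqref{eq:sumA} in each coordinate, telescope into quantities carrying extra factors of $1/n$; one checks that $\sigma_n^2=n^{-(r-1)}\sum_{\mathrm{all}}a^2\,(1+o(1))$, so $K_2/2\leq\sigma_n^2\leq 2K_1^2$ for all large $n$.

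Next I construct the exchangeable pair. Independently of $(\pi_1,\ldots,\pi_{r-1})$, draw $\ell$ uniformly on $\{1,\ldots,r-1\}$ and $(I,J)$ a uniformly random ordered pair of distinct indices; set $\pi_\ell':=\pi_\ell\circ\tau_{I,J}$ (the transposition of $I$ and $J$) and $\pi_k':=\pi_k$ for $k\neq\ell$, and put $C_n':=\sum_i a_{i,\pi_1'(i),\ldots,\pi_{r-1}'(i)}$, $W':=C_n'/\sigma_n$. Since a transposition applied to a uniform permutation produces an exchangeable pair, $(W,W')$ is exchangeable. Only the terms $i\in\{I,J\}$ differ between $C_n$ and $C_n'$, so $C_n'-C_n$ is a sum of four tensor entries and hence $|W'-W|\leq 4K_1/(\sigma_n\sqrt n)=O(n^{-1/2})$. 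Conditioning on $\mathcal F:=\sigma(\pi_1,\ldots,\pi_{r-1})$ and on $\ell$ and averaging over $(I,J)$, one uses that, for each fixed $i$, as $J$ ranges over $\{1,\ldots,n\}\setminus\{i\}$ the value $\pi_\ell(J)$ ranges over $\{1,\ldots,n\}\setminus\{\pi_\ell(i)\}$, so the centering \eqref{eq:sumA} in coordinate $\ell+1$ collapses the inner sum; the resulting identity does not depend on $\ell$ and yields the \emph{exact} linearity
$$\E[W'-W\mid\mathcal F]=-\lambda W,\qquad \lambda=\frac{2}{n-1}.$$

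With linearity established, the exchangeable-pair normal approximation bound, applied with conditioning on the finer field $\mathcal F$ (permissible since the linearity holds with respect to $\mathcal F$), gives for all $z$
$$\big|\,\P(W\leq z)-\Phi(z)\,\big|\;\leq\;\frac{1}{\lambda}\sqrt{\Var\big(\E[(W'-W)^2\mid\mathcal F]\big)}\;+\;\frac{C}{\lambda}\,\E|W'-W|^3 .$$
The third-moment term is $O(n)\cdot O(n^{-3/2})=O(n^{-1/2})\to 0$ by the bound on $|W'-W|$. Thus it remains to show $\Var\big(\E[(W'-W)^2\mid\mathcal F]\big)=o(n^{-2})$, equivalently (since $\sigma_n^2$ is bounded above and below) $\Var\big(\E[(C_n'-C_n)^2\mid\mathcal F]\big)=o(n^{-2})$. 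The quantity $\E[(C_n'-C_n)^2\mid\mathcal F]$ is an average over $\ell$ and over distinct pairs $(I,J)$ of squares of sums of four tensor entries, hence a bounded ($O(1/n)$) function of the $r-1$ permutations, and the claim is that it concentrates around its mean with fluctuations of order $o(1/n)$.

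I expect this last variance estimate to be the main obstacle and the technical core of the argument: it requires expanding the fourth-order products $\E\big[(\E[(C_n'-C_n)^2\mid\mathcal F])^2\big]$, computing the resulting moments under the joint law of the $r-1$ independent permutations, and tracking which index-coincidence patterns survive the centering \eqref{eq:sumA}. The calculation is parallel in spirit to Bolthausen's analysis of the single-permutation Hoeffding CLT, but is complicated here by the tensor structure and by the product of $r-1$ permutations; the target estimate is $\Var\big(\E[(C_n'-C_n)^2\mid\mathcal F]\big)=O(n^{-3})$, which actually yields a Berry--Esseen rate of $O(n^{-1/2})$. Combining this with the third-moment bound in the displayed inequality gives $\sup_z|\P(W\leq z)-\Phi(z)|\to 0$, that is, $C_n/\sqrt{\Var[C_n]}\dto N(0,1)$, completing the proof.
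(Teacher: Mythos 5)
Your strategy is essentially the paper's: Stein's method via an exchangeable pair, verification of a linear regression identity $\E[C_n'-C_n\mid\cdot]=-\lambda C_n$, a third-moment term of order $\frac{1}{\lambda}\E|W'-W|^3=O(n^{-1/2})$, and a reduction of everything to a variance bound on the conditional second moment. Your pair is a mild variant (you transpose one uniformly chosen permutation $\pi_\ell$, while the paper applies an independent transposition to every permutation simultaneously), and this variant is legitimate: the pair is exchangeable, and your computation of the exact linearity constant $\lambda=2/(n-1)$ from the centering \eqref{eq:sumA} checks out — indeed it is cleaner than the paper's $\lambda=4(n^2-7n+11)/(n^2(n-1))$ in Lemma \ref{lm:expectationC} for $r=3$. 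Passing from conditioning on $\mathcal{F}$ to conditioning on $W$ is also fine, exactly as in the paper, by the tower property and the inequality $\Var[\E[X\mid\sigma(W)]]\leq\Var[\E[X\mid\mathcal{F}]]$.

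The genuine gap is that the decisive estimate $\Var\big(\E[(C_n'-C_n)^2\mid\mathcal{F}]\big)=O(n^{-3})$ is only announced as a ``target'' and acknowledged as the main obstacle; it is never proved. This is not a routine detail: the naive bound (variance of an average of quantities that are deterministically $O(1/n)$) gives only $O(n^{-2})$, which after dividing by $\lambda^2=\Theta(n^{-2})$ yields $O(1)$ and proves nothing, so the entire content of the CLT lies in improving $O(n^{-2})$ to $o(n^{-2})$. This is precisely where the tensor structure and the $r-1$ independent permutations enter, and it is where the paper does its real work (Lemma \ref{lm:Cexpvar}): one expands the fourth-moment sum $\E\big[(\E[(C_n'-C_n)^2\mid\pi_1,\ldots,\pi_{r-1}])^2\big]$, splits according to coincidence patterns among the sampled indices, uses the entrywise bound $|a_{i_1,\ldots,i_r}|\leq K_1/\sqrt n$ on the degenerate patterns, and for the generic all-distinct pattern exploits the near-cancellation between product-of-expectations and joint expectations, quantified by counting factors such as $\frac{1}{(n)_8^{2}}-\frac{1}{(n)_4^{4}}=O(n^{-17})$ with $(n)_k:=n(n-1)\cdots(n-k+1)$. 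Until you carry out this computation (or an analogue adapted to your single-permutation pair, which should be somewhat simpler since only four entries change), the proposal does not establish the theorem. A minor further point: the Kolmogorov-distance inequality you quote is in substance the Wasserstein-type exchangeable-pair bound (the paper's Theorem \ref{thm:exchangeablepair}); either form suffices for weak convergence here because $|W'-W|=O(n^{-1/2})$ deterministically, but you should state and cite the bound in the form you actually apply.
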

		
		The proof of Theorem \ref{thm:clt} is given in Section \ref{thm:cltpf}. We first show that by centering $\bm A_n$, condition (1) in Assumption \ref{assumption:A} holds without loss of generality.

		\begin{remark}\label{remark:A} 
			Suppose $\bm A_n$ is an $r$-tensor not satisfying \eqref{eq:sumA}, then we can construct a tensor $\tilde{\bm A_n} = (( \tilde{a}_{i_1, i_2, \ldots, i_r}))_{1 \leq i_1, i_2, \ldots, i_r \leq n}$ which satisfies Assumption \ref{assumption:A}. We illustrate this for $r=3$. For $1 \leq i_1, i_2, i_3 \leq r$, define 
			\begin{align*}
				\tilde{a}_{i_1, i_2, i_3} = a_{i_1, i_2, i_3} - a_{\bullet, i_2, i_3} - a_{i_1, \bullet, i_3}
				-a_{i_1, i_2, \bullet} + a_{i_1, \bullet, \bullet} + a_{i_2, \bullet, \bullet} + a_{i_3, \bullet, \bullet} - a_{\bullet, \bullet, \bullet}
			\end{align*} 
			where 
			\begin{itemize} 
				
				\item $a_{\bullet, i_2, i_3} = \frac{1}{n} \sum_{i_1=1}^n a_{i_1, i_2, i_3}$, $a_{i_1, \bullet, i_3} = \frac{1}{n} \sum_{i_2=1}^n a_{i_1, i_2, i_3}$,  $a_{i_1, i_2, \bullet} = \frac{1}{n} \sum_{i_3=1}^n a_{i_1, i_2, i_3}$;
				
				\item $a_{i_1, \bullet, \bullet} = \frac{1}{n^2} \sum_{1 \leq i_2, i_3 \leq n} a_{i_1, i_2, i_3}$, $a_{\bullet, i_2, \bullet} = \frac{1}{n^2} \sum_{1 \leq i_1, i_3 \leq n} a_{i_1, i_2, i_3}$, $a_{\bullet, \bullet, i_3} = \frac{1}{n^2} \sum_{1 \leq i_1, i_2 \leq n} a_{i_1, i_2, i_3}$; 
				
				\item $a_{\bullet, \bullet, \bullet} = \frac{1}{n^3} \sum_{1 \leq i_1, i_2, i_3 \leq n} a_{i_1, i_2, i_3} $. 
				
			\end{itemize}
			Clearly, $\tilde{\bm A_n} = (( \tilde{a}_{i_1, i_2, i_3}))_{1 \leq i_1, i_2, i_3 \leq n}$ satisfies \eqref{eq:sumA}. 
		\end{remark}

		\subsection{Proof of Theorem \ref{thm:clt}}
		\label{thm:cltpf} 
		
		For notational convenience we will present the proof of Theorem \ref{thm:clt} for $r =3$. The proof can be easily extended to general $r$ by following the arguments. Hereafter, we will assume $\bm A_n =((a_{i_1, i_2, i_3}))_{1 \leq i_1, i_2, \cdots, i_r \leq n}$ is a sequence of $r$-tensors satisfying Assumption \ref{assumption:A}. We begin by computing $\Var[C_n]$. 
		
		\begin{lemma}\label{lm:variance} Suppose Assumption \ref{assumption:A} holds. Then 
			\begin{align*}
				\Var[C_n]=\frac{n-2}{n(n-1)^{2}} \sum_{1 \leq i_1, i_2, i_3 \leq n} a_{i_1, i_2, i_3}^{2} = \Theta(1).
			\end{align*} 
		\end{lemma}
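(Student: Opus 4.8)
The plan is to compute $\Var[C_n] = \E[C_n^2]$ directly, exploiting that $\E[C_n] = 0$ by condition $(1)$ of Assumption \ref{assumption:A}: since $\pi_1(i)$ and $\pi_2(i)$ are independent and each uniform on $\{1,\dots,n\}$, we have $\E[a_{i,\pi_1(i),\pi_2(i)}] = n^{-2}\sum_{j,k} a_{i,j,k} = 0$ by \eqref{eq:sumA}. Hence $\Var[C_n] = \sum_{1\le i,i'\le n} \E[a_{i,\pi_1(i),\pi_2(i)}\,a_{i',\pi_1(i'),\pi_2(i')}]$, and I would split this double sum into the diagonal part ($i=i'$) and the off-diagonal part ($i\ne i'$).

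For the diagonal terms, $(\pi_1(i),\pi_2(i))$ is uniform on $\{1,\dots,n\}^2$, so each diagonal term equals $n^{-2}\sum_{j,k} a_{i,j,k}^2$, contributing $n^{-2}\sum_{i,j,k} a_{i,j,k}^2$ in total. For the off-diagonal terms, I would use that for $i\ne i'$ the pair $(\pi_1(i),\pi_1(i'))$ is uniform over ordered pairs of distinct indices (each with probability $1/(n(n-1))$), independently of $(\pi_2(i),\pi_2(i'))$ which has the same law; hence the off-diagonal expectation equals $\frac{1}{n^2(n-1)^2}\sum_{j\ne j'}\sum_{k\ne k'} a_{i,j,k}\,a_{i',j',k'}$. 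The key algebraic step is to evaluate this doubly-restricted sum by nested inclusion--exclusion, writing $\sum_{k\ne k'} = \sum_{k,k'} - \sum_{k=k'}$ and likewise for $j,j'$, and each time invoking \eqref{eq:sumA} to annihilate the ``full-marginal'' pieces (summing $a_{i,j,k}$ over any one of its three indices gives $0$). Peeling the constraints off one index at a time collapses the quadruple sum to $\sum_{j,k} a_{i,j,k}\,a_{i',j,k}$. Summing over $i\ne i'$ and applying \eqref{eq:sumA} once more (now $\sum_i a_{i,j,k}=0$) turns $\sum_{i\ne i'} a_{i,j,k}a_{i',j,k}$ into $-\sum_i a_{i,j,k}^2$, so the off-diagonal part equals $-\frac{1}{n^2(n-1)^2}\sum_{i,j,k} a_{i,j,k}^2$.

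Adding the two parts gives $\Var[C_n] = \big(n^{-2} - n^{-2}(n-1)^{-2}\big)\sum_{i,j,k} a_{i,j,k}^2 = \frac{n-2}{n(n-1)^2}\sum_{i,j,k} a_{i,j,k}^2$, which is the claimed identity. The $\Theta(1)$ assertion then follows from condition $(2)$: the bound $|a_{i,j,k}| \le K_1/\sqrt n$ gives $\sum_{i,j,k} a_{i,j,k}^2 \le K_1^2 n^2$, while the stated lower bound gives $\sum_{i,j,k} a_{i,j,k}^2 \ge K_2 n^2$; since $\frac{n-2}{n(n-1)^2} = \Theta(n^{-2})$, the product is bounded above and below by positive constants for $n$ large.

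The only delicate point — and hence the part I would write out most carefully — is the bookkeeping in the nested inclusion--exclusion for the off-diagonal term: one must apply \eqref{eq:sumA} in a consistent order (third index, then second, then first) and keep track of the signs so that every cross term containing a full marginal sum genuinely drops out. Everything else is a routine computation, and the extension to general $r$ follows the same pattern with $r-1$ permutations and $r$ successive applications of \eqref{eq:sumA}.
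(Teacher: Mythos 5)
Your proposal is correct and follows essentially the same route as the paper: split $\Var[C_n]$ into diagonal and off-diagonal terms, use that $(\pi_1(i),\pi_1(i'))$ is uniform over ordered distinct pairs independently of $\pi_2$, and repeatedly apply the zero-marginal condition \eqref{eq:sumA} to collapse the restricted sums to $-\sum a_{i_1,i_2,i_3}^2$, giving $\big(n^{-2}-n^{-2}(n-1)^{-2}\big)\sum a_{i_1,i_2,i_3}^2=\frac{n-2}{n(n-1)^2}\sum a_{i_1,i_2,i_3}^2$, with the $\Theta(1)$ claim from condition $(2)$ exactly as in the paper. Your ``inclusion--exclusion'' phrasing is just the identity $\sum_{k'\ne k}a_{i',j',k'}=-a_{i',j',k}$ used directly in the paper, so there is no substantive difference.
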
 
		
		\begin{proof}
			Note that 
			\begin{align}\label{eq:varianceC}
				\Var[C_n]=\sum_{i=1}^{n}\Var[a_{i, \pi_1(i), \pi_2(i)}] + \sum_{1 \leq i_1 \neq i_1' \leq n}\Cov [a_{i_1, \pi_1(i_1), \pi_2(i_1)}, a_{i_1', \pi_1(i_1'), \pi_2(i_1')} ] . 
			\end{align}
			Assumption \ref{assumption:A} implies that $\E[a_{i, \pi_1(i), \pi_2(i)}] = 0$, hence, 
			\begin{align}\label{eq:varC}
				\sum_{i=1}^{n}  \Var[a_{i, \pi_1(i), \pi_2(i)}]= \sum_{i=1}^{n} \E[a_{i, \pi_1(i), \pi_2(i)}^2]  = \frac{1}{n^{2}} \sum_{1\leq i_1, i_2, i_3 \leq n } a_{i_1, i_2, i_3}.
			\end{align}
			Moreover, 
			\begin{align*}
				\Cov[a_{i_1, \pi_1(i_1), \pi_2(i_1)}, a_{i_1', \pi_1(i_1'), \pi_2(i_1')}] 
				&
				=\E[a_{i_1, \pi_1(i_1), \pi_2(i_1)} a_{i_1' \pi_1(i_1') \pi_2(i'_1)}] \nonumber \\
				&
				=\frac{1}{n^{2}(n-1)^{2}}\sum_{1 \leq i_2 \neq i_2' \leq n} \sum_{1 \leq i_3 \ne i_3' \leq n} a_{i_1, i_2, i_3} a_{i_1', i_2', i_3'} \nonumber \\
				&
				=\frac{1}{n^{2}(n-1)^{2}}\sum_{1 \leq i_2, i_3 \leq n} \left\{a_{i_1, i_2, i_3} \sum_{\substack{1 \leq i_2' \leq n \\ i_2' \ne i_2}} \sum_{\substack{1 \leq i_3' \leq n \\ i_3' \ne i_3}} a_{i_1', i_2', i_3'}  \right\} \nonumber \\ 
				&
				=\frac{1}{n^{2}(n-1)^{2}}\sum_{1 \leq i_2, i_3 \leq n} \left\{a_{i_1, i_2, i_3} \sum_{\substack{1 \leq i_2' \leq n \\ i_2' \ne i_2}} (-a_{i_1', i_2', i_3})  \right\} \nonumber \\ 
				&
				=\frac{1}{n^{2}(n-1)^{2}} \sum_{1 \leq i_2, i_3 \leq n} \left\{a_{i_1, i_2, i_3} a_{i_1', i_2, i_3} \right \}. 
			\end{align*}
			Hence, 
			\begin{align}\label{eq:covarianceC}
				\sum_{1 \leq i_1 \neq i_1' \leq n}\Cov [a_{i_1, \pi_1(i_1), \pi_2(i_1)}, a_{i_1', \pi_1(i_1'), \pi_2(i_1')} ]    &
				=\frac{1}{n^{2}(n-1)^{2}}\sum_{1 \leq i_1 \neq i_1' \leq n} \sum_{1 \leq i_2, i_3 \leq n} \left\{a_{i_1, i_2, i_3} a_{i_1', i_2, i_3}  \right\} \nonumber \\
				&
				=\frac{1}{n^{2}(n-1)^{2}}\sum_{1 \leq i_1, i_2, i_3 \leq n} a_{i_1, i_2, i_3} \sum_{\substack{ 1\leq i_1' \leq n \\ i_1' \ne i_1}} a_{i_1', i_2, i_3} \nonumber \\
				&
				=-\frac{1}{n^{2}(n-1)^{2}}\sum_{1 \leq i_1, i_2, i_3 \leq n} a_{i_1, i_2, i_3}^{2} .
			\end{align}
			Therefore, combining \eqref{eq:varC} and \eqref{eq:covarianceC} with \eqref{eq:varianceC} the lemma follows. 
		\end{proof}

		Since $\Var[C_n]= \Theta(1)$ by Assumption \ref{assumption:A}, we can always normalize the elements of $\bm A_n$ such that $\Var[C_n]=1$. Hereafter, we will assume $\bm A_n$ is normalized such that $\Var[C_n]=1$. To prove the CLT in Theorem \ref{thm:clt} will use Stein's method based on exchangeable pairs \cite{chatterjee}. The first step towards this is to construct an exchangeable  pair for the random permutations $(\pi_1, \pi_2)$. This is done as follows: 
		
		\begin{itemize} 
			
			\item We define $\pi_1'=\pi_1\circ (I,J)$. Choose a transposition $(I,J)$ uniformly at random from the set of transpositions on $\{1,\ldots,n\}$ and define $\pi_1' :=\pi_1\circ (I,J)$, that is, $\pi_1'(I)=\pi_1(J)$, $\pi_1'(J)=\pi_1(I)$,  
			and $\pi_1'(\ell)=\pi_1(\ell)$ for $\ell \neq \{I,J\}$. 
			
			\item Similarly, choose an independent transposition $(K, L)$ uniformly at random from the set of transpositions on $\{1,\ldots,n\}$ and define $\pi_2' :=\pi_2\circ (K, L)$. 
			
		\end{itemize}
		
		
		\begin{lemma}\label{lm:pi12}
			$\{(\pi_1, \pi_2),(\pi_1',\pi_2')\}$ is an exchangeable pair.
		\end{lemma}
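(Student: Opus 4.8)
The plan is to decompose the claim coordinate by coordinate. Write $\tau_1 := (I, J)$ and $\tau_2 := (K, L)$ for the two uniformly chosen transpositions, so that $\pi_1' = \pi_1 \circ \tau_1$ and $\pi_2' = \pi_2 \circ \tau_2$. By construction $\pi_1, \tau_1, \pi_2, \tau_2$ are mutually independent, and hence the pair $(\pi_1, \pi_1')$ is independent of the pair $(\pi_2, \pi_2')$. The law of $\big((\pi_1, \pi_2), (\pi_1', \pi_2')\big)$ is therefore the product of the law of $(\pi_1, \pi_1')$ with that of $(\pi_2, \pi_2')$ (suitably interleaved), and the swap $\big((\pi_1, \pi_2), (\pi_1', \pi_2')\big) \mapsto \big((\pi_1', \pi_2'), (\pi_1, \pi_2)\big)$ acts on these two independent factors separately. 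Consequently it suffices to prove that $(\pi_1, \pi_1')$ and $(\pi_2, \pi_2')$ are each individually exchangeable pairs.

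To verify that $(\pi_1, \pi_1')$ is an exchangeable pair, I would condition on the value of $\tau_1$. Fix a transposition $\tau_0 \in \mathcal{T}_n$. Since right-multiplication $\sigma \mapsto \sigma \circ \tau_0$ is a bijection of $S_n$, the permutation $\pi_1 \circ \tau_0$ is again uniform on $S_n$; moreover $\tau_0$ is an involution, i.e. $\tau_0 \circ \tau_0 = \mathrm{id}$. Hence for any bounded measurable $f$,
\[
\E\big[ f(\pi_1, \pi_1 \circ \tau_0) \big] = \E\big[ f(\pi_1 \circ \tau_0, \pi_1 \circ \tau_0 \circ \tau_0) \big] = \E\big[ f(\pi_1 \circ \tau_0, \pi_1) \big],
\]
where the first equality replaces $\pi_1$ by the identically distributed $\pi_1 \circ \tau_0$ and the second uses $\tau_0 \circ \tau_0 = \mathrm{id}$. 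Averaging over the (independent) choice $\tau_1 = \tau_0$ yields $\E[f(\pi_1, \pi_1')] = \E[f(\pi_1', \pi_1)]$, i.e. $(\pi_1, \pi_1') \stackrel{d}{=} (\pi_1', \pi_1)$. The same argument, verbatim, shows $(\pi_2, \pi_2') \stackrel{d}{=} (\pi_2', \pi_2)$.

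Putting the pieces together: for any bounded measurable $h$, using first the independence of $(\pi_1, \pi_1')$ and $(\pi_2, \pi_2')$ and then the exchangeability of each factor established above,
\[
\E\big[ h\big((\pi_1, \pi_2), (\pi_1', \pi_2')\big) \big] = \E\big[ h\big((\pi_1', \pi_2'), (\pi_1, \pi_2)\big) \big],
\]
which is exactly the assertion that $\{(\pi_1, \pi_2), (\pi_1', \pi_2')\}$ is an exchangeable pair. I do not anticipate any genuine obstacle here; the only point needing care is the use of the involution property $\tau_0 \circ \tau_0 = \mathrm{id}$, which is precisely what makes $\sigma \mapsto \sigma \circ \tau_0$ simultaneously measure-preserving on $S_n$ and its own inverse. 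Everything else is routine bookkeeping with products of independent laws.
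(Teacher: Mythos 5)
Your proof is correct and follows essentially the same route as the paper: factor the joint law using the independence of $(\pi_1,\pi_1')$ and $(\pi_2,\pi_2')$ and then invoke the exchangeability of each marginal pair. The only difference is that you also verify the marginal exchangeability (via the involution and measure-preserving properties of right multiplication by a transposition), which the paper simply cites as a known fact.
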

		
		\begin{proof} Denote by $S_n$ the collection of all permutations of $\{1, 2, \ldots, n\}$ and suppose $A_1, A_2, A_1' A_2' \subseteq S_n$. Then by the independence between $(\pi_1,\pi_1')$ and $(\pi_2,\pi_2')$, 
			\begin{align*}
				\P( (\pi_1, \pi_2) \in A_1 \times A_2, (\pi_1', \pi_2') \in A_1' \times A_2' ) & =\P(\pi_1\in A_1, \pi_1'\in A_1' )\P(\pi_2\in A_2, \pi_2'\in A_2' ) \nonumber \\ 
				& =\P(\pi_1' \in A_1, \pi_1 \in A_1' )\P(\pi_2' \in A_2, \pi_2 \in A_2' ) \nonumber \\     
				& = \P( (\pi_1', \pi_2') \in A_1 \times A_2, (\pi_1, \pi_2) \in A_1' \times A_2') ,
			\end{align*}
			where the second inequality uses the fact that $(\pi_1,\pi_1')$ and $(\pi_2,\pi_2')$ are marginally two exchangeable pairs. 
		\end{proof}
		
		Recall that $C_n=\sum_{i=1}^{n}a_{i\pi_1(i)\pi_2(i)}$ and define 
		\begin{align}\label{eq:Cnexchangeable}
			C_n'=\sum_{i=1}^{n}a_{i\pi_1'(i)\pi_2'(i)}. 
		\end{align}
		By Lemma \ref{lm:pi12}, $(C_n, C_n')$ is an exchangeable pair. Hence, by Stein's method based on exchangeable pairs we can obtain bounds on the Wasserstein distance between $C_n$ and $N(0, 1)$ in terms of the moments of the difference $C_n'-C_n$. To this end, recall that the Wasserstein distance between random variables $X \sim \mu$ and $Y \sim \nu$ on $\mathbb R$ is defined as 
		$$\mathrm{Wass}(X, Y) = \sup \left\{\left|\int f \mathrm d\mu - \int f \mathrm d \nu \right| : f \text{ is } 1-\text{Lipschitz}\right\},$$
		where a function $f: \mathbb R \rightarrow \mathbb R$ is 1-Lipschitz if $|f(x) - f(y)| \leq |x-y|$, for all $x, y \in \mathbb R$. We now invoke the following theorem: 
		
		\begin{theorem}\cite{chatterjee} \label{thm:exchangeablepair}
			Let $(C_n,C_n')$ be as defined above. If $\E[C_n'-C_n|C_n]=-\lambda C_n$, for some $0<\lambda<1$, and $\E[C_n^{2}]=1$, then 
			\begin{align*}
				\mathrm{Wass}(C_n, N(0, 1))\leq\sqrt{\frac{2}{\pi}\Var\left[\E\left[\frac{(C_n'-C_n)^2}{2\lambda}\Big|C_n \right] \right]}+\frac{1}{3\lambda}\E\left[|C_n'-C_n|^{3}\right]. 
			\end{align*}
		\end{theorem}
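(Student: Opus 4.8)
The plan is to establish this bound by Stein's method of exchangeable pairs, following \cite{chatterjee}. Fix a $1$-Lipschitz function $h:\mathbb R\to\mathbb R$ and let $Z\sim N(0,1)$. First I would introduce the bounded solution $f=f_h$ of the Stein equation
\begin{align*}
f'(x)-x f(x)=h(x)-\E[h(Z)],
\end{align*}
given explicitly by $f_h(x)=e^{x^2/2}\int_{-\infty}^x\big(h(y)-\E[h(Z)]\big)e^{-y^2/2}\,\d y$, and recall the classical regularity estimates valid when $h$ is $1$-Lipschitz: $\|f_h'\|_\infty\le\sqrt{2/\pi}$ and $\|f_h''\|_\infty\le 2$. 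Since $\mathrm{Wass}(C_n,N(0,1))=\sup_h\big|\E[h(C_n)]-\E[h(Z)]\big|$, the supremum being over $1$-Lipschitz $h$, and plugging $x=C_n$ into the Stein equation and taking expectations gives $\E[h(C_n)]-\E[h(Z)]=\E[f'(C_n)-C_n f(C_n)]$, it suffices to bound the latter quantity uniformly in $h$.

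The heart of the argument is to exploit the exchangeable pair $(C_n,C_n')$. For any antisymmetric $F$ (that is, $F(x,y)=-F(y,x)$), exchangeability forces $\E[F(C_n,C_n')]=0$; applying this with $F(x,y)=(y-x)(f(x)+f(y))$ yields
\begin{align*}
0=\E[(C_n'-C_n)f(C_n)]+\E[(C_n'-C_n)f(C_n')].
\end{align*}
Next I would Taylor-expand $f(C_n')=f(C_n)+(C_n'-C_n)f'(C_n)+\tfrac12(C_n'-C_n)^2 f''(\xi)$ about $C_n$ for a point $\xi$ between $C_n$ and $C_n'$, substitute into the identity above, and use the hypothesis $\E[C_n'-C_n\mid C_n]=-\lambda C_n$ in the form $\E[(C_n'-C_n)f(C_n)]=-\lambda\,\E[C_n f(C_n)]$ to solve for $\E[C_n f(C_n)]$, obtaining
\begin{align*}
\E[C_n f(C_n)]=\frac{1}{2\lambda}\,\E\big[(C_n'-C_n)^2 f'(C_n)\big]+\rho_n,\qquad |\rho_n|\le\frac{1}{3\lambda}\,\E|C_n'-C_n|^3,
\end{align*}
where the bound on the remainder $\rho_n$ comes from $\|f_h''\|_\infty\le 2$ together with a careful accounting of the Taylor remainder (the constant $\tfrac{1}{3\lambda}$ coming from a refinement such as the symmetrization $\E[(C_n'-C_n)^2 f'(C_n)]=\tfrac12\E[(C_n'-C_n)^2(f'(C_n)+f'(C_n'))]$, which exchangeability permits).

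Substituting this back into $\E[f'(C_n)-C_n f(C_n)]$ gives
\begin{align*}
\E[h(C_n)]-\E[h(Z)]=\E\left[f'(C_n)\left(1-\frac{1}{2\lambda}\,\E[(C_n'-C_n)^2\mid C_n]\right)\right]-\rho_n .
\end{align*}
The final ingredient is that $T:=1-\frac{1}{2\lambda}\E[(C_n'-C_n)^2\mid C_n]$ has mean zero: taking expectations in $\E[C_n(C_n'-C_n)\mid C_n]=-\lambda C_n^2$ and using $\E[C_n^2]=1$ together with exchangeability ($\E[(C_n')^2]=\E[C_n^2]$) yields $\E[(C_n'-C_n)^2]=2\lambda$, hence $\E[T]=0$. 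By Cauchy--Schwarz, $\E|T|\le\sqrt{\E[T^2]}=\sqrt{\Var[T]}$, so
\begin{align*}
\big|\E[h(C_n)]-\E[h(Z)]\big|\le\|f_h'\|_\infty\sqrt{\Var[T]}+|\rho_n|\le\sqrt{\frac{2}{\pi}\Var[T]}+\frac{1}{3\lambda}\,\E|C_n'-C_n|^3 .
\end{align*}
Since $\Var[T]=\Var\big[\E[\tfrac{(C_n'-C_n)^2}{2\lambda}\mid C_n]\big]$, taking the supremum over $1$-Lipschitz $h$ yields the claimed inequality. The main obstacle, such as it is, lies in the sharp constants for the Stein solution: establishing $\|f_h'\|_\infty\le\sqrt{2/\pi}$ and $\|f_h''\|_\infty\le 2$ for $1$-Lipschitz $h$ requires a somewhat delicate analysis of $f_h$ and its first two derivatives, and one must then track these constants carefully through the Taylor remainder to land on the precise coefficients $\tfrac{2}{\pi}$ and $\tfrac{1}{3\lambda}$; the exchangeability manipulations and the mean-zero computation for $T$ are routine.
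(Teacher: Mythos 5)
The paper does not actually prove this theorem: it is quoted verbatim from \cite{chatterjee} as a known black-box ingredient of Stein's method, so there is no internal proof to compare against. Your outline is the standard exchangeable-pairs derivation (the antisymmetric-function identity $\E[(C_n'-C_n)(f(C_n)+f(C_n'))]=0$, Taylor expansion, the regularity bounds $\|f_h'\|_\infty\le\sqrt{2/\pi}$ and $\|f_h''\|_\infty\le 2$ for $1$-Lipschitz $h$, and the mean-zero observation $\E[(C_n'-C_n)^2]=2\lambda$) and is correct; the one genuinely delicate point, which you rightly flag, is sharpening the Taylor-remainder constant from the naive $\tfrac{1}{2\lambda}$ to $\tfrac{1}{3\lambda}$, and the symmetrization $\E[(C_n'-C_n)^2f'(C_n)]=\tfrac12\E[(C_n'-C_n)^2(f'(C_n)+f'(C_n'))]$ you propose does deliver it.
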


		The rest of the proof is organized as follows: 
		
		\begin{itemize}
			
			\item[(1)] In Lemma \ref{lm:expectationC} we evaluate $\E[C_n'-C_n|C_n]$ and show that  $\E[C_n'-C_n|C_n]=-\lambda C_n$ holds with $\lambda = \Theta(1/n)$.

			\item[(2)] Next, in Lemma \ref{lm:thirdmoment} we show $\frac{1}{\lambda}\E[|C_n'-C_n|^{3}] = O(1/\sqrt n )$.

			\item[(3)] Finally, in Lemma \ref{lm:Cexpvar} we show that $\Var[\E[\frac{(C_n'-C_n)^2}{2\lambda}|C_n ]] = O(1/n)$. 
			
		\end{itemize} 
		These three steps combined with Theorem \ref{thm:exchangeablepair} implies, $\mathrm{Wass}(C_n, N(0, 1)) = O(1/\sqrt n)$, thus, completing the proof of Theorem \ref{thm:clt}.

		\begin{lemma} \label{lm:expectationC}
			$\E[C_n'-C_n|C_n]=-\lambda C_n$, where 
			\begin{align*}
				\lambda= \frac{4(n^2-7n+11)}{n^2 (n-1)} = \Theta\left( \frac{1}{n} \right) . 
			\end{align*}
		\end{lemma}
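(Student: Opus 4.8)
The plan is to first establish the stronger identity $\E[C_n' - C_n \mid \pi_1,\pi_2] = -\lambda C_n$ with $\lambda = \Theta(1/n)$; since $C_n$ is a measurable function of $(\pi_1,\pi_2)$, the tower property then immediately yields $\E[C_n' - C_n \mid C_n] = -\lambda C_n$, which is the assertion. By linearity of expectation it suffices to analyze, for each fixed $i \in \{1,\ldots,n\}$, the conditional law of the single summand $a_{i,\pi_1'(i),\pi_2'(i)}$ given $(\pi_1,\pi_2)$.

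Two structural facts drive the computation. First, conditionally on $(\pi_1,\pi_2)$, the coordinates $\pi_1'(i)$ and $\pi_2'(i)$ are independent, since $\pi_1'$ is a deterministic function of the transposition $(I,J)$ and $\pi_2'$ of the independent transposition $(K,L)$. Second, the conditional law of $\pi_1'(i)$ given $\pi_1$ puts mass $1-\frac{2}{n}$ on $\pi_1(i)$ (the event $i\notin\{I,J\}$) and mass $\frac{2}{n(n-1)}$ on each of the remaining values $\{\pi_1(\ell):\ell\neq i\}$; the analogous statement holds for $\pi_2'(i)$ given $\pi_2$. Combining the two, $\E[a_{i,\pi_1'(i),\pi_2'(i)}\mid\pi_1,\pi_2]$ splits into four weighted contributions according to whether each of the two permuted coordinates is ``moved.'' Assumption~\ref{assumption:A}(1) then does the essential work: fixing one slot of $a_{i,\cdot,\cdot}$ and summing over the other gives $0$, while an inclusion--exclusion shows that summing $a_{i,u,v}$ over $u\neq\pi_1(i)$ and $v\neq\pi_2(i)$ returns exactly $a_{i,\pi_1(i),\pi_2(i)}$. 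Hence each ``moved'' contribution collapses to a scalar multiple of $a_{i,\pi_1(i),\pi_2(i)}$, and we obtain $\E[a_{i,\pi_1'(i),\pi_2'(i)}\mid\pi_1,\pi_2] = \rho_n\, a_{i,\pi_1(i),\pi_2(i)}$ for a constant $\rho_n$ independent of $i$. Summing over $i$ gives $\E[C_n'\mid\pi_1,\pi_2]=\rho_n C_n$, so $\lambda=1-\rho_n$.

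It then remains only to evaluate $\rho_n$ by collating the four weighted coefficients and to simplify $1-\rho_n$ into the claimed rational function of $n$; one checks along the way that $\lambda$ is $\Theta(1/n)$ (with leading term $4/n$) and, in particular, that $0<\lambda<1$ for the relevant range of $n$, as required to apply Theorem~\ref{thm:exchangeablepair}. The same argument works verbatim for general $r$: conditionally on $(\pi_1,\ldots,\pi_{r-1})$ the coordinates $\pi_1'(i),\ldots,\pi_{r-1}'(i)$ are mutually independent, and an inclusion--exclusion over the subset of ``moved'' coordinates, using the higher slice-sum identities in Assumption~\ref{assumption:A}(1), again reduces the per-$i$ conditional expectation to a constant multiple of $a_{i,\pi_1(i),\ldots,\pi_{r-1}(i)}$.

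I expect the main obstacle to be purely the bookkeeping in the last step — a careful statement and proof of the multi-slot inclusion--exclusion identity used to collapse the sums, keeping the move-pattern probabilities $\frac{2}{n}$ and $\frac{2}{n(n-1)}$ straight, and the final collation that produces the closed-form $\lambda$ and establishes $0<\lambda<1$. The conceptual core (conditioning on $(\pi_1,\pi_2)$, conditional independence of the permuted coordinates, and the slice-sum cancellations from Assumption~\ref{assumption:A}(1)) is short.
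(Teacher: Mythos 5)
Your approach is sound, and it is a genuinely different route from the paper's: the paper proves the identity by an exhaustive case analysis over the joint configuration of the two transpositions $(I,J)$ and $(K,L)$ (the six cases in \eqref{eq:Cn}) and then collates all contributions, whereas you argue summand by summand, using the conditional independence of $\tau(i):=(I,J)(i)$ and $\sigma(i):=(K,L)(i)$ given $(\pi_1,\pi_2)$ together with the slice-sum cancellations from Assumption \ref{assumption:A}(1). Your move-pattern probabilities ($1-\tfrac{2}{n}$ and $\tfrac{2}{n(n-1)}$) and the inclusion--exclusion identity are correct, and your argument extends to general $r$ essentially for free, which the paper's case analysis does not.

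The one point you cannot deliver as promised is the final simplification ``into the claimed rational function.'' Carrying out your collation gives the per-summand coefficient $(p_0-p_1)^2$ with $p_0=1-\tfrac{2}{n}$, $p_1=\tfrac{2}{n(n-1)}$, i.e.\ $\rho_n=\bigl(\tfrac{n-3}{n-1}\bigr)^2$ and hence $\lambda=1-\rho_n=\tfrac{4(n-2)}{(n-1)^2}$. This is $\tfrac{4}{n}\{1+O(1/n)\}$, so the assertions actually used downstream (that $\E[C_n'-C_n\mid C_n]=-\lambda C_n$ with $\lambda=\Theta(1/n)$, as in Lemmas \ref{lm:thirdmoment} and \ref{lm:Cexpvar}) are intact, but it does not coincide with the closed form $\tfrac{4(n^2-7n+11)}{n^2(n-1)}$ stated in Lemma \ref{lm:expectationC}: the two differ at order $n^{-2}$, and the stated expression is even negative at $n=3,4$, which is impossible here (at $n=3$ the conditional law of $\tau(i)$ is uniform on $\{1,2,3\}$, so $\E[C_3'\mid\pi_1,\pi_2]=0$ and $\lambda=1$ exactly). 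In other words, your method, executed carefully, exposes an algebraic slip in the paper's collation rather than reproducing its constant; you should report the constant your computation actually yields, check $0<\lambda\le 1$ for $n\ge 3$ (strict inequality for $n\ge 4$) directly from that expression, and note that only $\lambda=\Theta(1/n)$ matters for the CLT.
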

		
		\begin{proof} 
			Note that 
			\begin{align}\label{eq:Cn}
				C_n'-C_n =
				\begin{cases}
					\Delta_1(I, J) - \Delta_0(I, J)& \text{if }K=I,L=J \text{ or }K=J, L=I , \\
					\Delta_2(I, J, K) - \Delta_0(I, J, K)& \text{if }K\neq J,L=I , \\
					\Delta_2(I, J, L) - \Delta_0(I, J, L)& \text{if }K=I, L\neq J , \\
					\Delta_3(I, J, K) -\Delta_0(I, J, K) & \text{if }K\neq I,L=J , \\
					\Delta_3(I, J, L) -\Delta_0(I, J, L) & \text{if }K=J, L\neq I , \\
					\Delta_4(I, J, K, L) - \Delta_0(I, J, K, L) & \text{otherwise;}
				\end{cases}
			\end{align}
			where 
			\begin{align*}
				\Delta_0(I, J) & := a_{I, \pi_1(I), \pi_2(I)} + a_{J, \pi_1(J), \pi_2(J)} , \\ 
				\Delta_0(I, J, K) & := a_{I, \pi_1(I), \pi_2(I)} + a_{J, \pi_1(J), \pi_2(J)} + a_{K, \pi_1(K), \pi_2(K)} , \\ 
				\Delta_0(I, J, K, L) & := a_{I, \pi_1(I), \pi_2(I)} + a_{J, \pi_1(J), \pi_2(J)} + a_{K, \pi_1(K), \pi_2(K)} + a_{L, \pi_1(L), \pi_2(L)} ;
			\end{align*} 
			and 
			\begin{align*}
				\Delta_1(I, J) & := a_{I, \pi_1(J), \pi_2(J)}+a_{J, \pi_1(I), \pi_2(I)}  \\
				\Delta_2(I, J, \cdot) & :=  a_{I, \pi_1(J), \pi_2(\cdot)}+a_{J, \pi_1(I),\pi_2(J)}+a_{\cdot, \pi_1(\cdot), \pi_2(I)} \\
				\Delta_3(I, J, \cdot) & := a_{I, \pi_1(J), \pi_2(I)}+a_{J, \pi_1(I), \pi_2(\cdot)}+a_{\cdot, \pi_1(\cdot), \pi_2(J)} \\ 
				\Delta_4(I, J, K, L)  & := a_{I, \pi_1(J), \pi_2(I)}+a_{J, \pi_1(I), \pi_2(J)}+a_{K, \pi_1(K), \pi_2(L)}+a_{L, \pi_1(L), \pi_2(K)} . 
			\end{align*}  
			Denote $\eta = \frac{1}{n(n-1)}$. Then \eqref{eq:Cn} implies, 
			\begin{align*}
				&
				\E[C_n'-C_n |\pi_1,\pi_2] = \eta^2 \left(  S_1 + S_2 + S_3 + S_4 \right) 
			\end{align*} 
			where    
			\begin{align*} 
				S_1 & := 2 \sum_{1 \leq i\neq j \leq n} \left\{ \Delta_1(i, j) - \Delta_0(i, j) \right\} \\ 
				S_2 & := \sum_{1 \leq i\neq j \ne k \leq n} \left\{ \Delta_2(i, j, k) + \Delta_3(i, j, k) - 2 \Delta_0(i, j, k) \right\} \\ 
				S_3 & := \sum_{1 \leq i\neq j \ne \ell \leq n} \left\{ \Delta_2(i, j, \ell) + \Delta_3(i, j, \ell) - 2 \Delta_0(i, j, \ell) \right\} \\ 
				S_4 & := \sum_{1 \leq i\neq j \ne k \ne \ell \leq n} \left\{ \Delta_4(i, j, k, \ell) - \Delta_0(i, j, k, \ell) \right\} .
			\end{align*}
			Note that, by Assumption \ref{assumption:A},  $\sum_{1 \leq i\neq j \leq n}  a_{i, \pi_1(j), \pi_2(j)} = - \sum_{j=1}^n a_{j, \pi_1(j), \pi_2(j)} = - C_n$. Hence, 
			\begin{align*} 
				S_1 = -4 C_n - 4(n-1) C_n . 
			\end{align*} 
			Computing the other terms similarly and simplifying gives 				\begin{align*}
					\E[C_n'-C_n|C_n]=\E[C_n'-C_n|\pi_1,\pi_2]= - \lambda C_n
				\end{align*}
				where 
				\begin{align*}
					\lambda=\frac{4(n-2)(n-3)}{n(n-1)^{2}}+\frac{4(n-2)(5-3n)}{n^{2}(n-1)^{2}}+\frac{4}{n^{2}(n-1)} = \frac{4(n^2-7n+11)}{n^2 (n-1)} = \Theta\left( \frac{1}{n} \right).
				\end{align*}
				This completes the proof of Lemma \ref{lm:expectationC}.
			\end{proof}

			\begin{lemma}\label{lm:thirdmoment} $\frac{1}{\lambda}\E[|C_n'-C_n|^{3}]=O(1/\sqrt n)$. 
			\end{lemma}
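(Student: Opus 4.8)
The plan is to bound the third moment of the increment $C_n'-C_n$ by combining a deterministic $\ell^\infty$-bound on $C_n'-C_n$ with the standard exchangeable-pairs identity for its second moment. Since Lemma~\ref{lm:expectationC} gives $\lambda=\Theta(1/n)$, it suffices to show $\E[|C_n'-C_n|^3]=O(\lambda/\sqrt n)$, i.e.\ essentially $O(n^{-3/2})$.

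First I would note that the normalization performed just above the lemma (dividing every entry of $\bm A_n$ by $\sqrt{\Var[C_n]}$ so that $\Var[C_n]=1$) does not destroy the uniform entry bound: by Lemma~\ref{lm:variance} we have $\Var[C_n]=\Theta(1)$, so after rescaling there is still an absolute constant $K>0$ with $|a_{i_1,i_2,i_3}|\le K/\sqrt n$ for all $i_1,i_2,i_3$. Next, inspecting the case decomposition \eqref{eq:Cn}, in each of the six cases $C_n'-C_n$ is a difference of two sums of the form $\Delta_j(\cdot)-\Delta_0(\cdot)$, and each of $\Delta_j(\cdot)$ and $\Delta_0(\cdot)$ is a sum of at most four entries of $\bm A_n$. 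Hence, by the triangle inequality alone (no cancellation needed), $|C_n'-C_n|\le 8K/\sqrt n$ almost surely.

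Then I would invoke the exchangeable-pairs second-moment identity. Since $(C_n,C_n')$ is an exchangeable pair (Lemma~\ref{lm:pi12}), $\E[(C_n')^2]=\E[C_n^2]=1$; and since $\E[C_n'-C_n\mid C_n]=-\lambda C_n$ by Lemma~\ref{lm:expectationC}, we get $\E[C_nC_n']=\E\big[C_n\,\E[C_n'\mid C_n]\big]=(1-\lambda)\E[C_n^2]$, so that
\begin{align*}
\E[(C_n'-C_n)^2]=\E[(C_n')^2]-2\E[C_nC_n']+\E[C_n^2]=2\E[C_n^2]-2(1-\lambda)\E[C_n^2]=2\lambda .
\end{align*}
Combining the $\ell^\infty$-bound with this, $\E[|C_n'-C_n|^3]\le \big(\sup |C_n'-C_n|\big)\,\E[(C_n'-C_n)^2]\le (8K/\sqrt n)(2\lambda)$, whence $\tfrac1\lambda\E[|C_n'-C_n|^3]\le 16K/\sqrt n=O(1/\sqrt n)$, which is the claim.

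The argument is essentially mechanical. The only points requiring a little care are (i) confirming that the normalization preserves the uniform entry bound $|a_{i_1,i_2,i_3}|=O(1/\sqrt n)$, and (ii) reading off from \eqref{eq:Cn} that each increment $C_n'-C_n$ is a sum of a bounded number of entries of $\bm A_n$, so that the bound $|C_n'-C_n|=O(1/\sqrt n)$ holds deterministically rather than merely in expectation; once these are in place there is no real obstacle.
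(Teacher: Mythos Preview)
Your proof is correct and in fact cleaner than the paper's. The paper proceeds by direct computation: conditioning on $(\pi_1,\pi_2)$, it writes $\E[|C_n'-C_n|^3\mid\pi_1,\pi_2]\lesssim \eta^2(W_1+W_2+W_3)$ with $\eta=\tfrac{1}{n(n-1)}$, where $W_1,W_2,W_3$ are sums of cubes of the $|\Delta_j|$ and $|\Delta_0|$ over the various index configurations for $(I,J,K,L)$; then the uniform entry bound $|a_{i_1,i_2,i_3}|\le C/\sqrt n$ gives each cube $O(n^{-3/2})$, the dominant sum $W_3$ has $O(n^4)$ terms, so $\eta^2\cdot O(n^{5/2})=O(n^{-3/2})$, and dividing by $\lambda=\Theta(1/n)$ finishes. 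Your argument replaces this combinatorial bookkeeping by the single inequality $\E[|C_n'-C_n|^3]\le \|C_n'-C_n\|_\infty\,\E[(C_n'-C_n)^2]$ together with the standard exchangeable-pairs identity $\E[(C_n'-C_n)^2]=2\lambda\Var[C_n]=2\lambda$, which follows immediately from $\E[C_n'-C_n\mid C_n]=-\lambda C_n$ and $\E[C_n]=0$. Both routes use the same deterministic bound $|C_n'-C_n|=O(1/\sqrt n)$ read off from the case decomposition \eqref{eq:Cn}; what you gain is that the second-moment identity absorbs the averaging over $(I,J,K,L)$ for free, so no index counting is needed. The paper's approach, on the other hand, is self-contained and does not rely on the second-moment identity, which may be why the authors chose it.
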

			
			\begin{proof} Denote $\eta = \frac{1}{n(n-1)}$. Then \eqref{eq:Cn} implies, 
				\begin{align*}
					\E[|C_n'-C_n|^{3}|\pi_1,\pi_2] \lesssim \eta^2 (W_1+W_2+W_3 ) , 
				\end{align*}
				where 
				\begin{align*} 
					W_1 & := \sum_{1 \leq i\neq j \leq n} \left( | \Delta_1(i, j)|^3 + |\Delta_0(i, j)|^3 \right) , \\ 
					W_2& := \sum_{1 \leq i\neq j \ne k \leq n} \left( | \Delta_2(i, j, k) |^3 + |\Delta_3(i, j, k)|^3 + |\Delta_0(i, j, k) |^3 \right) , \\ 
					W_3 & := \sum_{1 \leq i\neq j \ne k \ne \ell \leq n} \left( | \Delta_4(i, j, k, \ell) |^3 + | \Delta_0(i, j, k, \ell)|^3 \right) ,
				\end{align*}
				Using the fact $|a_{i_1, i_2, i_3}|\leq C/\sqrt{n}$, it follows that $W_1 + W_2 + W_3 = O(n^{5/2})$. Hence, using $\eta = \Theta(1/n^2)$ and $\lambda=\Theta(1/n)$ (by Lemma \ref{lm:expectationC}), 
				\begin{align*}
					\frac{1}{\lambda}\E[|C_n'-C_n|^{3}] = \frac{1}{\lambda}\E\left[\E\left[|C_n'-C_n|^{3}|\pi_1,\pi_2\right]\right]=O(1/\sqrt n) ,
				\end{align*}
				completing the proof of the lemma. 
			\end{proof}

			\begin{lemma} \label{lm:Cexpvar}
				$\Var[\E[\frac{(C_n'-C_n)^2}{2\lambda}|C_n ]] = O(1/n)$. 
			\end{lemma}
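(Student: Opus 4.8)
The plan is to first pass from conditioning on $C_n$ to conditioning on the full permutation pair, and then to control the resulting variance by a bounded-differences argument. Since $C_n$ is a function of $(\pi_1,\pi_2)$, the tower property gives $\E[(C_n'-C_n)^2\mid C_n]=\E\big[\E[(C_n'-C_n)^2\mid \pi_1,\pi_2]\mid C_n\big]$, and hence, since conditional expectation is an $L^2$-contraction,
\[
\Var\Big[\E\big[(C_n'-C_n)^2\mid C_n\big]\Big]\le \Var\Big[\E\big[(C_n'-C_n)^2\mid \pi_1,\pi_2\big]\Big]=:\Var[h(\pi_1,\pi_2)].
\]
Because $\lambda=\Theta(1/n)$ by Lemma~\ref{lm:expectationC}, it therefore suffices to show $\Var[h(\pi_1,\pi_2)]=O(1/n^{3})$; this yields $\Var\big[\E[(C_n'-C_n)^2/(2\lambda)\mid C_n]\big]=(1/(4\lambda^{2}))\,\Var[\E[(C_n'-C_n)^2\mid C_n]]=O(n^{2})\cdot O(1/n^{3})=O(1/n)$, as desired.

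Next I would write $h$ explicitly. Averaging the case decomposition \eqref{eq:Cn} over the uniform transpositions $(I,J)$ and $(K,L)$ gives
\[
h(\pi_1,\pi_2)=\frac{1}{[n(n-1)]^{2}}\sum_{1\le i\ne j\le n}\ \sum_{1\le k\ne \ell\le n} D_{i,j,k,\ell}(\pi_1,\pi_2)^{2},
\]
where $D_{i,j,k,\ell}$ denotes the value of $C_n'-C_n$ when the two transpositions equal $(i,j)$ and $(k,\ell)$. Inspecting the $\Delta$-terms in \eqref{eq:Cn}, each $D_{i,j,k,\ell}$ is a signed sum of at most eight entries of $\bm A_n$ (composed with $\pi_1,\pi_2$), so by Assumption~\ref{assumption:A} we have $D_{i,j,k,\ell}^{2}\le 64K_1^{2}/n$ uniformly; in particular $h=O(1/n)$, consistent with $\E[h]=\E[(C_n'-C_n)^2]=2\lambda$.

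The key step is a Lipschitz estimate for $h$ under a single transposition of either argument: if $\pi_1'=\pi_1\circ(p,q)$ with $\pi_2$ unchanged, then $|h(\pi_1,\pi_2)-h(\pi_1',\pi_2)|=O(1/n^{2})$, and symmetrically in the second argument. The point is that $D_{i,j,k,\ell}$ depends on $\pi_1$ only through its values at the indices $i,j,k,\ell$, so the summand $D_{i,j,k,\ell}^{2}$ is unchanged whenever $\{i,j,k,\ell\}\cap\{p,q\}=\emptyset$; the number of tuples for which this fails is $[n(n-1)]^{2}-[(n-2)(n-3)]^{2}=O(n^{3})$, each such summand changes by at most $128K_1^{2}/n$, and dividing by $[n(n-1)]^{2}$ gives the claimed $O(1/n^{2})$ bound. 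Generating $\pi_1$ and $\pi_2$ independently via two Fisher--Yates sequences of $n-1$ transpositions each and invoking the Efron--Stein / bounded-differences inequality for functions of independent random permutations, each of the $O(n)$ resampling steps contributes $O((1/n^{2})^{2})$ to $\Var[h]$, whence $\Var[h]=O(n)\cdot O(1/n^{4})=O(1/n^{3})$, completing the argument via the reduction in the first paragraph.

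I expect the main obstacle to be making the Lipschitz/concentration step airtight: the transpositions produced by Fisher--Yates are not uniformly random transpositions, so the clean ``resampling one coordinate changes $h$ by at most its one-transposition Lipschitz constant'' heuristic must be invoked in the precise martingale-difference form attached to the swap representation, and one must check that the $O(n^{3})$ count of affected tuples and the per-summand bound $128K_1^{2}/n$ are uniform over $(p,q)$ and over $(\pi_1,\pi_2)$. An alternative, purely computational route is to expand $\Var[h]=\frac{1}{[n(n-1)]^{4}}\sum\Cov[D_{i,j,k,\ell}^{2},D_{i',j',k',\ell'}^{2}]$ and bound it by counting the tuples whose index sets intersect; this sidesteps the permutation concentration inequality but then requires careful handling of the $O(1/n)$-size correlations that persist between disjoint tuples under a random permutation.
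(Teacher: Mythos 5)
Your reduction is exactly the paper's first step: since $\sigma(C_n)\subseteq\sigma(\pi_1,\pi_2)$, it suffices to bound $\Var\big[\E[(C_n'-C_n)^2\mid\pi_1,\pi_2]\big]$ by $O(1/n^3)$ and then divide by $4\lambda^2=\Theta(1/n^2)$ using Lemma \ref{lm:expectationC}. Where you diverge is in how that variance bound is obtained, and your route is a legitimately different one. The paper proceeds by brute force: it writes $\E[(C_n'-C_n)^2\mid\pi_1,\pi_2]=\eta^2(T_1+\cdots+T_4)$ following the case decomposition \eqref{eq:Cn}, disposes of the low-order terms deterministically, and for the dominant term (the sum over four distinct indices) expands the variance into covariances of summands, splitting into tuples that share an index (an $O(n^7)$-term count argument) and disjoint tuples, where the residual $O(1/n)$-size correlations are killed by an exact comparison of permutation moments, $1/(n)_8^2-1/(n)_4^4=O(1/n^{17})$. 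You instead exploit that each $D_{i,j,k,\ell}$ depends on $(\pi_1,\pi_2)$ only through their values at $\{i,j,k,\ell\}$, derive the one-transposition Lipschitz bound $|h(\pi_1,\pi_2)-h(\pi_1\circ(p,q),\pi_2)|=O(1/n^2)$ by counting the $O(n^3)$ affected tuples (each changing by $O(1/n)$ under Assumption \ref{assumption:A}), and then feed this into Efron--Stein over the $2(n-1)$ independent Fisher--Yates coordinates to get $\Var[h]=O(n)\cdot O(1/n^4)=O(1/n^3)$. This buys a shorter, more conceptual argument that sidesteps the delicate disjoint-tuple covariance computation and extends painlessly to general $r$ and to more than two permutations; the paper's computation, in exchange, is entirely self-contained elementary counting and produces the constants explicitly without invoking any concentration machinery for the symmetric group.

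The one point you flagged does need to be nailed down, and it does go through: resampling a single Fisher--Yates coordinate replaces the factor $(i,U_i)$ by $(i,U_i')$, so the new permutation differs from the old by (left or right) multiplication by a conjugate of $(i,U_i')(i,U_i)$, which moves at most three elements; hence the two permutations disagree in at most three positions, and your swap-Lipschitz bound applies with a uniform constant (the affected-tuple count is still $O(n^3)$). With that observation made explicit, Efron--Stein for functions of the independent coordinates gives the claimed $O(1/n^3)$, and the lemma follows.
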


			\begin{proof}  
				Recall that for any sigma-fields $\mathcal{F}_1\subseteq \mathcal{F}_2$, 
				\begin{align*}
					\Var[\E[X|\mathcal{F}_1]] \leq \Var[\E[X|\mathcal{F}_2]] . 
				\end{align*} 
				Hence, to prove the lemma it suffices to bound $\Var[\E[|C_n'-C_n|^2|\pi_1,\pi_2]]$. We first compute 
				\begin{align*}
					\E[|C_n'-C_n|^{2}|\pi_1,\pi_2] = \eta^2 (T_1+T_2+T_3+T_4).
				\end{align*} 
				where $\eta = \frac{1}{n(n-1)}$ and recalling \eqref{eq:Cn}, 
				\begin{align*} 
					T_1 & := 2 \sum_{1 \leq i\neq j \leq n}  | \Delta_1(i, j) - \Delta_0(i, j)|^2  , \\ 
					T_2& := 2 \sum_{1 \leq i\neq j \ne k \leq n} | \Delta_2(i, j, k)  - \Delta_0(i, j, k) |^2 , \\ 
					T_3& := 2 \sum_{1 \leq i\neq j \ne k \leq n} | \Delta_3(i, j, k)  - \Delta_0(i, j, k) |^2 , \\ 
					T_4 & :=  \sum_{1 \leq i\neq j \ne k \ne \ell \leq n}  | \Delta_4(i, j, k, \ell) - \Delta_0(i, j, k, \ell)|^2 . 
				\end{align*}
							Denote $T_{\nu} = T_1+T_2+T_3$. This implies, 
				\begin{align}\label{eq:varT}
					\Var[\E[|C_n'-C_n|^{2}|\pi_1,\pi_2]] \lesssim \eta^4 \Var[T_\nu] + \eta^4 \Var[T_4]   + 2 \eta^4 \sqrt{\Var[T_\nu] \Var[T_4]}.
				\end{align} 
				By Assumption \ref{assumption:A}, $T_{\nu} = O(n^2)$. Hence, 
				\begin{align}\label{eq:Tnu}
					\eta^4 \Var[T_{\nu}] = O(1/n^4). 
				\end{align}
				
				We will now show that $\eta^4 \Var[T_4] = O(1/n^3)$. For this define 
				\begin{align*}
					b_{ijk\ell} :=  | \Delta_4(i, j, k, \ell) - \Delta_0(i, j, k, \ell)| . 
				\end{align*} 
				Then 
				\begin{align}\label{eq:T6}
					\Var[T_4]    = \sum_{1 \leq i\neq j \neq k \neq \ell \leq n}\sum_{1 \leq i'\neq j'\neq k'\neq \ell' \leq n} \left\{ \E [ b_{i j k \ell}^{2} b_{i'j'k'\ell'}^{2} ] -  \E[b_{ijk \ell}^{2}]\E[b_{i'j'k'\ell'}^{2}] \right\} = S_1+S_2 , 
				\end{align}  
				where 
				\begin{align}\label{eq:S2}
					S_1   & := \sum_{\substack{\text{not all different} \\ 
							1 \leq i\neq j \neq k \neq \ell \leq n, 1 \leq i'\neq j'\neq k'\neq \ell' \leq n } }  
					\left\{ \E [ b_{i j k \ell}^{2} b_{i'j'k'\ell'}^{2} ] -  \E[b_{ijk \ell}^{2}]\E[b_{i'j'k'\ell'}^{2}] \right\}  \nonumber \\ 
					S_2 & := \sum_{ 1 \leq i \ne i' \neq j  \ne j' \neq k \ne k' \neq \ell \ne \ell' \leq n } \left\{ \E [ b_{i j k \ell}^{2} b_{i'j'k'\ell'}^{2} ] -  \E[b_{ijk \ell}^{2}]\E[b_{i'j'k'\ell'}^{2}]  \right\}  ,  
				\end{align}
				
				For $S_1$, we bound
				\begin{align}\label{eq:S1_clt}
					\eta^4  S_1\leq \frac{1}{n^{4}(n-1)^{4}}  \sum_{\substack{\text{not all different} \\ 
							1 \leq i\neq j \neq k \neq \ell \leq n, 1 \leq i'\neq j'\neq k'\neq \ell' \leq n } }   \E \left[ b_{i j k \ell}^{2} b_{i'j'k'\ell'}^{2} \right] = O(1/n^3).
				\end{align} 
				since $b_{ijk\ell}=O(1/\sqrt{n})$ by Assumption \ref{assumption:A} and there are $O(n^7)$ terms in the sum in \eqref{eq:S1_clt}.  Next, we bound $S_2$. To this end, fix $r \geq 1$ and denote $(n)_r := n(n-1) \cdots (n-r+1)$ and $[n]_r$ the collection of ordered $r$ element subsets of $\{1, 2, \ldots, n\}$ with distinct elements. For $\bm  s = (s_1, s_2, s_3, s_4) \in [n]_4$  and $\bm s' = (s_1', s_2', s_3', s_4') \in [n]_4$, define 
				$$\Delta(i, j, k, \ell; \bm s, \bm s' ) := \left( a_{i, s_2, s'_1}+a_{j, s_1, s'_2}+a_{k, s_3, s'_4}+a_{\ell, s_4, s'_3}-a_{i, s_1, s'_1}-a_{j, s_2, s'_2}-a_{k, s_3, s'_3}-a_{\ell, s_4, s'_4} \right)^{2} $$
				For $\bm  t = (s_1, s_2, s_3, s_4) \in [n]_4$  and $\bm t' = (s_1', s_2', s_3', s_4')\in [n]_4$, $\Delta(i, j, k, \ell; \bm t, \bm t' )$ is defined similarly. Then 
				\begin{align}\label{eq:covbijkl}
					\E[ b_{i j k \ell}^{2} b_{i'j'k'\ell'}^{2} ] &  = \frac{1}{(n)_8^{2}} \sum_{(\bm s, \bm t) \in [n]_8 } \sum_{(\bm s', \bm t') \in [n]_8 }  \Delta(i, j, k, \ell; \bm s, \bm s' ) \Delta(i, j, k, \ell; \bm t, \bm t' ) 
				\end{align}
				and 
				\begin{align}\label{eq:expbijkl}
					\E[ b_{i j k \ell}^{2}] \E[ b_{i'j'k'\ell'}^{2} ] &  = \frac{1}{(n)_4^{4}} \sum_{\bm s, \bm s', \bm t, \bm t' \in [n]_4 } \Delta(i, j, k, \ell; \bm s, \bm s' ) \Delta(i, j, k, \ell; \bm t, \bm t' )  . 
				\end{align} 
				Then we further decompose
				\begin{align*}
					& \sum_{\bm s, \bm s', \bm t, \bm t' \in [n]_4 } \Delta(i, j, k, \ell; \bm s, \bm s' ) \Delta(i, j, k, \ell; \bm t, \bm t' ) \\ 
					&
					= \sum_{(\bm s, \bm t) \in [n]_8 } \sum_{(\bm s', \bm t') \in [n]_8 }  \Delta(i, j, k, \ell; \bm s, \bm s' ) \Delta(i, j, k, \ell; \bm t, \bm t' ) + \cR , 
				\end{align*}   
				where 
				\begin{align*}
					\cR :=   & \sum_{\substack{ \bm s, \bm s', \bm t, \bm t' \in [n]_4  \\ \bm s \cap \bm t \ne \emptyset \text{ or } \bm s' \cap \bm t' \ne \emptyset}} \Delta(i, j, k, \ell; \bm s, \bm s' ) \Delta(i, j, k, \ell; \bm t, \bm t' ) .
				\end{align*}
				Then, denoting $\sum_{\cD}$ as the sum over indices $1 \leq i \ne i' \neq j  \ne j' \neq k \ne k' \neq \ell \ne \ell' \leq n$ and noting from Assumption \ref{assumption:A} that 
				\begin{align}\label{eq:aijklst}
					\Delta(i, j, k, \ell; \bm s, \bm s' ) \Delta(i, j, k, \ell; \bm t, \bm t' ) = O\left( \frac{1}{n^2} \right),  
				\end{align}
				implies $\eta^4\sum_{\cD} \cR = O(1/n^3)$. Recalling \eqref{eq:S2}, \eqref{eq:covbijkl}, and \eqref{eq:expbijkl} then gives, 
				\begin{align*} 
					S_2  \leq  \sum_{ \cD } \left( \frac{1}{(n)_8^2}-\frac{1}{(n)_4^4} \right)  \sum_{(\bm s, \bm t) \in [n]_8 } \sum_{(\bm s', \bm t') \in [n]_8 }   
					\Delta(i, j, k, \ell; \bm s, \bm s' ) \Delta(i, j, k, \ell; \bm t, \bm t' ) + \sum_{\cD} \cR. 
				\end{align*}
				Note that  $\frac{1}{(n)_8^2}-\frac{1}{(n)_4^4} = O(1/n^{17})$. Hence, by \eqref{eq:aijklst}, $$\eta^4 S_2 = O(1/n^3).$$ Combining this with \eqref{eq:T6} and \eqref{eq:S1} it follows that $\eta^4 \Var[T_4] = O(1/n^3)$. This together with \eqref{eq:varT} and \eqref{eq:Tnu} implies, 
				\begin{align*}  
					\Var\left[\E\left[|C_n'-C_n|^{2}|\pi_1,\pi_2\right] \right]=O(1/n^{3}). 
				\end{align*}
				Since $\lambda= \Theta(1/n)$, by Lemma \ref{lm:expectationC}, the result in Lemma \ref{lm:Cexpvar} follows. 
			\end{proof}

			\section{Proof of Theorem \ref{thm:consistency} and Proposition \ref{ppn:phiconsistency}}
			\label{sec:consistencypf}
			
			This section is organized as follows: We prove the consistency of the estimate $\RJdCov_n^2(\bm X_S)$ (recall definition from \eqref{eq:RdCovSn}) in Section \ref{sec:consistencyestimatepf}. The consistency of the corresponding test \eqref{eq:testrank} is proved in Section \ref{sec:consistencytestpf}. 
			
			\subsection{Proof of Theorem \ref{thm:consistency}} 
			\label{sec:consistencyestimatepf}
			
			Recall from \eqref{eq:RdCovWXS} and \eqref{eq:estimateW} that 
			\begin{align}\label{eq:RdCovWXSpf}
				\RdCov^2_n(\bm X_S) &=\frac{1}{n^2}\sum_{1 \leq a, b \leq n} \prod_{i \in S} \hat{\cE}_i(a, b) ,
			\end{align} 
			where 
			\begin{align*}
				\hat{\cE}_i(a, b) & := \frac{1}{n}\sum_{v=1}^n\|\hat{R}_i(X_{i}^{(a)})-\hat{R}_i(X_{i}^{(v)})\|+\frac{1}{n}\sum_{u=1}^n\|\hat{R}_i(X_{i}^{(u)})-\hat{R}_i(X_{i}^{(b)})\| \nonumber \\ 
				& \hspace{1.25in} -\|\hat{R}_i(X_{i}^{(a)})-\hat{R}_i(X_{i}^{(b)})\| -\frac{1}{n^2}\sum_{1 \leq u,v \leq n}\|\hat{R}_i(X_{i}^{(u)})-\hat{R}_i(X_{i}^{(v)})\| . 
			\end{align*} 
			Now, expanding the product gives, 
			\begin{align}\label{eq:bivariateab}
				\frac{1}{n^{2}}\sum_{1 \leq a, b \leq n}\prod_{i \in S} \hat{\cE}_i(a, b)=\frac{1}{n^{2}}\sum_{(\ell_i)_{i \in S}}\sum_{1 \leq a, b \leq n} \prod_{i \in S} \Delta^{(\ell_i)}_i(a, b) , 
			\end{align}
			where $\ell_i \in\{1,2,3,4\}$ for $i\in\{1,\ldots,d\}$ and 
			\begin{align*}
				\Delta_i^{(1)}(a, b)& = \frac{1}{n}\sum_{v=1}^{n}\|\R_i(X_{i}^{(a)})-\R_{i}(X_{i}^{(v)})\|, \\
				\Delta_i^{(2)}(a, b)& = \frac{1}{n}\sum_{u=1}^{n}\|\R_i(X_{i}^{(b)})-\R_{i}(X_{i}^{(u)})\|,\\
				\Delta_i^{(3)} (a, b) & = - \|\R_i(X_{i}^{(a)})-\R_{i}(X_{i}^{(b)})\|, \\ 
				\Delta_i^{(4)}(a, b) & = - \frac{1}{n^2} \sum_{1 \leq u, v \leq n}\|\R_{i}(X_{i}^{(u)})-\R_{i}(X_{i}^{(v)})\|.
			\end{align*}
			(Note that $\Delta_i^{(1)}(a, b)$, $\Delta_i^{(2)}(a, b)$, and $\Delta_i^{(4)}(a, b)$ does not depend on $b$, $a$, and $a, b$, respectively. Nevertheless, for reasons of symmetry keep the dependence on $a, b$ in the notations.) Similarly, define 
			\begin{align*}
				\tilde \Delta_i^{(1)}(a, b)& = \frac{1}{n}\sum_{v=1}^{n}\| R_{\mu_i}(X_{i}^{(a)})-R_{\mu_i}(X_{i}^{(v)})\|, \\
				\tilde \Delta_i^{(2)}(a, b)& = \frac{1}{n}\sum_{u=1}^{n}\| R_{\mu_i}(X_{i}^{(b)})-R_{\mu_i}(X_{i}^{(u)})\|,\\
				\tilde \Delta_i^{(3)} (a, b) & = - \|R_{\mu_i}(X_{i}^{(a)})-R_{\mu_i}(X_{i}^{(b)})\|, \\ 
				\tilde \Delta_i^{(4)}(a, b) & = - \frac{1}{n^2} \sum_{1 \leq u, v \leq n}\|R_{\mu_i}(X_{i}^{(u)})-R_{\mu_i}(X_{i}^{(v)})\|. 
			\end{align*} 
			As for $\Delta_i^{(1)}(a,b),\tilde{\Delta}_i^{(1)}(a,b)$, we have
			\begin{align*}
				\left|\Delta_{i}^{(1)}(a, b)-\t\Delta_{i}^{(1)}(a, b) \right|
				&
				\leq\frac{1}{n}\sum_{v=1}^{n} \left| \|\R_i(X_{i}^{(a)})-\R_{i}(X_{i}^{(v)})\|-\|R_{\mu_i}(X_{i}^{(a)})-R_{\mu_i}(X_{i}^{(v)})\| \right|\\
				&
				\leq \frac{1}{n}\sum_{v=1}^{n}\|\R_i(X_{i}^{(v)})-R_{\mu_i}(X_{i}^{(v)})\|+\|\R_i(X_i^{(a)})-R_{\mu_i}(X_i^{(a)})\|, 
			\end{align*}
			where the second inequality holds by the reserve triangle inequality. Similarly, 
			\begin{align*}
				\left |\Delta_{i}^{(2)}(a, b)-\t\Delta_{i}^{(2)}(a, b) \right | & \leq  \frac{1}{n}\sum_{u=1}^{n}\|\R_i(X_{i}^{(u)})-R_{\mu_i}(X_{i}^{(u)})\|+\|\R_i(X_i^{(b)})-R_{\mu_i}(X_i^{(b)})\| 
			\end{align*}
			and 	
			\begin{align*}
				\big|\Delta_{i}^{(3)}(a, b)-\t\Delta_{i}^{(3)}(a, b)\big|	&
				\leq \|\R_i(X_{i}^{(b)})-R_{\mu_i}(X_{i}^{(b)})\|+\|\R_i(X_{i}^{(a)})-R_{\mu_i}(X_{i}^{(a)})\| .
			\end{align*}
			The last term we bound
			\begin{align*}
				\left |\Delta_{i}^{(4)}(a, b)-\t\Delta_{i}^{(4)}(a, b) \right |\leq \frac{1}{n}\sum_{u=1}^{n}\|\R_i(X_{i}^{(u)})-R_{\mu_i}(X_{i}^{(u)})\|+\frac{1}{n}\sum_{v=1}^{n}\|\R_i(X_{i}^{(v)})-R_{\mu_i}(X_{i}^{(v)})\|\\
			\end{align*}
			Hence, by \cite[Theorem 2.1]{deb2021multivariate}, for $1 \leq i \leq r$ and $1 \leq s \leq 4$, 
			\begin{align}\label{eq:Deltaab}
				\frac{1}{n^{2}}\sum_{1 \leq a, b \leq n}\big|\Delta_{i}^{(s)}(a, b)-\t\Delta_{i}^{(s)}(a, b)\big| \stackrel{a.s.}{\rightarrow} 0 . 
			\end{align}
			
			Note that $|\t\Delta_i^{(\ell_i)}(a, b)|\leq\sqrt{d_i}$ and $|\Delta_i^{(\ell_i)}(a, b)| \leq\sqrt{d_i}$, for all $1 \leq a \ne b \leq n$ and $1 \leq i \leq r$. Then for any $T \subseteq S$ and $i \notin T$, by triangle inequality, we have
			\begin{align*}
				&
				\frac{1}{n^{2}}\sum_{1 \leq a, b \leq n} \left| \prod_{j \in T} \t\Delta_j^{\ell_j}(a, b) \prod_{j \in S\backslash T} \Delta_j^{(\ell_j)}(a, b) 
				- \prod_{j \in T \bigcup \{i\}} \t\Delta_j^{(\ell_j)}(a, b) \prod_{j \in S\backslash (T \bigcup \{i\})} \Delta_j^{\ell_j}(a, b)  \right| \\
				&
				\leq\frac{1}{n^{2}}\sum_{1 \leq a, b \leq n} \left| \Delta_{i}^{(\ell_i)}(a, b)-\t\Delta_{i}^{(\ell_i)}(a, b) \right| \prod_{j \in S\backslash\{i\}} \sqrt{d_j}  \stackrel{a.s.} \rightarrow 0 , 
			\end{align*} 
			by \eqref{eq:Deltaab}. This implies, by a telescoping argument together with \eqref{eq:bivariateab}, 		
			\begin{align}\label{eq:RdCovab}
				\left|\RdCov^{2}_n(\bm X_S) - \frac{1}{n^{2}}\sum_{1 \leq a \ne b \leq n}\prod_{i \in S }{\cE}_i^\mathrm{oracle}(a, b) \right| \stackrel{a.s.}{\rightarrow} 0 ,
			\end{align}
			where 
			\begin{align*}
				{\cE}_i^\mathrm{oracle}(a, b) 
				&
				=\frac{1}{n}\sum_{v=1}^{n}\|R_{\mu_i}(X_{i}^{(a)})-R_{\mu_i}(X_{i}^{(v)})\|+\frac{1}{n}\sum_{u=1}^{n}\|R_{\mu_i}(X_{i}^{(b)})-R_{\mu_i}(X_{i}^{(u)})\|\\
				&
				\hspace{1.25in} -\|R_{\mu_i}(X_{i}^{(a)})-R_{\mu_i}(X_{i}^{(b)})\|- \frac{1}{n^2} \sum_{1 \leq u, v \leq n}\|R_{\mu_i}(X_{i}^{(u)})-R_{\mu_i}(X_{i}^{(v)})\|.
			\end{align*}
			Since $\{ (R_{\mu_1}(X_{1}^{(a)}), R_{\mu_2}(X_{2}^{(a)}), \ldots, R_{\mu_d}(X_{d}^{(a)}) \}_{1 \leq a \leq n}$ are i.i.d., applying \cite[Proposition 8]{chakraborty2019distance}  to the rank transformed data shows that 
			\begin{align*}
				\frac{1}{n^{2}}\sum_{1 \leq a, b \leq n}\prod_{i \in S} {\cE}_i^\mathrm{oracle}(a, b)  \stackrel{a.s.}{\rightarrow}\RdCov^{2}(\bm X_S) .
			\end{align*}
			This combined with \eqref{eq:RdCovab} completes the proof of Theorem  \ref{thm:consistency}. 
			
			\subsection{Proof of Proposition \ref{ppn:phiconsistency}} 
			\label{sec:consistencytestpf}

			Recall  that $c_{\alpha, n}$ is the upper $\alpha$ quantile of the universal distribution in Proposition \ref{ppn:H0distributionfree}.  Note from \eqref{eq:H0RJdCov} in Theorem \ref{thm:H0}, $c_{\alpha, n} = O(1/n)$. Moreover, $\RJdCov^{2}_n(\bm X; \bm C) \stackrel{a.s.} \rightarrow \RJdCov^{2}(\bm X; \bm C) > 0$ whenever $\mu \ne \mu_1\otimes\mu_2\otimes\cdots\otimes\mu_d$ by Theorem \ref{thm:consistency} and Proposition \ref{ppn:RJdCovindependence}. Hence, recalling \eqref{eq:testrank}, for $\mu \ne \mu_1\otimes\mu_2\otimes\cdots\otimes\mu_d$,  
			\begin{align*}
				\E_{\mu}\left[\phi_n(\bm C) \right] = \P_{\mu}(\RJdCov^{2}_n(\bm X; \bm C) > c_{\alpha, n}) \rightarrow 1.  
			\end{align*}

			\section{Proof of Theorem \ref{thm:H0}}
			\label{sec:H0pf}

				Recall from \eqref{eq:ZnS} the definition of the empirical process $Z_{S, n}(\bm t)$, for $S \in \cT$, $\bm t= (t_i)_{i \in S} \in \mathbb R^{d_S}$ and $t_i \in \mathbb R^{d_i}$, for $i \in S$: 
				\begin{align}\label{eq:ZnSpf}
					Z_{S, n}(\bm t)
					& =  \frac{1}{n}\sum_{b=1}^{n}\left\{\prod_{i \in S} \left (\frac{1}{n}\sum_{a=1}^{n}e^{\i\langle t_i,\R_i(X_{i}^{(a)})\rangle}-e^{\i\langle t_i,\R_i(X_{i}^{(b)})\rangle} \right) \right\} . 
				\end{align} 
    				    				
				Under the assumption of mutual independence, $\E_{H_0}[  Z_{S, n}(\bm t) ]=0$. Next, we compute the covariance between $Z_{S, n}(\bm t)$ and $Z_{S, n}(\bm t')$.

				\begin{lemma}\label{lm:Zcovariance} 
					Suppose $S \in \cT$ and $\bm t= (t_i)_{i \in S}, \bm t'= (t_i')_{i \in S} \in \mathbb R^{d_S}$. Then under $H_0$, 
					\begin{align}\label{eq:CSS}
						\lim_{n \rightarrow \infty} n \Cov[  Z_{S, n}(\bm t), Z_{S, n}(\bm t')] = C_S(\bm t, \bm t') , 
					\end{align}
					where $C_S(\bm t, \bm t')$ as defined in \eqref{eq:CS}. Moreover, for any $S_1 \ne S_2 \in \cT$ and $\bm t= (t_i)_{i \in S_1} \in \mathbb R^{d_{S_1}}$, $\bm t'= (t_i')_{i \in S_2} \in \mathbb R^{d_{S_2}}$, under $H_0$, 
					\begin{align}\label{eq:CS1S2}
						\Cov[  Z_{S_1, n}(\bm t), Z_{S_2, n}(\bm t')] = 0 . 
					\end{align}
				\end{lemma}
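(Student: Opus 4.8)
The plan is to exploit the distribution-free property (Proposition \ref{ppn:H0distributionfree_OT}) to reduce the computation to a second-moment calculation for functions of independent uniform random permutations. Under $H_0$, write $\R_i(X_i^{(a)}) = h_{\sigma_i(a)}^{d_i}$, where $\sigma_1, \ldots, \sigma_r$ are mutually independent and each uniformly distributed over $S_n$. Since $\frac{1}{n}\sum_{a=1}^n e^{\i\langle t_i, h_{\sigma_i(a)}^{d_i}\rangle} = \frac{1}{n}\sum_{a=1}^n e^{\i\langle t_i, h_a^{d_i}\rangle} =: \bar\phi_{i,n}(t_i)$ does not depend on $\sigma_i$, we get the representation
$$Z_{S, n}(\bm t) = \frac{1}{n}\sum_{b=1}^{n} \prod_{i \in S} \big( \bar\phi_{i,n}(t_i) - e^{\i\langle t_i, h_{\sigma_i(b)}^{d_i}\rangle} \big).$$
Because $\sigma_i(b)$ is uniform on $\{1, \ldots, n\}$, each factor has mean zero, and by independence across $i$ so does $Z_{S,n}(\bm t)$; hence $\Cov[Z_{S,n}(\bm t), Z_{S,n}(\bm t')] = \E[Z_{S,n}(\bm t)\overline{Z_{S,n}(\bm t')}]$, and similarly for the cross-covariance.

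Next I would expand $Z_{S,n}(\bm t)\overline{Z_{S,n}(\bm t')}$ as a double sum over $b, b' \in \{1, \ldots, n\}$ and take expectations. For fixed $(b,b')$ the expectation factors over $i \in S$ by independence of the $\sigma_i$, reducing everything to the per-coordinate quantities
$$A_{i,n}(b,b') := \E\Big[ \big(\bar\phi_{i,n}(t_i) - e^{\i\langle t_i, h_{\sigma_i(b)}^{d_i}\rangle}\big)\big(\overline{\bar\phi_{i,n}(t_i')} - e^{-\i\langle t_i', h_{\sigma_i(b')}^{d_i}\rangle}\big) \Big].$$
Using that $h_{\sigma_i(b)}^{d_i}$ is uniform over $\cH_n^{d_i}$ when $b=b'$, and that $(h_{\sigma_i(b)}^{d_i}, h_{\sigma_i(b')}^{d_i})$ is uniform over ordered pairs of distinct points of $\cH_n^{d_i}$ when $b\neq b'$, a direct computation gives $A_{i,n}(b,b) = \bar\phi_{i,n}(t_i - t_i') - \bar\phi_{i,n}(t_i)\overline{\bar\phi_{i,n}(t_i')} =: \alpha_{i,n}$ for every $b$, and the cancellation $A_{i,n}(b,b') = -\alpha_{i,n}/(n-1)$ for $b \neq b'$. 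Splitting the double sum into its diagonal ($n$ terms) and off-diagonal ($n(n-1)$ terms) parts then yields
$$n\,\E\big[Z_{S,n}(\bm t)\overline{Z_{S,n}(\bm t')}\big] = \prod_{i\in S}\alpha_{i,n} + (-1)^{|S|}\frac{\prod_{i\in S}\alpha_{i,n}}{(n-1)^{|S|-1}}.$$
Since $|\alpha_{i,n}| \le 2$ and $|S| \ge 2$, the second term is $O(1/n)$. Finally, Assumption \ref{assumption:U} gives $\bar\phi_{i,n}(s) \to \E[e^{\i\langle s, U_i\rangle}]$ for every fixed $s$ (as $x \mapsto e^{\i\langle s, x\rangle}$ is bounded and continuous), so $\alpha_{i,n} \to \E[e^{\i\langle t_i - t_i', U_i\rangle}] - \E[e^{\i\langle t_i, U_i\rangle}]\E[e^{-\i\langle t_i', U_i\rangle}]$, and the product over $i \in S$ is exactly $C_S(\bm t, \bm t')$, establishing \eqref{eq:CSS}.

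For the orthogonality statement \eqref{eq:CS1S2}, since $S_1 \neq S_2$ there is an index $i^* \in (S_1 \setminus S_2) \cup (S_2 \setminus S_1)$; say $i^* \in S_1 \setminus S_2$. In the product representation of $Z_{S_1,n}(\bm t)\overline{Z_{S_2,n}(\bm t')}$ the permutation $\sigma_{i^*}$ enters only through the single factor $\bar\phi_{i^*,n}(t_{i^*}) - e^{\i\langle t_{i^*}, h_{\sigma_{i^*}(b)}^{d_{i^*}}\rangle}$ in the $S_1$-product; conditioning on all the other permutations and taking expectation over $\sigma_{i^*}$ (independent of them) makes this factor vanish identically, so $\E[Z_{S_1,n}(\bm t)\overline{Z_{S_2,n}(\bm t')}] = 0$ and hence the covariance is $0$.

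As for the main obstacle: the argument is essentially a moment computation, and the only place requiring care is the evaluation of $A_{i,n}(b,b')$ for $b \neq b'$ — one must use the exact two-point marginal of a uniform random permutation rather than an approximation — together with checking that the resulting identity $A_{i,n}(b,b') = -\alpha_{i,n}/(n-1)$ makes the off-diagonal contribution negligible precisely because $|S| \ge 2$. Everything else (mean-zero, factorization across coordinates, passage to the limit via Assumption \ref{assumption:U}) is routine.
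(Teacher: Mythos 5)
Your proposal is correct and follows essentially the same route as the paper: split the double sum over $(b,b')$ into diagonal and off-diagonal parts, compute the per-coordinate expectations exactly from the uniform-permutation marginals (getting the same $-\alpha_{i,n}/(n-1)$ cancellation, so the off-diagonal contribution is $O((n-1)^{-(|S|-1)})$ since $|S|\ge 2$), pass to the limit via Assumption \ref{assumption:U}, and kill the cross-covariance for $S_1\neq S_2$ by the mean-zero factor at an index in the symmetric difference. No gaps.
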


				\begin{proof}
					Denote, for $1 \leq b \leq n$ and $i \in S$,  
					\begin{align}\label{eq:ZSnti}
					Z_{S, n}(\bm t, b, i):=  \frac{1}{n}\sum_{a=1}^{n}e^{\i\langle t_i,\R_i(X_{i}^{(a)})\rangle}-e^{\i\langle t_i,\R_i(X_{i}^{(b)})\rangle} 
					\end{align} 
					and its complex conjugate 
					$$\bar Z_{S, n}(\bm t, b, i):=  \frac{1}{n}\sum_{a=1}^{n}e^{-\i\langle t_i,\R_i(X_{i}^{(a)})\rangle}-e^{-\i\langle t_i,\R_i(X_{i}^{(b)})\rangle}.$$
					Then for  $\bm t= (t_i)_{i \in S}, \bm t= (t_i')_{i \in S} \in \mathbb R^{d_S}$, 
					\begin{align}\label{eq:ZS1S2} 
						n  Z_{S, n}(\bm t) \bar Z_{S, n}(\bm t')  & = \frac{1}{n}\sum_{1 \leq b, b' \leq n} \prod_{i \in S} Z_{S, n}(\bm t, b, i) \bar Z_{S, n}(\bm t', b', i) \nonumber \\ 
				& = T_1 + T_2 , 
					\end{align} 
					where 
						\begin{align*}
							T_1 & :=  \frac{1}{n}\sum_{b=1}^n   \prod_{i \in S}  Z_{S, n}(\bm t, b, i) \bar Z_{S, n}(\bm t', b, i) \\
							T_2 & :=  \frac{1}{n}\sum_{1 \leq b \ne b' \leq n}   \prod_{i \in S}  Z_{S, n}(\bm t, b, i) \bar Z_{S, n}(\bm t', b', i) .
						\end{align*} 
						Note that under $H_0$, $\{\R_i(X_{i}^{(1)}),\ldots, \R_i(X_{i}^{(n)}) \}$ are uniformly distributed over the fixed grid $\cH_n^{d_i}=\{h_1^{d_i}, h_2^{d_i}, \ldots, h_n^{d_i} \}$ and independent over $1 \leq i \leq r$. Therefore, 
						\begin{align*}
							& \E [ Z_{S, n}(\bm t, b, i) \bar Z_{S, n}(\bm t', b, i) ] \\ 
							& = \E \left[
							\left(  \frac{1}{n}\sum_{a=1}^{n}e^{\i\langle t_i,\R_i(X_{i}^{(a)})\rangle}-e^{\i\langle t_i,\R_i(X_{i}^{(b)})\rangle} \right) \left(  \frac{1}{n}\sum_{a=1}^{n}e^{\i\langle t_i',\R_i(X_{i}^{(a)})\rangle}-e^{\i\langle t_i',\R_i(X_{i}^{(b)})\rangle} \right) \right] \\ 
							&= \E \left[
							\left(  \frac{1}{n}\sum_{a=1}^{n}e^{\i\langle t_i, h_a^{d_i} \rangle}-e^{\i\langle t_i,\R_i(X_{i}^{(b)})\rangle} \right) \left(  \frac{1}{n}\sum_{a=1}^{n}e^{-\i\langle t_i', h_a^{d_i} \rangle}-e^{-\i\langle t_i',\R_i(X_{i}^{(b)})\rangle} \right) \right] \\ 
							& = \frac{1}{n} \sum_{u=1}^n \left(  \frac{1}{n}\sum_{a=1}^{n}e^{\i\langle t_i, h_a^{d_i} \rangle}-e^{\i\langle t_i, h_{u}^{d_i} \rangle} \right) \left(  \frac{1}{n}\sum_{a=1}^{n}e^{-\i\langle t_i', h_a^{d_i} \rangle}-e^{-\i\langle t_i', h_{u}^{d_i} \rangle} \right) \\ 
							& = \frac{1}{n}  \sum_{u=1}^n e^{\i\langle t_i - t_i', h_u^{d_i} \rangle} -  \left(  \frac{1}{n}\sum_{a=1}^{n}e^{\i\langle t_i, h_a^{d_i} \rangle} \right) \left(  \frac{1}{n}\sum_{a=1}^{n}e^{-\i\langle t_i', h_a^{d_i} \rangle} \right) . 
						\end{align*} 
						This implies, since the collection $\{ Z_{S, n}(\bm t, b, i) Z_{S, n}(\bm t', b', i)\}_{i \in S}$ is independent, 
							\begin{align}\label{eq:S1}
								\E[T_1] & :=  \frac{1}{n}\sum_{b=1}^n   \prod_{i \in S} \E [ Z_{S, n}(\bm t, b, i) Z_{S, n}(\bm t', b', i) ] \nonumber \\ 
								& = \prod_{i \in S} \left(  \frac{1}{n} \sum_{u=1}^n  e^{\i\langle t_i - t_i', h_u^{d_i} \rangle} -  \left(  \frac{1}{n}\sum_{a=1}^{n}e^{\i\langle t_i, h_a^{d_i} \rangle} \right) \left(  \frac{1}{n}\sum_{a=1}^{n}e^{-\i\langle t_i', h_a^{d_i} \rangle} \right)  \right) \nonumber \\
								& \rightarrow C_S(\bm t, \bm t') , 
							\end{align}
							by Assumption \ref{assumption:U}. Similarly, 
							\begin{align*}
								& \E [ Z_{S, n}(\bm t, b, i) \bar Z_{S, n}(\bm t', b', i) ] \\ 
								&= \E \left[
								\left(  \frac{1}{n}\sum_{a=1}^{n}e^{\i\langle t_i, h_a^{d_i} \rangle}-e^{\i\langle t_i,\R_i(X_{i}^{(b)})\rangle} \right) \left(  \frac{1}{n}\sum_{a=1}^{n}e^{-\i\langle t_i', h_a^{d_i} \rangle}-e^{-\i\langle t_i',\R_i(X_{i}^{(b')})\rangle} \right) \right] \\ 
								& = \frac{1}{n(n-1)} \sum_{1 \leq u \ne v \leq n} \left(  \frac{1}{n}\sum_{a=1}^{n}e^{\i\langle t_i, h_a^{d_i} \rangle}-e^{\i\langle t_i, h_{u}^{d_i} \rangle} \right) \left(  \frac{1}{n}\sum_{a=1}^{n}e^{-\i\langle t_i', h_a^{d_i} \rangle}-e^{-\i\langle t_i', h_{v}^{d_i} \rangle} \right) \\ 
								& = \frac{1}{n(n-1)}  \sum_{1 \leq u \ne v \leq n} e^{\i\langle t_i, h_u^{d_i} \rangle - \i\langle t_i', h_v^{d_i} \rangle} -  \left(  \frac{1}{n}\sum_{a=1}^{n}e^{\i\langle t_i, h_a^{d_i} \rangle} \right) \left(  \frac{1}{n}\sum_{a=1}^{n}e^{-\i\langle t_i', h_a^{d_i} \rangle} \right) \\ 
								& = - \frac{1}{(n-1)} \left[ \frac{1}{n} \sum_{u=1}^n e^{\i\langle t_i-t_i', h_u^{d_i} \rangle } -  \left(  \frac{1}{n}\sum_{a=1}^{n}e^{\i\langle t_i, h_a^{d_i} \rangle} \right) \left(  \frac{1}{n}\sum_{a=1}^{n}e^{-\i\langle t_i', h_a^{d_i} \rangle} \right) \right] .
							\end{align*} 
							This implies, 
							\begin{align}\label{eq:T2}
								\E[T_2] & :=  \frac{(-1)^{|S|}}{(n-1)^{|S|-1}} \prod_{i \in S} \left[ \frac{1}{n} \sum_{u=1}^n e^{\i\langle t_i-t_i', h_u^{d_i} \rangle } -  \left(  \frac{1}{n}\sum_{a=1}^{n}e^{\i\langle t_i, h_a^{d_i} \rangle} \right) \left(  \frac{1}{n}\sum_{a=1}^{n}e^{-\i\langle t_i', h_a^{d_i} \rangle} \right) \right] \nonumber \\ 
								& \rightarrow 0, 
							\end{align}
							by Assumption \ref{assumption:U}. Combining \eqref{eq:S1} and \eqref{eq:T2} with \eqref{eq:ZS1S2} the result in \eqref{eq:CSS} follows. Next, suppose $S_1 \ne S_2 \in \cT$ and $\bm t= (t_i)_{i \in S_1}, \bm t'= (t_i')_{i \in S_2}$. Then 
							\begin{align*} 
								& Z_{S_1, n}(\bm t) \bar Z_{S_2, n}(\bm t')  \\ 
								& = \frac{1}{n^2}\sum_{1 \leq b, b' \leq n} \prod_{i \in S_1 \bigcap S_2} Z_{S_1, n}(\bm t, b, i) \bar Z_{S_2, n}(\bm t', b', i) \prod_{i \in S_1 \backslash S_2} Z_{S_1, n}(\bm t, b, i) \prod_{i \in S_2 \backslash S_1}  \bar Z_{S_2, n}(\bm t', b', i) . 
							\end{align*} 
							Note that the collections $\{ Z_{S_1, n}(\bm t, b, i) \}_{i \in S_1 \backslash S_2}$ and $\{Z_{S_2, n}(\bm t', b', i)\}_{i \in S_2 \backslash S_1}$ are independent. Hence,  
							\begin{align} 
								& \E[ Z_{S_1, n}(\bm t) \bar Z_{S_2, n}(\bm t') ] \nonumber \\ 
								& = \frac{1}{n^2}\sum_{1 \leq b, b' \leq n} \prod_{i \in S_1 \bigcap S_2} \E[Z_{S_1, n}(\bm t, b, i) \bar Z_{S_2, n}(\bm t', b', i)] \prod_{i \in S_1 \backslash S_2} \E [Z_{S_1, n}(\bm t, b, i)] \prod_{i \in S_2 \backslash S_1}  \E [\bar Z_{S_2, n}(\bm t', b', i)] \nonumber \\ 
								& = 0 , \nonumber 
							\end{align} 
							since $\E [Z_{S, n}(\bm t, b, i)] = \E [ \bar Z_{S, n}(\bm t, b, i)] = 0$ for $i \in S$ under $H_0$ and at least one of the sets $S_1 \backslash S_2$ and $S_2\backslash S_1$ is non-empty. This proves \eqref{eq:CS1S2}.
						\end{proof} 
						
						We use the following result to prove for Theorem \ref{thm:H0} which is restatement of \cite[Theorem 8.6.2]{resnick2013probability}. 
						To this end, denote $D_{S}(\delta)=\prod_{i\in S}T_i(\delta)$ with $T_i(\delta)=\{t_i\in\mathbb{R}^{d_i}:\delta\leq \|t_i\|\leq 1/\delta\}$. 
						
						\begin{lemma}\label{lem:convergence_cond}
							Suppose the following conditions hold: 
							\begin{enumerate}
								\item[$(1)$] As $n \rightarrow \infty$, $$\left\{n\int_{D_{S}(\delta)}|Z_{S,n}(\bm t)|^2\prod_{i\in S}\mathrm{d}w_i\right\}_{S\in\mathcal{T}}\stackrel{D}{\rightarrow}\left\{\int_{D_S(\delta)}|Z_{S}(\bm t)|^2\prod_{i\in S}\mathrm{d}w_i\right\}_{S\in\mathcal{T}};$$ 
								
								\item[$(2)$] As $\delta\rightarrow 0$, 
								$$\left\{\int_{D_S(\delta)}|Z_{S}(\bm t)|^2\prod_{i\in S}\mathrm{d}w_i\right\}_{S\in\mathcal{T}}\stackrel{D}{\rightarrow}\left\{\int_{\mathbb{R}^{\sum_{i\in S}d_i}}|Z_{S}(\bm t)|^2\prod_{i\in S}\mathrm{d}w_i\right\}_{S\in\mathcal{T}} ;$$
								\item[$(3)$] As $n \rightarrow \infty$ followed by $\delta \rightarrow 0$,  
								$$\E\left|\left\{n\int_{D_{S}(\delta)}|Z_{S,n}(\bm t)|^2\prod_{i\in S}\mathrm{d}w_i\right\}_{S\in\mathcal{T}}-\left\{n\int_{\mathbb{R}^{\sum_{i\in S}d_i}}|Z_{S,n}(\bm t)|^2\prod_{i\in S}\mathrm{d}w_i\right\}_{S\in\mathcal{T}}\right|^2=0.$$ 
							\end{enumerate}
							Then the distributional convergence in Theorem \ref{thm:H0} holds.
						\end{lemma}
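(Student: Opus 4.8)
The plan is to obtain Lemma~\ref{lem:convergence_cond} as a direct instance of the classical approximation (``converging together'') lemma for weak convergence, the version stated as \cite[Theorem 8.6.2]{resnick2013probability}: if $V_{n,\delta} \dto V_\delta$ as $n \to \infty$ for each fixed $\delta$, if $V_\delta \dto V$ as $\delta \to 0$, and if $\lim_{\delta \to 0} \limsup_{n \to \infty} \P(d(U_n, V_{n,\delta}) > \epsilon) = 0$ for every $\epsilon > 0$, then $U_n \dto V$. The first step is to make the identifications. Regarding the finite family indexed by $\cT$ as a random element of $\mathbb R^{|\cT|}$ with its Euclidean metric, I would take $U_n := \{ n \RdCov_n^2(\bm X_S) \}_{S \in \cT}$, which by the representation \eqref{eq:RdCovZnS} equals $\{ n \int_{\mathbb R^{d_S}} |Z_{S,n}(\bm t)|^2 \prod_{i \in S} \d w_i \}_{S \in \cT}$; the truncated version $V_{n,\delta} := \{ n \int_{D_S(\delta)} |Z_{S,n}(\bm t)|^2 \prod_{i \in S} \d w_i \}_{S \in \cT}$; the truncated Gaussian limit $V_\delta := \{ \int_{D_S(\delta)} |Z_S(\bm t)|^2 \prod_{i \in S} \d w_i \}_{S \in \cT}$; and the full Gaussian limit $V := \{ \int_{\mathbb R^{d_S}} |Z_S(\bm t)|^2 \prod_{i \in S} \d w_i \}_{S \in \cT}$. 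With these choices, $U_n \dto V$ is exactly the asserted convergence \eqref{eq:H0RdCovS}.

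Next, I would read the three hypotheses of the approximation lemma off conditions (1)--(3) of Lemma~\ref{lem:convergence_cond}. Condition (1) is verbatim ``$V_{n,\delta} \dto V_\delta$ as $n \to \infty$'' and condition (2) is verbatim ``$V_\delta \dto V$ as $\delta \to 0$'', so the only remaining item is the asymptotic-negligibility clause, which I would deduce from condition (3) via Markov's inequality: $\P(\|U_n - V_{n,\delta}\| > \epsilon) \le \epsilon^{-2}\, \E\|U_n - V_{n,\delta}\|^2$, and condition (3) says precisely that this second moment tends to $0$ in the iterated limit ($n \to \infty$ first, then $\delta \to 0$). Invoking the approximation lemma then gives $U_n \dto V$, i.e.\ \eqref{eq:H0RdCovS}; and \eqref{eq:H0RJdCov} follows immediately by the continuous mapping theorem, since $\RJdCov_n^2(\bm X; \bm C) = \sum_{s=2}^r C_s \sum_{|S| = s} \RdCov_n^2(\bm X_S)$ is a fixed linear functional of the coordinates of $U_n/n$.

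There is no genuine obstacle internal to this lemma --- it is a packaging step --- so the points worth care are purely organizational: checking that all four random elements are well-defined (in particular that $\int_{\mathbb R^{d_S}} |Z_S(\bm t)|^2 \prod_{i \in S} \d w_i < \infty$ almost surely, which is needed for $V$ to exist and is implicit in condition (2)); noting that joint weak convergence over the finite index set $\cT$ coincides with convergence in $\mathbb R^{|\cT|}$, so the Euclidean metric is a legitimate choice of $d$; and threading the order of limits so that the $L^2$ bound in (3) feeds correctly into the $\limsup_{n}$-then-$\lim_{\delta}$ form required by the approximation lemma, which Markov's inequality does without extra work. The substantive analytic effort lies elsewhere and is deferred to the subsequent lemmas: verifying (1) through the combinatorial central limit theorem of Theorem~\ref{thm:clt} combined with the covariance asymptotics of Lemma~\ref{lm:Zcovariance}; verifying (2) through monotone/dominated convergence as the truncation region $D_S(\delta)$ exhausts $\mathbb R^{d_S}$; and verifying (3) through a uniform-in-$n$ tail estimate controlling the contribution of $|Z_{S,n}(\bm t)|^2$ in the region where $\prod_{i \in S} w_{d_i}(t_i)$ concentrates its mass, namely near the coordinate hyperplanes and at infinity.
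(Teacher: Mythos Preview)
Your proposal is correct and matches the paper's approach exactly: the paper itself introduces this lemma as ``a restatement of \cite[Theorem 8.6.2]{resnick2013probability}'' without further proof, and your write-up simply makes explicit the identifications $U_n, V_{n,\delta}, V_\delta, V$ and the Markov-inequality passage from the $L^2$ bound in (3) to the probability condition required by the converging-together lemma. If anything, your version is more detailed than what the paper provides.
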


						\noindent {\it Establishing Condition (1) in Lemma \ref{lem:convergence_cond}}: Fix a positive integer $K \geq 1$, denote $\kappa := K |\cT|$ dimensional random vector 
						\begin{align}\label{eq:KS}
							{\bm \cZ}_n:=   \left( \left( Z_{S, n}(\bm t_S^{(1)}),   Z_{S, n}(\bm t_S^{(2)}), \ldots,  Z_{S, n}(\bm t_S^{(K)} ) \right) \right)_{S \in \cT}  , 
						\end{align} 
						where $\bm t_S^{(\ell)} \in \mathbb R^{d_S}$, for $1 \leq \ell \leq K$. We will use the Cramer-Wold device to establish the joint asymptotic normality of $\cZ_n$. For this, suppose 
						$$\bm \eta = \left ( \bm \eta_S \right)_{S \in \cT} \in \mathbb R^{\kappa}, \text{ where } \bm \eta_S = ( \eta_S^{(1)}, \eta_S^{(2)}, \ldots, \eta_S^{(K)})$$
						and consider 
						\begin{align*}
							\sqrt n \langle \bm \eta, {\bm \cZ}_n \rangle = \sum_{S \in \cT} \sum_{\ell=1}^{K} \eta_S^{(\ell)} \left (\sqrt n Z_{S, n}(\bm t_S^{(\ell)}) \right) = \sum_{i=1}^{n}\big\{A_{i, \pi_1(i), \ldots, \pi_{d-1}(i)}+ \i B_{i, \pi_1(i), \ldots, \pi_{d-1}(i)}\big\} , 
						\end{align*}     
						where $A_{i_1, \ldots, i_r}$ and $B_{i_1, \ldots, i_r}$, for $1 \leq i_1, i_2, \ldots, i_r \leq n$ are the real and imaginary parts of $\sqrt n \langle \bm \eta, {\bm \cZ}_n \rangle$, respectively. For $a, b\in\mathbb{R}$ consider, 
						\begin{align}\label{eq:C-W_1}
							a\sum_{i=1}^{n}A_{i, \pi_1(i), \ldots, \pi_{d-1}(i)} + b\sum_{i=1}^{n}B_{i, \pi_1(i), \ldots, \pi_{d-1}(i)}.
						\end{align}
						Note from \eqref{eq:ZnSpf} the modulus of each summand in \eqref{eq:C-W_1} is bounded by $C/n^{\frac{1}{2}}$, for some constant $C$ depending only on $r, \bm \eta, a, b$. 
						Moreover, 
						\begin{align*}
							\E\left[\prod_{i\in S}\left (\frac{1}{n}\sum_{a=1}^{n}e^{\i\langle t_i^{(\ell)},\R_i(X_{i}^{(a)})\rangle}-e^{\i\langle t_i^{(\ell)},\R_i(X_{i}^{(b)})\rangle} \right)\right]=0 , 
						\end{align*} 
						for $\bm t_S^{(\ell)} \in \mathbb R^{d_S}$. 
						Hence, $A_{i_1,\ldots,i_r},B_{i_1,\ldots,i_r}$ are centered around $0$. Moreover, 
						\begin{align} 
							&
							\Cov \left[ a \sum_{i=1}^{n}A_{i, \pi_1(i), \ldots, \pi_{d-1}(i)},  b \sum_{i=1}^{n} B_{i, \pi_1(i), \ldots \pi_{d-1}(i)} \right] \nonumber \\
							&
							= ab \Cov \left[ \sum_{i=1}^{n}A_{i, \pi_1(i), \ldots, \pi_{d-1}(i)}, \sum_{i=1}^{n}B_{i, \pi_1(i), \ldots, \pi_{d-1}(i)} \right] \nonumber \\
							&
							=\frac{ab}{2}\mathrm{Im} \left( \Var \left[ \sum_{S \in \cT} \sum_{\ell=1}^{K} \eta_S^{(\ell)} \left (\sqrt n Z_{S, n}(\bm t_S^{(\ell)}) \right)  \right]  \right) \nonumber \\
							&
							=\frac{ab}{2}\sum_{S \in \cT} \mathrm{Im} \left( \Var \left[ \sum_{\ell=1}^{K} \eta_S^{(\ell)} \left (\sqrt n Z_{S, n}(\bm t_S^{(\ell)}) \right) \right] \right) \tag*{ (by \eqref{eq:CS1S2}) } \nonumber \\
							& \rightarrow \frac{ab}{2}\sum_{S \in \cT}  \mathrm{Im} \left( \sum_{\ell=1}^K (\eta_S^{(\ell)})^2 C_S(\bm t_S^{(\ell)}, \bm t_S^{(\ell)}) + \sum_{1 \leq \ell, \ell' \leq K} \eta_S^{(\ell)}  \eta_S^{(\ell')} C_S(\bm t_S^{(\ell)}, \bm t_S^{(\ell')})
							\right) \tag*{ (by \eqref{eq:CSS}) } \nonumber \\ 
							& = \frac{ab}{2}\sum_{S \in \cT} \mathrm{Im} \left( \Var \left[ \sum_{\ell=1}^{K} \eta_S^{(\ell)}  Z_{S}(\bm t_S^{(\ell)}) \right] \right) .  \label{eq:covarianceZnS} 
						\end{align}
						from Definition \ref{defn:ZSprocess}. 
						
						To compute variance, we have the following lemma which follows directly from definitions. 
						\begin{lemma}\label{lem:complex_var}
							Suppose $Z=X+iY$, where $X,Y$ are two real-valued variables. Define $\E[Z]=\E[X]+i\E[Y]$. Then $\mathrm{Var}[X]=\frac{1}{2}\left(\mathrm{Re}[\E[Z^2]]+\E[|Z|^2]\right)$ whenever $\E[X]=0$.
						\end{lemma}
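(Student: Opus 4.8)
The plan is to prove this identity by an elementary computation with real and imaginary parts; there is no substantive obstacle here, so the ``hard part'' is really just being careful about what $\mathrm{Re}[\E[Z^2]]$ means under the componentwise definition of the complex-valued expectation.

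First I would expand the square: writing $Z = X + \i Y$ with $X$ and $Y$ real-valued, one has pointwise $Z^2 = (X^2 - Y^2) + 2\i X Y$, so $\mathrm{Re}[Z^2] = X^2 - Y^2$ and $|Z|^2 = X^2 + Y^2$. Since $\E[Z^2]$ is defined componentwise as $\E[\mathrm{Re}[Z^2]] + \i\,\E[\mathrm{Im}[Z^2]]$, taking real parts commutes with taking expectations, so $\mathrm{Re}[\E[Z^2]] = \E[X^2] - \E[Y^2]$, while $\E[|Z|^2] = \E[X^2] + \E[Y^2]$ (both assumed finite).

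Next I would simply add the two identities, obtaining
\begin{align*}
\mathrm{Re}[\E[Z^2]] + \E[|Z|^2] = 2\,\E[X^2],
\end{align*}
so that $\tfrac12\big(\mathrm{Re}[\E[Z^2]] + \E[|Z|^2]\big) = \E[X^2]$. Finally, invoking the hypothesis $\E[X] = 0$ gives $\mathrm{Var}[X] = \E[X^2] - (\E[X])^2 = \E[X^2]$, which is exactly the claimed formula. The only point worth stating explicitly is the commutation of $\mathrm{Re}[\cdot]$ with $\E[\cdot]$, which follows directly from the componentwise definition of $\E[Z]$; the rest is a single line of algebra.
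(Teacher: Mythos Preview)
Your proof is correct and is exactly the direct computation the paper has in mind; the paper itself merely states that the lemma ``follows directly from definitions'' without writing out the details, so your expansion of $Z^2$ and $|Z|^2$ is precisely what is intended.
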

						
						By Lemma \ref{lem:complex_var}, 
						\begin{align}\label{eq:ZSnvariance}
							\mathrm{Var}\left[\sum_{i=1}^{n}A_{i, \pi_1(i), \ldots, \pi_{d-1}(i)}\right]=\frac{1}{2} \left( \mathrm{Re}\left(\E\left[n \langle \bm \eta, {\bm \cZ}_n \rangle^2\right]\right) +\E\left[|\sqrt n \langle \bm \eta, {\bm \cZ}_n \rangle|^2\right] \right) . 
						\end{align}
						For the second term in the RHS above Lemma \ref{lm:Zcovariance} implies that 
						\begin{align}\label{eq:complex_variance}
							\E\left[|\sqrt n \langle \bm \eta, {\bm \cZ}_n \rangle|^2\right] \nonumber \\ 
							&
							=\sum_{S\in\mathcal{T}}\E\left[\left|\sum_{\ell=1}^K\eta_{S}^{(\ell)}\left(\sqrt{n}Z_{S,n}(\bm t_{S}^{(\ell)})\right)\right|^2\right] \nonumber \\
							&
							= \sum_{S\in\mathcal{T}}\sum_{\ell,\ell'=1}^K\eta_{S}^{(\ell)}\eta_{S}^{(\ell')}\E\left[nZ_{S,n}( \bm t_{S}^{(\ell)})\bar{Z}_{S,n}( \bm t_{S}^{(\ell')})\right] \nonumber \\ 
							& \rightarrow \sum_{S\in\mathcal{T}}\sum_{1 \leq \ell,\ell' \leq K} \eta_{S}^{(\ell)}\eta_{S}^{(\ell')}  C_S(\bm t_{S}^{(\ell)}, \bm t_{S}^{(\ell')} ) \nonumber \\ 
							& = 						
							\E\left[ \left| \sum_{\ell=1}^{K} \eta_S^{(\ell)}  Z_{S}(\bm t_S^{(\ell)}) \right|^2 \right] . 
						\end{align}
					Similarly, we can show that  
						\begin{align}\label{eq:real_variance}
							\E\left[n \langle \bm \eta, {\bm \cZ}_n \rangle^2\right] 
							&
							=\sum_{S\in\mathcal{T}}\sum_{\ell,\ell'=1}^K\eta_{S}^{(\ell)}\eta_{S}^{(\ell')}\E\left[nZ_{S,n}(t_{S}^{(\ell)})Z_{S,n}(t_{S}^{(\ell')})\right] \nonumber \\ 
							& \rightarrow \E\left[ \left( \sum_{\ell=1}^{K} \eta_S^{(\ell)}  Z_{S}(\bm t_S^{(\ell)}) \right)^2 \right] . 
						\end{align}
						Therefore, combining \eqref{eq:ZSnvariance} with \eqref{eq:complex_variance} and \eqref{eq:real_variance} gives, 						\begin{align*} 	
							\lim_{n\rightarrow\infty}\mathrm{Var}\left[\sum_{i=1}^{n}A_{i, \pi_1(i), \ldots, \pi_{d-1}(i)}\right] 
							&
							=\lim_{n\rightarrow\infty} \frac{1}{2} \left(\mathrm{Re}\left(\E\left[n \langle \bm \eta, {\bm \cZ}_n \rangle^2\right]\right)+\E\left[|\sqrt n \langle \bm \eta, {\bm \cZ}_n \rangle|^2\right] \right) \\ 
							& = \frac{1}{2} \left(\mathrm{Re}\left(\E\left[ \langle \bm \eta, {\bm \cZ} \rangle^2\right]\right)+\E\left[| \langle \bm \eta, {\bm \cZ} \rangle|^2\right] \right) , 
						\end{align*} 
						where 
\begin{align}\label{eq:KSZ}
{\bm \cZ}:=   \left( \left( Z_{S}(\bm t_S^{(1)}),   Z_{S}(\bm t_S^{(2)}), \ldots,  Z_{S}(\bm t_S^{(K)} ) \right) \right)_{S \in \cT} . 
\end{align}					
						 Hence, by  Theorem \ref{thm:clt}, under $H_0$ (recall \eqref{eq:KS}),
						\begin{align*}
							{\bm \cZ}_n \stackrel{D} \rightarrow {\bm \cZ}, 
		\end{align*} 
	This establishes the finite dimensional convergence of the process $\sqrt{n}Z_{S,n}(\bm t)$. To establish the weak convergence of the process $\sqrt{n}Z_{S,n}(\bm t)\stackrel{D}{\longrightarrow}Z_{S}(\bm t)$ in $L^{\infty}(D_{S}(\delta))$, we need to prove the following stochastic equicontinuity statement for all $ S\in\mathcal{T}$.  
						
						\begin{lemma}\label{lem:equicontinuity}
							For any given $\eta>0$,
							\begin{align}\label{eq:equicontinuity}
				\lim_{\varepsilon\rightarrow0}\limsup_{n\rightarrow\infty}\P\left\{\sup_{\mathop{}_{\|\bm t-\bm t'\|\leq\varepsilon}^{\bm t,\bm t'\in D_{S}(\delta) } }\big|\sqrt{n}Z_{S,n}(\bm t)-\sqrt{n}Z_{S,n}(\bm t')\big|>\eta\right\}=0.
							\end{align} 
						\end{lemma}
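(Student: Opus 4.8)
The plan is to deduce the stochastic equicontinuity \eqref{eq:equicontinuity} from a uniform-in-$n$ increment bound for the process $\bm t\mapsto\sqrt n\,Z_{S,n}(\bm t)$, followed by a standard chaining argument. Concretely, I will establish that there are an integer $m$ with $2m>d_S$ and a constant $C$, depending only on $m$, $S$, $(d_i)_{i\in S}$ and $\delta$, such that for all $n\geq 1$ and all $\bm t,\bm t'\in D_S(\delta)$,
\begin{align}\label{eq:incr_proposal}
\E\big|\sqrt n\,Z_{S,n}(\bm t)-\sqrt n\,Z_{S,n}(\bm t')\big|^{2m}\leq C\,\|\bm t-\bm t'\|^{2m}.
\end{align}
Granting \eqref{eq:incr_proposal}, since $D_S(\delta)$ is a bounded subset of $\mathbb R^{d_S}$ and $2m>d_S$, the quantitative Kolmogorov continuity criterion (a Garsia--Rodemich--Rumsey / chaining bound) gives $\E\sup_{\bm t,\bm t'\in D_S(\delta),\,\|\bm t-\bm t'\|\leq\varepsilon}\big|\sqrt n\,Z_{S,n}(\bm t)-\sqrt n\,Z_{S,n}(\bm t')\big|\leq C'\varepsilon^{\gamma}$ for some $\gamma>0$ and $C'$ not depending on $n$; Markov's inequality and letting $\varepsilon\to0$ then yield \eqref{eq:equicontinuity}. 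No continuity modification is needed, since $\sqrt n\,Z_{S,n}(\bm t)$ is a finite sum of entire functions of $\bm t$.

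To reduce \eqref{eq:incr_proposal} to a one-coordinate estimate, recall that under $H_0$ the empirical ranks satisfy $\R_i(X_i^{(a)})=h^{d_i}_{\pi_i(a)}$ with $\pi_1,\ldots,\pi_r$ independent uniform permutations of $\{1,\ldots,n\}$, and that $\frac1n\sum_{a=1}^n e^{\i\langle t_i,\R_i(X_i^{(a)})\rangle}=\frac1n\sum_{a=1}^n e^{\i\langle t_i,h^{d_i}_a\rangle}=:g_n(t_i)$ is deterministic. Hence $\sqrt n\,Z_{S,n}(\bm t)=\frac1{\sqrt n}\sum_{b=1}^n\prod_{i\in S}\varphi_i(b,t_i)$ with $\varphi_i(b,t_i):=g_n(t_i)-e^{\i\langle t_i,h^{d_i}_{\pi_i(b)}\rangle}$, and each factor satisfies $|\varphi_i(b,\cdot)|\leq 2$ and has mean zero under the randomness of $\pi_i$. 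Fixing an ordering of $S$ and applying the telescoping identity $\prod_i a_i-\prod_i b_i=\sum_j(\prod_{i<j}a_i)(a_j-b_j)(\prod_{i>j}b_i)$, together with the Lipschitz bounds $|g_n(t_i)-g_n(t_i')|\leq\sqrt{d_i}\,\|t_i-t_i'\|$ and $|e^{\i\langle t_i,h\rangle}-e^{\i\langle t_i',h\rangle}|\leq\sqrt{d_i}\,\|t_i-t_i'\|$ for $\|h\|\leq\sqrt{d_i}$ (in particular for $h=h^{d_i}_a\in[0,1]^{d_i}$), one writes $\sqrt n\,Z_{S,n}(\bm t)-\sqrt n\,Z_{S,n}(\bm t')$ as a finite sum over $j\in S$ of terms $\frac1{\sqrt n}\sum_{b=1}^n\chi^{(j)}_b(\bm t,\bm t')$, where $\chi^{(j)}_b$ is a product of $|S|$ factors, each bounded by $2$ and exactly one of which is a difference bounded by $2\sqrt{d_j}\,\|\bm t-\bm t'\|$; thus $|\chi^{(j)}_b|\leq 2^{|S|}\sqrt{d_j}\,\|\bm t-\bm t'\|$. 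It therefore suffices to bound $\E\big|\frac1{\sqrt n}\sum_{b=1}^n\chi^{(j)}_b\big|^{2m}$ by $C\,\|\bm t-\bm t'\|^{2m}$ for each $j\in S$.

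This last estimate is the main obstacle and is proved by a combinatorial moment expansion in the spirit of Lemmas \ref{lm:variance} and \ref{lm:expectationC}--\ref{lm:Cexpvar}. Writing
\begin{align*}
\E\Big|\frac1{\sqrt n}\sum_{b=1}^n\chi^{(j)}_b\Big|^{2m}=\frac1{n^{m}}\sum_{b_1,\ldots,b_{2m}=1}^n\E\big[\chi^{(j)}_{b_1}\cdots\chi^{(j)}_{b_m}\,\overline{\chi^{(j)}_{b_{m+1}}}\cdots\overline{\chi^{(j)}_{b_{2m}}}\big],
\end{align*}
and then, coordinate by coordinate, expanding over the realized positions $\pi_i(b_1),\ldots,\pi_i(b_{2m})$, one invokes the zero-mean property of the factors $\varphi_i$ (and of the differenced factors, which are also centered): any index configuration in which, for some $i\in S$, some $b_\ell$ occupies a position shared by no other factor contributes $0$. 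The surviving configurations are those in which, for every $i\in S$, the $2m$ indices are grouped into blocks of size at least $2$; counting these ($O(n^m)$ of them) and bounding each contribution by $O(n^{-m})$ from the normalization times $O(\|\bm t-\bm t'\|^{2m})$ from the differenced factors yields \eqref{eq:incr_proposal}, the point being that the $n^{-m}$ prefactor is exactly compensated. The delicate bookkeeping lies in running the expansion over the $|S|$ independent permutations simultaneously while tracking which positions are forced to coincide, in applying the cancellation coordinate-wise, and in verifying that the number of matched configurations does not exceed $O(n^m)$. A purely empirical-process argument, based on the Donsker property of the class $\{(\bm x_i)_{i\in S}\mapsto\prod_{i\in S}e^{\i\langle t_i,x_i\rangle}:\bm t\in D_S(\delta)\}$, does not apply directly because of the permutation dependence, which is why the moment method is used here.
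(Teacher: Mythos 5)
Your overall architecture (a $2m$-th moment increment bound with $2m>d_S$, then Kolmogorov/GRR chaining uniformly in $n$) is a legitimate alternative to the paper's route, which instead verifies the three conditions of \cite[Theorem 2.11.1]{van1996weak}: a uniform $2^{|S|}/\sqrt n$ bound on the summands, the second-moment increment bound $\E|\sqrt n Z_{S,n}(\bm t)-\sqrt n Z_{S,n}(\bm t')|^2\le C\|\bm t-\bm t'\|$ of Lemma \ref{lem:lipschitz_Z_n}, and an entropy integral for the random semimetric $d_n$, controlled by the deterministic per-summand Lipschitz bound of Lemma \ref{lem:lip_dist}. The paper's conditions need only second moments and pointwise Lipschitz estimates; your route shifts all the difficulty into the bound \eqref{eq:incr_proposal}, and that is exactly where your argument has a genuine gap.

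The claimed cancellation --- ``any index configuration in which, for some $i\in S$, some $b_\ell$ occupies a position shared by no other factor contributes $0$'' --- is false. The factors $\varphi_i(b_\ell,t_i)$ are evaluated at $\pi_i(b_\ell)$ for a single random permutation $\pi_i$, i.e.\ they are draws \emph{without replacement} from a finite population; being centered does not make products over distinct indices have zero mean. Indeed the paper's own Lemma \ref{lm:variance} computes $\Cov[a_{i_1,\pi_1(i_1),\pi_2(i_1)},a_{i_1',\pi_1(i_1'),\pi_2(i_1')}]=-\frac{1}{n^2(n-1)^2}\sum a_{i_1,i_2,i_3}^2\neq 0$ for distinct indices, and the same phenomenon appears in the off-diagonal term $T_2$ in the proof of Lemma \ref{lm:Zcovariance}. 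Consequently the ``surviving configurations'' are not only the fully matched ones: configurations with lone indices (up to $O(n^{2m})$ of them, e.g.\ all $b_\ell$ distinct) contribute nonzero amounts, and with only the crude uniform bound $O(\|\bm t-\bm t'\|^{2m})$ per term the prefactor $n^{-m}$ does not save you. To repair the step you would need quantitative finite-population moment estimates of the type $\big|\E\prod_{\ell=1}^{k}v^{(i)}_{\pi_i(b_\ell)}\big|=O(n^{-\lceil k/2\rceil})$ for distinct $b_\ell$ and a population summing exactly to zero, applied coordinatewise across the $|S|$ independent permutations, and then a count showing every partially matched configuration is suppressed enough; this is the real content of the lemma and is not supplied by the sketch. (The reduction steps you do give --- determinism of $g_n$ under $H_0$, centering of the factors and of the differenced factor, the telescoping decomposition, the Lipschitz bounds, and the chaining conclusion once \eqref{eq:incr_proposal} is granted --- are all correct.)
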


						\begin{proof}[Proof of Lemma \ref{lem:equicontinuity}]
							By  \cite[Theorem 2.11.1]{van1996weak} to prove the the Lemma \eqref{lem:equicontinuity} it is enough to show the following: 
							
							\begin{enumerate}
								\item There exists a sequence of $\eta_n$ such that 
								\begin{align}\label{eq:equicontinuity1}
									\sup_{1\leq b\leq n}\sup_{\bm t\in D_{S}(\delta)}\left|\Gamma(\bm t,b)\right|\leq\eta_n,\ \Gamma(\bm t,b):=\frac{1}{\sqrt{n}}\prod_{i \in S} \left (\frac{1}{n}\sum_{a=1}^{n}e^{\i\langle t_i,\R_i(X_{i}^{(a)})\rangle}-e^{\i\langle t_i,\R_i(X_{i}^{(b)})\rangle} \right) . 
								\end{align}
								\item For any sequence $\varepsilon_n\downarrow0$, 
								\begin{align}\label{eq:equicontinuity2}
									\sup_{\bm t,\bm t'\in D_{S}(\delta),\|\bm t-\bm t'\|\leq \varepsilon_n}\E\bigg|\sqrt{n}Z_{S,n}(\bm t)-\sqrt{n}Z_{S,n}(\bm t')\bigg|^2\rightarrow 0.
								\end{align}
								\item For any sequence $\varepsilon_n\downarrow0$
								\begin{align}\label{eq:equicontinuity3}
									\int_{0}^{\varepsilon_n}\left[\log\left(N\left(\eta,D_{S}(\delta),d_n(\cdot,\cdot)\right)\right)\right]^{\frac{1}{2}}\mathrm{d}\eta\stackrel{P}{\rightarrow} 0 , 
								\end{align}
								where $N(\eta,D_{S}(\delta),d_n(\cdot,\cdot))$ denotes the $\eta$ covering number of the set $D_{S}(\delta)$ based on the random metric $d_n(\cdot,\cdot)$ satisfying 
								\begin{align}\label{eq:dn}
									d_n^2(\bm t,\bm t')=\sum_{b=1}^{n}\left|\Gamma(\bm t, b)-\Gamma(\bm t',b)\right|^2.
								\end{align}
							\end{enumerate}

							Note that $|\Gamma(\bm t,b)|\leq 2^{|S|}/\sqrt{n}$ so that \eqref{eq:equicontinuity1} is immediate. The next lemma proves \eqref{eq:equicontinuity2}. 		
							
							\begin{lemma}\label{lem:lipschitz_Z_n}
								There exists a constant $C > 0$ is depending only on $d_i$, for $1 \leq i \leq r$, such that 
								\begin{align*}
									\E\big|\sqrt{n}Z_{S,n}(\bm t)-\sqrt{n}Z_{S,n}(\bm t')\big|^{2}\leq C\|\bm t-\bm t'\|.
								\end{align*} 
								Consequently, the condition in \eqref{eq:equicontinuity2} holds. 
							\end{lemma}
							
							\begin{proof}  For ease of notation define, 
								\begin{align*}
									F^{(1)}(\bm t,\bm t', i):=
									&
									\frac{1}{n}\sum_{u=1}^{n}e^{\i\langle t_{i}-t_i',h_u^{d_i})\rangle}-1\\
									F^{(2)}(\bm t,i):=
									&
									1-\left(\frac{1}{n}\sum_{u=1}^{n}e^{\i\langle t_{i},h_{u}^{d_i}\rangle}\right)\left(\frac{1}{n}\sum_{a=1}^{n}e^{\i\langle -t_{i},h_{a}^{d_i}\rangle}\right)\\
									F^{(3)}(\bm t,\bm t', i):=
									&
									\bigg(\frac{1}{n}\sum_{u=1}^{n}e^{\i\langle t_i',h_{u}^{d_i}\rangle}\bigg)\bigg(\frac{1}{n}\sum_{u=1}^{n}e^{\i\langle -t_i',h_u^{d_i}\rangle}\bigg)-\bigg(\frac{1}{n}\sum_{u=1}^{n}e^{\i\langle t_{i},h_u^{d_i}\rangle}\bigg)\bigg(\frac{1}{n}\sum_{u=1}^{n}e^{\i\langle -t_i',h_u^{d_i}\rangle}\bigg).
								\end{align*}
				Then as in the proof of Lemma \ref{lm:Zcovariance} we can  compute
								\begin{align} \label{eq:ZSnT14}
									&
									\E\left|\sqrt{n}Z_{S,n}(\bm t)-\sqrt{n}Z_{S,n}(\bm t')\right|^2 \nonumber \\
									&
									=\E\left|\sqrt{n}Z_{S,n}(\bm t)\right|^2+\E\left|\sqrt{n}Z_{S,n}(\bm t')\right|^2-\E\left[nZ_{S,n}(\bm t)\bar{Z}_{S,n}(\bm t')\right]-\E\left[n\bar{Z}_{S,n}(\bm t)Z_{S,n}(\bm t')\right] \nonumber \\ 
									&
									=c_{S,n}(T_{1}+T_2+T_3+T_4) .
								\end{align}
								where $c_{S,n}:=1+\frac{(-1)^{|S|}}{(n-1)^{|S|-1}}$ and 
								\begin{align*}
									T_1
									&
									=\prod_{i\in S}F^{(2)}(\bm t,i),\\ 
									T_2
									&
									=\prod_{i\in S}F^{(2)}(\bm t', i),\\ 
									T_3
									&
									=-\prod_{i\in S}\left\{F^{(1)}(\bm t, \bm t',i)+F^{(2)}(\bm t',i)+F^{(3)}(\bm t,\bm t',i)\right\},\\
									T_4
									&
									=-\prod_{i\in S}\left\{\bar{F}^{(1)}(\bm t, \bm t',i)+F^{(2)}(\bm t,i)+F^{(3)}(\bm t',\bm t,i)\right\} , 
								\end{align*} 
								where $\bar{F}^{(1)}$ denotes the complex conjugate of $F^{(1)}$. 
								We will now show that there exists universal constants $C_1,C_2$ such that for all $i \in S$, 
								\begin{align*}
									|F^{(1)}(\bm t,\bm t',i)|\leq C_{1}\|\bm t-\bm t'\| \text{ and } \ |F^{(3)}(\bm t,\bm t', i)|\leq C_{2}\|\bm t-\bm t'\| . 
								\end{align*}
								Towards this, using $|e^{\i t}-1|\leq |t|$ and the Cauchy-Schwarz inequality, we have
								\begin{align*}
									|F^{(1)}(\bm t,\bm t',i)|\leq \frac{1}{n}\sum_{u=1}^n|\langle t_i-t_i',h_u^{d_i}\rangle|\leq \|\bm t-\bm t'\|\sqrt{d_0}.
								\end{align*}
								Similarly, we bound
								\begin{align*}
									|F^{(3)}(\bm t,\bm t',i)|
									&
									\leq\left|\frac{1}{n}\sum_{u=1}^ne^{\i\langle t_i', h_{u}^{d_i}\rangle}-\frac{1}{n}\sum_{u=1}^ne^{\i \langle t_{i},h_{u}^{d_i}\rangle}\right|\\
									&
									\leq \left|\frac{1}{n}\sum_{u=1}^ne^{\i\langle t_i'-t_i,h_u^{d_i}\rangle}-1\right|\\
									&
									\leq \|\bm t-\bm t'\|\sqrt{d_0}.
								\end{align*}
								Also, we notice that $|F^{(1)}(\bm t,\bm t',i)|,|F^{(2)}(\bm t,i)|,|F^{(3)}(\bm t,\bm t',i)|\leq2$ for all $\bm t,\bm t'\in\mathbb{R}^{d_0}$ and $i\in S$. This implies, 
								\begin{align*}
									|T_1+T_2+T_3+T_4|\leq C\|\bm t-\bm t'\|. 
								\end{align*} 
									Applying this in \eqref{eq:ZSnT14} completes the proof of  Lemma \ref{lem:lipschitz_Z_n}. 
							\end{proof}
							
							Now we prove \eqref{eq:equicontinuity3}. For this we need the following lemma:							
							\begin{lemma}\label{lem:lip_dist}
								There exists a universal constant $C_0 > 0$								\begin{align*}
									d_n(\bm t,\bm t')\leq C_0\|\bm t-\bm t'\| , 
								\end{align*} 
								for $d_n(\cdot, \cdot)$ as defined in \eqref{eq:dn}. 
							\end{lemma} 
							
							\begin{proof}
								We begin by computing 
									$d^2_n(\bm t,\bm t')
									=\sum_{b=1}^n|\Gamma(\bm t,b)-\Gamma(\bm t',b)|^2$.				Note that 
								\begin{align*}
									|\Gamma(\bm t,b)-\Gamma(\bm t',b)|=\frac{1}{\sqrt{n}}\left|\prod_{i\in S}\left(\frac{1}{n}\sum_{a=1}^ne^{\i \langle t_i,h_{a}^{d_i}\rangle}-e^{\i\langle t_i,h_b^{d_i}\rangle}\right)-\prod_{i\in S}\left(\frac{1}{n}\sum_{a=1}^ne^{\i \langle t_i',h_{a}^{d_i}\rangle}-e^{\i\langle t_i',h_b^{d_i}\rangle}\right)\right|.
								\end{align*}
								Now, recall the following inequality
								\begin{align}\label{eq:prod_sum}
									\left|\prod_{a=1}^nw_a-\prod_{a=1}^nv_a\right|\leq\sum_{a=1}^n|w_a-v_a|, \quad |w_a|,|v_a|\leq 1, \text{ for all } 1 \leq a \leq n .
								\end{align}
								Then 
								\begin{align*}
									|\Gamma(\bm t,b)-\Gamma(\bm t',b)|\leq \frac{2^{|S|-1}}{\sqrt{n}}\sum_{i\in S}\left|\frac{1}{n}\sum_{a=1}^ne^{\i \langle t_i,h_{a}^{d_i}\rangle}-\frac{1}{n}\sum_{a=1}^ne^{\i \langle t_i',h_{a}^{d_i}\rangle}+e^{\i \langle t_i',h_{b}^{d_i}\rangle}-e^{\i \langle t_i,h_{b}^{d_i}\rangle}\right|.
								\end{align*}
								Note that 
								\begin{align*}
									\left|\frac{1}{n}\sum_{a=1}^ne^{\i \langle t_i,h_{a}^{d_i}\rangle}-\frac{1}{n}\sum_{a=1}^ne^{\i \langle t_i',h_{a}^{d_i}\rangle}\right|\leq \frac{1}{n}\sum_{a=1}^n\left|e^{\i \langle t_i,h_{a}^{d_i}\rangle}-e^{\i \langle t_i',h_{a}^{d_i}\rangle}\right| & \leq \frac{1}{n}\|\bm t-\bm t'\|\|h_a^{d_1}\| \nonumber \\ 
									& \leq\sqrt{d_0}\|\bm t-\bm t'\|.
								\end{align*}
								Similarly, 
								\begin{align*}
									|e^{\i \langle t_i, h_b^{d_i}}-e^{\i \langle t_i',h_b^{d_i}\rangle}|\leq \|\bm t-\bm t'\|\|h_b^{d_i}\|\leq \sqrt{d_0}\|\bm t-\bm t'\|.
								\end{align*}
								Therefore, 
								\begin{align*}
									|\Gamma(\bm t,b)-\Gamma(\bm t',b)|\leq \frac{2^{|S|}}{\sqrt{n}}\sum_{i\in S}\sqrt{d_0}\|\bm t-\bm t'\|=\frac{|S|\sqrt{d_0}}{\sqrt{n}2^{|S|-1}}\|\bm t-\bm t'\| , 
								\end{align*}
								which completes the proof of Lemma \ref{lem:lip_dist}. 
							\end{proof} 
						
						To complete the proof of \eqref{eq:equicontinuity3} we compute the following covering number: 
							\begin{align*}
								N(\eta,D_{S}(\delta),\|\cdot\|)\leq \left(\frac{3}{\eta}\right)^{d_0}\frac{\mathrm{vol}(D_{S}(\delta))}{\mathrm{vol}(B_1(0))}\leq \left(\frac{3}{\eta}+1\right)^{d_0}\frac{\mathrm{vol}(D_{S}(\delta))}{\mathrm{vol}(B_1(0))}.
							\end{align*}
							Then by Lemma \ref{lem:lip_dist}, 							\begin{align*}
								& \int_{0}^{\varepsilon_n}\left[\log\left(N(\eta,D_{S}(\delta),d_n(\cdot,\cdot))\right)\right]^{\frac{1}{2}}\mathrm{d}\eta \nonumber \\ 
								&
								\leq \int_{0}^{\varepsilon_n}\left[\log\left(N(\eta/C_0,D_{S}(\delta),\|\cdot\|_2)\right)\right]^{\frac{1}{2}}\mathrm{d}\eta\\
								&
								\leq \int_{0}^{\varepsilon_n}\left(\log \left(\frac{\mathrm{vol}(D_{S}(\delta))}{\mathrm{vol}(B_1(0))} \right)\right)^{\frac{1}{2}}\mathrm{d}\eta+\int_{0}^{\varepsilon_n} \left(\frac{3C_0d_0}{\eta}\right)^{\frac{1}{2}}\mathrm{d}\eta\\
								&
								\rightarrow0
							\end{align*}
							as $\varepsilon_n\rightarrow0.$ This completes the proof of \eqref{eq:equicontinuity3} and, hence, the proof of Lemma \ref{lem:equicontinuity}.  
						\end{proof}
				
				Lemma \ref{lem:equicontinuity} together with the finite dimensional convergence established earlier implies that the process $\sqrt{n}Z_{S,n}(\bm t)\stackrel{D}{\longrightarrow}Z_{S}(\bm t)$ in $L^{\infty}(D_{S}(\delta))$. This completes the proof of condition (1) in Lemma \ref{lem:convergence_cond}. \\ 
						
						\noindent {\it Establishing Condition (3) in Lemma \ref{lem:convergence_cond}}:
					Note that for all $S\in \mathcal{T}$, 
						\begin{align*}
							\E\left|n\int_{D_{S}(\delta)}|Z_{S,n}(\bm t)|^2\prod_{i\in S}\mathrm{d}w_i-n\int_{\mathbb{R}^{d_0}}|Z_{S,n}(\bm t)|^2\prod_{i\in S}\mathrm{d}w_i\right|=\E\left|n\int_{D_{S}^c(\delta)}|Z_{S,n}(\bm t)|^2\prod_{i\in S}\mathrm{d}w_i\right|.
						\end{align*}
						Then by triangle inequality and Fubini's theorem, to establish Condition (3) in Lemma \ref{lem:convergence_cond} it is enough to show that 
						\begin{align}\label{eq:complement_converg}
						\lim_{\delta\rightarrow0}\limsup_{n\rightarrow\infty}\int_{D_S^c(\delta)}n\E\left|Z_{S,n}(\bm t)\right|^2\prod_{i\in S}\mathrm{d}w_i=0.
						\end{align}
						Recalling the definition of $D_S(\delta)=\prod_{i\in S}T_i(\delta)$, we have the following inequality, 
						\begin{align}\label{eq:ZSndelta}
							& \int_{D_S^c(\delta)}n\E\left|Z_{S,n}(\bm t)\right|^2\prod_{i\in S}\mathrm{d}w_i \nonumber \\ 
							& \leq \sum_{i\in S}\left\{\int_{\|t_i\|< \delta}n\E\left|Z_{S,n}(\bm t)\right|^2\prod_{i\in S}\mathrm{d}w_i+\int_{\|t_i\|>1/\delta}n\E\left|Z_{S,n}(\bm t)\right|^2\prod_{i\in S}\mathrm{d}w_i\right\}.
						\end{align} 
						Now, define \begin{align*}
							G_i(\delta,t)=\int_{\|w\|<\delta}\frac{1-\cos(\langle t,w\rangle)}{c_{d_i}\|w\|^{d_i+1}}\mathrm{d}w . 
						\end{align*}
						Then by Lemma \ref{lm:Zcovariance}, we obtain
						\begin{align}
							&
							\int_{\|t_i\|< \delta}n\E\left|Z_{S,n}(\bm t)\right|^2\prod_{i\in S}\mathrm{d}w_i \nonumber \\
							&
							= \int_{\|t_i\|< \delta}\left(1+\frac{(-1)^{|S|}}{(n-1)^{|S|-1}}\right)\prod_{i\in S}\left[1-\left\{\frac{1}{n}\sum_{u=1}^{n}e^{\i\langle t_{i},h_u^{d_i})\rangle}\right\}\left\{\frac{1}{n}\sum_{u=1}^{n}e^{\i\langle -t_{i},h_u^{d_i})\rangle}\right\}\right]\prod_{i\in S}\mathrm{d}w_i \nonumber \\
							&
							\leq 2\int_{\|t_i\|<\delta}\prod_{i\in S}\left[1-\left\{\frac{1}{n}\sum_{u=1}^{n}e^{\i\langle t_{i},h_u^{d_i})\rangle}\right\}\left\{\frac{1}{n}\sum_{u=1}^{n}e^{\i\langle -t_{i},h_u^{d_i})\rangle}\right\}\right]\prod_{i\in S}\mathrm{d}w_i \tag*{(by \eqref{eq:ZSndelta})} \nonumber \\
							&
							=\frac{2}{n^{2|S|}} \sum_{\substack{u_{1},\ldots,u_{|S|} \\ v_1,\ldots,v_{|S|}}}\int_{\|t_i\|<\delta}\frac{1-\cos\left(\langle t_i,h_{u_i}^{d_i}-h_{v_i}^{d_i}\rangle\right)}{c_{d_i}\|t_i\|^{1+d_i}}\mathrm{d}t_i\prod_{i'\in S\backslash\{i\}}\left(1-\cos\left(\langle t_{i'},h_{u_{i'}}^{d_{i'}}-h_{v_{i'}}^{d_{i'}}\rangle\right)\right)\prod_{i'\in S\backslash\{i\}}\mathrm{d}w_{i'} \nonumber \\
							&
							=\frac{2}{n^{2|S|}} \sum_{\substack{u_{1},\ldots,u_{|S|} \\ v_1,\ldots,v_{|S|} } } G_i(\delta,h_{u_i}^{d_i}-h_{v_i}^{d_i})\prod_{i'\in S\backslash\{i\}}\|h_{u_{i'}}^{d_{i'}}-h_{v_{i'}}^{d_{i'}}\|. \label{eq:GZSn}
						\end{align}
						Note that the second equality above holds due to the fact that $\sin$ is an odd function and hence integrates to $0$ over symmetric sets and the last equality holds due to \cite[Lemma 1]{szekely2007measuring}. Now, suppose $t\in\{t:\|t\|\leq \sqrt{d_0}\}$. Then by \cite[Lemma 1]{szekely2007measuring},  
						\begin{align*}
							G_i(y,t)\leq \|t\|\leq \sqrt{d_0}.
						\end{align*}
						Also, by dominated convergence theorem, $\lim_{\delta\rightarrow0}G_i(\delta,t)=0$ for any $t\in \{t:\|t\|\leq \sqrt{d_0}\}$. Then by Assumption \ref{assumption:U} and \eqref{eq:GZSn}, 
						\begin{align*}
							\lim_{n\rightarrow\infty}\int_{\|t_i\|< \delta}n\E\left|Z_{S,n}(\bm t)\right|^2\prod_{i\in S}\mathrm{d}w_i\leq 2\E\left[G_{i}(\delta,U_i^{a}-U_i^{b})\prod_{i'\in S\backslash\{i\}}\|U_{i'}^{a}-U_{i'}^{b}\|\right] , 
						\end{align*}
						where $U_i^{a},U_i^{b}$ are two independent samples from $\mathrm{Unif}([0,1]^{d_i})$. Applying the dominated convergence theorem again gives, 
						\begin{align*}
							\lim_{\delta\rightarrow0}\E\left[G_{i}(\delta,U_i^{a}-U_i^{b})\prod_{i'\in S\backslash\{i\}}\|U_{i'}^{a}-U_{i'}^{b}\|\right]=0 , 
						\end{align*} 
						which implies, 
						\begin{align}\label{eq:ZSndelta1}
							\lim_{\delta \rightarrow 0} \lim_{n\rightarrow\infty}\int_{\|t_i\|< \delta}n\E\left|Z_{S,n}(\bm t)\right|^2\prod_{i\in S}\mathrm{d}w_i\leq 2\E\left[G_{i}(\delta,U_i^{a}-U_i^{b})\prod_{i'\in S\backslash\{i\}}\|U_{i'}^{a}-U_{i'}^{b}\|\right]  = 0 . 
						\end{align}
						
						 On the other hand, 
						\begin{align*}
							&
							\int_{\|t_i\|>1/\delta}n\E\left|Z_{S,n}(\bm t)\right|^2\prod_{i\in S}\mathrm{d}w_i\\
							&
							\leq \frac{2}{n^{2|S|}} \sum_{\substack{u_{1},\ldots,u_{|S|} \\ v_1,\ldots,v_{|S|} }} \int_{\|t_i\|>1/\delta}\frac{1-\cos\left(\langle t_i,h_{u_i}^{d_i}-h_{v_i}^{d_i}\rangle\right)}{c_{d_i}\|t_i\|^{1+d_i}}\mathrm{d}t_i\prod_{i'\in S\backslash\{i\}}\left(1-\cos\left(\langle t_{i'},h_{u_{i'}}^{d_{i'}}-h_{v_{i'}}^{d_{i'}}\rangle\right)\right)\prod_{i'\in S\backslash\{i\}}\mathrm{d}w_{i'}\\
							&
							=\frac{2}{n^{2|S|}} \sum_{\substack{u_{1},\ldots,u_{|S|} \\ v_1,\ldots,v_{|S|}} }\int_{\|t_i\|>1/\delta}\frac{1-\cos\left(\langle t_i,h_{u_i}^{d_i}-h_{v_i}^{d_i}\rangle\right)}{c_{d_i}\|t_i\|^{1+d_i}}\mathrm{d}t_i\prod_{i'\in S\backslash\{i\}}\|h_{u_{i'}}^{d_{i'}}-h_{v_{i'}}^{d_{i'}}\|\\
							&
							\leq \frac{2}{n^{2|S|}} \sum_{\substack{u_{1},\ldots,u_{|S|} \\ v_1,\ldots,v_{|S|}} }\prod_{i'\in S\backslash\{i\}}\|h_{u_{i'}}^{d_{i'}}-h_{v_{i'}}^{d_{i'}}\|\int_{\|t_i\|>1/\delta}\frac{1}{c_{d_i}\|t_i\|^{1+d_i}}\mathrm{d}t_i.
						\end{align*}
						Now, note that 
						\begin{align*}
							\int_{\|t_i\|>1/\delta}\frac{1}{c_{d_i}\|t_i\|^{1+d_i}}\mathrm{d}t_i\leq K_{d_i}\delta , 
						\end{align*}
						where $K_{d_i}$ is a constant that only depends on $d_i$. This implies, 
						\begin{align*}
							\lim_{n\rightarrow\infty}\int_{\|t_i\|>1/\delta}n\E\left|Z_{S,n}(\bm t)\right|^2\prod_{i\in S}\mathrm{d}w_i
							&
							\leq \lim_{n\rightarrow\infty}\frac{2}{n^{2|S|}} \sum_{\substack{u_{1},\ldots,u_{|S|} \\ v_1,\ldots,v_{|S|}}} \prod_{i'\in S\backslash\{i\}}\|h_{u_{i'}}^{d_{i'}}-h_{v_{i'}}^{d_{i'}}\|K_{d_i}\delta\\
							&
							=2\E\left[\prod_{i'\in S\backslash\{i\}}\|U_{i'}^a-U_{i'}^b\|\right] K_{d_i}\delta . 
						\end{align*}
						Taking $\delta\rightarrow0$ on both sides above we conclude
						\begin{align}\label{eq:ZSndelta2}
						\lim_{\delta\rightarrow0}\lim_{n\rightarrow\infty}\int_{\|t_i\|>1/\delta} n\E\left|Z_{S,n}(\bm t)\right|^2\prod_{i\in S}\mathrm{d}w_i=0.
						\end{align} 
						
						Combining \eqref{eq:ZSndelta1} and \eqref{eq:ZSndelta2} establishes \eqref{eq:complement_converg}, which completes the proof of statement (3) in Lemma \ref{lem:convergence_cond}. \\ 
						
						\noindent {\it Establishing Condition (2) in Lemma \ref{lem:convergence_cond}}: First note that 
						\begin{align*}
							Z_{S}(\bm t)\bm{1}\big\{\bm t\in D_{S}(\delta)\big\}\stackrel{a.s.}{\rightarrow}Z_{S}(\bm t)\bm{1}\big\{\bm t\in \mathbb{R}^{d_1}\times\cdots\times \mathbb{R}^{d_{|S|}}\big\}.
						\end{align*}
						Also, 
						\begin{align*}
							\int_{\mathbb{R}^{\sum_{i\in S}d_i}}\E\left[|Z_{S}(\bm t)|^2\right]\prod_{i\in S}\mathrm{d}w_i
							&
							=\int_{\mathbb{R}^{\sum_{i\in S}d_i}} \prod_{i\in S}\left[1-\E\left[e^{\i\langle t_{i},U_{i}\rangle}\right]\E\left[e^{\i\langle -t_{i},U_{i}\rangle}\right]\right]\prod_{i\in S}\d w_{i}\\
							&
							=\prod_{i\in S}\E\left\|U_{i}-U'_{i}\right\|<\infty.
						\end{align*}
						where $U_{i},U'_{i}\sim \mathrm{Unif}([0,1]^{d_i})$. 
					Then by dominated convergence theorem, 
					$$\int_{D_{S}(\delta)}|Z_{S}(\bm t)|^2\prod_{i\in S}\mathrm{d}w_i\stackrel{a.s.}{\rightarrow}\int_{\mathbb{R}^{\sum_{i\in S}d_i}}|Z_{S}(\bm t)|^2\prod_{i\in S}\mathrm{d}w_i,$$ which establishes  Condition (2) in Lemma \ref{lem:convergence_cond}. \hfill $\Box$

					\section{Consistency with Finite Resamples}
					\label{sec:consistencypermutationpf} 
					
					In this section we prove the technical result required for establishing the consistency of the resampling based implementation of the test as shown in Theorem \ref{thm:consistency_permutation}. 					
					
					\begin{proposition}\label{ppn:consistencypermutation}
						Let $\bm \pi=\{\pi_1,\ldots,\pi_{r}\}$ be a collection of independent uniform random permutations in $S_n$. Then for any $S \subseteq \{1, 2, \ldots, r\}$ with $|S| \geq 2$,  
						\begin{align}\label{eq:permutationdCov}
						\theta_{S}(\bm h_{S, \bm \pi }^{(1)}, \ldots, \bm h_{S, \bm \pi}^{(n)} ) \stackrel{P}{\rightarrow} 0 , 
						\end{align} 
						where $\theta_S$ is as defined in \eqref{eq:thetaS} and $\bm h_{S, \bm \pi}^{(a)} := (h_{\pi_i(a)}^{d_i})_{i \in S} \in \mathbb R^{d_S}$, for $1 \leq a \leq n$. Consequently, recalling \eqref{eq:pi}, 
$$\RdCov_n^{(1)}(\bm X; \bm C) \stackrel{P}{\rightarrow} 0.$$
					\end{proposition}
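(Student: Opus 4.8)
The plan is to exploit the distribution-freeness of the OT-based ranks to identify the law of $\theta_S(\bm h_{S,\bm\pi}^{(1)},\ldots,\bm h_{S,\bm\pi}^{(n)})$ with the \emph{null} law of $\RdCov_n(\bm X_S)$, and then quote the limit theorem already proved. First I would record the following distributional identity. Under $H_0$ as in \eqref{eq:H0H1} the variables $X_1,\ldots,X_r$ are independent, so the empirical rank maps $\hat R_1,\ldots,\hat R_r$ (each $\hat R_i$ being a function of $X_i^{(1)},\ldots,X_i^{(n)}$ alone) are mutually independent, and by Proposition \ref{ppn:H0distributionfree_OT} the vector $(\hat R_i(X_i^{(1)}),\ldots,\hat R_i(X_i^{(n)}))$ is uniform over the $n!$ permutations of $\cH_n^{d_i}$. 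Hence the full array $(\hat R_i(X_i^{(a)}))_{1\le a\le n,\,1\le i\le r}$ has exactly the law of $(h_{\pi_i(a)}^{d_i})_{1\le a\le n,\,1\le i\le r}$ with $\bm\pi=\{\pi_1,\ldots,\pi_r\}$ a collection of i.i.d.\ uniform permutations. Recalling \eqref{eq:RdCovHalton} this gives, for every $S\in\cT$, that $\theta_S(\bm h_{S,\bm\pi}^{(1)},\ldots,\bm h_{S,\bm\pi}^{(n)})$ has the same law as $\RdCov_n(\bm X_S)$ computed under $H_0$, and, summing over $S$ with the common $\bm\pi$, that $\RdCov_n^{(1)}(\bm X;\bm C)$ has the same law as $\RJdCov_n^2(\bm X;\bm C)$ computed under $H_0$.

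Second, I would invoke Theorem \ref{thm:H0}: under $H_0$ one has $n\,\RdCov_n(\bm X_S)\dto \int_{\mathbb R^{d_S}}|Z_S(\bm t)|^2\prod_{i\in S}\d w_i$, an almost surely finite random variable, and likewise $n\,\RJdCov_n^2(\bm X;\bm C)$ converges in distribution to the a.s.\ finite limit in \eqref{eq:H0RJdCov}. Therefore $\RdCov_n(\bm X_S)=O_P(1/n)=o_P(1)$ and $\RJdCov_n^2(\bm X;\bm C)=o_P(1)$ under $H_0$; transferring through the distributional identities of the previous paragraph yields both \eqref{eq:permutationdCov} and $\RdCov_n^{(1)}(\bm X;\bm C)\stackrel{P}{\rightarrow}0$. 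Alternatively, the second assertion follows from \eqref{eq:permutationdCov} by writing $\RdCov_n^{(1)}(\bm X;\bm C)=\sum_{s=2}^r C_s\sum_{|S|=s}\theta_S(\bm h_{S,\bm\pi}^{(1)},\ldots,\bm h_{S,\bm\pi}^{(n)})$ and using that a finite sum of $o_P(1)$ terms is $o_P(1)$, irrespective of the dependence across $S$ induced by the shared $\bm\pi$.

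Since Theorem \ref{thm:H0} does not invoke the present proposition there is no circularity, and the only substantive input — finiteness/tightness of the limit — is already available there. If a self-contained argument is preferred I would instead bound the first moment: since $\theta_S\ge 0$ one has $\E\big[\theta_S(\bm h_{S,\bm\pi}^{(1)},\ldots,\bm h_{S,\bm\pi}^{(n)})\big]=\E_{H_0}\big[\RdCov_n(\bm X_S)\big]=\int_{\mathbb R^{d_S}}\E_{H_0}\big|Z_{S,n}(\bm t)\big|^2\prod_{i\in S}\d w_i$, and by Lemma \ref{lm:Zcovariance} the integrand equals $\tfrac1n(1+o(1))C_S(\bm t,\bm t)$ pointwise, while the tail estimates used to verify condition (3) of Lemma \ref{lem:convergence_cond} (around \eqref{eq:complement_converg}) supply the domination needed to pass to the limit and conclude $n\,\E_{H_0}[\RdCov_n(\bm X_S)]\to\prod_{i\in S}\E\|U_i-U_i'\|<\infty$; Markov's inequality then gives \eqref{eq:permutationdCov}, and a finite sum again gives the consequence. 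In either route the main obstacle is not a new estimate but the bookkeeping of the reduction to $H_0$ — making the distributional identity precise and importing the moment/limit bounds already established in the proof of Theorem \ref{thm:H0}.
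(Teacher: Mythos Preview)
Your proof is correct and takes a genuinely different route from the paper. The paper argues directly via a first-moment bound: it writes $\theta_S$ in its product form $\frac{1}{n^2}\sum_{a,b}\prod_{i\in S}\hat w_i(h_{\pi_i(a)}^{d_i},h_{\pi_i(b)}^{d_i})$, uses independence of the $\pi_i$ to factor the expectation across $i\in S$, and observes that for $a\ne b$ each factor equals $\frac{1}{n(n-1)}\sum_{s\ne t}\hat w_i(h_s^{d_i},h_t^{d_i})$, which tends to $0$ by the double-centering of $\hat w_i$; since $\theta_S\ge 0$, Markov's inequality finishes. Your main argument instead invokes Theorem \ref{thm:H0} through the distributional identity $\theta_S(\bm h_{S,\bm\pi}^{(\cdot)})\stackrel{D}{=}\RdCov_n(\bm X_S)$ under $H_0$, deducing $o_P(1)$ from $O_P(1/n)$ via tightness of the limit. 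Both are valid and non-circular (Theorem \ref{thm:H0} is proved in Appendix \ref{sec:H0pf} without reference to this proposition). The paper's computation is elementary and self-contained, needing only the centering identity $\sum_{s,t}\hat w_i(h_s^{d_i},h_t^{d_i})=0$; your route is shorter once Theorem \ref{thm:H0} is available and makes transparent that \eqref{eq:permutationdCov} is nothing more than distribution-freeness combined with the null rate. Your alternative first-moment sketch (via Lemma \ref{lm:Zcovariance} and the tail bounds around \eqref{eq:complement_converg}) is also correct but more circuitous than the paper's direct product computation.
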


					\begin{proof}[Proof of Proposition \ref{ppn:consistencypermutation}] 
						 Note that since $\theta_S$ is always non-negative, to prove \eqref{eq:permutationdCov} it suffices to show that 
						\begin{align*}
							\lim_{n\rightarrow\infty}\E[ \theta_{S}(\bm h_{S, \bm \pi }^{(1)}, \ldots, \bm h_{S, \bm \pi}^{(n)} ) ] \rightarrow 0.
						\end{align*}
						 Recalling \eqref{eq:RdCovWXS}, \eqref{eq:estimateW} and \eqref{eq:RdCovHalton}, $\theta_{S}(\bm h_{S, \bm \pi }^{(1)}, \ldots, \bm h_{S, \bm \pi}^{(n)} )$ can be written as 
						\begin{align*}
							\theta_{S}(\bm h_{S, \bm \pi }^{(1)}, \ldots, \bm h_{S, \bm \pi}^{(n)} )=\frac{1}{n^{2}}\sum_{1 \leq a, b \leq n}\prod_{i=1}^{r}\hat{w}_i(h_{\pi_i(a)}^{d_i},h_{\pi_i(b)}^{d_i}), 
						\end{align*} 
						where, for $x, y \in \mathbb R^{d_i}$,  
					\begin{align*}
							\hat{w}_i(x, y) 
							& = \frac{1}{n}\sum_{v=1}^n\| x - h_v^{d_i} \|+\frac{1}{n}\sum_{u=1}^n\| h_u^{d_i} - y \| \nonumber \\ 
							& \hspace{1.25in} -\| x - y \| -\frac{1}{n^2}\sum_{1 \leq u,v \leq n} \| h_u^{d_i} - h_v^{d_i} \| . 
						\end{align*} 
				Note that, for $1 \leq i \leq r$, 
								\begin{align*} 
		\E\left[ \hat{w}_i(h_{\pi_i(a)}^{d_i},h_{\pi_i(b)}^{d_i}) \right] = \frac{1}{n(n-1)} \sum_{1 \leq s \ne t \leq n} \hat{w}_i(h_{s}^{d_i},h_{t}^{d_i}) \rightarrow 0 , 
		\end{align*} 
		by Assumption \ref{assumption:U}. Hence, by independence of the permutations, 
						\begin{align*} 
							\E[ \theta_{S}(\bm h_{S, \bm \pi }^{(1)}, \ldots, \bm h_{S, \bm \pi}^{(n)} ) ] & =\frac{1}{n^{2}}\sum_{1 \leq a, b \leq n}\prod_{i=1}^{r} \E [\hat{w}_i(h_{\pi_i(a)}^{d_i},h_{\pi_i(b)}^{d_i}) ] \nonumber \\ 
							& = \prod_{i=1}^{r} \left( \frac{1}{n(n-1)} \sum_{1 \leq s \ne t \leq n} \hat{w}_i(h_{s}^{d_i},h_{t}^{d_i})  \right) \rightarrow 0. 
						\end{align*}
						This completes the proof of Proposition \ref{ppn:consistencypermutation}. 
					\end{proof}

					\section{Proof of Results in section \ref{sec:local_power}}\label{sec:local_power_proof} 
					
					This section is organized as follows: We begin with the proof of the H\'ajek projection result in Section \ref{sec:ZnSpf}. The proof of Theorem \ref{thm:mixture} is given Section \ref{sec:mixturepf}. Theorem \ref{thm:powerK} is proved in Section \ref{sec:powerKpf}.

					\subsection{Proof of Proposition \ref{ppn:ZnS}} 
					\label{sec:ZnSpf}
					
					To prove Proposition \ref{ppn:ZnS} we will show 
					\begin{align}\label{eq:ZSnt2} 
					\E_{H_0}|\sqrt{n}Z_{S,n}^{\o}(\bm t)-\sqrt{n}Z_{S,n}(\bm t)|^{2}=o(1) . 
					\end{align} 
					Hereafter, all moments will be computed under $H_0$, so we will omit the subscript $H_0$ for notational convenience. To begin with, note that 					\begin{align}\label{eq:ZSndifference}
						&
						\E|\sqrt{n}Z_{S,n}^{\o}(\bm t)-\sqrt{n}Z_{S,n}(\bm t)|^{2} \nonumber \\
						&
						=\E[nZ_{S,n}^{\o}(\bm t)\bar{Z}_{S,n}^{\o}(\bm t)] +\E[nZ_{S,n}(\bm t)\bar{Z}_{S,n}(\bm t)]-\E[nZ_{S,n}(\bm t)\bar{Z}_{S,n}^{\o}(\bm t)]-\E[n\bar{Z}_{S,n}(\bm t)Z_{S,n}^{\o}(\bm t)] \nonumber \\ 
						&
						:=A_n+B_n+C_n+D_n . 
					\end{align} 
					By Lemma \ref{lm:Zcovariance}, 
					$$\lim_{n \rightarrow \infty} B_n = \lim_{n \rightarrow \infty} \E[nZ_{S,n}(\bm t)\bar{Z}_{S,n}(\bm t)] = \lim_{n \rightarrow \infty} \mathrm{Cov}[Z_{S,n}(\bm t), \bar{Z}_{S,n}(\bm t)] = C_S(\bm t, \bm t).$$ 
					Similarly, as in the proof of Lemma \ref{lm:Zcovariance} we can show that 
					$$\lim_{n \rightarrow \infty} A_n = C_S(\bm t, \bm t).$$ 
				
					Now, consider $C_n$. For $1 \leq b \leq n$ and $i \in S$,  recall the definition of $Z_{S, n}(\bm t, b, i)$ from \eqref{eq:ZSnti} and define 					$$\bar Z_{S, n}^{\o}(\bm t, b, i):=  \frac{1}{n}\sum_{a=1}^{n}e^{-\i\langle t_i, R_{\mu_i}(X_{i}^{(a)})\rangle}-e^{-\i\langle t_i, R_{\mu_i} (X_{i}^{(b)})\rangle}.$$ 
					Then 
					\begin{align}\label{eq:ZSnt1} 
						C_n & =  \E[nZ_{S,n}(\bm t)\bar{Z}_{S,n}^{\o}(\bm t)] 
						\nonumber \\ 
						& = \E\left[
						 \frac{1}{n}\sum_{1 \leq b, b' \leq n} \prod_{i \in S} Z_{S, n}(\bm t, b, i) \bar Z_{S, n}^{\o}(\bm t, b', i) \right] \nonumber \\ 
						 & = \prod_{i\in S}\E[ Z_{S, n}( \bm t, 1, i)  \bar Z_{S, n}^{\o}( \bm t, 1, i) ] + (n-1)  \prod_{i\in S}\E[ Z_{S, n}( \bm t, 1, i)  \bar Z_{S, n}^{\o}( \bm t, 2, i) ] , 
					\end{align} 
using the distributional symmetry of the rank maps. Note that for any $i' \in S$, $$\sum_{b=1}^n \bar Z_{S, n}^{\o}(\bm t, b, i')  = 0,$$ hence, $\bar Z_{S, n}^{\o}(\bm t, 1, i')  = - \sum_{b=2}^n \bar Z_{S, n}^{\o}(\bm t, b, i') $.  This implies, 
					\begin{align*}
					\E[ Z_{S, n}(\bm t, 1, i')  \bar Z_{S, n}^{\o}(\bm t, 1, i') ] & =  - \sum_{b=2}^n \E[ Z_{S, n}(\bm t, 1, i') \bar Z_{S, n}^{\o}(\bm t, b, i')	] \nonumber \\ 
					& = - (n-1) \E[ Z_{S, n}(\bm t, 1, i') \bar Z_{S, n}^{\o}(\bm t, 2, i')]  .				\end{align*} 				
Hence, 
			\begin{align}\label{eq:ZSnt2} 
		 &	(n-1) \prod_{i \in S}\E[ Z_{S, n}(\bm t, 1, i)  \bar Z_{S, n}^{\o}(\bm t, 2, i) ] \nonumber \\ 
		 & = (n-1) \E[ Z_{S, n}(\bm t, 1, i') \bar Z_{S, n}^{\o}(\bm t, 2, i')] \prod_{i \in S\backslash\{i'\}}\E[ Z_{S, n}(\bm t, 1, i)  \bar Z_{S, n}^{\o}(\bm t, 2, i) ]  \nonumber \\ 
			& = - \E[ Z_{S, n}(\bm t, 1, i')  \bar Z_{S, n}^{\o}(\bm t, 1, i') ]  \prod_{i \in S\backslash\{i'\}}\E[ Z_{S, n}(\bm t, 1, i)  \bar Z_{S, n}^{\o}(\bm t, 2, i) ] \nonumber \\ 
				& =	 - \E[ Z_{S, n}(\bm t, 1, i')  \bar Z_{S, n}^{\o}(\bm t, 1, i') ]  \prod_{i \in S\backslash\{i'\}}\E[ Z_{S, n}(\bm t, 1, i) ] \E[ \bar Z_{S, n}^{\o}(\bm t, 2, i) ] = 0. 
					\end{align} 
					Combining \eqref{eq:ZSnt1} and \eqref{eq:ZSnt2} gives, 
					\begin{align}\label{eq:ZSnt4} 
						 C_n = \E[nZ_{S,n}(\bm t)\bar{Z}_{S,n}^{\o}(\bm t)] 
						& = \prod_{i\in S} \E[ Z_{S, n}( \bm t, 1, i)  \bar Z_{S, n}^{\o}( \bm t, 1, i) ] 
						\end{align} 
To compute the limit of the RHS of \eqref{eq:ZSnt4} note that 
					\begin{align*}
						& \left| Z_{S, n}( \bm t, 1, i)  - \bar Z_{S, n}^{\o}( \bm t, 1, i) \right| \\ 
						&
						= \left|\frac{1}{n}\sum_{u=1}^{n}e^{\i \langle t_{i},R_{\mu_i}(X_{i}^{(u)})\rangle}-e^{\i\langle t_{i},R_{\mu_i}(X_{i}^{(1)})\rangle}-\frac{1}{n}\sum_{u=1}^{n}e^{\i \langle t_{i},\R_{i}(X_{i}^{(u)})\rangle}+e^{\i\langle t_{i},\R_{i}(X_{i}^{(1)})\rangle} \right|\\
						&
						\leq \frac{1}{n}\sum_{u=1}^{n}\bigg|e^{\i \langle t_{i},R_{\mu_i}(X_{i}^{(u)})\rangle}-e^{\i \langle t_{i},\R_{i}(X_{i}^{(u)})\rangle}\bigg|+\bigg|e^{\i\langle t_{i},R_{\mu_i}(X_{i}^{(1)})\rangle}-e^{\i\langle t_{i},\R_{i}(X_{i}^{(1)})\rangle}\bigg|\\
						&
						\leq \frac{1}{n}\sum_{u=1}^{n} \left|\langle t_{i},R_{\mu_i}(X_{i}^{(u)})-\R_{i}(X_{i}^{(u)})\rangle \right| + \left|\langle t_{i},R_{\mu_i}(X_{i}^{(1)})-\R_{i}(X_{i}^{(1)})\rangle\right|\\
						&
						\leq \frac{1}{n}\sum_{u=1}^{n}\|t_{i}\|\|R_{\mu_i}(X_{i}^{(u)})-\R_{i}(X_{i}^{(u)})\|+\|t_{i}\|\|R_{\mu_i}(X_{i}^{(1)})-\R_{i}(X_{i}^{(1)})\| \\ 
						& \rightarrow 0 , 
					\end{align*} 
					by \cite[Theorem 2.1]{deb2021multivariate}. 
					Moreover, since $Z_{S, n}( \bm t, 1, i)$ and $\bar Z_{S, n}^{\o}( \bm t, 1, i)$ are uniformly bounded in $n$, by the dominated convergence theorem, 
$$ \lim_{n \rightarrow \infty} \E \left| Z_{S, n}( \bm t, 1, i)  - \bar Z_{S, n}^{\o}( \bm t, 1, i) \right| = 0. $$ This implies 
	\begin{align*}
        & \left|\E[ Z_{S, n}( \bm t, 1, i)  \bar Z_{S, n}^{\o}( \bm t, 1, i) ] - \E[ Z_{S, n}( \bm t, 1, i)  \bar Z_{S, n}( \bm t, 1, i) ] \right| \nonumber \\ 
	& \leq \E\left[ \left|  \bar Z_{S, n}^{\o}( \bm t, 1, i)  \right| \left|  \bar Z_{S, n}^{\o}( \bm t, 1, i) ] - \bar Z_{S, n}( \bm t, 1, i)  \right| \right] \rightarrow 0 , 
	\end{align*}
as $n \rightarrow \infty$. Hence, from \eqref{eq:ZSnt4} and as in the proof of  \eqref{eq:S1}, 
					\begin{align*}
						\lim_{n\rightarrow\infty} C_n = \lim_{n \rightarrow \infty }\prod_{i\in S} \E[  Z_{S, n}( \bm t, 1, i)  \bar Z_{S, n}^{\o}( \bm t, 1, i) ]  = \lim_{n \rightarrow \infty } \prod_{i\in S} \E[ Z_{S, n}( \bm t, 1, i)  \bar Z_{S, n} ( \bm t, 1, i) ]
						= C_S(\bm t, \bm t) . 
					\end{align*} 
					Similarly, we can show that $\lim_{n\rightarrow\infty} D_n= C_S(\bm t, \bm t)$. Recalling \eqref{eq:ZSndifference} the proof of Proposition \ref{ppn:ZnS} follows.

					\subsection{Proof of Theorem \ref{thm:mixture}}
					\label{sec:mixturepf}

					As the in the proof of Theorem \ref{thm:H0} denote $\kappa:=K|\mathcal{T}|$ and (recall \eqref{eq:KS}) 
						\begin{align*}
							{\bm \cZ}_n:=   \left( \left( Z_{S, n}(\bm t_S^{(1)}),   Z_{S, n}(\bm t_S^{(2)}), \ldots,  Z_{S, n}(\bm t_S^{(K)} ) \right) \right)_{S \in \cT} .
						\end{align*}
						Define $\delta_n := h/\sqrt n$ and 
						\begin{align}\label{eq:loglikelihood}
							V_n:=\sum_{a=1}^n\log\left(\frac{f(\bm X_a)}{\prod_{i=1}^rf_{X_i}(X_i^{(a)})}\right) = \sum_{a=1}^n\log\left(\frac{(1-\delta_n) \prod_{i=1}^rf_{X_i}(X_i^{(a)}) + \delta_n g(\bm X_a)}{\prod_{i=1}^rf_{X_i}(X_i^{(a)})}\right), 
							\end{align} 
							the log-likelihood ratio of the data under for the hypothesis \eqref{eq:H0mixture}.  To obtain the limiting distribution of $\sqrt{n} {\bm \cZ}_n$ under $H_1$ as in \eqref{eq:H0mixture}, we invoke Le Cam's third lemma (see \cite[Example 6.7]{vdv2000asymptotic}), which entails deriving joint distribution of $(\sqrt{n}\bm \cZ_n, V_n)$ under $H_0$. Note that by Proposition \ref{ppn:ZnS}, it is enough to derive the joint distribution of $(\sqrt{n}\bm \cZ_n^{\o},V_n)$ under $H_0$. Also, it can be shown that 
						\begin{align}\label{eq:ZSnoraclet}
							\sqrt{n}Z_{S,n}^{\o}(\bm t)-\sqrt{n}\tilde{Z}_{S,n}(\bm t)\stackrel{P}{\longrightarrow}0 , 
						\end{align} 
						where 
						\begin{align}\label{eq:ZSnt}
						\sqrt{n}\tilde{Z}_{S,n}(\bm t):=\frac{1}{\sqrt{n}}\sum_{u=1}^n\prod_{i\in S}\left\{\E\left[e^{\i \langle t_i, R_{\mu_i}(X_i)\rangle}\right]-e^{\i \langle t_i, R_{\mu_i}(X_i^{(u)})\rangle}\right\} . 
						\end{align} 
Therefore, we only need to obtain the limiting distribution $(\sqrt{n}\tilde{\bm{\cZ}}_n,V_n)$ under $H_0$, where 
\begin{align}\label{eq:KSZt}
\tilde{\bm{\cZ}}_n:=((\tilde{Z}_{S,n}(\bm t_{S}^{(1)}),\ldots,\tilde{Z}_{S,n}(\bm t_{S}^{(K)})))_{S\in\mathcal{T}}. 
\end{align} To this end, by a Taylor's expansion around $\delta=0$ in \eqref{eq:loglikelihood} gives,  
						\begin{align*}
							V_n=\dot{V}_n+o_P(1), 
						\end{align*} 
						where 
		$$\dot{V}_n:=\frac{h}{\sqrt{n}}\sum_{a=1}^{n} \left(\frac{g(\bm X_a)}{\prod_{i=1}^{r}f_{X_i}(X_{i}^{(a)})}-1 \right)-\frac{h^{2}}{2}\E\left[\frac{g(\bm X)}{\prod_{i=1}^{r}f_{X_i}(X_{i})}-1\right]^{2}.$$ 
						Therefore, under $H_0$, by the multivariate central limit theorem, 						\begin{align}\label{eq:ZVn}
							(\sqrt{n}\tilde{\bm{\cZ}}_n, \dot{V}_n)\stackrel{D}{\rightarrow}N_\kappa \left(\begin{pmatrix}
							\bm 0 \\ 
							-\tfrac{1}{2} \gamma  
							\end{pmatrix} , \begin{pmatrix}
								\bm \cD & \bm \mu \\
								\bm{\mu}^{\top} & \gamma \\
							\end{pmatrix} \right), 
				\end{align}
				where 
				$\gamma := h^{2}\E\left[\frac{g(\bm X)}{\prod_{i=1}^{r}f_{X_i}(X_{i})}-1\right]^{2}$, 
%
%
						$\bm \cD$ is $\kappa \times \kappa$ a block-diagonal matrix with elements $C_S(\bm t^{(\ell)}, \bm t^{(\ell')})$ for $\ell, \ell' \in \{1, 2, \ldots, K\}$ and $S \in \cT$, and 						
						\begin{align*}
							\bm \mu=((\mu_{S}(\bm t^{(1)}), \mu_{S}(\bm t^{(2)}), \ldots, \mu_{S}(\bm t^{(K)})))_{S\in\mathcal{T}} ,  
							\end{align*} 
							with $\mu_S(\cdot)$ is as defined in \eqref{eq:meanH1}. 

							 Then by Proposition \ref{ppn:ZnS} and \eqref{eq:ZSnoraclet}, under $H_0$ $(\sqrt{n}\tilde{\bm{\cZ}}_n, \dot{V}_n)$ converges to the same limit as in \eqref{eq:ZVn}. Hence, by Le Cam's third lemma (\cite[Example 6.7]{vdv2000asymptotic}), under $H_1$					
						\begin{align*}
							\sqrt{n}\bm \cZ_n \stackrel{D}{\rightarrow}N_\kappa(\bm \tau, \bm{\cD}) \stackrel{D} = \bm{\cZ} + \bm \mu ,
						\end{align*} 
						for $\bm{\cZ}$ as defined in \eqref{eq:KSZ}. 
						This establishes the finite dimensional convergence of the process $\sqrt{n}Z_{S,n}(\bm t)$ under $H_1$. Finally, since $H_0$ and $H_1$ are mutually contiguous, the condition in \eqref{eq:equicontinuity} holds under $H_1$ as well. Hence, the process $\{ \sqrt{n}Z_{S,n}(\bm t) : S \in \cT \} $ converges weakly to $\{ Z_{S}(\bm t) + \mu_S(\bm t) : S \in \cT\}$ and the result in \eqref{eq:H0NRdCovS} follows by the continuous mapping theorem. This completes the proof of Theorem \ref{thm:mixture}. \hfill $\Box$

					\subsection{Proof of Theorem \ref{thm:powerK}}
					\label{sec:powerKpf}
					
					To show Theorem \ref{thm:powerK}, we will first show the contiguity between $H_0$ and $H_1$ under the given assumptions. For this, denote by $\log(L_n)$ the  likelihood ratio for the hypothesis \eqref{eq:KH0}, 
		$$ W_n=2\sum_{i=1}^{n}\bigg\{\frac{f_{\bm X}^{\frac{1}{2}}(\bm X_i|\delta_n)}{f_{\bm X}^{\frac{1}{2}}(\bm X_i|0)}-1\bigg\} \text{ and } T_n=\delta_n\sum_{i=1}^{n}L'(\bm X_i|0).$$
where $\delta_n=h/\sqrt{n}$. To show the contiguity between $H_0$ and $H_1$, we will use Le Cam's second lemma \cite[Corollary 12.3.1]{lr}. The following lemma verifies the conditions required for applying Le Cam's second lemma.

					
					\begin{lemma}\label{lm:KH0}
						Denote $\delta_n=h/\sqrt{n}$ and suppose Assumption \ref{assumption:K}. Then under $H_0$ the following conditions hold: 
						
						\begin{itemize} 
						
						\item[$(a)$] $W_n=T_n-\tfrac{1}{4} \gamma+o_P(1)$, 						where $\Var_{H_0}[T_n]=\gamma<\infty$. 
						
						\item[$(b)$] The summands of likelihood ratio are uniformly asymptotic negligible, that is, 
						\begin{align*}
							\P_n\bigg(\bigg|\frac{f_{\bm X}(\bm X_i|\delta_n)}{f_{\bm X}(\bm X_i|0)}-1\bigg|>\varepsilon\bigg)\rightarrow0.
						\end{align*}
						
						
						\end{itemize} 
						
					\end{lemma}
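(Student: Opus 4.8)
The plan is to reduce the whole lemma to a single structural fact — that under Assumption \ref{assumption:K} the Konijn family $\{f_{\bm X}(\cdot|\delta)\}_{\delta\in\Theta}$ is quadratic mean-differentiable (QMD) at $\delta=0$ in the sense of \cite[Definition 12.2.1]{lr} — after which parts (a) and (b) both drop out of the standard local asymptotic normality machinery (\cite[Chapter 12]{lr}, \cite[Chapter 7]{vdv2000asymptotic}). So I would organize the argument in three steps: establish QMD, deduce the quadratic expansion of $W_n$, and deduce uniform asymptotic negligibility.

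For the QMD step I would work from the change-of-variables representation \eqref{eq:densitydelta}, $f_{\bm X}(\bm x|\delta)=|\det\bm A_\delta|^{-1} f_{\bm X'}(\bm A_\delta\bm x)$. Since the entries of $\bm A_\delta$ are affine in $\delta$ and $\bm A_0=\bm I_{d_0}$, the maps $\delta\mapsto\bm A_\delta\bm x$ and $\delta\mapsto|\det\bm A_\delta|^{-1}$ are smooth near $\delta=0$; composing with the assumed continuous differentiability of $\bm x\mapsto\sqrt{f_{\bm X'}(\bm x)}$ shows that $\delta\mapsto\sqrt{f_{\bm X}(\bm x|\delta)}$ is differentiable for every $\bm x$ in the ($\delta$-independent) support. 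To promote this pointwise statement to $L^2(\mathrm{Leb})$-differentiability I would invoke the classical criterion \cite[Lemma 7.6]{vdv2000asymptotic}: pointwise differentiability of the root-density, combined with finiteness and continuity of the Fisher information $I(\delta)$ at $\delta=0$ (Assumption \ref{assumption:K}), forces QMD. This identifies the score at $\delta=0$ as $L'(\cdot|0)$, with $\E_{H_0}[L'(\bm X|0)^2]=I(0)<\infty$; differentiating $\int f_{\bm X}(\bm x|\delta)\,\d\bm x=1$ under the integral sign — which QMD legitimizes — gives $\E_{H_0}[L'(\bm X|0)]=0$.

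Given QMD, part (a) is the usual quadratic expansion of a sum of root-likelihood-ratio increments. Writing $r_{ni}:=\{f_{\bm X}(\bm X_i|\delta_n)/f_{\bm X}(\bm X_i|0)\}^{1/2}$, the Hellinger-affinity identity gives $\E_{H_0}[W_n]=2n(\E_{H_0}[r_{n1}]-1)=-nH_n^2$, where $H_n^2$ is the squared Hellinger distance between $f_{\bm X}(\cdot|\delta_n)$ and $f_{\bm X}(\cdot|0)$, and QMD yields $nH_n^2=\tfrac14 h^2 I(0)+o(1)=\tfrac14\gamma+o(1)$. For the centred part, QMD gives $\sum_{i=1}^n\E_{H_0}[(2(r_{ni}-1)-\delta_n L'(\bm X_i|0))^2]\to0$, so $W_n-\E_{H_0}[W_n]=\delta_n\sum_{i=1}^n L'(\bm X_i|0)+o_P(1)=T_n+o_P(1)$ since $\E_{H_0}[T_n]=0$; combining the two displays gives $W_n=T_n-\tfrac14\gamma+o_P(1)$, and by independence $\Var_{H_0}[T_n]=n\delta_n^2\,\E_{H_0}[L'(\bm X|0)^2]=h^2 I(0)=\gamma<\infty$ (the bookkeeping here mirrors the proof of \cite[Theorem 12.2.3]{lr}). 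Part (b) follows from $\delta_n\to0$ together with continuity of the density in $\delta$: by \eqref{eq:densitydelta}, $f_{\bm X}(\bm x|\delta)\to f_{\bm X}(\bm x|0)$ as $\delta\to0$ for every $\bm x$ on the support, hence $f_{\bm X}(\bm X_i|\delta_n)/f_{\bm X}(\bm X_i|0)\to1$ a.s.\ (and so in probability) under $H_0$; if the stronger maximum-over-$i$ form of negligibility is needed one instead bounds $\P_{H_0}(\max_{1\le i\le n}|r_{ni}^2-1|>\varepsilon)\le n\,\P_{H_0}(|r_{n1}^2-1|>\varepsilon)$ and controls the right-hand side using $\E_{H_0}[(r_{n1}-1)^2]=O(1/n)$ from QMD plus a truncation argument.

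The only genuinely delicate point is the QMD verification in the second step: transferring the hypothesis on $\bm x\mapsto\sqrt{f_{\bm X'}(\bm x)}$ (differentiability in its argument) into $\delta$-differentiability of $\bm x\mapsto\sqrt{f_{\bm X}(\bm x|\delta)}$ and then from pointwise to $L^2$ convergence, which is precisely where the finiteness and continuity of $I(\delta)$ at $\delta=0$ are indispensable. This is the multivariate analogue of the computation carried out for two random vectors in \cite{gieser1993new} and \cite[Example 5.1]{shi2020consistent}; once it is in place, the expansion of $W_n$ and the negligibility statement are routine.
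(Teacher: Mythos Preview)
Your proposal is correct and follows essentially the same route as the paper: both arguments first verify that $\{f_{\bm X}(\cdot|\delta)\}$ is QMD at $\delta=0$ (you via \cite[Lemma 7.6]{vdv2000asymptotic}, the paper via an explicit Taylor expansion of the root-density using Lemma~\ref{lem: derivative_L}), then derive (a) by computing $\E_{H_0}[W_n]\to-\tfrac14\gamma$ through the Hellinger identity and showing $\Var_{H_0}[W_n-T_n]\to0$ from the QMD remainder, and finally handle (b) by a one-line argument (you use pointwise continuity of $\delta\mapsto f_{\bm X}(\bm x|\delta)$, the paper uses Markov's inequality with an $L^1$ bound on $Z_n$). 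The only cosmetic difference is that your invocation of \cite[Lemma 7.6]{vdv2000asymptotic} packages the pointwise-to-$L^2$ passage more cleanly than the paper's hands-on computation, but the substance is identical.
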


					The proof of Lemma \ref{lm:KH0} is given in Section \ref{sec:KH0pf}. First we apply it to complete the proof of Theorem \ref{thm:powerK}. To this end, note that since $T_n \stackrel{D} \rightarrow N(0, \gamma)$, by Lemma \ref{lm:KH0} (a) 
					$W_n \stackrel{D} \rightarrow N(-\frac{1}{4} \gamma, \gamma)$. Hence, by Lemma \ref{lm:KH0} and Le Cam's second lemma,  the Konjin local alternative $H_1$ as in \eqref{eq:KH0} are contiguous to $H_0$. Also, by Le Cam's second lemma we know that $\log(L_n) - (T_n- \frac{1}{2} \gamma) = o_P(1)$. Therefore, as in \eqref{eq:ZVn} under $H_0$, by the multivariate central limit theorem, 						\begin{align}\label{eq:ZVn2}
							( \sqrt{n}\tilde{\bm{\cZ}}_n, \log(L_n) )\stackrel{D}{\rightarrow}N_\kappa \left(\begin{pmatrix}
							\bm 0 \\ 
							-\tfrac{1}{2} \gamma  
							\end{pmatrix} , \begin{pmatrix}
								\bm \cD & \bm \nu \\
								\bm{\nu}^{\top} & \gamma \\
							\end{pmatrix} \right), 
				\end{align}
				where $\tilde{\bm{\cZ}}_n$ is as in \eqref{eq:KSZt},  $\bm {\cD}$ as in \eqref{eq:ZVn} and 
								\begin{align*}
							\bm \nu=((\nu_{S}(\bm t^{(1)}), \nu_{S}(\bm t^{(2)}), \ldots, \nu_{S}(\bm t^{(K)})))_{S\in\mathcal{T}} ,  
							\end{align*} 
							with $\nu_S(\cdot)$ is as defined in \eqref{eq:meanKH1}. The rest of proof can now be completed by arguments similar to the proof of Theorem \ref{thm:mixture}.

					\subsubsection{Proof of Lemma \ref{lm:KH0}}
					\label{sec:KH0pf} 
					
					We begin by computing the derivative of $f_{\bm X}(\bm x|\delta)$ (as defined in \eqref{eq:densitydelta}) with respect to $\delta$. This is stated in the following lemma which follows by a direct computation. 
					
					\begin{lemma}\label{lem: derivative_L}
						Consider the family of distributions $\{f_{\bm X}(\bm x|\delta)\}_{\delta \in \Theta}$ as in \eqref{eq:densitydelta}. Then 
						\begin{align*}
							\frac{\partial}{\partial \delta} f_{\bm X}(\bm x|\delta) =-\frac{1}{|\det(\bm A_{\delta})|}\mathrm{tr}(-\bm A_{\delta}^{-1}P)f_{\bm X}(\bm A_{\delta}^{-1}\bm x|0)+\frac{1}{|\det(\bm A_{\delta})|}(\bm A_{\delta}^{-1}P\bm A_{\delta}^{-1}\bm x)^{\top} \nabla f_{\bm X}(\bm x|0).
						\end{align*} 
						where 
						$$P=- \frac{\partial}{\partial \delta} \bm A_{\delta}\Big|_{\delta=0} = \begin{pmatrix}
					\bm I_{d_1} & - \bm M_{1, 2} & \cdots & - \bm M_{1, r} \\
					- \bm M_{2, 1} & \bm I_{d_2} & \cdots & - \bm M_{2, r} \\ 
					\vdots & \vdots & \ddots & \vdots \\ 
					- \bm M_{r, 1}^\top & -  \bm M_{r, 2} & \cdots &  \bm I_{d_r} \\ 
				\end{pmatrix} ,					
						$$ 
						and $\nabla f_{\bm X}(\bm x|0) = \frac{\partial }{\partial \delta} f_{\bm X}(\bm x|\delta)|_{\delta=0}$. 
						\end{lemma}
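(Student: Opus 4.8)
The plan is to prove Lemma~\ref{lem: derivative_L} by differentiating directly, with respect to $\delta$, the change-of-variables formula for the density of $\bm X=\bm A_\delta\bm X'$, namely $f_{\bm X}(\bm x\mid\delta)=|\det(\bm A_\delta)|^{-1}f_{\bm X}(\bm A_\delta^{-1}\bm x\mid 0)$ (cf.\ \eqref{eq:densitydelta}), combining Jacobi's formula for the derivative of a determinant with the chain rule.

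First I would record the two structural facts that make the computation mechanical. Every block of $\bm A_\delta$ in \eqref{eq:matrixA} is affine in $\delta$ (the diagonal blocks are $(1-\delta)\bm I_{d_i}$ and the $(i,j)$-block is $\delta\bm M_{i,j}$), so $\delta\mapsto\bm A_\delta$ is affine and its derivative is constant in $\delta$, equal to $\tfrac{\partial}{\partial\delta}\bm A_\delta\big|_{\delta=0}=-P$ on all of $\Theta$, with $P$ the block matrix displayed in the statement. Moreover, on the neighbourhood $\Theta$ of $0$ one has $\det(\bm A_\delta)>0$, since this quantity equals $1$ at $\delta=0$ and is continuous; hence $|\det(\bm A_\delta)|=\det(\bm A_\delta)$ on $\Theta$, so the absolute value may be dropped during the differentiation and reinstated at the end.

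With these in hand I would apply the product rule to $f_{\bm X}(\bm x\mid\delta)=\det(\bm A_\delta)^{-1}\cdot f_{\bm X}(\bm A_\delta^{-1}\bm x\mid 0)$. For the scalar factor $\det(\bm A_\delta)^{-1}$, Jacobi's formula gives $\tfrac{\partial}{\partial\delta}\det(\bm A_\delta)=\det(\bm A_\delta)\,\tr\!\big(\bm A_\delta^{-1}\tfrac{\partial}{\partial\delta}\bm A_\delta\big)=-\det(\bm A_\delta)\,\tr(\bm A_\delta^{-1}P)$, so that $\tfrac{\partial}{\partial\delta}\det(\bm A_\delta)^{-1}=\det(\bm A_\delta)^{-1}\tr(\bm A_\delta^{-1}P)=-\det(\bm A_\delta)^{-1}\tr(-\bm A_\delta^{-1}P)$; multiplied by $f_{\bm X}(\bm A_\delta^{-1}\bm x\mid 0)$ this is the first summand. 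For the composition factor, the chain rule together with the identity $\tfrac{\partial}{\partial\delta}\bm A_\delta^{-1}=-\bm A_\delta^{-1}\big(\tfrac{\partial}{\partial\delta}\bm A_\delta\big)\bm A_\delta^{-1}=\bm A_\delta^{-1}P\bm A_\delta^{-1}$ yields $\tfrac{\partial}{\partial\delta}f_{\bm X}(\bm A_\delta^{-1}\bm x\mid 0)=(\bm A_\delta^{-1}P\bm A_\delta^{-1}\bm x)^{\top}\nabla f_{\bm X}(\cdot\mid 0)$ evaluated at the relevant argument, which is the second summand. Collecting the two pieces and restoring $|\det(\bm A_\delta)|$ gives the claimed formula.

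There is no substantive obstacle here; the lemma is an isolated bookkeeping step, and the only points that need care are the sign convention for $P$ (defined as $-\tfrac{\partial}{\partial\delta}\bm A_\delta$, which is exactly why the minus signs appear inside the traces and why $P$ carries $+\bm I_{d_i}$ on its diagonal blocks) and the legitimacy of differentiating under the composition $\delta\mapsto f_{\bm X}(\bm A_\delta^{-1}\bm x\mid 0)$. The latter is guaranteed by Assumption~\ref{assumption:K}, under which $\bm x\mapsto\sqrt{f_{\bm X}(\bm x\mid 0)}$, and hence $f_{\bm X}(\cdot\mid 0)$, is continuously differentiable, together with the smoothness of $\delta\mapsto\bm A_\delta^{-1}$ on $\Theta$. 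The purpose of isolating this computation is to have the derivative $L'(\bm x\mid 0)$ --- and through it the Fisher information in Assumption~\ref{assumption:K} and the mean function $\nu_S(\bm t)$ of Theorem~\ref{thm:powerK} --- available in closed form.
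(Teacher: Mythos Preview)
Your proposal is correct and follows exactly the approach the paper takes: the paper states the lemma ``follows by a direct computation'' and provides no further detail, so your product-rule/Jacobi's-formula/chain-rule derivation is precisely what is intended. Your extra care with the sign convention for $P$, the positivity of $\det(\bm A_\delta)$ on $\Theta$, and the differentiability justification via Assumption~\ref{assumption:K} is appropriate and in fact more explicit than the paper itself.
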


We begin with the proof of Lemma \ref{lm:KH0} (a). From Assumption \ref{assumption:K} it follows that $\Var_{H_0}[T_n]=h^{2} \E_{H_0}[(L'(\bm X|0))^2]<\infty$. To prove $W_n=T_n-\tfrac{1}{4} \gamma+o_P(1)$, it suffices to show that
\begin{align}\label{eq:Wn}
\E_{H_0}[W_n]\rightarrow- \tfrac{1}{4} \gamma \quad \text{and} \quad \Var_{H_0}[W_n-T_n]\rightarrow0, 
\end{align} 
as $n\rightarrow\infty$. To this end, denote $s(\bm x)=f^{\frac{1}{2}}_{\bm X}(\bm x|0)$. Then $\nabla s(\bm x)=(\nabla f_{\bm X}(\bm x|0))/(2f_{\bm X}^{\frac{1}{2}}(\bm x|0))$ and 	\begin{align*}
	\E_{H_0}[W_n] =2n\E_{H_0}[L^{\frac{1}{2}}(\bm X|\delta_n)]-2 
						&
						=2n\E_{H_0}\left[\frac{|\bm A_{\delta_n}|^{-\frac{1}{2}}s(\bm A_{\delta_n}^{-1}\bm X)}{s(\bm X)}\right]-2\\
						&
						=-h^{2}\int \bigg\{\frac{|\bm A_{\delta_n}|^{-\frac{1}{2}}s(\bm A_{\delta_n}^{-1}\bm x)-s(\bm x)}{\delta_n}\bigg\}^{2}\d x.
					\end{align*}
					To compute the limit of the integral in the RHS above we first show that given the assumptions, the quadratic mean differentiability (QMD) condition is satisfied for $f_{\bm X}(\bm x|\delta)$ around $\delta=0$. It is easy to verify that $f_{\bm X}(\bm x|\delta)$ is continuously differentiable with respect to $\delta$ around the neighborhood of $\delta=0$ for any $\bm x\in\supp\{f_{\bm X}(\bm x|0)\}$. This can be easily seen from the expression of $\partial f/\partial\delta$ in Theorem \ref{lem: derivative_L} since $f_{\bm X}(\bm x|0)$ is continuously differentiable by the assumption. Combined with the Assumption \ref{assumption:K} of the well-definedness and continuity of $\E_{H_0}[(L'(\bm x|\delta))^2]$ around $\delta=0$ we know the QMD assumption is satisfied around $\delta=0$. Then by Taylor's expansion, 
					\begin{align*}
						\frac{|\bm A_{\delta_n}|^{-\frac{1}{2}}s(\bm A_{\delta_n}^{-1}\bm x)-s(\bm x)}{\delta_n}=\frac{1}{2}L'(\bm x|0)s(\bm x) + \frac{r_n}{\delta_n}
					\end{align*}
					where $r_n=o(\delta_n^{2})$. By QMD, we know $\int r_n^{2}\d x=o(\delta_n^{2})$. Therefore we have
					\begin{align*}
						&
						\bigg|\int \bigg\{\frac{|\bm A_{\delta_n}|^{-\frac{1}{2}}s(\bm A_{\delta_n}^{-1}\bm x)-s(\bm x)}{\delta_n}\bigg\}^{2}\d x - \int \bigg\{\frac{1}{2}L'(\bm x|0)s(\bm x)\bigg\}^{2}\d x\bigg|\\
						&
						=\bigg|2\int\bigg\{\frac{1}{2}L'(\bm x|0)s(\bm x)\bigg\}\frac{r_n}{\delta_n}\d x+\int\frac{r_n^{2}}{\delta_n^{2}}\d x\bigg|\\
						&
						\leq 2\sqrt{\int \bigg\{\frac{1}{2}L'(\bm x|0)s(\bm x)\bigg\}^{2}\d x\int\frac{r_n^{2}}{\delta_n^{2}}\d x}+\int\frac{r_n^{2}}{\delta_n^{2}}\d x\\
						&
						\rightarrow 0 , 
					\end{align*}
					using the third part of Assumption \ref{assumption:K}. This implies, 					\begin{align}\label{eq:change_limit}
						\lim_{n\rightarrow\infty}\E_{H_0}[W_n]=-h^{2}\int \bigg\{\frac{1}{2}L'(\bm x|0)s(\bm x)\bigg\}^{2}\d x & =-\frac{h^{2}}{4}\E_{H_0}\left[(L'(\bm X|0))^{2}\right] \nonumber \\ 
						& = -\frac{h^{2}}{4}\mathrm{Var}_{H_0}\left[(L'(\bm X|0))^{2}\right] = \frac{1}{4} \gamma , 
					\end{align} 
					where the second last inequality uses $E_{H_0}\left[L'(\bm X|0)\right]=0$, by the QMD condition. This completes the proof of the first condition in \eqref{eq:Wn}. To show the second condition in \eqref{eq:Wn} note that 
					\begin{align*}
						\Var_{H_0}[W_n-T_n]
						&
						=4h^2\Var_{H_0}\left[\frac{|\bm A_{\delta_n}^{-1}|s(\bm A_{\delta_n}^{-1}\bm X)}{\delta_n s(\bm X)}-\frac{1}{\delta_n}-\frac{1}{2}L'(\bm X|0)\right]\\
						&
						\leq 4h^2\E_{H_0}\left[\left(\frac{|\bm A_{\delta_n}^{-1}|s(\bm A_{\delta_n}^{-1}\bm X)}{\delta_n s(\bm X)}-\frac{1}{\delta_n}-\frac{1}{2}L'(\bm X|0)\right)^{2}\right]\\
						&
						=4h^2\int \bigg\{\frac{|\bm A_{\delta_n}^{-1}|s(\bm A_{\delta_n}^{-1}\bm x)-s(\bm x)}{\delta_n}-\frac{1}{2}L'(\bm x|0)s(\bm x)\bigg\}^{2}\d x.
					\end{align*}
					Following the same idea as in proving \eqref{eq:change_limit}, we have by QMD condition					\begin{align*} 
						\lim_{n\rightarrow\infty}\Var_{H_0}[W_n-T_n] 
						& \leq 4h^{2}\lim_{n\rightarrow\infty}\int \bigg\{\frac{|\bm A_{\delta_n}^{-1}|s(\bm A_{\delta_n}^{-1}\bm x)-s(\bm x)}{\delta_n}-\frac{1}{2}L'(\bm x|0)s(\bm x)\bigg\}^{2}\d x\\
						&
						= \lim_{n\rightarrow\infty}4h^{2}\int\frac{r_n^{2}}{\delta_n^{2}}\d x\\
						&
						=0.
					\end{align*} 
					This establishes the second condition in \eqref{eq:Wn}, and hence, the completes proof of Lemma \ref{lm:KH0} (a). 
%
					
					Now, we prove Lemma \ref{lm:KH0} (b). Define $Z:=L'(\bm X|0)$ and
					\begin{align*}
						Z_n:=\frac{f_{\bm X}(\bm X|\delta_n)-f_{\bm X}(\bm X|0)}{\delta_n f_{\bm X}(\bm X|0)}.
					\end{align*} 
%
%
					 By a Taylor's expansion, 
					\begin{align*}
						f^{\frac{1}{2}}_{\bm X}(\bm x|\delta_n)=f^{\frac{1}{2}}_{\bm X}(\bm x|0)+\delta_n \frac{1}{2}L'(\bm x|0) f^{\frac{1}{2}}_{\bm X}(\bm x|0)+r_n , 
					\end{align*}
					where $r_n=o(\delta_n^{2}),\int r_n^2/\delta_n^2\mathrm{d}x=o(1)$. This implies, 
					\begin{align*}
						\E_{H_0}[|Z_n|] & \leq \int \bigg|\frac{f_{\bm X}(\bm x|\delta_n)-f_{\bm X}(\bm x|0)}{\delta_n}\bigg|\d x \nonumber \\ 
						&
						\leq\int \bigg|\delta_n\bigg\{\frac{1}{2}L'(\bm x|0) f^{\frac{1}{2}}_{\bm X}(\bm x|0)\bigg\}^{2}\bigg|\d x +2\int \bigg|r_n\frac{1}{2}L'(\bm x|0)f^{\frac{1}{2}}_{\bm X}(\bm x|0)\bigg|\d x\\
						&
						\qquad+2\int\left|\frac{1}{2}f_{\bm X}(\bm x|0)L'(\bm x|0)\right|\mathrm{d}x+2\int\bigg|\frac{r_n}{\delta_n} f^{\frac{1}{2}}_{\bm X}(\bm x|0)\bigg|d x+\int \bigg|\frac{r_n^{2}}{\delta_n}\bigg|\d x.
					\end{align*} 
					Then for $n$ enough and the Cauchy-Schwarz inequality gives, 					\begin{align*}
						\E_{H_0}[|Z_n|]
						&
						\leq \E_{H_0}\left[|L'(\bm x|0)|\right]+o(1)<\infty.
					\end{align*} 
					Therefore, 
					\begin{align*}
						\P\bigg(\bigg|\frac{f_{\bm X}(\bm X_i|\delta_n)}{f_{\bm X}(\bm X_i|0)}-1\bigg|>\varepsilon\bigg)=\P\bigg(\bigg| \delta_n Z_n\bigg|>\varepsilon\bigg)\leq\frac{h\E(|Z_n|)}{\varepsilon\sqrt{n}}\rightarrow0.
					\end{align*}
					This completes the proof of Lemma \ref{lm:KH0} (b). \hfill $\Box$ 
%
%
%

					\section{Proof of results in Section \ref{sec:ICA}}
					
					This section is organized as follows: In Section \ref{sec:gradICA} we compute the gradient for the objective in \eqref{eq:kernelICA}. The proof of  Theorem \ref{thm:independentcomponent} is given in Section \ref{sec:icapf}.

					\subsection{Gradient Computation of $\RJdCov_n^2$}\label{sec:gradICA}

					In practice, we use a gradient descent algorithm to solve the optimization problem in \eqref{eq:kernelICA}. To compute the gradient of the objective function we use the Chain rule, 
		            \begin{align*}
			        \frac{\partial \JdCov^2_n(\bm{\tilde{F}}(\bm Z_n \bm W(\theta)^{\top});c)}{\partial \tilde{F}_i(W_i(\theta)^{\top}Z_a)}=\frac{1}{n^2}\sum_{b=1}^n\prod_{i'\neq i}^r\big\{\tilde{W}_{i'}(a,b) + c\big\}\frac{\partial \tilde{\cE}_i(a,b)}{\partial \tilde{F}_i(W_i(\theta)^{\top}Z_a)} , 
		            \end{align*}
		            where 
		            $$\bm{\tilde{F}}(\bm Z_n \bm W(\theta)^{\top})=(\tilde{F}_1(\bm ZW_1(\theta)),\ldots,\tilde{F}_d(\bm ZW_d(\theta))
$$ and 
		            $ \tilde{\cE}_i(a,b)$ is  obtained after replacing $\R_i(X_{i}^{(a)})$ with $\tilde{F}_i(W_i(\theta)^{\top}Z_a)$ and $\R_i(X_{i}^{(b)})$ with $\tilde{F}_i(W_i(\theta)^{\top}Z_b)$ in the expression of $\hat{\cE}_{i}(a,b)$ (recall \eqref{eq:estimateW}). Moreover, $\partial \tilde{\cE}_i(a,b)/\partial \tilde{F}_i(W_i(\theta)^{\top}Z_a)$ can be approximated by omitting $O(1/n)$ term as follows: 
		            \begin{align*}
			        \frac{1}{n}\sum_{v=1}^n\mathrm{sign}(\tilde{F}_i(W_i(\theta)^{\top}Z_a)-\tilde{F}_i(W_i(\theta)^{\top}Z_v))-\mathrm{sign}(\tilde{F}_i(W_i(\theta)^{\top}Z_a)-\tilde{F}_i(W_i(\theta)^{\top}Z_b)), 
		            \end{align*} 
		            for all $\theta\in\bar{\Theta}$, where $\mathrm{sign}(x)$ equals 1 if $x>0$, -1 if $x<0$, 0 otherwise. In the optimization procedure, suppose the last updated value is $\theta^{(k-1)}$, then we are able to compute
		            \begin{align*}
			        \frac{\partial \tilde{F}_i(W_i(\theta)^{\top}Z_a)}{\partial W_i(\theta)}\bigg|_{\theta=\theta^{(k-1)}}=\frac{1}{n}\sum_{v=1}^nG'\bigg(\frac{W_i^{\top}(\theta^{(k-1)})Z_a-W_i^\top(\theta^{(k-1)})Z_v}{h_{n}(i)}\bigg)\frac{Z_a}{h_{n}(i)} , 
		            \end{align*}
where $G'$ is the derivate of the function $G$ in \eqref{eq:cdfG}.

					\subsection{Proof of Theorem \ref{thm:independentcomponent}}\label{sec:icapf}
					
					 Throughout this proof we will drop the dependence on $c$ from the notations of $\RJdCov^2/\RJdCov^2_n$. First we will show that the distribution function of $W_i(\theta)^{\top}Z$ is Lipschitz by showing the density is uniformly bounded. 
					This is because of the convolution formula for the sum of independent random variables. For example, if $X$ and $Y$ are 2 independent random variables then the density of $X+Y$ is given by, 
						\begin{align*}
							f_{X+Y}(z)=\int_{-\infty}^{\infty}f_{Y}(z-x)f(x)\d x\leq \sup_{y}f_{Y}(y).
						\end{align*}
						Therefore, we know the density of $W_i(\theta)^{\top}Z$ is uniformly bounded, hence the distribution function of $W_i(\theta)^{\top}Z$ is Lipschitz. 
						
						
						Now suppose $\cD$ is a  metric satisfying the conditions in Theorem \ref{sec:icapf}. Partition $\SO(r)$ into equivalence classes by identifying elements with $\mathcal{D}$-distance zero with the same equivalence class.  
						Let $\SO(r)_{\mathcal{D}}$ be the quotient space $\SO(r)/\mathcal{D}$ of these equivalence classes. Then $\bm W=\bm A$ on $\SO(r)_{\mathcal{D}}$ if and only if $\mathcal{D}(\bm W, \bm A)=0$. To establish the consistency of $\hat \theta_n$ we will use the following result about the consistency of $M$-estimators. 
												
						\begin{theorem}[Theorem 2.12 in \cite{kosorok2008introduction}]\label{thm:consitency_estimator}
							Assume for some $\theta_0\in \Theta$ that $\liminf_{n\rightarrow\infty}M(\theta_n)\geq M(\theta_0)$ implies $d(\theta_n,\theta_0)\rightarrow0$ for any sequence $\{\theta_n\}\in\Theta$. Then for a sequence of estimator $\hat{\theta}_n\in\Theta$, if $M_n(\hat{\theta}_n)=\sup_{\theta\in\Theta}M_n(\theta)-o(1)$ and $\sup_{\theta\in\Theta}|M_n(\theta)-M(\theta)|\stackrel{a.s.}{\rightarrow}0$, then $d(\hat{\theta}_n,\theta_0)\stackrel{a.s.}{\rightarrow}0$.
						\end{theorem}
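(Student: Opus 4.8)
\textbf{Proof proposal for Theorem \ref{thm:independentcomponent}.}

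The plan is to verify the two hypotheses of the $M$-estimator consistency theorem (Theorem \ref{thm:consitency_estimator}) with the objective $M(\theta) = -\RJdCov^2(\bm W(\theta) Z; c)$ and its empirical version $M_n(\theta) = -\RJdCov^2_n(\bm Z_n \bm W(\theta)^\top; c)$, where both are viewed as functions on the compact set $\bar\Theta$ (equivalently on the corresponding compact subset of $\mathcal{SO}(r)_{\mathcal D}$). The first hypothesis — that $\liminf_n M(\theta_n) \geq M(\theta_0)$ forces $\mathcal D(\bm W(\theta_n), \bm W(\theta_0)) \to 0$ — is a "well-separatedness" condition on the population minimizer. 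Since $\theta_0$ is assumed to be the \emph{unique} minimizer of $\RJdCov^2(\bm W(\theta)Z;c)$ over $\bar\Theta$ (and the map $\theta \mapsto \bm W(\theta)$ has a continuous inverse under the stated conditions), continuity of $\theta \mapsto \RJdCov^2(\bm W(\theta) Z; c)$ together with compactness of $\bar\Theta$ gives that $\theta_0$ is a well-separated minimum: for any $\varepsilon>0$, $\inf_{\mathcal D(\bm W(\theta),\bm W(\theta_0))\geq\varepsilon} \RJdCov^2(\bm W(\theta)Z;c) > \RJdCov^2(\bm W(\theta_0)Z;c)$. So $\liminf_n M(\theta_n)\geq M(\theta_0)$ rules out $\theta_n$ staying bounded away from $\theta_0$. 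Continuity of $\theta\mapsto\RJdCov^2(\bm W(\theta)Z;c)$ itself follows from the representation \eqref{eq:RJdCovW}–\eqref{eq:W}: the rank maps here are one-dimensional CDFs $F_{W_i(\theta)^\top Z}$, and since the density of $W_i(\theta)^\top Z$ is uniformly bounded (by the convolution argument sketched at the start of the proof — a bounded density of $Z$ convolved with anything stays bounded), these CDFs are uniformly Lipschitz in their argument, and they depend continuously on $\theta$; the integrand of $\RJdCov^2$ is then continuous in $\theta$ and dominated, so dominated convergence gives continuity.

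The second hypothesis is uniform convergence: $\sup_{\theta\in\bar\Theta} |M_n(\theta) - M(\theta)| \stackrel{a.s.}\to 0$. The plan here is the standard two-ingredient argument: (i) pointwise a.s.\ convergence $M_n(\theta)\stackrel{a.s.}\to M(\theta)$ for each fixed $\theta$, and (ii) asymptotic equicontinuity of the family $\{M_n\}$, which together with compactness of $\bar\Theta$ upgrades pointwise to uniform convergence. For (i), fix $\theta$; then $\bm Z_n \bm W(\theta)^\top$ are i.i.d.\ samples of $\bm W(\theta) Z$, whose coordinates are continuously distributed (since $Z$ has a density), so Theorem \ref{thm:consistency} applied to the transformed data yields $\RJdCov^2_n(\bm Z_n\bm W(\theta)^\top;c)\stackrel{a.s.}\to\RJdCov^2(\bm W(\theta)Z;c)$. (One subtlety: the empirical procedure uses a Halton grid for $d_i=1$, which is just $\{1/n,\dots,1\}$, and for $d_i=1$ the empirical rank map coincides with the usual empirical CDF up to the $1/n$ grid, so Assumption \ref{assumption:U} holds and Theorem \ref{thm:consistency} applies verbatim.) For (ii), the key estimate is a Lipschitz-in-$\theta$ bound on $M_n$ uniform in $n$: using the representation $\RJdCov^2_n = \frac{1}{n^2}\sum_{a,b}\prod_i(\hat{\cE}_i(a,b)+c) - c^r$ with $\hat{\cE}_i(a,b)$ built from $\|\cdot\|$-distances between the one-dimensional ranks $\hat F_{i,n}(W_i(\theta)^\top Z_a)$, and the facts that (a) each $\hat F_{i,n}(\cdot|\theta)$ maps into $[0,1]$ so $\hat{\cE}_i(a,b)$ is bounded, and (b) $\theta\mapsto W_i(\theta)$ is Lipschitz on $\bar\Theta$ (it is built from $\cos,\sin$ of the angles) and the empirical CDF composed with a Lipschitz function of $\theta$ is... here is the care point — the \emph{empirical} CDF $\hat F_{i,n}$ is not continuous, so one must argue at the level of the sorted ranks: $\hat F_{i,n}(W_i(\theta)^\top Z_a)$ is a permutation of $\{1/n,\dots,1\}$ determined by the order statistics of $\{W_i(\theta)^\top Z_b\}_b$, and this ordering is locally constant in $\theta$ away from a null set of "tie" configurations. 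The cleaner route, which I would take, is to instead invoke the oracle/H\'ajek-type reduction already used in the proof of Theorem \ref{thm:consistency} (equations \eqref{eq:RdCovab}–\eqref{eq:Deltaab}): replace $\hat F_{i,n}$ by the population CDF $F_{W_i(\theta)^\top Z}$ at a cost that is $o(1)$ a.s.\ uniformly, using \cite[Theorem 2.1]{deb2021multivariate} and the Glivenko–Cantelli-type control $\sup_t|\hat F_{i,n}(t)-F_i(t)|\to 0$; the population-rank version of $\RJdCov^2_n$ is then manifestly Lipschitz in $\theta$ (composition of the Lipschitz CDFs — this is exactly where the uniform density bound is used — with the Lipschitz maps $\theta\mapsto W_i(\theta)$, inside bounded smooth functions), uniformly in $n$, which gives asymptotic equicontinuity.

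The main obstacle I anticipate is precisely step (ii): establishing the uniform-in-$n$ equicontinuity of $M_n(\theta)$ in $\theta$ despite the empirical rank maps being non-smooth (indeed discontinuous) functions of $\theta$. The resolution is the oracle replacement described above — once one passes to population CDFs the Lipschitz bound is routine, and the replacement error is handled by the same machinery already developed for Theorem \ref{thm:consistency}. A minor additional point to check is that the oracle replacement error is uniform over $\theta\in\bar\Theta$, not just pointwise; this requires noting that \cite[Theorem 2.1]{deb2021multivariate} controls $\frac1n\sum_a\|\hat R_i(\cdot) - R_{\mu_i}(\cdot)\|$ for the fixed underlying sample, and the relevant underlying sample $\{W_i(\theta)^\top Z_a\}$ varies continuously with $\theta$ over a compact set, so a standard covering/continuity argument upgrades it to uniformity. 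With both hypotheses of Theorem \ref{thm:consitency_estimator} verified, we conclude $\mathcal D(\bm W(\hat\theta_n), \bm W(\theta_0))\stackrel{a.s.}\to 0$, i.e., $\hat\theta_n$ is consistent for $\theta_0$ over $\mathcal{SO}(r)$. \hfill$\Box$
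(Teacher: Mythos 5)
The statement you were handed is Kosorok's Theorem 2.12, which the paper imports verbatim and never proves; what you have actually written is a proof of Theorem \ref{thm:independentcomponent} by verifying the two hypotheses of that imported result, which is precisely how the paper's own Appendix proof proceeds. Your route matches the paper's almost step for step: the well-separation hypothesis is supplied by the assumed unique minimizer together with compactness of $\bar\Theta$ and continuity of $\theta\mapsto\RJdCov^{2}(\bm W(\theta)Z;c)$, and the uniform convergence hypothesis is the paper's Lemma \ref{lem:RWthetaZ}, proved there by pointwise a.s.\ consistency (Theorem \ref{thm:consistency}) plus stochastic Arzel\`a--Ascoli equicontinuity, where the equicontinuity bound is obtained exactly as in your ``oracle'' reduction --- the empirical one-dimensional ranks are traded for the population CDFs at an asymptotically negligible cost, and the population CDFs are Lipschitz (by the uniform density bound via convolution) composed with the Lipschitz map $\theta\mapsto W_i(\theta)$, with $\frac{1}{n}\sum_k\|Z_k\|$ controlled by the SLLN. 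The one point where you are more careful than the paper is in noting that the empirical-to-population rank replacement error must be controlled uniformly over $\theta\in\bar\Theta$ (the paper simply invokes ``consistency of the empirical rank'' inside the sup defining the modulus $m_{1/c}$); your suggested covering/VC-type argument is a legitimate way to make that step airtight, but it is a refinement of the same proof rather than a different approach.
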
 
						
						We will apply the above result with $M_n= \RJdCov^2_n$ and $M= \RJdCov^2$. We begin by proving the uniform convergence of the objective functions in the following lemma. 
						
						\begin{lemma}\label{lem:RWthetaZ}
							Under Assumptions in Theorem \ref{thm:independentcomponent},  as $n\rightarrow\infty$, 
							\begin{align*}
								\sup_{\theta:W(\theta)\in \SO(r)_{\mathcal{D}}}\big|\RJdCov^{2}_n(\bm Z_n \bm W(\theta)^{\top})-\RJdCov^{2}(\bm W(\theta)Z)\big|\stackrel{a.s.}{\rightarrow}0 . 
							\end{align*}
						\end{lemma}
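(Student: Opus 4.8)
\textbf{Proof proposal for Lemma \ref{lem:RWthetaZ}.}
The plan is to establish the uniform strong law of large numbers for the objective $\RJdCov^2_n(\bm Z_n \bm W(\theta)^\top)$ by invoking a standard uniform-convergence argument: pointwise a.s.\ convergence plus asymptotic equicontinuity (stochastic equicontinuity) of the empirical objective, over the compact parameter space $\bar\Theta$. Since $\RJdCov^2_n(\bm Z_n \bm W(\theta)^\top) = \sum_{s=2}^r c^{r-s} \sum_{|S|=s} \RdCov^2_n(\bm Z_n\bm W(\theta)^\top|_S)$ is a finite sum of terms of the same type, it suffices to prove the uniform convergence for each summand $\RdCov^2_n(\cdot)$. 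For a fixed $\theta$, the consistency $\RdCov^2_n(\bm Z_n \bm W(\theta)^\top|_S) \stackrel{a.s.}\to \RdCov^2(\bm W(\theta)Z|_S)$ follows directly from Theorem \ref{thm:consistency} (applied to the transformed data $\bm Z_n \bm W(\theta)^\top$, noting that $\bm W(\theta)Z$ has a continuous distribution since its density is uniformly bounded), so the only real work is to upgrade pointwise convergence to uniform convergence over $\bar\Theta$.

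For the equicontinuity step I would proceed as follows. First, recall from \eqref{eq:RdCovWXS}--\eqref{eq:estimateW} that $\RdCov^2_n(\bm Z_n \bm W(\theta)^\top|_S)$ is a bounded function of the one-dimensional empirical rank maps $\hat F_{i,n}(\cdot|\theta)$, namely $\hat F_{i,n}(t|\theta) = \frac1n\sum_{a=1}^n \bm 1\{W_i(\theta)^\top Z_a \le t\}$, composed through the $\hat{\cE}_i(a,b)$ terms (which are averages of absolute differences of these ranks). The key point is that the map $\theta \mapsto (W_i(\theta)^\top Z_1, \ldots, W_i(\theta)^\top Z_n)$ is (uniformly over the data, on the compact set $\bar\Theta$) Lipschitz in $\theta$ because $\bm W(\theta)$ is a smooth function of $\theta$ via \eqref{eq:rotationrepresentation} and $\E[\|Z\|^2]<\infty$; combined with the fact that the population distribution function $F_{W_i(\theta)^\top Z}$ is Lipschitz in $t$ uniformly in $\theta$ (by the uniform density bound established in the first paragraph of the proof of Theorem \ref{thm:independentcomponent}), one gets that the population ranks $F_{W_i(\theta)^\top Z}(W_i(\theta)^\top Z_a)$ vary continuously in $\theta$, and the empirical ranks inherit this via a Glivenko--Cantelli-type control. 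Concretely, I would bound $|\RdCov^2_n(\bm Z_n\bm W(\theta)^\top|_S) - \RdCov^2_n(\bm Z_n\bm W(\theta')^\top|_S)|$ by a constant times $\max_{i\in S}\frac1n\sum_{a=1}^n |\hat F_{i,n}(W_i(\theta)^\top Z_a|\theta) - \hat F_{i,n}(W_i(\theta')^\top Z_a|\theta')|$ (using the telescoping/product-expansion argument already used in the proof of Theorem \ref{thm:consistency}, together with the boundedness $|\hat{\cE}_i|\le \sqrt{d_i}$), and then control this average modulus uniformly by splitting into an empirical-process fluctuation term (handled by a VC/Glivenko--Cantelli bound for the class of half-spaces $\{z: W_i(\theta)^\top z \le t\}$ indexed by $(\theta,t)$) and a population term (handled by the Lipschitz bound above). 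A standard $\varepsilon$-net argument over the compact $\bar\Theta$ then yields $\sup_{\theta}|\RdCov^2_n(\bm Z_n\bm W(\theta)^\top|_S) - \RdCov^2(\bm W(\theta)Z|_S)| \stackrel{a.s.}\to 0$, and summing over $S$ and $s$ gives the claim.

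The main obstacle I anticipate is making the equicontinuity estimate fully rigorous in the presence of the empirical rank maps rather than the population ones: the composition $\hat F_{i,n}(W_i(\theta)^\top Z_a|\theta)$ depends on $\theta$ both through the evaluation point $W_i(\theta)^\top Z_a$ and through the empirical CDF $\hat F_{i,n}(\cdot|\theta)$ itself, so one cannot simply differentiate. The cleanest route is probably to first pass to the population ranks using a uniform Glivenko--Cantelli statement $\sup_{\theta\in\bar\Theta}\sup_{t}|\hat F_{i,n}(t|\theta) - F_{W_i(\theta)^\top Z}(t)| \stackrel{a.s.}\to 0$ (valid because the half-space class has polynomial discrimination), reducing everything to the deterministic continuity of $\theta\mapsto \RdCov^2$ of the population-rank-transformed variables, which is straightforward given the Lipschitz density bound. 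Once Lemma \ref{lem:RWthetaZ} is in hand, Theorem \ref{thm:independentcomponent} follows from Theorem \ref{thm:consitency_estimator} with $M_n = \RJdCov^2_n$, $M = \RJdCov^2$, using the assumed uniqueness of the minimizer $\theta_0\in\bar\Theta$ and Proposition \ref{ppn:RJdCovindependence} to verify the identifiability hypothesis $\liminf M(\theta_n)\ge M(\theta_0)\Rightarrow \mathcal D(\bm W(\theta_n),\bm W(\theta_0))\to 0$.
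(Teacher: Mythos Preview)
Your proposal is correct and follows essentially the same route as the paper: reduce to the individual $\RdCov^2_n$ summands, invoke Theorem \ref{thm:consistency} for pointwise convergence, and prove stochastic equicontinuity by telescoping through the $\hat{\cE}_i$ factors, passing from empirical to population ranks, and using the Lipschitz property of the CDF (from the bounded-density assumption) together with the SLLN for $\tfrac1n\sum_a\|Z_a\|$. You are in fact more careful than the paper on one point: the paper handles the empirical-to-population rank step by simply citing ``consistency of empirical rank'' without addressing uniformity in $\theta$, whereas your proposed uniform Glivenko--Cantelli bound for the half-space class $\{z:W_i(\theta)^\top z\le t\}$ is exactly what is needed to make that step rigorous.
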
 
						
						\begin{proof}[Proof of Lemma \ref{lem:RWthetaZ}] 
						The proof of this lemma relies on the stochastic Arzel\'a-Ascoli lemma.  
%
%
%
To this end, note that since $\SO(r)_{\mathcal{D}}$ is a quotient space so it is compact. We have also shown in Theorem \ref{thm:consistency} that $$\RJdCov^{2}_n(\bm Z_n \bm W(\theta)^{\top})\stackrel{a.s.}{\rightarrow}\RJdCov^{2}(\bm W(\theta)Z),$$
for all $\theta$. Therefore, to apply stochastic Arzel\'a-Ascoli lemma we need check the condition for stochastic equicontinuity, that is, we need to prove 							\begin{align*}
								\lim_{c\rightarrow\infty}\limsup_{n\rightarrow\infty}m_{1/c}(\RJdCov^{2}_n)\stackrel{a.s.}{\rightarrow}0 , 
							\end{align*}
							where 
							\begin{align*}
								m_{1/c}(\RJdCov^{2}_n)=\sup\{|\RJdCov^{2}_n & (\bm Z_n \bm W(\theta)^{\top})-\RJdCov^{2}_n(\bm Z_n \bm W(\psi)^{\top})|: \\ 
								&
								\bm W(\theta), \bm W(\psi)\in\SO(r)_{\mathcal{D}}, \|\bm W(\theta) - \bm W(\psi)\|_{F}<1/c\}.
							\end{align*}
							 Consider
							\begin{align*}
								&|\RJdCov^{2}_n(\bm Z_n \bm W(\theta)^{\top})-\RJdCov^{2}_n(\bm Z_n \bm W(\psi)^{\top})|\\
								&
								\leq c^{d-2}\sum_{1 \leq i_1 < i_2 \leq r}\bigg|\RdCov^{2}_n(Z_{i_1}^\top W_{i_1}(\theta), Z_{i_2}^\top W_{i_2}(\theta))-\RdCov^{2}_n( Z_{i_1}^\top W_{i_1}(\psi), Z_{i_2}^\top W_{i_2}(\psi))\bigg|\\
								&\qquad\qquad\qquad+\cdots+\big|\RdCov^{2}_n(\bm Z_n \bm W(\theta)^{\top})-\RdCov^{2}_n(\bm Z_n \bm W(\psi)^{\top})\big|.
							\end{align*} 
							Therefore, we only need to prove that 
							\begin{align*}
								\lim_{c\rightarrow\infty}\limsup_{n\rightarrow\infty}m_{1/c}(\RdCov^{2}_n(\bm Z_n \bm W^\top_{\mathcal{S}}))=0 , 							\end{align*} 
								for all $\mathcal{S}\subset\{1, \ldots, r\}$, 
							where $\bm W_\mathcal{S}$ is the row submatrix of $\bm W$ with rows containing in $\mathcal{S}$ and
							\begin{align*}
								m_{1/c}(\RdCov^{2}_n(\bm Z_n \bm W^\top_{\mathcal{S}})) & =\sup\{|\RdCov^{2}_n(\bm Z_n \bm W_{\mathcal{S}}(\theta)^{\top})-\RdCov^{2}_n(\bm Z_n \bm W_{\mathcal{S}}(\psi)^{\top} )|: \\ 
								& \hspace{0.85in} \bm W(\theta), \bm W(\psi)\in\SO(r)_{\mathcal{D}},\| \bm W(\theta)-\bm W(\psi)\|_{F}<1/c\}.
							\end{align*} 
							Now, consider the telescoping sum 
							\begin{align*}
								&|\RdCov^{2}_n(\bm Z_n \bm W_{\mathcal{S}}(\theta)^{\top})-\RdCov^{2}_n(\bm Z_n \bm W_{\mathcal{S}}(\psi)^{\top})|\\ 
								&
								=\bigg|\frac{1}{n^{2}}\sum_{1 \leq a, b \leq n}\prod_{i\in \mathcal{S}}\hat{\cE}_i(a,b;\theta)-\frac{1}{n^{2}}\sum_{1 \leq a, b \leq n}\prod_{i\in\mathcal{S}}\hat{\cE}_i(a,b;\psi)\bigg|\\
								&
								= \bigg|\frac{1}{n^{2}}\sum_{1 \leq a, b \leq n}\prod_{i\in \mathcal{S}}\hat{\cE}_i(a,b;\theta)-\frac{1}{n^{2}}\sum_{1 \leq a, b \leq n}\hat{\cE}_{i_1}(a, b;\psi)\prod_{i\in \mathcal{S}\backslash \{i_1\}}\hat{\cE}_i(a,b;\theta)\\
								&
								+\frac{1}{n^{2}}\sum_{1 \leq a, b \leq n}\hat{\cE}_{i_1}(a, b; \psi)\prod_{i\in \mathcal{S} \backslash \{i_1\}}\hat{\cE}_i(a,b;\theta)-\frac{1}{n^{2}}\sum_{1 \leq a, b \leq n}\hat{\cE}_{i_1}(a,b;\psi)\hat{\cE}_{i_2}(a,b;\psi)\prod_{i\in \mathcal{S}\backslash\{i_1,i_2\}}\hat{\cE}_i(a,b;\theta)\\
								&
								+\cdots+\\
								&
								+\frac{1}{n^{2}}\sum_{1 \leq a, b \leq n}\hat{\cE}_{i_{|\mathcal{S}|}}(a,b;\theta)\prod_{i\in \mathcal{S}\backslash\{i_{|\mathcal{S}|}\}}\hat{\cE}_i(a,b;\psi)-
								\frac{1}{n^{2}}\sum_{1 \leq a, b \leq n}\prod_{i\in \mathcal{S}}\hat{\cE}_i(a,b;\psi)\bigg|\\
								&
								\leq C\sum_{i=1}^{r}\bigg|\frac{1}{n^{2}}\sum_{1 \leq a, b \leq n}\big\{\hat{\cE}_i(a,b;\theta)-\hat{\cE}_i(a,b;\psi)\big\}\bigg|
							\end{align*}
							where $C$ is a universal constant and the last inequality holds due to the fact that $\hat{\cE}_i(a, b)$ is uniformly bounded. Then we consider
							\begin{align*}
								&
								\frac{1}{n^{2}}\sum_{1 \leq a, b \leq n}\big\{\hat{\cE}_i(a,b;\theta)-\hat{\cE}_i(a,b;\psi)\big\}\\
								&
								\leq \frac{1}{n^{2}}\sum_{1 \leq a, b \leq n}\frac{1}{n}\sum_{v=1}^{n}\big|\hat{R}_i(W_{i}^{\top}(\theta)Z_a)-\hat{R}_i(W_{i}^{\top}(\psi)Z_a)+\hat{R}_i(W_i^{\top}(\psi)Z_v)-\hat{R}_i(W_i(\theta)^{\top}Z_v)\big|\\
								&
								+\frac{1}{n^{2}}\sum_{1 \leq a, b \leq n}\frac{1}{n}\big|\hat{R}_i(W_{i}^{\top}(\theta)Z_a)-\hat{R}_i(W_{i}^{\top}(\psi)Z_a)+\hat{R}_i(W_{i}^{\top}(\psi)Z_b)-\hat{R}_i(W_{i}^{\top}(\theta)Z_b)\big|\\
								&
								+\frac{1}{n^{2}}\sum_{1 \leq a, b \leq n}\frac{1}{n}\sum_{u=1}^{n}\big|\hat{R}_i(W_{i}^{\top}(\theta)Z_b)-\hat{R}_i(W_{i}^{\top}(\psi)Z_b)+\hat{R}_i(W_i^{\top}(\psi)Z_u)-\hat{R}_i(W_i(\theta)^{\top}Z_u)\big|\\
								&
								+\frac{1}{n^2}\sum_{u,v=1}^{n}\big|\hat{R}_i(W_{i}^{\top}(\theta)Z_u)-\hat{R}_i(W_{i}^{\top}(\psi)Z_u)+\hat{R}_i(W_{i}^{\top}(\psi)Z_v)-\hat{R}_i(W_{i}^{\top}(\theta)Z_v)\big|\\
								&
								\leq \frac{8}{n}\sum_{a=1}^{n}\big|\hat{R}_i(W_i(\theta)^{\top}Z_a)-R_{\mu_i}(W_i(\theta)^{\top}Z_a)\big|+\frac{8}{n}\sum_{a=1}^{n}\big|\hat{R}_i(W_i^{\top}(\psi)Z_a)-R_{\mu_i}(W_i^{\top}(\psi)Z_a)\big|\\
								&
								+ \frac{8}{n}\sum_{a=1}^{n}\big|R_{\mu_i}(W_i(\theta)^{\top}Z_a)-R_{\mu_i}(W_i^{\top}(\psi)Z_a)\big|.
							\end{align*}
							As $n\rightarrow\infty$, we know the first and the second term goes to zero by the consistency of empirical rank. Then we consider the last term by utilizing the Lipschitzness of $R_{\mu_i}(\cdot)$, since $R_{\mu_i}(\cdot)$ should be the CDF of $W_i^{\top}(\cdot) Z$. Therefore, if $0 < K < \infty$ is the bound on the density $Z$, 
							\begin{align*}
								\frac{1}{n^{2}}\sum_{1 \leq a, b \leq n}\big\{\hat{\cE}_i(a,b;\theta)-\hat{\cE}_i(a,b;\psi)\big\}
								&
								\leq \frac{8K}{n}\sum_{k=1}^{n}\big|W_i(\theta)^{\top}Z_k-W_i^{\top}(\psi)Z_k\big|\\
								&
								\leq 8K \|W_i(\theta)-W_i(\psi)\|\frac{1}{n}\sum_{k=1}^{n}\|Z_k\|.
							\end{align*}
							Then by strong law of large number we know $\frac{1}{n}\sum_{k=1}^{n}\|Z_k\|\stackrel{a.s.}{\rightarrow}\E(\|Z\|)<\infty$. Therefore we can conclude that
							\begin{align*}
								\lim_{c\rightarrow\infty}\limsup_{n\rightarrow\infty}m_{1/c}(\RdCov^{2}_n(\bm Z_n\bm W^\top_{\mathcal{S}}))=0 , 
							\end{align*} 
							for all $\mathcal{S}\subset\{1,\ldots,d\}$. This complete the proof of Lemma \ref{lem:RWthetaZ}.  
						\end{proof}

				To complete the proof of Theorem \ref{thm:independentcomponent}, recall the definition of $\theta_0$ (from \eqref{eq:minimizerICA}) and $\hat{\theta}_n$ (from \eqref{eq:thetaestimation}) and note that
						\begin{align*}
							\RJdCov^{2}_n(\bm Z_n \bm W(\hat{\theta}_n)^{\top})\leq\RJdCov_n^{2}(\bm Z_n \bm W(\theta_0)^{\top}) \text{ and } \RJdCov^{2}(\bm W(\theta_0)Z)\leq \RJdCov^{2}(\bm W(\hat{\theta}_n) Z).
						\end{align*}
						Therefore, we have
						\begin{align*}
							\RJdCov^{2}_n(\bm Z_n \bm W(\theta_0)^{\top})-\RJdCov^{2}(\bm W(\theta_0)Z)
							&
							\geq \RJdCov^{2}_n(\bm Z_n \bm W(\hat{\theta}_n)^{\top})-\RJdCov^{2}(\bm W(\theta_0)Z)\\
							&
							\geq \RJdCov^{2}_n(\bm Z_n \bm W(\hat{\theta}_n)^{\top})-\RJdCov^{2}(\bm W(\hat{\theta}_n)Z) . 
						\end{align*}
						This implies, 
						\begin{align*}
							&
							|\RJdCov_n^{2}(\bm Z_n \bm W(\hat{\theta}_n)^{\top})-\RJdCov^{2}(\bm W(\theta_0)Z)|\\
							&
							\leq \max\{|\RJdCov^{2}_n(\bm Z_n \bm W(\theta_0)^{\top})-\RJdCov^{2}(\bm W(\theta_0)Z)|,|\RJdCov^{2}_n(\bm Z_n \bm W(\hat{\theta}_n)^{\top})-\RJdCov^{2}(\bm W(\hat{\theta}_n)Z)|\}\\
							&
							\leq \sup_{\theta:W(\theta)\in \SO(r)_{\mathcal{D}}}\big|\RJdCov_n^{2}(\bm Z_n \bm W(\theta)^{\top})-\RJdCov^{2}(\bm W(\theta)Z)\big|\stackrel{a.s.}{\rightarrow}0.
						\end{align*}
						Then, since $\RJdCov^{2}(\bm W(\theta)Z)$ is a Lipschitz continuous with respect to $\theta$, Theorem \ref{thm:consitency_estimator} that $\mathcal{D}(\bm W(\hat{\theta}_n), \bm W(\theta_0))\stackrel{a.s.}{\rightarrow}0$ for $\bm W(\theta_0)\in\SO(r)_{\mathcal{D}}$. Since the map $\bm W(\theta)\mapsto \theta$ is continuous by the assumption, we conclude the result in Theorem \ref{thm:independentcomponent}. \hfill $\Box$

				\end{document}